\documentclass[12pt]{amsart}
\usepackage[a4paper,margin=2cm]{geometry}

\usepackage[dvipsnames]{xcolor}

\usepackage[all]{xy}

\usepackage[T1]{fontenc}
\usepackage[utf8]{inputenc}

\usepackage{amsmath,amssymb,amsthm,mathtools,enumerate,extarrows,stmaryrd,mathrsfs,textcomp}
\usepackage[oldstyle]{libertine}

\usepackage{mathtools}
\usepackage{extarrows}
\usepackage{tikz}
\usepackage[bbgreekl]{mathbbol}
\DeclareSymbolFontAlphabet{\mathbbm}{bbold}
\DeclareSymbolFontAlphabet{\mathbb}{AMSb}%

\UseRawInputEncoding
\let\oldtocsubsection=\tocsubsection
\renewcommand{\tocsubsection}[2]{\hspace{1em}\oldtocsubsection{#1}{#2}}

\usepackage{hyperref}
\hypersetup{colorlinks=true,linkcolor=blue,citecolor=teal,filecolor=magenta,urlcolor=cyan}

\usepackage{url}

\usepackage[backend=bibtex,style=alphabetic,sorting=nyt,isbn=false,url=false,doi=true,maxalphanames=10,minalphanames=4,mincitenames=4,maxcitenames=10,minnames=4,maxnames=10,giveninits=true,maxbibnames=99]{biblatex}
\addbibresource{FreeProbR.bib}

\usepackage[newparttoc]{titlesec}
\usepackage{appendix}
\usepackage{titletoc}
\titleformat{\section}
{\normalfont\bfseries\large}{\thesection}{1em}{}
\titlecontents{section}
	[3.5em]{}
	{\contentslabel{1.5em}}
	{}
	{\hfill\contentspage}
\titleformat{\subsection}
{\normalfont\large\slshape}{\thesubsection}{1em}{}
\titlecontents{subsection}
	[5.5em]{}
	{\contentslabel{1.5em}\itshape}
	{}
	{\hfill\contentspage}
\titleformat{\subsubsection}
{\normalfont\itshape}{\thesubsubsection}{1em}{}
\titlecontents{subsubsection}
	[7.5em]{}
	{\contentslabel{1.5em}\itshape}
	{}
	{\hfill\contentspage}

\theoremstyle{plain}
\newtheorem{theorem}{Theorem}[section]
\newtheorem{proposition}[theorem]{Proposition}
\newtheorem{corollary}[theorem]{Corollary}
\newtheorem{conjecture}[theorem]{Conjecture}
\newtheorem{lemma}[theorem]{Lemma}
\theoremstyle{definition}
\newtheorem{definition}[theorem]{Definition}
\newtheorem{example}[theorem]{Example}

\theoremstyle{remark}
\newtheorem{remark}[theorem]{Remark}
\newcommand{\bra}{\langle |}
\newcommand{\ket}{|\rangle}
\newcommand{\Bra}{\Big\langle\Big|}
\newcommand{\Ket}{\Big|\Big\rangle}
\newcommand{\PSG}{\mathbb{PS}}
\newcommand{\dd}{\mathrm{d}}

\begin{document}
	
\title{Functional relations for higher-order free cumulants}

\author[G.~Borot]{Ga\"etan Borot}
\address{Humboldt-Universit\"at zu Berlin, Institut f\"ur Mathematik und Institut f\"ur Physik \\ Unter den Linden 6, 10099 Berlin, Germany.}
\email{\href{mailto:gaetan.borot@hu-berlin.de}{gaetan.borot@hu-berlin.de}}

\author[S.~Charbonnier]{S\'everin Charbonnier}
\address{IRIF, Universit\'e de Paris \\ B\^atiment Sophie Germain, Case courrier 7014, 8 Place Aur\'elie Nemours, 75205 Paris Cedex 13, France.}
\email{\href{mailto:charbonnier@irif.fr}{charbonnier@irif.fr}}

\author[E.~Garcia-Failde]{Elba Garcia-Failde}
\address{Institut de Math\'ematiques de Jussieu - Paris Rive Gauche \\ Sorbonne Universit\'e, 4 Place Jussieu, 75252 Paris Cedex 05, France.}
\email{\href{mailto:egarcia@imj-prg.fr}{elba.garcia-failde@imj-prg.fr}}

\author[F.~Leid]{Felix Leid}
\address{Universit\"at des Saarlandes, Fachrichtung Mathematik \\ Postfach 1511505, 66041 Saarbr\"ucken, Germany.}
\email{\href{mailto:felix.leid@math.uni-sb.de}{felix.leid@math.uni-sb.de}}

\author[S.~Shadrin]{Sergey Shadrin}
\address{Korteweg-de Vries Instituut voor Wiskunde, Universiteit van Amsterdam \\ Postbus 94248, 1090 GE Amsterdam, the Netherlands.}
\email{\href{mailto:s.shadrin@uva.nl}{s.shadrin@uva.nl}}

\begin{abstract} We establish the functional relations between generating series of higher-order free cumulants and moments in higher-order free probability, solving an open problem posed fifteen years ago by Collins, Mingo, \'Sniady and Speicher. We propose an extension of free probability theory, which governs the all-order topological expansion in unitarily invariant matrix ensembles, with a corresponding notion of free cumulants and give as well their relation to moments via functional relations. Our approach is based on the study of a master transformation involving double monotone Hurwitz numbers via semi-infinite wedge techniques, building on the recent advances of the last-named author with Bychkov, Dunin-Barkowski and Kazarian. We illustrate our formulas by computing the first few decaying terms of the correlation functions of an ensemble of spiked GUE matrices, going beyond the law of large numbers and the central limit theorem.

\end{abstract}

\maketitle

\tableofcontents	
\newpage

\section{Introduction}
Voiculescu introduced free probability in the 1980's in the context of operator algebras. In this framework, the notion of independence of variables in classical probabilities is replaced by a notion of freeness, for which free cumulants are crucial objects. The first order moments and free cumulants of a random variable $a$ are encoded in generating series:
\begin{equation}
\label{MCdef}
M(X)\coloneqq1+\sum_{\ell \geq 1}\varphi_{\ell}[a] X^{\ell}\,, \qquad C(w)\coloneqq 1+\sum_{\ell \geq 1}\kappa_{\ell}[a] w^{\ell}\,,
\end{equation}
which satisfy the the following functional relation
\begin{equation}\label{eq:mome:cumu:1}
C(X\,M(X))= M(X)\,.
\end{equation}
It is sometimes referred to as the $R$-transform machinery, originally given \cite{Voiculescu} in a slightly different form: with the $R$-transform $R(w)$ and the Stieltjes transform $W(x)$ related to \eqref{MCdef} by $C(w) = 1 + w R(w)$ and $W(x) = x^{-1}M(x^{-1})$, where $x=X^{-1}$. After the pioneering work of Voiculescu \cite{Voi91}, these notions brought a better understanding of the law of large numbers for the spectra of large random matrices, which in turn enriched the study of free probability.

Later, the study of random matrices of large size beyond the law of large numbers motivated, in \cite{MingoSpeicher, MMS07, CMSS},  the development of a theory of higher-order freeness. Given a higher-order probability space, these works introduce higher-order moments and free cumulants, the latter being obtained from the former via a convolution product involving non-crossing partitioned permutations. In the same manner as for the first order, one would like to relate the generating series of moments and free cumulants of order $n \geq 2$ of a variable $a$:
\begin{equation*}
\begin{split}
M_n(X_1,\dots,X_n) & \coloneqq \sum_{k_1,\ldots,k_n \geq 1}\varphi_n[a^{k_1},\ldots,a^{k_n}] \prod_{i = 1}^n X_i^{k_i} \,,\\
 C_n(w_1,\dots,w_n) & \coloneqq \sum_{k_1,\ldots,k_n\geq 1} \kappa_{k_1,\ldots,k_n}[a,\ldots,a] \prod_{i = 1}^n w_i^{k_i}\,.
\end{split}
\end{equation*}
In \cite{CMSS}, second order freeness was studied in detail, and an explicit functional relation for $n=2$ was found. It can be rewritten in the simple form
\begin{equation}
\label{eq:mome:cumu:2}
M_2(X_1,X_2) + \frac{X_1X_2}{(X_1 - X_2)^2} = \frac{\dd \ln w_1}{\dd \ln X_1}\,\frac{\dd \ln w_2}{\dd \ln X_2} \bigg(C_2(w_1,w_2) + \frac{w_1w_2}{(w_1 - w_2)^2}\bigg)\,,
\end{equation}
where $w_i = X_i M(X_i)$, or equivalently $X_i = w_i/C(w_i)$. 
However, due to the complicated combinatorics of partitioned permutations, the problem of obtaining such functional relations for $n\geq 3$ has remained open until now, limiting the practical applicability of higher-order freeness. The present work remedies this by establishing in Theorem \ref{thm:R-transform-GenusZero} the functional relations between generating series of moments and free cumulants to any order $n\geq 1$. They have the following structure.

\begin{theorem}
Consider the change of variables $X_i= w_i/C(w_i)$. For $n \geq 3$, we have:
\begin{equation}
\label{MnCn} M_n(X_1,\ldots,X_n) =  \sum_{r_1,\ldots,r_n \geq 0} \,\,\sum_{T \in \mathcal{G}_{0,n}(\mathbf{r} + 1)}\prod_{i=1}^n \vec{\mathsf{O}}_{r_i}^{\vee}(w_i) \prod_{I \in \mathcal{I}(T)}' C_{\# I}(w_I)\,,
\end{equation}
where:
\begin{itemize}
\item $\mathcal{G}_{0,n}(\mathbf{r} + 1)$ is the set of bicoloured trees with white vertices labeled from $1$ to $n$ having valency $r_1 + 1,\ldots,r_n + 1$, and without univalent black vertices.
\item The weight $\vec{\mathsf{O}}_{r_i}^{\vee}(w_i)$ of the $i$-th white vertex is a differential operator acting on the variable $w_i$. Its expression (Definition~\ref{Or2}) involves only $C(w_i)$.
\item $\mathcal{I}(T)$ is the set of black vertices, identified with the subset of white vertices they connect to.
\item $\prod'$ means that any occurrence of $C_{2}(w_i,w_j)$ should be replaced with $C_{2}(w_i,w_j) + \frac{w_iw_j}{(w_i - w_j)^2}$.
\item For a given monomial in the $X$s, only finitely many terms of the right-hand side contribute.
\end{itemize}
\end{theorem}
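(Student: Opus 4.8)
The plan is to establish a master functional relation at the level of the topological expansion of unitarily invariant matrix ensembles, and then specialize it to higher order free probability. The central tool, as announced in the abstract, is the "master transformation" built from double monotone Hurwitz numbers, analyzed via semi-infinite wedge (fermionic Fock space) techniques. The key input is the result of the last-named author with Bychkov, Dunin-Barkowski and Kazarian, which expresses the relation between the disconnected generating series of a unitarily invariant model (the "moment side") and its free data (the "cumulant side") as a combinatorial sum over bicoloured graphs weighted by monotone Hurwitz numbers.

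First I would set up the semi-infinite wedge formalism, writing the moment and cumulant generating series as vacuum expectation values of appropriate operators, with the change of variables $X_i = w_i/C(w_i)$ encoding the passage between the two dual bases. The monotone Hurwitz operators act as a conjugation that implements precisely this change of variables, and the combinatorial content of this conjugation is a sum over transpositions organized into monotone sequences. The second step is to perform the genus-zero (planar) truncation: in the full topological expansion the graphs would carry arbitrary genus and higher connectivity, but restricting to the leading order in the topological parameter forces the relevant graphs to be \emph{trees}. This is where the set $\mathcal{G}_{0,n}(\mathbf{r}+1)$ of bicoloured trees emerges, with white vertices of prescribed valencies $r_i+1$ and no univalent black vertices. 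The third step is to identify the analytic weights: each white vertex contributes a differential operator $\vec{\mathsf{O}}_{r_i}^{\vee}(w_i)$ that packages the monotone Hurwitz data at that vertex into an explicit operator in $w_i$ depending only on $C(w_i)$ (Definition~\ref{Or2}), while each black vertex of valency $\#I$ contributes a free cumulant $C_{\#I}(w_I)$.

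A technical but essential fourth step is to correctly handle the $n=2$ pole correction encoded by $\prod'$. The shift replacing $C_2(w_i,w_j)$ with $C_2(w_i,w_j) + \frac{w_iw_j}{(w_i-w_j)^2}$ reflects the universal Gaussian/GUE-type contribution to the second order cumulant that is already visible in the $n=2$ relation \eqref{eq:mome:cumu:2}; I would track this term through the semi-infinite wedge computation and verify that it is exactly the ``connected two-point function of the identity'' produced by the free part of the master transformation, so that its appearance inside every occurrence of a bivalent black vertex is forced by the combinatorics rather than inserted by hand. Finally, I would verify the finiteness claim: for a fixed monomial $\prod_i X_i^{k_i}$, the degrees $r_i$ and the sizes of the $w_I$-expansions are bounded, so only finitely many trees $T$ contribute, making the right-hand side a well-defined formal power series identity.

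The hard part will be the passage from the abstract semi-infinite wedge identity to the explicit tree expansion with the correct operator weights: isolating the planar contribution and proving that the monotone Hurwitz insertions at a white vertex resum exactly into the differential operator $\vec{\mathsf{O}}_{r_i}^{\vee}(w_i)$ depending only on $C(w_i)$. In particular, one must show that, after the change of variables $X_i = w_i/C(w_i)$, the Jacobian factors $\frac{\dd\ln w_i}{\dd\ln X_i}$ visible in \eqref{eq:mome:cumu:2} are absorbed into these operators in a way that is uniform in $n$; I expect this resummation, together with the bookkeeping that no univalent black vertices survive and that the $\prod'$ correction is compatible with the tree structure at all valencies, to be the main obstacle.
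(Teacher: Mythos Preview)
Your proposal is correct and follows essentially the same route as the paper: the master relation is realised as $Z = \mathsf{D}Z^{\vee}$ in the Fock space, the conjugation formula for the Heisenberg field by $\mathsf{D}$ from \cite{BDBKSexplicit} yields a sum over bicoloured graphs (Lemma~\ref{lem:KeyComb}), and the genus-$0$ extraction kills all non-tree contributions. The ``hard part'' you anticipate is handled in the paper by three specific algebraic manoeuvres attributed to Kazarian --- a shift identity \eqref{eq:FirstStep-u} to remove the $\exp(u_i(G_{0,1}^{\vee}-1))$ factors, a polynomial-in-$k$ trick to trade the explicit $k$-dependence for $w\partial_w$, and Lagrange inversion to absorb the remaining $(1+Y(w))^k$ into the change of variables $X = w/G_{0,1}^{\vee}(w)$ --- which together resum the vertex weight into $\vec{\mathsf{O}}_r^{\vee}$ with the Jacobian $P^{\vee}(w) = \tfrac{\dd\ln w}{\dd\ln X}$ built in, exactly as you expect.
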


Formula \eqref{MnCn} stems from combinatorics, but indirectly and with a convoluted history. In the case where the higher-order probability space arises from the large size limit of an arbitrary (formal) unitarily-invariant measure of the space of Hermitian matrices, then moments are identified with generating series of ordinary maps \cite{BIPZ}. Later, for the same type of measure, the first and third-named authors \cite[Section 11.2]{BG-F18} identified the free cumulants with generating series of planar fully simple maps. Using those identifications, the functional relations \eqref{eq:mome:cumu:1} and \eqref{eq:mome:cumu:2} admit combinatorial proofs \cite{BG-F18}. Generating series of maps of arbitrary genus $g$ and number of boundaries $n$ are known to satisfy a universal recurrence on $2g - 2 + n$, known under the name of topological recursion \cite{EORev,Ebook}. Its initial data is encoded in the plane curve of equation $w = W(x) = x^{-1}M(x^{-1})$, called spectral curve. \cite{BG-F18} observed that \eqref{eq:mome:cumu:1} and \eqref{eq:mome:cumu:2} correspond to a transformation of the spectral curve called ``symplectic exchange'', which plays an important role in the theory of topological recursion \cite{EORev,EO2MM,EOxy}, and then conjectured that the topological recursion for the exchanged spectral curve $x = C(w)/w$ produces the
generating series of fully simple maps of any topology. The functional relation \eqref{MnCn} for $n = 3$ (in the special case of maps) can be derived from this conjecture, but the method could not be pushed to $n \geq 4$. The conjecture has received two proofs recently: using combinatorial tools in \cite{BCG-F} by the three first-named authors; and for more general models of maps, i.e.~more general unitarily-invariant ensembles of random matrices, using the Fock space formalism, in \cite{BDBKSexplicit, BDBKS} by the fifth-named author with Bychkov, Dunin-Barkowski and Kazarian. The latter works are part of a general approach establishing, among other things, topological recursion for Hurwitz theory~\cite{BDBKSHurw}.

\medskip

The starting point of this article will be the \emph{master relation} of the form
\begin{equation}
\label{master} Z(\lambda) = \mathfrak{z}_{\lambda}\sum_{\nu\vdash d} H^{<}(\lambda,\nu) Z^{\vee}(\nu)
\end{equation}
between two functions $Z$ and $Z^{\vee}$ on the set of integer partitions, where $H^{<}(\lambda,\mu)$ are the strictly monotone (double) Hurwitz numbers and $\mathfrak{z}_{\lambda}$ is a symmetry factor (Sections~\ref{Sec:PS} and \ref{Sec:Hurbase}). The weakly monotone Hurwitz numbers, encoded in $H^{\leq}(\lambda,\nu)$, which are in some sense (that will become explicit in Lemma~\ref{Hinverse}) dual to the strict version, were introduced and studied in \cite{Goulden1,Goulden2,Goulden3,NovakHCIZ}. As proved in \cite{BG-F18} via Weingarten calculus and recalled in Theorem~\ref{tunome}, the master relation \eqref{master} materialises in the context of unitarily invariant ensembles of random hermitian matrices, $\mathfrak{z}_{\lambda}$ coming from moments of traces of powers and $Z^{\vee}(\nu)$ from expectation values of products of entries along cycles of a permutation. From the relation between map enumeration and formal matrix integrals, the master relation also relates the enumeration of (stuffed) maps (in $Z(\lambda)$) to the enumeration of fully simple (stuffed) maps (in $Z^{\vee}(\nu)$). This admits two bijective proofs and applies  as well to hypermaps \cite{borot2019relating}. 

\medskip

We show that having the master relation \eqref{master} between two \emph{arbitrary} generating series is equivalent to a moment-cumulant relation where moments are expressed as an extended convolution of the free cumulants with the zeta function on partitioned permutations (Theorem~\ref{thm:hbarstarproduct}). By extended, we mean that we do not throw away the contribution of non-planar partitioned permutations, contrarily to the convolution used in \cite[Section 5]{CMSS}. Representing these arbitrary generating series as expectation values in the Fock space, the universal relation amounts to inserting an operator creating the monotone Hurwitz numbers (Section~\ref{Sec:TransZ}). In Section~\ref{Sec3}, we present the core functional relations stated in Theorem~\ref{thm:R-transform-HigherGenera}. The extended convolution can be truncated to keep only the planar contributions as in \cite{CMSS}, and by truncating accordingly we obtain the functional relations of Theorem~\ref{thm:R-transform-GenusZero} (or in the equivalent form of Theorem~\ref{coeffThm}), summarised above as \eqref{MnCn}. The rest of Section~\ref{Sec3} is devoted to examples and reformulations of the functional relations and a brief discussion of their analytic properties. The  proof of the main theorem \ref{thm:R-transform-HigherGenera} exploits the tools developed in \cite{BDBKSexplicit} and is exposed in Section~\ref{Sec4}. These functional relations can be inverted to rather express free cumulants in terms of moments (Section~\ref{SecDual}).

\medskip

The application of these results to higher-order free probability is summarised in Sections~\ref{Decpartperm}-\ref{Sec:Higherorderfree}. Besides, this leads us to propose in Section~\ref{SecFreehig} a natural extension of the free probability theory to include non-planar cases, to all orders. It is based on surfaced permutations, which are essentially partitioned permutations with genus information (Section~\ref{Sec:PSG}). In this context we define an extended convolution, a notion of surfaced moments and free cumulants related by extended convolution with the zeta function, and the functional relations are described by Theorem~\ref{thm:R-transform-HigherGenera} (Section~\ref{SecFreehig}). We argue that this extension is the right one:
\begin{itemize}
\item We define $(g_0,n_0)$-freeness of variables by vanishing of mixed cumulants up to order $(g_0,n_0)$, and this is equivalent to $(g_0,n_0)$-freeness of the algebras generated by those variables (Corollary~\ref{SettoAlg}).
\item The $(\infty,\infty)$-asymptotic freeness  of two independent ensembles of random matrices, one of which is unitarily invariant, is guaranteed (Section~\ref{Sec:Randommat}).
\item We stress that, compared to previous work in free probability where an emphasis was put on keeping only non-crossing or planar contributions, our conclusion is that the theory becomes simpler once one extends it to keep all genera. It is in this context that we derive functional relations, which we (only then) truncate to obtain the desired genus $0$ relations \eqref{MnCn}.
\item It is convenient to allow the genus taking half-integer values. Then, $(\frac{1}{2},1)$-freeness retrieves the notion of infinitesimal freeness of \cite{NF-infinitesimal} coming from \cite{BS09}, and the known functional relations between the corresponding moments and free cumulants (Corollary~\ref{the1demi}).
\end{itemize} 
In Section~\ref{SecGUEDET}, we illustrate the functional relations by a computation of (the generating series of) moments of order $(g,n) = (0,3),(1,1)$ of a GUE + deterministic matrix, going beyond the known law of large numbers (corresponding to $(0,1)$) and central limit theorem (corresponding to $(0,2)$).

Albeit not used in this article, the theory of the topological recursion whispered us that keeping all genera was the natural thing to do. It also led us to import for the present purposes the Fock space techniques recently developed to understand better the interplay between Hurwitz theory and topological recursion. Actually, we are led to formulate Conjecture~\ref{ConjTR}, stating that the master relation \eqref{master} and the functional relations of Theorem~\ref{thm:R-transform-HigherGenera} exactly describe the effect of symplectic exchange in topological recursion. 

\subsection*{Added on revision} Several developments based on the results of this paper have appeared since the first version of this paper was released in December 2021, which are relevant for applications in higher-order free probability theory. The key formula for the higher-order moments / free cumulants correspondence (which is a genus $0$ formula in our framework stated in Theorem~\ref{thm:R-transform-GenusZero}) has got an alternative formulation in~\cite{hock2022xy}, which is visibly simpler.  In the special case of third order freeness, \cite{lionni2022higher} proposed a combinatorial interpretation of the formula relating moments and cumulants. In higher genera, an analogous simplification of the correspondence formula stated in Theorem~\ref{thm:R-transform-HigherGenera} was derived in~\cite{hock2022simple} and~\cite{alexandrov2022universal}.  Moreover, our Conjecture~\ref{ConjTR} has been proved in \cite{alexandrov2022universal}.

\subsection*{Acknowledgements}
We thank O.~Arizmendi, A.~Bufetov, J. Mingo and R.~Speicher for discussions. We also thank H.~Grosse, M.~Khalkhali, H.~Markwig, J.~Sch\"urmann, and R.~Wulkenhaar, the co-organisers of the workshops ``Non-commuta\-ti\-ve geometry meets topological recursion'' in August 2021 in M\"unster and in April 2023 at the Erwin Schr\"odinger Institute in Vienna: the first workshop led to a breakthrough in our understanding of the structure of the relations between the higher-order free cumulants and moments, the second one allowed us to work on a revision and extension of this paper. S.~C.~was supported by the European  Research Council  (ERC)  under  the  European  Union's Horizon  2020 research and innovation programme  (grant  agreement  No.~ERC-2016-STG 716083  ``CombiTop''). F.~L.~was supported by the SFB-TRR 195 ``Symbolic Tools in Mathematics and their Application'' of the German Research Foundation (DFG). S.~S.~was supported by the Netherlands Organisation for Scientific Research.

\section{Convolution on partitioned permutations and Hurwitz numbers}

\label{S2}
In Section~\ref{Sec:PS}, we set up some preliminary notations in order to review partitioned permutations in Section~\ref{Sec:PSS}, in which we also introduce some new definitions needed for our purposes. In Section~\ref{Sec:Hurbase}, following \cite{ALS16, borot2019relating}, we recall notations and basic facts concerning monotone Hurwitz numbers. This allows us to recast in Section~\ref{Sec:TransZ}  the extended convolution by the zeta function in the algebra of functions on partitioned permutations, in terms of transformations of topological partition functions using monotone Hurwitz numbers.

\subsection{Integer and set partitions}

\label{Sec:PS}
Let $d$ be a nonnegative integer. The set of permutations of $[d]\coloneqq \{1,\ldots,d\}$  is denoted $S(d)$. We say that $\lambda = (\lambda_i)_{i = 1}^{\ell}$ is a partition of $d$ (notation $\lambda \vdash d$) when it is a weakly decreasing sequence of positive integers such that $\sum_{i = 1}^{\ell} \lambda_i = d$. We denote $\ell(\lambda) = \ell$ the length of the sequence. If $(k_1,\ldots,k_{\ell})$ is a sequence of positive integers, we denote $\lambda(\mathbf{k})$ this sequence written in decreasing order. If $\lambda \vdash d$, we  associate to it the permutation $\pi_{\lambda} \in S(d)$:
\[
\pi_{\lambda}\;\coloneqq\; (1\,\,\dots\,\,\lambda_1)(\lambda_1+1\,\,\dots\,\,\lambda_{1}+\lambda_{2})\cdots(\lambda_1+\dots+\lambda_{\ell-1}+1\,\,\dots\,\,\lambda_1+\dots+\lambda_{\ell})\,. \label{gammalamb}
\]
Conversely, if $\sigma$ is a permutation, we denote $\lambda(\sigma)$ the sequence of lengths of the cycles of $\sigma$, in weakly decreasing order. By convention, we define $S(0) = \{\emptyset\}$, and we declare $\emptyset$  to be the (only) partition of $0$.  Given $\lambda \vdash d$, let $C_{\lambda} \subseteq S(d)$ be the conjugacy class of $\pi_{\lambda}$, that is, the set of permutations $\sigma\in S[d]$ such that $\lambda(\sigma)=\lambda$, and
\[
\mathfrak{z}_{\lambda} = \frac{d!}{\# C_{\lambda}} = \prod\limits_{i=1}^{\ell(\lambda)}\, \lambda_i \, \prod\limits_{j\geq 1} m_j(\lambda)!\,,
\]
 where $m_j(\lambda)$ is the number of occurrences of $j$ in the sequence $\lambda$.

For us, a partition of $[d]$ is a set of non-empty and pairwise disjoint subsets of $[d]$ whose union is $[d]$. The set $P(d)$ of partitions of $[d]$ is endowed with the structure of a poset. Namely, let  $\mathcal{A}, \,\mathcal{B}\in P(d)$, $\mathcal{A}=\{A_1,\dots, A_a\},\, \mathcal{B}=\{B_1,\dots,B_{b}\}$. The partial order relation  $\mathcal{A}\leq \mathcal{B}$ holds when every block $A_i$ is contained in some block $B_j$. The trivial partition $\mathbf{1}_d \coloneqq \{[d]\} \in P(d)$ is the maximum of $P(d)$. If $\mathcal{A},\mathcal{B} \in P(d)$, their merging is denoted $\mathcal{A} \vee \mathcal{B} \in P(d)$: it is the smallest $\mathcal{C} \in P(d)$ such that $\mathcal{A} \leq \mathcal{C}$ and $\mathcal{B} \leq \mathcal{C}$. In particular, we have $\mathcal{A} \vee \mathbf{1}_d = \mathbf{1}_d$. If $\sigma \in S(d)$, we define $\mathbf{0}_{\sigma} \in P(d)$ to be the partition of $[d]$ whose elements are the supports of cycles in $\sigma$. For instance $\mathbf{0}_{(126)(35)(4)} = \{\{1,2,6\},\{3,5\},\{4\}\}$. Note that we have $\mathcal{A} \vee \mathbf{0}_{\rm id} =  \mathcal{A}$. We also use the notation $\mathbf{0}_{\lambda} \coloneqq \mathbf{0}_{\pi_{\lambda}}$.

\subsection{Partitioned permutations}
\label{Sec:PSS}

\begin{definition} A \emph{partitioned permutation}  of $d$ elements is a pair $(\mathcal{A},\alpha)$, where $\mathcal{A}\in P(d)$ and $\alpha \in S(d)$, such that $\mathbf{0}_{\alpha} \leq \mathcal{A}$. Let $PS(d)$ be the set of partitioned permutations of $d$ elements  and $PS=\bigcup_{d\geq 1} PS(d)$.
\end{definition}
 We define the \emph{colength} of $\mathcal{A}\in P(d)$ and of $\alpha \in S(d)$ as
\[
|\mathcal{A}|\coloneqq d-\# \mathcal{A}\;\; \text{ and }\;\; |\alpha|\coloneqq d-\#\mathbf{0}_\alpha\,,\]
where $\#\mathcal{A}$ and $\#\mathbf{0}_\alpha$ are the number of blocks of $\mathcal{A}$ and the number of cycles of $\alpha$, respectively. Then, $|\alpha|$ is the minimal number of transpositions in a factorisation of $\alpha$. The colength of a partitioned permutation $(\mathcal{A},\alpha)\in PS(d)$ is defined as:
\[
|(\mathcal{A},\alpha)|\coloneqq 2|\mathcal{A}|-|\alpha|\,.
\]
It is a nonnegative integer, as the property $\mathbf{0}_{\alpha} \leq \mathcal{A}$ implies $\# \mathbf{0}_{\alpha} \geq \# \mathcal{A}$. Note the special case:
\begin{equation}
\label{spcol} |(\mathbf{0}_{\alpha},\alpha)| = |\alpha|\,,
\end{equation}
and the additivity:
\begin{equation}
\label{Addico}|(\mathcal{A},\alpha)| = \sum_{A \in \mathcal{A}} |(\mathbf{1}_{\# A},\alpha_{|A})|\,,
\end{equation}
where we have made a choice of bijections $[\#A] \rightarrow A$ to consider $(\mathbf{1}_{\# A},\alpha_{|A}) \in PS(\# A)$, but its colength appearing on the right-hand side of \eqref{Addico} is independent of these choices.

We recall the definitions of the product of partitioned permutations and the associated convolution, and introduce their extended version which is relevant for us. Hereafter $R$ denotes a commutative ring.
\begin{definition}\label{def:extension}
Let $(\mathcal{A},\alpha), (\mathcal{B},\beta),(C,\gamma) \in PS(d)$ and $f_1,f_2 \colon PS(d) \rightarrow R$ two functions.
\begin{itemize}
	\item The multiplication\footnote{Strictly speaking, $\cdot$ is a multiplication on $PS(d) \cup \{0\}$, with $0$ declared to be an absorbing element.} $\cdot$ and the extended multiplication $\odot$ of partitioned permutations are:
\[
\begin{split}
(\mathcal{A},\alpha)\cdot (\mathcal{B},\beta)&\coloneqq \begin{cases} (\mathcal{A}\vee \mathcal{B},\alpha \circ \beta)  & \textup{if} \quad |(\mathcal{A},\alpha)|+|(\mathcal{B},\beta)|=|(\mathcal{A}\vee\mathcal{B},\alpha \circ \beta)|\,,\\ \,0 & \textup{otherwise}\,. \end{cases} \\
(\mathcal{A},\alpha)\odot (\mathcal{B},\beta)&\coloneqq (\mathcal{A}\vee \mathcal{B},\alpha \circ \beta)\,.
\end{split}
\]
	\item The convolution product and the extended convolution of two functions are:
\begin{equation}
\label{extconv}
\begin{split}
(f_1\ast f_2) (\mathcal{C},\gamma) &\coloneqq\sum\limits_{(\mathcal{A},\alpha)\cdot(\mathcal{B},\beta) = (\mathcal{C},\gamma)} f_1(\mathcal{A},\alpha) \,f_2(\mathcal{B},\beta)\,, \\
(f_1  \circledast f_2) (\mathcal{C},\gamma) &\coloneqq\sum\limits_{(\mathcal{A},\alpha) \odot (\mathcal{B},\beta) = (\mathcal{C},\gamma)} f_1(\mathcal{A},\alpha) \,f_2(\mathcal{B},\beta)\,.
\end{split}
\end{equation}
\end{itemize}
\end{definition}
It is easy to see that these products are associative, but for $d > 1$ they are not commutative. Elementary combinatorics --- see \textit{e.g.} \cite[Lemma 4.7]{CMSS} --- show that
\begin{equation}
\label{miniABprod} |(\mathcal{A} \vee \mathcal{B},\alpha \circ \beta)| \leq |(\mathcal{A},\alpha)| + |(\mathcal{B},\beta)|\,.
\end{equation}

\begin{definition}
The following functions on $PS(d)$ will play an important role.
\begin{itemize}
\item The \emph{delta function} is:
$$
\delta(\mathcal{A},\alpha) =\begin{cases} \,1 & \textup{if}\,\,\mathcal{A} = \mathbf{0}_{\rm id} \,\,{\rm and}\,\,\alpha = {\rm id}\,, \\ \,0 & \textup{otherwise}\,. \end{cases}
$$ 
\item The \emph{zeta function} is:
\[
\zeta(\mathcal{A},\alpha) \coloneqq \begin{cases} \,1 & \textup{if}\,\, \mathcal{A}=\mathbf{0}_{\alpha}\,, \\ \,0 & \textup{otherwise}\,. \end{cases} \qquad 
\]
The \emph{extended zeta function} is $\zeta_{\hbar}(\mathcal{A},\alpha) \coloneqq \hbar^{|\alpha|}\zeta(\mathcal{A},\alpha)$ and takes values in $R[\![\hbar]\!]$.
\vspace{0.1cm}
\item The \emph{M\"obius function} $\mu \colon PS(d) \rightarrow R$ and the \emph{extended M\"obius function} $\mu_{\hbar} \colon PS(d) \rightarrow R[\![\hbar]\!]$ are uniquely determined by
\[
\begin{split}
\mu\ast \zeta & = \zeta\ast \mu = \delta\,, \\
\mu_{\hbar} \circledast \zeta_{\hbar} & = \zeta_{\hbar} \circledast \mu_{\hbar} = \delta\,.
\end{split}
\]
\end{itemize}
\end{definition} 

\begin{lemma}
\label{existe} The M\"obius functions $\mu$ and $\mu_{\hbar}$ exist.
\end{lemma}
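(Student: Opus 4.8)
The plan is to observe that, for each fixed $d$, the functions $PS(d)\to R$ (respectively $PS(d)\to R[\![\hbar]\!]$) form an associative unital $R$-algebra under $\ast$ (respectively $\circledast$), with two-sided unit $\delta$, and then to invert $\zeta$ (respectively $\zeta_{\hbar}$) by a geometric series. Writing $\zeta = \delta + \eta$ with $\eta \coloneqq \zeta - \delta$, the candidate inverse is $\mu = \sum_{k\geq 0}(-1)^k\,\eta^{\ast k}$, and similarly $\mu_{\hbar} = \sum_{k\geq 0}(-1)^k\,\eta_{\hbar}^{\circledast k}$ with $\eta_{\hbar}\coloneqq\zeta_{\hbar}-\delta$. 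Associativity is noted above and the two-sided unit property of $\delta$ follows directly from $\mathcal{A}\vee\mathbf{0}_{\rm id}=\mathcal{A}$ together with the colength additivity built into $\cdot$; so everything reduces to showing that these series are well defined, i.e.\ that $\eta$ (respectively $\eta_{\hbar}$) is nilpotent in the appropriate sense.

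For $\mu$ I would exploit the grading of $PS(d)$ by colength. The key structural point is that the ordinary product $\cdot$ is designed to vanish exactly when colength fails to be additive: if $(\mathcal{A},\alpha)\cdot(\mathcal{B},\beta)=(\mathcal{C},\gamma)\neq 0$, then $|(\mathcal{A},\alpha)| + |(\mathcal{B},\beta)| = |(\mathcal{C},\gamma)|$. Hence, if $f$ is supported on colength $\geq p$ and $g$ on colength $\geq q$, then $f\ast g$ is supported on colength $\geq p+q$. Now $\eta$ vanishes on the unique colength-$0$ element $(\mathbf{0}_{\rm id},{\rm id})$ (where $\zeta=\delta=1$) and, by \eqref{spcol}, is supported on $\{(\mathbf{0}_{\alpha},\alpha)\colon\alpha\neq{\rm id}\}$, all of which have colength $|\alpha|\geq 1$. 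Therefore $\eta^{\ast k}$ is supported on colength $\geq k$. Since $PS(d)$ is finite, the colength is bounded by some $N$, whence $\eta^{\ast(N+1)}=0$ and the series for $\mu$ terminates; the resulting finite geometric identity then gives $\mu\ast\zeta = \zeta\ast\mu = \delta$.

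For $\mu_{\hbar}$ the colength grading no longer suffices, since under $\odot$ the colength is only subadditive by \eqref{miniABprod}; this is the main obstacle, and I would resolve it by replacing the colength filtration with the $\hbar$-adic one. One checks that $\eta_{\hbar}$ takes values in $\hbar R[\![\hbar]\!]$: it vanishes at $(\mathbf{0}_{\rm id},{\rm id})$, equals $\hbar^{|\alpha|}$ with $|\alpha|\geq 1$ on the rest of the support of $\zeta_{\hbar}$, and is $0$ elsewhere. Because $PS(d)$ is finite, each extended convolution in \eqref{extconv} is a \emph{finite} sum, and multiplication of power series adds $\hbar$-adic valuations; hence $\eta_{\hbar}^{\circledast k}$ takes values in $\hbar^{k}R[\![\hbar]\!]$. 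Consequently, evaluated at any fixed $(\mathcal{C},\gamma)$, the coefficient of each power $\hbar^{m}$ in $\mu_{\hbar}=\sum_{k\geq 0}(-1)^k\,\eta_{\hbar}^{\circledast k}$ receives contributions only from $k\leq m$, so the series converges $\hbar$-adically to a well-defined element of $R[\![\hbar]\!]$, and as before it furnishes a two-sided inverse $\mu_{\hbar}\circledast\zeta_{\hbar}=\zeta_{\hbar}\circledast\mu_{\hbar}=\delta$.
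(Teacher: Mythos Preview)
Your argument is correct and, for $\mu_{\hbar}$, is essentially the paper's proof: the paper views the functions as the group ring $R[\![\hbar]\!](PS(d))$ under $\odot$, notes that $\zeta_{\hbar} = (\mathbf{0}_{\rm id},{\rm id}) + O(\hbar)$, and concludes invertibility---exactly your $\hbar$-adic geometric series, phrased slightly differently. For $\mu$ the paper simply cites \cite{CMSS}, whereas you supply a self-contained proof via the colength grading and nilpotency of $\eta$; this is a small bonus, and the colength-additivity of $\cdot$ makes your filtration argument cleaner there than the $\hbar$-adic one you need for $\circledast$.
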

\begin{proof} The existence of $\mu$ is known from \cite{CMSS}. For the existence of $\mu_{\hbar}$, we view functions $PS \rightarrow R[\![\hbar]\!]$ (and their $\circledast$ convolution) as elements of the group ring $R[\![\hbar]\!][PS(d)]$ (with product induced by $\odot$). In particular,  $\zeta_{\hbar}$ can be viewed as $\sum_{ \alpha \in S(d)} \hbar^{|\alpha|}\,(\mathbf{0}_{\alpha},\alpha)$. It is of the form $(\mathbf{0}_{\rm id},{\rm id}) + O(\hbar)$ since the only permutation with zero colength is the identity, and $(\mathbf{0}_{\rm id},{\rm id})$ is the unit for $\odot$ thus invertible. Therefore, $\zeta_{\hbar}$ is invertible, and its inverse defines $\mu_{\hbar}$.
\end{proof} 

\begin{definition}
\label{def:multifun}A function $f \colon PS \rightarrow R$ is \emph{multiplicative} if  for any  $d \in \mathbb{Z}_{> 0}$ and $\sigma \in S(d)$, $f(\mathbf{1}_d,\sigma)$ depends only on the conjugacy class of $\sigma$, and for any $(\mathcal{A},\alpha) \in PS$:
$$
f(\mathcal{A},\alpha)=\prod_{A \in\mathcal{A}}f(\mathbf{1}_{\#A},\alpha_{|A})\,.
$$
Here $\alpha_{|A}$ is the pre-composition of the restriction of $\alpha$ to $A$ with some bijection $[\# A] \rightarrow A$, the result being independent of the choice  of that bijection due to the first property. In particular, a multiplicative function is determined by its values $f(\mathbf{1}_d,\pi_{\lambda})$ with $d \in \mathbb{Z}_{> 0}$ and $\lambda \vdash d$.
\end{definition}

The convolution of two functions $f_1,f_2 \colon PS \rightarrow R$ is defined by requiring that $(f_1 \ast f_2)_{|PS(d)} = f_1{}_{|PS(d)} \ast f_2{}_{|PS(d)}$ for all $d \in \mathbb{Z}_{\geq 0}$, and likewise for $\circledast$. Then, multiplicative functions are stable under convolution (resp. extended convolution), and  the zeta function, the extended zeta function and their corresponding M\"obius functions are multiplicative. Besides, the convolution of two multiplicative functions is commutative:
\begin{lemma}
\label{commutmult} If $f_1,f_2\colon PS \rightarrow R$ are two multiplicative functions, then
\[
f_1 \ast f_2 = f_2 \ast f_1\,,\qquad {\rm and}\qquad f_1 \circledast f_2 = f_2 \circledast f_1\,.
\]
\end{lemma}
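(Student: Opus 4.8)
The plan is to deduce both identities from a single bijection of the set of factorisations of a fixed $(\mathcal{C},\gamma) \in PS(d)$, matching the two summands term by term. The only input I would extract from multiplicativity is invariance under simultaneous relabelling: for $\sigma \in S(d)$, writing $\sigma(\mathcal{A}) \coloneqq \{\sigma(A) : A \in \mathcal{A}\}$, I claim that $f(\sigma(\mathcal{A}), \sigma\circ\alpha\circ\sigma^{-1}) = f(\mathcal{A},\alpha)$ for any multiplicative $f$. This is immediate from Definition~\ref{def:multifun}: conjugation by $\sigma$ carries the blocks of $\mathcal{A}$ bijectively onto those of $\sigma(\mathcal{A})$ and sends each restriction $\alpha_{|A}$ to a permutation of the same cycle type, so the defining product over blocks is unchanged (and $\mathbf{0}_{\sigma\alpha\sigma^{-1}} = \sigma(\mathbf{0}_\alpha) \leq \sigma(\mathcal{A})$ keeps the pair admissible).

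First I would treat $\circledast$. Fixing $(\mathcal{C},\gamma)$, I propose the map
\[
\Phi\bigl((\mathcal{A},\alpha),(\mathcal{B},\beta)\bigr) \coloneqq \bigl((\alpha(\mathcal{B}),\,\alpha\circ\beta\circ\alpha^{-1}),\,(\mathcal{A},\alpha)\bigr)
\]
defined on pairs with $(\mathcal{A},\alpha)\odot(\mathcal{B},\beta) = (\mathcal{C},\gamma)$. The key observation is that $\mathbf{0}_\alpha \leq \mathcal{A} \leq \mathcal{A}\vee\mathcal{B} = \mathcal{C}$ forces every cycle of $\alpha$ to sit inside a block of $\mathcal{A}$ and of $\mathcal{C}$, so $\alpha$ fixes each of those blocks setwise, giving $\alpha(\mathcal{A}) = \mathcal{A}$ and $\alpha(\mathcal{C}) = \mathcal{C}$. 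Since the join commutes with relabelling, I then compute $\alpha(\mathcal{B})\vee\mathcal{A} = \alpha(\mathcal{B})\vee\alpha(\mathcal{A}) = \alpha(\mathcal{A}\vee\mathcal{B}) = \mathcal{C}$ and $(\alpha\circ\beta\circ\alpha^{-1})\circ\alpha = \alpha\circ\beta = \gamma$, so $\Phi$ indeed lands in the set of $\odot$-factorisations of $(\mathcal{C},\gamma)$. It is a bijection of that set onto itself because the output records $(\mathcal{A},\alpha)$ as its second entry, from which $(\mathcal{B},\beta)$ is recovered by conjugating the first entry by $\alpha^{-1}$.

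Applying the relabelling invariance with $\sigma = \alpha$ gives $f_2(\alpha(\mathcal{B}),\alpha\circ\beta\circ\alpha^{-1}) = f_2(\mathcal{B},\beta)$, so the $(f_2\circledast f_1)$-summand attached to $\Phi\bigl((\mathcal{A},\alpha),(\mathcal{B},\beta)\bigr)$ equals $f_2(\mathcal{B},\beta)\,f_1(\mathcal{A},\alpha) = f_1(\mathcal{A},\alpha)\,f_2(\mathcal{B},\beta)$, which is the corresponding $(f_1\circledast f_2)$-summand; summing over all factorisations yields $f_1\circledast f_2 = f_2\circledast f_1$. For the ordinary convolution I would reuse the same $\Phi$: colength is conjugation-invariant, so $\Phi$ preserves $|(\mathcal{A},\alpha)|$ and $|(\mathcal{B},\beta)|$ individually, hence preserves the additivity constraint $|(\mathcal{A},\alpha)| + |(\mathcal{B},\beta)| = |(\mathcal{C},\gamma)|$ that distinguishes $\cdot$ from $\odot$. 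Thus $\Phi$ restricts to a bijection of the $\ast$-index set, and the same term-by-term matching gives $f_1\ast f_2 = f_2\ast f_1$.

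The hard part is not the bookkeeping but locating the right conjugating element, i.e.\ controlling the interaction between the join $\vee$ and conjugation. A naive swap $((\mathcal{A},\alpha),(\mathcal{B},\beta))\mapsto((\mathcal{B},\beta),(\mathcal{A},\alpha))$ fails since $\odot$ is non-commutative for $d>1$, while conjugating a single factor by an arbitrary permutation destroys the identity $\mathcal{A}\vee\mathcal{B}=\mathcal{C}$. Conjugation by $\alpha$ is exactly what reconciles the two demands: it is the one permutation that fixes $\mathcal{C}$ (preserving the join) and simultaneously rewrites $\gamma=\alpha\circ\beta$ in $\beta$-first order as $\gamma=(\alpha\circ\beta\circ\alpha^{-1})\circ\alpha$. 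Once this is isolated, checking admissibility of the image and bijectivity is routine, and the multiplicativity of $\zeta$, $\zeta_\hbar$ and their Möbius functions (noted after Definition~\ref{def:multifun}) makes the lemma directly applicable to them.
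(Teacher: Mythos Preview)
Your argument is correct and complete. The bijection $\Phi$ you build via conjugation by $\alpha$ is a genuine self-bijection of the set of $\odot$-factorisations of a \emph{fixed} $(\mathcal{C},\gamma)$, and your checks of admissibility, invertibility, and term-matching all go through; the restriction to $\ast$ is handled cleanly by conjugation-invariance of the colength.

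The paper takes a different route: instead of conjugating, it sends $\bigl((\mathcal{A},\alpha),(\mathcal{B},\beta)\bigr)$ to $\bigl((\mathcal{B},\beta^{-1}),(\mathcal{A},\alpha^{-1})\bigr)$, which is a bijection between factorisations of $(\mathcal{C},\gamma)$ and factorisations of $(\mathcal{C},\gamma^{-1})$, and then uses that a multiplicative function (and, implicitly, the convolution of two such) takes the same value on $(\mathcal{C},\gamma)$ and $(\mathcal{C},\gamma^{-1})$. The inversion trick is shorter to state but passes through a different base point $(\mathcal{C},\gamma^{-1})$ and relies on that extra identification step, which the paper leaves tacit. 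Your conjugation approach stays at the same base point throughout, making the term-by-term matching more transparent and self-contained; it also isolates more precisely which feature of multiplicativity is doing the work (relabelling invariance, rather than invariance under $\alpha\mapsto\alpha^{-1}$). Both proofs ultimately exploit the same invariance of multiplicative functions under conjugation, just applied to different permutations.
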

\begin{proof}
Let $(\mathcal{A},\alpha),(\mathcal{B},\beta),(\mathcal{C},\gamma) \in PS$ such that $(\mathcal{A},\alpha) \odot (\mathcal{B},\beta) = (\mathcal{C},\gamma)$. Then $\mathcal{C} = \mathcal{A} \vee \mathcal{B}$  and $\alpha \circ \beta = \gamma$. This can also be written $\mathcal{C} = \mathcal{B} \vee \mathcal{A}$ and $\beta^{-1} \circ \alpha^{-1} = \gamma^{-1}$. The support of cycles of a permutation and its inverse are the same: $\mathbf{0}_{\alpha} = \mathbf{0}_{\alpha^{-1}}$,  etc. Therefore $(\mathcal{A},\alpha^{-1}),(\mathcal{B},\beta^{-1}),(\mathcal{C},\gamma^{-1})$ are still partitioned permutations and $(\mathcal{B},\beta^{-1}) \odot (\mathcal{A},\alpha^{-1}) = (\mathcal{C},\gamma^{-1})$. Since $\alpha$ and $\alpha^{-1}$ are conjugated, a multiplicative function takes the same values on $(\mathcal{A},\alpha)$ and on $(\mathcal{A},\alpha^{-1})$. Then, relabelling $(\mathcal{A},\alpha)$ into $(\mathcal{B},\beta^{-1})$ and $(\mathcal{B},\beta)$ into $(\mathcal{A},\alpha^{-1})$ in the definition \eqref{extconv} of the extended convolution shows that $f \circledast g = g \circledast f$. The same reasoning works for $\ast$ as $(\mathcal{C},\gamma)$ and $(\mathcal{C},\gamma^{-1})$ have the same colength.
\end{proof}

We give the following property for later use.

\begin{lemma}
\label{lem25} Let $\phi_{1,\hbar},\phi_{2,\hbar} \colon PS \rightarrow R[\![\hbar]\!]$ be two multiplicative functions such that
$$
\forall (\mathcal{A},\alpha) \in PS,\qquad \phi_{i,\hbar}(\mathcal{A},\alpha) = \hbar^{|(\mathcal{A},\alpha)|}\phi_i(\mathcal{A},\alpha) + o(\hbar^{|(\mathcal{A},\alpha)|})\,, \qquad i = 1,2\,,
$$
where $\phi_i\colon PS \rightarrow R$. The relation~$\phi_{1,\hbar} = \zeta_{\hbar} \circledast \phi_{2,\hbar}$ implies $\phi_1 = \zeta \ast \phi_2$.
\end{lemma}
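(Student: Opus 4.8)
The plan is to extract, for each fixed $(\mathcal{C},\gamma)\in PS$, the coefficient of the lowest admissible power of $\hbar$ on both sides of the identity $\phi_{1,\hbar} = \zeta_{\hbar}\circledast\phi_{2,\hbar}$, and to recognise the resulting leading-order relation as $\phi_1 = \zeta\ast\phi_2$. The essential point is that the extended convolution $\circledast$ degenerates, at leading order in $\hbar$, to the ordinary convolution $\ast$, because the colength-additivity condition built into the product $\cdot$ is precisely the condition under which the relevant $\hbar$-orders combine minimally.

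First I would note that $\zeta_{\hbar}$ itself has a leading-order expansion of the type assumed in the hypothesis, with leading coefficient $\zeta$ and vanishing error. Indeed $\zeta_{\hbar}(\mathcal{A},\alpha) = \hbar^{|\alpha|}\zeta(\mathcal{A},\alpha)$, and wherever $\zeta(\mathcal{A},\alpha)\neq 0$ one has $\mathcal{A}=\mathbf{0}_{\alpha}$, so that the special case \eqref{spcol} gives $|\alpha| = |(\mathcal{A},\alpha)|$; hence $\zeta_{\hbar}(\mathcal{A},\alpha) = \hbar^{|(\mathcal{A},\alpha)|}\zeta(\mathcal{A},\alpha)$ for every $(\mathcal{A},\alpha)$.

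Next, fixing $(\mathcal{C},\gamma)$ and expanding the extended convolution from \eqref{extconv}, each summand reads $\zeta_{\hbar}(\mathcal{A},\alpha)\,\phi_{2,\hbar}(\mathcal{B},\beta) = \hbar^{|(\mathcal{A},\alpha)|+|(\mathcal{B},\beta)|}\,\zeta(\mathcal{A},\alpha)\,\phi_2(\mathcal{B},\beta) + o(\hbar^{|(\mathcal{A},\alpha)|+|(\mathcal{B},\beta)|})$, where $\mathcal{A}\vee\mathcal{B}=\mathcal{C}$ and $\alpha\circ\beta=\gamma$ (the sum being finite since $S(d)$ and $P(d)$ are finite). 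By the subadditivity \eqref{miniABprod}, the $\hbar$-valuation of each summand is at least $|(\mathcal{A}\vee\mathcal{B},\alpha\circ\beta)| = |(\mathcal{C},\gamma)|$, so no term can undershoot order $|(\mathcal{C},\gamma)|$. A summand contributes to the coefficient of $\hbar^{|(\mathcal{C},\gamma)|}$ exactly when $|(\mathcal{A},\alpha)|+|(\mathcal{B},\beta)| = |(\mathcal{C},\gamma)|$, and this equality is, by definition, the condition for $(\mathcal{A},\alpha)\cdot(\mathcal{B},\beta) = (\mathcal{C},\gamma)$. Collecting the corresponding coefficients $\zeta(\mathcal{A},\alpha)\,\phi_2(\mathcal{B},\beta)$ reproduces precisely $(\zeta\ast\phi_2)(\mathcal{C},\gamma)$, while on the left the coefficient of $\hbar^{|(\mathcal{C},\gamma)|}$ in $\phi_{1,\hbar}(\mathcal{C},\gamma)$ is $\phi_1(\mathcal{C},\gamma)$. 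Since $(\mathcal{C},\gamma)$ was arbitrary, this gives $\phi_1 = \zeta\ast\phi_2$.

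The only step that needs genuine care — the crux rather than a real obstacle — is this identification of the defining colength condition of $\cdot$ with minimality of the combined $\hbar$-order; the inequality \eqref{miniABprod} is exactly what forbids a lower-order contamination of the leading coefficient. I would also remark that multiplicativity of $\phi_{1,\hbar}$ and $\phi_{2,\hbar}$ is not actually used in this implication: it serves only to guarantee that the leading coefficients $\phi_1,\phi_2$ and their convolution are again multiplicative, which is how the statement will later be applied.
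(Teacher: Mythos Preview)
Your proof is correct and follows essentially the same route as the paper's: extract the coefficient of $\hbar^{|(\mathcal{C},\gamma)|}$ on both sides, use the subadditivity \eqref{miniABprod} to see that equality of colengths is exactly the condition defining the product $\cdot$, and recognise the resulting sum as $(\zeta\ast\phi_2)(\mathcal{C},\gamma)$. Your additional remark that multiplicativity is not actually used in the implication itself is correct and worth noting.
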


\begin{proof}
Let us evaluate $\phi_{1,\hbar}=\zeta_{\hbar}\circledast \phi_{2,\hbar}$ at $(\mathcal{C},\gamma) \in PS(d)$. In the left-hand side the leading term with respect to $\hbar$ is $\hbar^{|(\mathcal{C},\gamma)|}\phi_{1}(\mathcal{C},\gamma)$, by definition. Due to the definition of $\zeta_{\hbar}$, only the factorisations of the type $(\mathbf{0}_{\alpha},\alpha) \odot (\mathcal{B},\beta) = (\mathcal{C},\gamma)$ contribute to the right-hand side, and the leading term with respect to $\hbar$ is $\hbar^{|\alpha| + |(\mathcal{B},\beta)|} \zeta(\mathbf{0}_{\alpha},\alpha) \phi_{2}(\mathcal{B},\beta)$. Recalling \eqref{spcol} we have $|(\mathbf{0}_{\alpha},\alpha)| = |\alpha|$, and together with the subadditivity of colength for the extended product \eqref{miniABprod}, it shows that $|(\mathcal{C},\gamma)| \leq |\alpha| + |(\mathcal{B},\beta)|$, with equality if and only if $(\mathbf{0}_{\alpha},\alpha) \cdot (\mathcal{B},\beta) = (\mathcal{C},\gamma)$. Therefore:
\[
\phi_{1}(\mathcal{C},\gamma) = \sum\limits_{(\mathbf{0}_{\alpha},\alpha)\cdot(\mathcal{B},\beta)=(\mathcal{C},\gamma)} \phi_{2}(\mathcal{B},\beta) = (\zeta\ast \phi_{2})(\mathcal{C},\gamma)\,.
\]
\end{proof}

\subsection{Monotone Hurwitz numbers}
\label{Sec:Hurbase}
Let $r$ be a nonnegative integer. A sequence $\tau_1,\ldots,\tau_r$ of transpositions in $S(d)$ is called \emph{strictly monotone} (resp.~\emph{weakly monotone}) if $\tau_i=(a_i \; b_i)$ with $a_i<b_i$ and the sequence  $(b_i)_{i = 1}^r$ is strictly (resp.~weakly) increasing.

\begin{definition}
Let $\lambda,\nu \vdash d$. The \emph{strictly monotone Hurwitz number} $H^{<}_{r}(\lambda,\nu)$ is $\frac{1}{d!}$ times the number of tuples $(\alpha,\tau_1,\dots,\tau_r,\beta)$ of permutations in $S(d)$ such that:
\begin{itemize}
	\item $\alpha \in C_{\lambda}$ and $\beta \in C_{\nu}$;
	\item $\tau_1,\dots,\tau_r$ is a strictly monotone sequence of transpositions;
	\item $\alpha \circ \tau_1 \circ \cdots \circ \tau_r \circ \beta = \textup{id}$.
\end{itemize}
The \emph{weakly monotone Hurwitz number} $H_r^{\leq}(\lambda,\nu)$ is defined analogously\footnote{Weakly monotone Hurwitz numbers were first introduced and studied in a series of papers by Goulden, Guay-Paquet and Novak \cite{Goulden1,Goulden2,Goulden3} as coefficients in the large $N$ asymptotic expansion of the Harish-Chandra--Itzykson--Zuber (HCIZ) matrix integral over the unitary group $U(N)$.}. Summing over all the possible numbers of transpositions, we introduce the generating series
\begin{equation}
\label{monoHur}
\begin{split}
H^{<}(\lambda,\nu) & = \sum\limits_{r=0}^{d-1} \hbar^{r} \, H^{<}_{r}(\lambda,\nu) \in \mathbb{Q}[\hbar]\,, \\
H^{\leq}(\lambda,\nu) & = \sum\limits_{r \geq 0} (-\hbar)^r \,H^{\leq}_r(\lambda,\nu) \in \mathbb{Q}[\![\hbar]\!]\,.
\end{split}
\end{equation}
\end{definition} 

We recall a classical result, including a proof to be self-contained.
\begin{lemma}
\label{Hinverse} For any $d \in \mathbb{Z}_{\geq 0}$ and $\lambda,\nu \vdash d$, we have:
\begin{equation*}
\begin{split}
\sum_{\rho \vdash d} \mathfrak{z}_{\lambda}\,H^{<}(\lambda,\rho) \cdot \mathfrak{z}_{\rho} H^{\leq}(\rho,\nu) & = \delta_{\lambda,\nu}\,, \\
\sum_{\rho \vdash d} \mathfrak{z}_{\lambda}\,H^{\leq}(\lambda,\rho) \cdot \mathfrak{z}_{\rho} H^{<}(\rho,\nu) & = \delta_{\lambda,\nu}\,.
\end{split}
\end{equation*}

\end{lemma}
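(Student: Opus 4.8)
The plan is to realise both generating series as matrix elements of two mutually inverse \emph{central} operators in the group algebra $\mathbb{Q}[S(d)]$, built from the Jucys--Murphy elements $J_k \coloneqq \sum_{j < k}(j\,k)$, $k = 1,\ldots,d$. I will use two classical facts about these elements: they commute pairwise, and every symmetric polynomial in $J_1,\ldots,J_d$ is central. Setting $\Theta(\hbar) \coloneqq \prod_{k=1}^d (1 + \hbar J_k) = \sum_{m \geq 0}\hbar^m e_m(J_1,\ldots,J_d)$, the second fact makes $\Theta(\hbar)$ central in $\mathbb{Q}[S(d)][\![\hbar]\!]$. Expanding the product written from $k=1$ to $d$, a degree-$r$ monomial picks $J_{k_1}\cdots J_{k_r}$ with $k_1 < \cdots < k_r$ and, inside each factor, a transposition $(j\,k_i)$ with $j < k_i$; this reproduces exactly the strictly monotone sequences, so the coefficient of a fixed $\pi \in S(d)$ in $\Theta(\hbar)$ is $\theta(\pi) \coloneqq \sum_r \hbar^r \#\{\textup{strictly monotone } \tau_1,\ldots,\tau_r : \tau_1\cdots\tau_r = \pi\}$. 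In the same way the weakly monotone sequences are generated by $\prod_{k=1}^d (1 - \hbar J_k)^{-1}$, and after the sign substitution $\hbar \mapsto -\hbar$ dictated by \eqref{monoHur} this becomes $\prod_{k=1}^d (1 + \hbar J_k)^{-1} = \Theta(\hbar)^{-1}$; the commutativity of the $J_k$ is exactly what guarantees $\Theta(\hbar)\,\Theta(\hbar)^{-1} = \mathrm{id}$.

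Next I would record the dictionary between these operators and the Hurwitz numbers. Equip $\mathbb{Q}[S(d)]$ with the symmetric nondegenerate pairing $\langle x, y\rangle$ equal to the coefficient of $\mathrm{id}$ in $xy$, and let $K_\lambda \coloneqq \sum_{\sigma \in C_\lambda}\sigma$ denote the class sums, which form a basis of the centre and satisfy $\langle K_\lambda, K_\nu\rangle = \delta_{\lambda\nu}\,\#C_\lambda = \delta_{\lambda\nu}\,d!/z(\lambda)$. A direct computation of the coefficient of $\mathrm{id}$ in $K_\lambda\,\Theta(\hbar)\,K_\nu$, using $\alpha\,\pi\,\beta = \mathrm{id} \Leftrightarrow \pi = \alpha^{-1}\beta^{-1}$, gives
\[
\langle K_\lambda\,\Theta(\hbar),\, K_\nu\rangle = \sum_{\alpha \in C_\lambda,\ \beta \in C_\nu} \theta(\alpha^{-1}\beta^{-1}) = d!\,H^{<}(\lambda,\nu)\,,
\]
and identically $\langle K_\lambda\,\Theta(\hbar)^{-1}, K_\nu\rangle = d!\,H^{\leq}(\lambda,\nu)$. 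From $\langle K_\lambda, K_\nu\rangle = \delta_{\lambda\nu}d!/z(\lambda)$ one reads off the completeness relation $Y = \sum_{\rho \vdash d} \tfrac{z(\rho)}{d!}\,\langle Y, K_\rho\rangle\,K_\rho$, valid for every central $Y$.

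Finally I would combine the two ingredients. Writing $z(\lambda)H^{<}(\lambda,\rho) = \tfrac{z(\lambda)}{d!}\langle K_\lambda\Theta, K_\rho\rangle$ and $z(\rho)H^{\leq}(\rho,\nu) = \tfrac{z(\rho)}{d!}\langle K_\rho\Theta^{-1}, K_\nu\rangle$, applying the completeness relation to the central element $Y = K_\lambda\,\Theta(\hbar)$ collapses the sum over $\rho$:
\[
\sum_{\rho \vdash d} z(\lambda)H^{<}(\lambda,\rho)\,z(\rho)H^{\leq}(\rho,\nu) = \frac{z(\lambda)}{d!}\,\langle K_\lambda\,\Theta(\hbar)\,\Theta(\hbar)^{-1},\, K_\nu\rangle = \frac{z(\lambda)}{d!}\,\langle K_\lambda, K_\nu\rangle = \delta_{\lambda\nu}\,.
\]
The second identity is obtained verbatim after exchanging the roles of $\Theta(\hbar)$ and $\Theta(\hbar)^{-1}$, using $\Theta(\hbar)^{-1}\Theta(\hbar) = \mathrm{id}$.

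The routine bookkeeping (tracking inverses in the pairing, the $1/d!$ normalisations, and the sign convention) is harmless; the only genuine input is the pair of classical facts about Jucys--Murphy elements, namely their commutativity and the centrality of their symmetric functions, together with the combinatorial identification of $\Theta(\hbar)$ and $\Theta(\hbar)^{-1}$ with the strictly and weakly monotone generating series. I expect the main obstacle to be giving (or citing) a clean self-contained justification of this identification --- in particular making precise that reading the ordered product of Jucys--Murphy factors by increasing largest index is a bijection onto monotone transposition sequences --- since everything else is formal manipulation in the centre of $\mathbb{Q}[S(d)]$.
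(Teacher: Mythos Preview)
Your proposal is correct and follows essentially the same route as the paper: both realise $H^{<}$ and $H^{\leq}$ as matrix elements of the central operators $\prod_k(1+\hbar J_k)$ and its inverse built from Jucys--Murphy elements, and then collapse the $\rho$-sum by expanding in class sums (your ``completeness relation'' is exactly the paper's step of writing a central element in the $C_\rho$ basis). The only cosmetic difference is that you phrase things via the bilinear pairing $\langle x,y\rangle = [\mathrm{id}]\,xy$, while the paper extracts coefficients directly; the mathematical content is identical.
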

\begin{proof}
This relation appears when we present monotone Hurwitz numbers using the Jucys--Murphy elements. These are the elements of the group algebra $\mathbb{Q}S(d)$ defined by $J_k = \sum_{i = 1}^{k - 1} (i\,k)$. They have the property \cite{Jucys,Murphy} that symmetric polynomials in the $(J_k)_{k = 2}^d$ belong to the center of $\mathbb{Q}[S(d)]$. In particular, we can use them in the $r$-th elementary symmetric and complete symmetric polynomials:
\[
\mathsf{e}_r(x_2,\ldots,x_d) \coloneqq \sum_{2 \leq k_1 < \ldots < k_r \leq d} x_{k_1}\cdots x_{k_r}\,,\qquad \mathsf{h}_r(x_2,\ldots,x_d) \coloneqq \sum_{2 \leq k_1 \leq \cdots \leq k_r \leq d} x_{k_1} \cdots x_{k_r}\,.
\]
By construction,
\[
H_{r}^{<}(\lambda,\nu) = \frac{1}{d!}\cdot [{\rm id}] \,\,C_{\lambda}C_{\nu}\,\mathsf{e}_r(J_2,\ldots,J_d)\,, \qquad H_{r}^{\leq}(\lambda,\nu) = \frac{1}{d!}\cdot [{\rm id}] \,\,C_{\lambda}C_{\nu}\,\mathsf{h}_r(J_2,\ldots,J_d)\,,
\]
where the operation $[{\rm id}]$ stands for the extraction of the coefficient of ${\rm id}$ in the group algebra, and $C_{\lambda}$ is here used to denote the element of the group algebra $\sum_{\sigma \in C_{\lambda}} \sigma$. For the generating series \eqref{monoHur} over $r$, this gives:
\begin{equation}
\label{Hjuc}\begin{split}
H^{<}(\lambda,\nu) & = \frac{1}{d!} \cdot [{\rm id}] \,\,C_{\lambda} C_{\nu} \prod_{k = 2}^d (1 + \hbar J_k) \,,\\
H^{\leq}(\lambda,\nu)& = \frac{1}{d!} \cdot [{\rm id}]\,\,C_{\lambda} C_{\nu} \frac{1}{\prod_{k = 2}^d (1 + \hbar J_k)}\,.
\end{split}
\end{equation}
In general, if $B$ is in the center of $\mathbb{Q}[S(d)]$, we have
\[
\frac{1}{d!}\cdot [{\rm id}]\,\,C_{\lambda}C_{\nu} B  = [C_{\lambda}]\,\, \frac{C_{\nu}B}{\mathfrak{z}_{\lambda}} = [C_{\nu}]\,\,  \frac{C_{\lambda}B}{\mathfrak{z}_{\nu}}\,,
\]
where we recall $\mathfrak{z}_{\lambda} = \frac{d!}{\# C_{\lambda}}$. We use this relation to compute for any $\lambda,\nu \vdash d$:
\begin{equation*}
\begin{split}
\delta_{\lambda,\nu} & = \frac{\mathfrak{z}_{\lambda}}{d!}\cdot [{\rm id}] \,\,C_{\lambda}C_{\nu} = \frac{\mathfrak{z}_{\lambda}}{d!}\cdot  [{\rm id}] \,\,C_{\lambda} \prod_{k = 2}^{d} (1 + \hbar J_k)  \cdot C_{\nu} \frac{1}{\prod_{k = 2}^d (1 + \hbar J_k)} \\
& = \frac{\mathfrak{z}_{\lambda}}{d!}\cdot [{\rm id}]\bigg(\sum_{\rho,\rho' \vdash d} H^{<}(\lambda,\rho)\,\mathfrak{z}_{\rho} C_{\rho} \cdot H^{\leq }(\rho',\nu) \mathfrak{z}_{\rho'} C_{\rho'}\bigg) \\
& = \sum_{\rho \vdash d} \mathfrak{z}_{\lambda} H^{<}(\lambda,\rho) \cdot \mathfrak{z}_{\rho} H^{\leq}(\rho,\nu)\,.
\end{split}
\end{equation*} 
The second relation is proved similarly. 
\end{proof}

An alternative description of strictly monotone Hurwitz numbers will later come handy.
\begin{definition}[See \textit{e.g.}~\cite{ALS16}]\label{def:free:single}
Given $\lambda,\nu \vdash d$, the \emph{free single Hurwitz number} $H^{|}_{r}(\lambda,\nu)$ is $\frac{1}{d!}$ times the number of triples $(\alpha,\psi,\beta)$ of permutations of $[d]$ such that
    \begin{itemize}
        \item $\alpha \in C_{\lambda}$ and $\beta \in C_{\nu}$;
        \item $\psi \in S(d)$ has colength $r$;
        \item $\alpha \circ \psi \circ \beta =\textup{id}$.
    \end{itemize}
In other words, in the group algebra we have 
\begin{equation}
H^|_r(\lambda,\nu)=\frac{1}{d!}\cdot [{\rm id}] \,\,C_{\lambda}C_{\nu} \sum_{\substack{\rho\, \vdash\, d \\ \ell(\rho) = d - r}}C_{\rho}\,,
\end{equation}
where the summation index $\rho$ is the partition associated to $\psi$.
\end{definition} 
The free single Hurwitz number is directly related to the weighted count $D_{r}(\lambda,\nu)$ of (possibly disconnected) dessins d'enfants having $r$ more edges than there are vertices, namely $D_r(\lambda,\nu) = \mathfrak{z}_{\lambda} H^{|}_{r}(\lambda,\nu)$, see \cite[Definition 2.4]{borot2019relating}. We also recall that dessins d'enfants is another name for bipartite maps (again, up to symmetry considerations which in this case amount to factors of $2$ in the weighted count). The Harnad--Orlov correspondence~\cite{HarnadOrlov,HarnadG} (see also \cite[Proposition 4.4]{ALS16}), or the more elementary observation that  every permutation can be uniquely expressed as the product of transpositions along a strictly monotone sequence (see \textit{e.g.}~\cite[Lemma 2.5]{borot2019relating}), yields the following equality.
\begin{lemma}\label{lem:Harnad:Orlov}
We have $H^{<}_r(\lambda,\nu) = H^{|}_r(\lambda,\nu)$.
\end{lemma}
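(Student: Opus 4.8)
The plan is to match, term by term, the tuples counted by $H^{<}_r(\lambda,\nu)$ and $H^{|}_r(\lambda,\nu)$ via an explicit bijection. Both Hurwitz numbers are $\tfrac{1}{d!}$ times a count of tuples whose first entry lies in $C_{\lambda}$, whose last entry lies in $C_{\nu}$, and which compose to $\textup{id}$; they differ only in the middle factor. For $H^{<}_r$ the middle is a strictly monotone sequence $(\tau_1,\ldots,\tau_r)$ of transpositions, while for $H^{|}_r$ it is a single permutation $\psi$ of colength $r$. I would define the map $(\alpha,\tau_1,\ldots,\tau_r,\beta)\mapsto(\alpha,\psi,\beta)$ with $\psi=\tau_1\circ\cdots\circ\tau_r$, and observe that it automatically preserves the relation $\alpha\circ(\text{middle})\circ\beta=\textup{id}$ and the constraints $\alpha\in C_{\lambda}$, $\beta\in C_{\nu}$, since those involve only the unchanged end factors.

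All the content is thereby concentrated in the claim that taking products realizes, for each $r$, a bijection
\[
\{\text{strictly monotone sequences of } r \text{ transpositions in } S(d)\} \longleftrightarrow \{\psi \in S(d) : |\psi| = r\}.
\]
This is precisely the statement that every $\psi\in S(d)$ admits a \emph{unique} factorization $\psi=\tau_1\circ\cdots\circ\tau_r$ into a strictly monotone sequence of transpositions, and that the length $r$ of this sequence equals the colength $|\psi|$. I would invoke this as the key input, citing \cite[Lemma 2.5]{borot2019relating}. Equivalently, and more in the spirit of the surrounding text, one can argue in $\mathbb{Q}S(d)$: the proof of Lemma~\ref{Hinverse} already records $H^{<}_r(\lambda,\nu)=\tfrac{1}{d!}[\textup{id}]\,C_{\lambda}C_{\nu}\,\mathsf{e}_r(J_2,\ldots,J_d)$, while Definition~\ref{def:free:single} gives $H^{|}_r(\lambda,\nu)=\tfrac{1}{d!}[\textup{id}]\,C_{\lambda}C_{\nu}\sum_{|\psi|=r}\psi$, so the lemma reduces to the single identity
\[
\mathsf{e}_r(J_2,\ldots,J_d) = \sum_{\psi \in S(d):\,|\psi| = r} \psi .
\]
Expanding $\mathsf{e}_r$ and each $J_k=\sum_{i<k}(i\,k)$, the monomials $(i_1\,k_1)\cdots(i_r\,k_r)$ that appear are indexed exactly by strictly monotone transposition sequences, so this identity is again equivalent to unique monotone factorization.

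The main obstacle is thus establishing that factorization statement: that a strictly monotone product of $r$ transpositions has colength exactly $r$ (no cancellation occurs), and that every permutation is obtained exactly once. I expect to handle this by induction on $d$. In any strictly monotone factorization the largest index $b_r$ satisfies $a_i<b_i\le b_r$, so $b_r$ occurs only in the final transposition $\tau_r$; this lets one peel off the action on the top element and reduce to $S(d-1)$, while the colength bookkeeping $|(a\,b)\circ\sigma|=|\sigma|\pm 1$ (with sign governed by whether $a,b$ share a cycle of $\sigma$) forces $r=|\psi|$ and rules out cancellation. Since this is classical and already available in the cited references, I would keep the recursion brief and defer the routine verification to the literature, after which the equality $H^{<}_r(\lambda,\nu)=H^{|}_r(\lambda,\nu)$ is immediate.
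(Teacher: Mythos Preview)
Your proposal is correct and follows exactly the route the paper indicates: the paper does not give a proof but simply cites the Harnad--Orlov correspondence and the elementary fact that every permutation has a unique strictly monotone transposition factorization (referring to \cite[Lemma~2.5]{borot2019relating}), which is precisely the bijection you set up and reduce to. Your additional reformulation via $\mathsf{e}_r(J_2,\ldots,J_d)=\sum_{|\psi|=r}\psi$ is a useful gloss but not needed beyond what the paper already sketches.
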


\subsection{Fock space preliminary}
\label{Sec:TransZ}

We introduce the ring of formal series in countably many variables (aka the bosonic Fock space)
$$
\mathcal{F}_{R} \coloneqq \lim_{\substack{\longleftarrow \\ d \in \mathbb{Z}_{\geq 0}}} R[\![p_1,\ldots,p_d]\!]\,,\qquad \mathcal{F}_{R,\hbar} \coloneqq \mathcal{F}_{R} \otimes \mathbb{Q}(\!(\hbar)\!)\,.
$$
We introduce the vector $\ket \coloneqq 1 \in \mathcal{F}_{R,\hbar}$ and the linear form $\bra \colon \mathcal{F}_{R,\hbar} \rightarrow R \otimes \mathbb{Q}(\!(\hbar)\!)$ which extracts the constant term. As $R \otimes \mathbb{Q}(\!(\hbar)\!)$-module, $\mathcal{F}_{R,\hbar}$ admits the Schur (Schauder) basis $s_{\lambda}$, indexed by $\lambda \vdash d$ and $d \in \mathbb{Z}_{\geq 0}$. We say that $(i,j)  \in \mathbb{Z}_{> 0}^2$ belongs to $\lambda$ when $i \leq \ell(\lambda)$ and $j \leq \lambda_i$.

\begin{definition}
We define a linear operator $\mathsf{D} \in {\rm End}(\mathcal{F}_{R,\hbar})$ by the formula
\begin{equation}
\mathsf{D}\,s_{\lambda} = \prod_{(i,j) \in \lambda} (1 + \hbar(j - i))\, s_{\lambda}\,.
\end{equation}
Note that $\mathsf{D}$ has a logarithm, and $\ln \mathsf{D} \ket = 0$ as well as $\bra \ln \mathsf{D} = 0$.
\end{definition}

There is an alternative description of the operator $\mathsf{D}$ which is relevant for us in relation with Hurwitz numbers. The bosonic Fock space enters in the study of Hurwitz numbers via the isomorphism of $R$-modules 
\begin{equation}
{\rm ch}\,:\,\begin{array}{ccc}
\prod_{d \geq 0} \mathsf{Z}R[S(d)] \otimes \mathbb{Q}(\!(\hbar)\!) & \longrightarrow & \mathcal{F}_{R,\hbar} \\
C_{\lambda} & \longmapsto & p_{\lambda}/\mathfrak{z}_{\lambda} \end{array},
\end{equation}
where the prefix $\mathsf{Z}$ indicates that we take the center of the group ring. $\mathcal{F}_{R,\hbar}$ acts on the left-hand side (and thus on itself by transporting with ${\rm ch}$) via evaluation on Jucys--Murphy elements and multiplication in the center of the group ring. Then:
\begin{lemma}
\label{chDch} The operator ${\rm ch}^{-1} \circ \mathsf{D} \circ {\rm ch}$ on the center of the $S(d)$-group ring coincides with multiplication by $\prod_{k = 2}^{d} (1 + \hbar J_k)$.
\end{lemma}
This could equally well be taken as definition for $\mathsf{D}$. We already met this combination of Juycs--Murphy elements in the proof of Lemma~\ref{Hinverse}, and the relevance of this operator for us will become clear in Theorem~\ref{thm:hbarstarproduct}. Lemma~\ref{chDch} is a direct consequence of the Harnad--Orlov correspondence \cite{HarnadOrlov,HarnadG}, which is itself based based on the classical results of Jucys \cite{Jucys}.  Other formulas for $\mathsf{D}$ are given in \cite[Section 5]{ALS16}.  

In the last decade, the full action of $\mathcal{F}_{R,\hbar}$ on itself has been the starting point for the study of a large class of Hurwitz numbers, but in this article only the operator $\mathsf{D}$ and its inverse will be needed.


\subsection{Topological partition functions}
\label{Sec24}
Let $\mathcal{F}_{R}^0 \subset \mathcal{F}_R$ be the subspace of elements without constant term (i.e.~in the kernel of $\bra$), and likewise $\mathcal{F}_{R,\hbar}^0 \subset \mathcal{F}_{R,\hbar}$. Clearly, if $F \in \mathcal{F}_{R,\hbar}^0$, then $e^{F} \in \mathcal{F}_{R,\hbar}$ is well-defined. If $\lambda \vdash d$, we define $p_{\lambda} = p_{\lambda_1}\cdots p_{\lambda_{\ell}}$. If $\sigma$ is a permutation we also denote $p_{\sigma} \coloneqq p_{\lambda(\sigma)}$. Last, given $F \in \mathcal{F}_{R,\hbar}$, we denote $[p_{\sigma}]\,\, F$ for the coefficient of $p_{\sigma}$ in $F$, with the convention that $p_{\emptyset} = 1$ and $[p_{\emptyset}] \,\,F = \bra F$. Recall the definition
$$
\mathfrak{z}_{\lambda}\coloneqq\prod_{i=1}^{\ell(\lambda)}\, \lambda_i \, \prod_{j\geq 1} m_j(\lambda)!,
$$
where $m_j(\lambda)$ is the number of occurrences of $j$ in $\lambda$.

\begin{definition}
\label{def:phi:Z}
A \emph{topological partition function} is an element of $\mathcal{F}_{R,\hbar}$ of the form $Z = e^{F}$, where
$$
F = \bigg(\sum_{\substack{g \in \mathbb{Z}_{\geq 0} \\ n \in \mathbb{Z}_{> 0}}} \hbar^{2g - 2 + n}F_{g,n}\bigg) \in \mathcal{F}_{R,\hbar}^0
$$
and $F_{g,n} \in \mathcal{F}_{R}^0$. To a topological partition function $Z = e^{F}$, we associate the unique multiplicative function $\Phi_{Z,\hbar} \colon PS \rightarrow R[\![\hbar]\!]$ such that
\begin{equation}
\label{Fzzz} F = \sum_{\substack{d \geq 1 \\ \lambda \vdash d}} \hbar^{-d} \Phi_{Z,\hbar}(\mathbf{1}_d,\pi_{\lambda})\,\frac{p_{\lambda}}{\mathfrak{z}_{\lambda}}\,.
\end{equation}
\end{definition}
In other words, for $(\mathcal{A},\alpha) \in PS(d)$:
\begin{equation}
\label{eq1comp}\begin{split}
\Phi_{Z,\hbar}(\mathcal{A},\alpha) & \coloneqq  \hbar^{d} \prod\limits_{A\in\mathcal{A}}  [p_{\alpha_{|A}}]\,\,\mathfrak{z}_{\alpha_{|A}}F  \\
& = \hbar^{|(\mathcal{A},\alpha)|}\sum\limits_{g \colon \mathcal{A} \to \mathbb{Z}_{\geq 0}} \hbar^{2\sum_{A \in \mathcal{A}} g(A)} \prod_{A \in \mathcal{A}} [p_{\alpha_{|A}}]\,\,\mathfrak{z}_{\alpha_{|A}} F_{g(A),\ell(\alpha_{|A})} \\
& \coloneqq \hbar^{|(\mathcal{A},\alpha)|} \sum\limits_{g \colon \mathcal{A} \to \mathbb{Z}_{\geq 0}} \hbar^{2\sum_{A \in \mathcal{A}} g(A)}\,\Phi^{[g]}_Z(\mathcal{A},\alpha)\,.
\end{split}
\end{equation}
Here in the second and the third formulae the sum runs over all possible assignments of non-negatives integers $g(A)$ to $A\in\mathcal{A}$. The multiplicative function completely determines the partition function and vice versa. If one prefers to use $Z = e^{F}$ instead of $F$, one decomposes
\begin{equation}
\label{Z1plus}
Z = 1 + \sum_{\substack{ d \geq 1 \\ \lambda \vdash d}} \hbar^{-d} Z(\lambda)\frac{p_{\lambda}}{\mathfrak{z}_{\lambda}}\,,
\end{equation}
and the relation between the coefficients $Z(\lambda)$ and the multiplicative function is
\begin{equation}
\label{eq:Z:phi}
Z(\lambda) = \sum_{\substack{\mathcal{A} \in P(d) \\ \mathbf{0}_{\lambda} \leq \mathcal{A}}} \Phi_{Z,\hbar}(\mathcal{A},\pi_{\lambda})\,.
\end{equation}
The $1$ in \eqref{Z1plus} may be absorbed by allowing the empty partition in the sum while setting $Z(\emptyset) = 1$.

In Remark~\ref{RemMMPart} we explain what topological partition functions correspond to in applications to random matrix theory. 

These definitions involve conventions regarding powers of $\hbar$ and prefactors $\mathfrak{z}$, which are motivated by the following guiding principles from analytic combinatorics. First, the definition of topological partition functions via its coefficients should always include the right symmetry factor. The reason to include $\mathfrak{z}$ in \eqref{Fzzz} the following identity
$$
\sum_{\substack{n \geq 1 \\k_1,\ldots,k_n \geq 1}} \frac{\Phi_{Z,\hbar}(\mathbf{1}_{k_1 + \cdots + k_n},\pi_{\lambda(\mathbf{k})})}{n!\,k_1 \cdots k_n}\,p_{k_1} \cdots p_{k_n}  = \sum_{\substack{d \geq 1 \\ \lambda \vdash d}} \Phi_{Z,\hbar}(\mathbf{1}_d,\pi_{\lambda})\,\frac{p_{\lambda}}{\mathfrak{z}_{\lambda}}\,.
$$
The ``right'' symmetry factor in the left-hand side is the number of ways to label $n$ cycles of respective order $k_1,\ldots,k_n$ as well as their element respecting the cyclic order. Second, the power of $\hbar$ in topological partition functions should control the ``Euler characteristic''; the latter being additive under disjoint union, this is compatible with an interpretation of  $F$ as generating series of connected objects, while $Z = e^{F}$ generates disconnected objects. So, we want the contribution of type $(g,n)$ (``genus $g$, $n$ cycles'') to appear in $F$ with a factor $\hbar^{2g - 2 + n}$. For the multiplicative function the power of $\hbar$ is dictated by Lemma~\ref{lem25}: we want $\Phi_{Z,\hbar}(\mathcal{A},\alpha)$ to have leading order $\hbar^{|(\mathcal{A},\alpha)|}$. In particular, due to $|(\mathbf{1}_d,\pi_{\lambda})| = d - 2 + \ell(\lambda)$ and \eqref{eq1comp}, the $(g,n)$-part contributes to $\Phi_{Z,\hbar}(\mathbf{1}_d,\pi_{\lambda})$ with an $\hbar^{2g - 2 + \ell(\lambda) + d}$. To respect the principle, we killed the $\hbar^{d}$ in \eqref{eq1comp} to define $F$ in \eqref{Fzzz}.

\subsection{The master relation and its avatars}

We say that two topological partition functions $Z_1,Z_2$ satisfy the master relation if  $Z_1 = \mathsf{D}Z_2$. The key observation for us is that the master relation expresses nothing but the convolution by the zeta function for the associated multiplicative functions. We shall prove this by connecting both to monotone Hurwitz numbers.

\begin{theorem}\label{thm:hbarstarproduct} Consider two topological partition functions $Z_1,Z_2$ and $d \in \mathbb{Z}_{> 0}$. The following four properties are equivalent:
\begin{itemize}
\item[(i)] $Z_1(\lambda) = \mathfrak{z}_{\lambda}\sum_{\nu\vdash d} H^{<}(\lambda,\nu) Z_2(\nu)$ holds for any $\lambda \vdash d$;
\item[(ii)] $\Phi_{Z_1,\hbar} = \zeta_{\hbar} \circledast \Phi_{Z_2,\hbar}$ holds between functions on $PS(d)$;
\item[(iii)] $Z_2(\nu) = \mathfrak{z}_{\nu} \sum_{\lambda \vdash d} H^{\leq}(\nu,\lambda) Z_1(\lambda)$ holds for any $\nu \vdash d$;
\item[(iv)] $\Phi_{Z_2,\hbar} = \mu_{\hbar} \circledast \Phi_{Z_1,\hbar}$ holds between functions on $PS(d)$.
\end{itemize} 
Besides, the property $Z_1 = \mathsf{D}Z_2$ is equivalent to any of these conditions simultaneously for all $d > 0$.
\end{theorem}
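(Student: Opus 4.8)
The plan is to treat the four statements as two mutually-inverse pairs bridged by a single combinatorial computation, and to read the $\mathsf{D}$-statement off the group-algebra picture at the end.

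First I would dispose of the two ``duality'' equivalences. The equivalence $(i)\Leftrightarrow(iii)$ is pure linear algebra: for fixed $d$ both relations express the finite vectors $(Z_1(\lambda))_{\lambda\vdash d}$ and $(Z_2(\nu))_{\nu\vdash d}$ through one another by the square matrices $\bigl(z(\lambda)H^{<}(\lambda,\nu)\bigr)$ and $\bigl(z(\nu)H^{\leq}(\nu,\lambda)\bigr)$, which Lemma~\ref{Hinverse} shows are mutually inverse. Likewise $(ii)\Leftrightarrow(iv)$ is immediate from the definition of $\mu_{\hbar}$ as the $\circledast$-inverse of $\zeta_{\hbar}$: since $\Phi_{Z_1,\hbar},\Phi_{Z_2,\hbar},\zeta_{\hbar},\mu_{\hbar}$ are all multiplicative, Lemma~\ref{commutmult} lets me $\circledast$-convolve $(ii)$ with $\mu_{\hbar}$ on either side to produce $(iv)$, and conversely. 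It therefore remains to prove $(i)\Leftrightarrow(ii)$.

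The backbone of that step is the observation that a multiplicative function $\Phi$ is determined by, and can be recovered from, the sequence $C_{\Phi}(\lambda):=\sum_{\mathcal{A}\geq\mathbf{0}_{\lambda}}\Phi(\mathcal{A},\pi_{\lambda})$ (for $\Phi=\Phi_{Z,\hbar}$ this is exactly $Z(\lambda)$ by \eqref{eq:Z:phi}): indeed multiplicativity reduces $\Phi$ to its values on the $(\mathbf{1}_d,\pi_{\mu})$, and the passage to $C_{\Phi}$ is the exponential-formula sum over refinements of $\mathbf{0}_{\lambda}$, which is invertible by M\"obius inversion on the partition lattice. Granting this, $(i)\Leftrightarrow(ii)$ follows from the single identity
\begin{equation*}
C_{\zeta_{\hbar}\circledast\Phi_{Z_2,\hbar}}(\lambda)=z(\lambda)\sum_{\nu\vdash d}H^{<}(\lambda,\nu)\,Z_2(\nu)\,.
\end{equation*}
To prove it I would insert $\zeta_{\hbar}\circledast\Phi_{Z_2,\hbar}$ into $C_{(-)}(\lambda)$ and use that $\zeta_{\hbar}(\mathcal{A}',\alpha)$ is supported on $\mathcal{A}'=\mathbf{0}_{\alpha}$ with value $\hbar^{|\alpha|}$, so that only factorisations $(\mathbf{0}_{\alpha},\alpha)\odot(\mathcal{B},\beta)=(\mathcal{A},\pi_{\lambda})$ contribute. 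Since $\alpha\circ\beta=\pi_{\lambda}$ forces $\mathbf{0}_{\lambda}\leq\mathbf{0}_{\alpha}\vee\mathbf{0}_{\beta}\leq\mathbf{0}_{\alpha}\vee\mathcal{B}=\mathcal{A}$, the outer sum over $\mathcal{A}\geq\mathbf{0}_{\lambda}$ is unconstrained and collapses; eliminating $\alpha=\pi_{\lambda}\beta^{-1}$ and summing $\mathcal{B}$ over all refinements of $\mathbf{0}_{\beta}$ (which returns $Z_2(\lambda(\beta))$ by multiplicativity and conjugation-invariance of $\Phi_{Z_2,\hbar}$) leaves $\sum_{\beta\in S(d)}\hbar^{|\pi_{\lambda}\beta^{-1}|}Z_2(\lambda(\beta))$. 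Grouping $\beta$ by cycle type reduces everything to $\sum_{\beta\in C_{\nu}}\hbar^{|\pi_{\lambda}\beta^{-1}|}=z(\lambda)H^{<}(\lambda,\nu)$; fixing the first permutation to a representative of $C_{\lambda}$ in the free single Hurwitz count (Definition~\ref{def:free:single}) and restoring the average over the class identifies the left-hand side with $z(\lambda)H^{|}(\lambda,\nu)$ via inversion- and conjugation-invariance of the colength, whereupon Lemma~\ref{lem:Harnad:Orlov} turns $H^{|}$ into $H^{<}$. This identity is the computational heart of the theorem and the step I expect to be most delicate, precisely in checking that the $\mathcal{A}$-sum decouples and that the free single Hurwitz count reproduces the strictly monotone generating series with the correct symmetry factor $z(\lambda)$.

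Finally, for the equivalence with $Z_1=\mathsf{D}Z_2$ I would pass to the identification of the degree-$d$ part of $\mathcal{F}_R$ with the centre of $\mathbb{Q}S(d)$ recalled at the end of Section~\ref{Sec:TransZ}, under which $\mathsf{D}$ becomes multiplication by $\prod_{k=2}^{d}(1+\hbar J_k)$. The identity extracted in the proof of Lemma~\ref{Hinverse}, namely $z(\lambda)H^{<}(\lambda,\nu)=[C_{\lambda}]\bigl(C_{\nu}\prod_{k=2}^{d}(1+\hbar J_k)\bigr)$, then realises the matrix $\bigl(z(\lambda)H^{<}(\lambda,\nu)\bigr)$ governing $(i)$ as the matrix, in the class-sum basis, of this central multiplication. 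I would use this to show that $(i)$ for a given $d$ is the degree-$d$ component of $Z_1=\mathsf{D}Z_2$, so that requiring $(i)$ for all $d>0$ amounts to the single Fock-space identity $Z_1=\mathsf{D}Z_2$. The point deserving genuine care here is the $\hbar$-grading carried by the identification: one must verify that the weight $\hbar^{d+\ell(\lambda)}$ in the definition of $Z(\lambda)$ is exactly the factor aligning the two pictures, matching the $\hbar$-powers of $\prod_{k=2}^{d}(1+\hbar J_k)$ with those of the strictly monotone generating series.
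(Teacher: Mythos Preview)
Your proposal is correct and follows essentially the same route as the paper: the paper also proves $(ii)\Rightarrow(i)$ by expanding $Z_1(\lambda)$ via \eqref{eq:Z:phi}, using the support of $\zeta_{\hbar}$ to restrict to factorisations $(\mathbf{0}_{\alpha},\alpha)\odot(\mathcal{B},\beta)=(\mathcal{C},\pi_{\lambda})$, collapsing the $\mathcal{C}$-sum, regrouping by the cycle type $\nu$ of $\beta$, recognising the free single Hurwitz numbers and invoking Lemma~\ref{lem:Harnad:Orlov}, then handling the remaining equivalences exactly as you do; the only minor slip is that you twice say ``refinements'' where you mean coarsenings (the sums are over $\mathcal{A}\geq\mathbf{0}_{\lambda}$ and $\mathcal{B}\geq\mathbf{0}_{\beta}$). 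The paper merely cites the $\mathsf{D}$-equivalence to~\cite[Lemma~3.1]{BDBKS}, whereas you sketch the group-algebra verification, which is a fine elaboration of the same fact.
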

\begin{corollary}
\label{thm:starproduct}\label{thm:dual:starproduct} Let $d > 0$. If one of the four conditions above holds, then the relations $\Phi_{Z_1}^{[0]} = \zeta \ast \Phi_{Z_2}^{[0]}$ and $\Phi_{Z_2}^{[0]} = \mu \ast \Phi_{Z_1}^{[0]}$ between the genus $0$ parts of the functions on $PS(d)$ hold.
\end{corollary}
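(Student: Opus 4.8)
The plan is to reduce the whole statement to Lemma~\ref{lem25}, whose hypotheses are tailored exactly to this situation, followed by a one-line algebraic inversion. First I would invoke Theorem~\ref{thm:hbarstarproduct}: since its four conditions are equivalent, assuming any one of them grants in particular property $(ii)$, namely $\Phi_{Z_1,\hbar} = \zeta_{\hbar} \circledast \Phi_{Z_2,\hbar}$ as functions on $PS(d)$. The functions $\Phi_{Z_1,\hbar}$ and $\Phi_{Z_2,\hbar}$ are multiplicative by construction (the discussion following \eqref{eq1comp}), so the only thing left to verify before applying Lemma~\ref{lem25} is their leading $\hbar$-adic behaviour.

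That behaviour can be read off directly from \eqref{eq1comp}: for $(\mathcal{A},\alpha) \in PS(d)$ one has $\Phi_{Z_i,\hbar}(\mathcal{A},\alpha) = \hbar^{|(\mathcal{A},\alpha)|}\sum_{g \geq 0} \hbar^{2g}\,\Phi_{Z_i}^{[g]}(\mathcal{A},\alpha)$, whose lowest-order term in $\hbar$ is $\hbar^{|(\mathcal{A},\alpha)|}\Phi_{Z_i}^{[0]}(\mathcal{A},\alpha)$, attained when the genus function $g\colon \mathcal{A} \to \mathbb{Z}_{\geq 0}$ vanishes identically. Hence $\Phi_{Z_i,\hbar}$ satisfies the hypothesis of Lemma~\ref{lem25} with $\phi_i = \Phi_{Z_i}^{[0]}$. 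Applying the lemma to $\phi_{1,\hbar} = \Phi_{Z_1,\hbar}$ and $\phi_{2,\hbar} = \Phi_{Z_2,\hbar}$ then yields the first relation $\Phi_{Z_1}^{[0]} = \zeta \ast \Phi_{Z_2}^{[0]}$.

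For the second relation I would not invoke the lemma again but simply invert. Since $\zeta$ and $\mu$ are mutually inverse for the associative convolution $\ast$, with $\mu \ast \zeta = \delta$ and $\delta$ the unit for $\ast$ (immediate from the definitions of $\cdot$ and \eqref{extconv}), convolving the first relation on the left by $\mu$ gives $\mu \ast \Phi_{Z_1}^{[0]} = (\mu \ast \zeta) \ast \Phi_{Z_2}^{[0]} = \delta \ast \Phi_{Z_2}^{[0]} = \Phi_{Z_2}^{[0]}$, which is exactly $\Phi_{Z_2}^{[0]} = \mu \ast \Phi_{Z_1}^{[0]}$.

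The only genuinely delicate point is matching the two formalisms: one must check that the leading coefficient extracted in \eqref{eq1comp}, the $R$-valued function $\Phi_{Z_i}^{[0]}$, is precisely the object entering the classical (unextended) convolution identity and that it is itself multiplicative. This is immediate, since $\Phi_{Z_i}^{[0]}(\mathcal{A},\alpha) = \prod_{A \in \mathcal{A}}[p_{\alpha_{|A}}]\,F_{0}$ factorises over the blocks of $\mathcal{A}$ and depends only on the cycle types of the restrictions $\alpha_{|A}$. Everything else is a direct citation of Lemma~\ref{lem25}, Theorem~\ref{thm:hbarstarproduct}, and the elementary algebra of $\ast$, so I expect no real obstacle beyond this bookkeeping.
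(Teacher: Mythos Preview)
Your proposal is correct and follows essentially the same approach as the paper, which dispatches the corollary in one sentence by citing Lemma~\ref{lem25} and the properties of the M\"obius function for $\ast$. Your write-up simply makes explicit the bookkeeping (multiplicativity, the leading-order identification $\phi_i = \Phi_{Z_i}^{[0]}$ from \eqref{eq1comp}, and the inversion via $\mu \ast \zeta = \delta$) that the paper leaves implicit.
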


\begin{remark} Due to Lemma~\ref{commutmult}, condition (ii) is also equivalent to $\Phi_{Z_1,\hbar} = \Phi_{Z_2,\hbar} \circledast \zeta_{\hbar}$, etc.
\end{remark}

\begin{proof}[Proof of Theorem~\ref{thm:hbarstarproduct}]

We first prove (ii) $\Rightarrow$ (i).  Let $\lambda \vdash d$ be a partition of $d$. We start from the equality $\Phi_{Z_1,\hbar}=\zeta_{\hbar}\circledast \Phi_{Z_2,\hbar}$, which, by Equation \eqref{eq:Z:phi}, is equivalent to:
\begin{equation*}
\begin{split}
Z_1(\lambda) &= \sum_{\substack{\mathcal{C}\in P(d) \\ \mathbf{0}_{\lambda} \leq \mathcal{C}}} (\zeta_{\hbar} \circledast \Phi_{Z_2,\hbar})(\mathcal{C},\pi_{\lambda}) \\
&=\sum_{\substack{\mathcal{C}\in P(d) \\ \mathbf{0}_{\lambda} \leq \mathcal{C}}} \,\,\sum_{(\mathbf{0}_\alpha,\alpha) \odot (\mathcal{B},\beta)=(\mathcal{C},\pi_{\lambda})} \hbar^{|\alpha|}\,\Phi_{Z_2,\hbar}(\mathcal{B},\beta)  \\ 
& = \sum_{\substack{\alpha,\beta \in S(d) \\ \alpha \circ \beta = \pi_{\lambda}}}  \sum\limits_{\substack{\mathcal{B}\in P(d)\\ \mathbf{0}_{\beta} \leq \mathcal{B}}} \hbar^{|\alpha|} \Phi_{Z_2,\hbar}(\mathcal{B},\beta) \\
& = \sum_{\nu \vdash d} \sum_{\alpha \in S(d)} \hbar^{|\alpha|} \sum_{\substack{\beta \in C_{\nu} \\ \alpha \circ \beta = \pi_{\lambda} }} \Bigg(\sum_{\substack{\mathcal{B} \in P(d) \\ \mathbf{0}_{\beta} \leq \mathcal{B}}} \Phi_{Z_2,\hbar}(\mathcal{B},\beta)\Bigg)\,,
\end{split} 
\end{equation*}
where we clustered the sum by the conjugacy class $C_{\nu}$ to which $\beta = \alpha^{-1}\circ\pi_{\lambda}$ belongs. By multiplicativity of $\Phi_{Z_2,\hbar}$, the sum inside the brackets only depends on the cycle structure of $\beta$. In particular, substituting $\beta$ with $\pi_{\nu}$ does not change the sum, and by comparing with \eqref{eq:Z:phi} we recognise the value of $Z_2(\nu)$. This yields:
\[
Z_1(\lambda) = \sum_{\nu \vdash d} \sum_{\alpha \in S(d)} \hbar^{|\alpha|} \sum_{\substack{\beta \in C_{\nu} \\ \alpha \circ \beta = \pi_{\lambda}}} Z_2(\nu)  = \sum_{\nu \vdash d} Z_2(\nu) \Bigg(\sum_{\substack{\alpha \in S(d), \beta\in C_{\nu} \\ \alpha \circ \beta = \pi_{\lambda}}} \hbar^{|\alpha|} \Bigg)\,.
\]
The last constraint can be written $\pi_{\lambda}^{-1} \circ \alpha \circ \beta = {\rm id}$. By comparison with Definition~\ref{def:free:single}, we recognise the free single Hurwitz numbers:
\[
Z_1(\lambda) = \mathfrak{z}_{\lambda} \sum_{\nu \vdash d} Z_2(\nu) \bigg( \sum_{r = 0}^{d - 1} \hbar^{r} H^{|}_r(\lambda,\nu)\bigg)\,.
\]
Here, $r$ is the colength of $\alpha$, and the factor $\mathfrak{z}_{\lambda} = \frac{d!}{\# C_{\lambda}}$ is explained as follows. The numerator compensates the $\frac{1}{d!}$ in the definition of Hurwitz numbers. The denominator comes from the fact that in the definition of free single Hurwitz numbers, we let the leftmost permutation be any element of the conjugacy class $C_{\lambda}$, so we overcount by a factor of $\# C_{\lambda}$.
Thanks to Lemma~\ref{lem:Harnad:Orlov} and by comparison with the definition of the generating series of strictly monotone Hurwitz numbers in \eqref{monoHur}, we get:
\[
{\rm (i)}\,\,:\,\,Z_1(\lambda) = \mathfrak{z}_{\lambda} \sum_{\nu \vdash d} H^{>}(\lambda,\nu)Z_2(\nu)\,.
\]
As all steps are equivalences, this in fact proves (i) $\Leftrightarrow$ (ii). 
 
 \medskip
 
Extended convolution from the left by the M\"obius function $\mu_{\hbar}$ proves (ii) $\Rightarrow$ (iv). The same operation with the zeta function $\zeta_{\hbar}$ proves the converse direction. The implication (i) $\Rightarrow$ (iii) is obtained by multiplying (i) by $\mathfrak{z}_{\nu} H^{\leq}(\nu,\lambda)$, summing over $\lambda \vdash d$ and using the first line of Lemma~\ref{Hinverse}, while the converse direction is obtained likewise using the second line of Lemma~\ref{Hinverse}. This finishes the proof of all equivalences between (i),(ii),(iii),(iv).

The equivalence between (i) for all $d > 0$ and $Z_1 = \mathsf{D} Z_2$ is a direct consequence of the Harnad--Orlov correspondence, and more precisely of combining the identity \eqref{Hjuc} with the second description of $\mathsf{D}$ in Section~\ref{Sec:TransZ}.
\end{proof}

\begin{proof}[Proof of Corollary~\ref{thm:starproduct}] In light of Lemma~\ref{lem25} and the properties of the M\"obius function for $\ast$, this is a direct consequence of the theorem we just proved.
\end{proof}

\section{The functional relations}
\label{Sec3}

\subsection{\texorpdfstring{$n$-point functions}{n-point functions}}
\label{npointSec}
The bosonic Fock space $\mathcal{F}_R$ (and so $\mathcal{F}_{R,\hbar}$) is acted upon by the Heisenberg operators
\[
\mathsf{J}_k = \begin{cases} k\partial_{p_k} & \textup{if}\,\,k > 0\,, \\ 0 & \textup{if}\,\,k = 0\,, \\ p_{-k} & \textup{if}\,\,k < 0\,. \end{cases}
\]
We collect them in generating series
\[
\mathsf{J}(X) = \sum_{k > 0} X^{k} \mathsf{J}_k\,,\qquad \widetilde{J}(X) = \sum_{k \in \mathbb{Z}} X^{k} \mathsf{J}_k\,.
\]
A topological partition function $Z = e^{F} \in \mathcal{F}_{R,\hbar}$ as in Definition~\ref{def:phi:Z} can be decomposed as
\begin{equation}
\label{eq2comp} F =  \sum_{\substack{n \geq 1 \\ g \geq 0}} \frac{\hbar^{2g - 2 + n}}{n!} \sum_{k_1,\ldots,k_n > 0} F_{g;k_1,\ldots,k_n} \prod_{i = 1}^n \frac{p_{k_i}}{k_i} = \sum_{\substack{d \geq 1 \\ g \geq 0}} \sum_{\lambda \vdash d} \hbar^{2g - 2 + \ell(\lambda)}\,F_{g;\lambda_1,\ldots,\lambda_{\ell}}\,\frac{p_{\lambda}}{\mathfrak{z}_{\lambda}}\,,
\end{equation}
with coefficients $F_{g;k_1,\ldots,k_n} \in R$ that are symmetric under permutation of the $k_i$s. The operator
\[
\mathsf{F} = \sum_{\substack{n \geq 1 \\ g \geq 0}} \frac{\hbar^{2g - 2 + n}}{n!} \sum_{k_1,\ldots,k_n > 0}  F_{g;k_1,\ldots,k_n} \prod_{i = 1}^n \frac{\mathsf{J}_{-k_i}}{k_i}
\]
is such that $Z = e^{\mathsf{F}} \ket$. For every $n \in \mathbb{Z}_{> 0}$, we define the $n$-point functions $G_n$ and their shifted version $\widetilde{G}_n$:
\begin{equation}
\label{Gntildes}\begin{split}
G_n(X_1,\ldots,X_n) & = \hbar^{-1}\delta_{n,1} + \Bra  \prod_{i = 1}^n \mathsf{J}(X_i) \cdot e^{\mathsf{F}} \Ket^{\circ}\,, \\
\widetilde{G}_n(X_1,\ldots,X_n) & = \hbar^{-1}\delta_{n,1} + \Bra \prod_{i = 1}^n \widetilde{\mathsf{J}}(X_i) \cdot e^{\mathsf{F}} \Ket^{\circ}\,.
\end{split}
\end{equation}
Here, $\bra \cdots \ket{}^{\circ}$ refers to the connected expectation value, defined for any tuple of linear operators $(\mathsf{A}_i)_{i = 1}^n$ by
\[
\bra \mathsf{A}_1 \cdots \mathsf{A}_n \cdot e^{\mathsf{F}} \ket^{\circ} \coloneqq \partial_{t_1 = 0} \cdots \partial_{t_n = 0} \ln\Big( \bra e^{t_1\mathsf{A}_1} \cdots e^{t_n \mathsf{A}_n} \,e^{\mathsf{F}} \ket\Big)\,.
\]
Equivalently, we have the inclusion-exclusion formulas:
\begin{equation*}
\begin{split}
\bra \mathsf{A}_1 \cdots \mathsf{A}_n\,e^{\mathsf{F}} \ket & = \sum_{\mathcal{I} \in \mathcal{P}(n)}   \prod_{I \in \mathcal{I}} \Bra \prod_{i \in I} \mathsf{A}_i \cdot e^{\mathsf{F}}\Ket^{\circ}\,, \\
\bra \mathsf{A}_1 \cdots \mathsf{A}_n \cdot e^{\mathsf{F}} \ket^{\circ} & = \sum_{\mathcal{I} \in \mathcal{P}(n)} (-1)^{\# \mathcal{I} - 1}\,(\# \mathcal{I} - 1)!\,\prod_{I \in \mathcal{I}} \Bra \prod_{i \in I} \mathsf{A}_i \, e^{\mathsf{F}} \Ket\,. 
\end{split}
\end{equation*}
In concrete terms, we have
\begin{equation}
\label{npointfun}\begin{split}
G_n(X_1,\ldots,X_n) & = \hbar^{-1}\delta_{n,1} + \sum_{g \geq 0} \sum_{k_1,\ldots,k_n > 0} \hbar^{2g - 2 + n}\,F_{g;k_1,\ldots,k_n} X_1^{k_1} \cdots X_n^{k_n}\,, \\
\widetilde{G}_n(X_1,\ldots,X_n) & =  G_n(X_1,\ldots,X_n) + \delta_{n,2}\,\frac{X_1X_2}{(X_1 - X_2)^2}\,.
\end{split}
\end{equation}
The second equation can be obtained by elementary manipulations with the Heisenberg commutation relations, see \cite[Proposition 4.1]{BDBKSexplicit}. Collecting the coefficients of powers of $\hbar$, we obtain a decomposition
\begin{equation}
\label{giGn} G_{n} = \sum_{g \geq 0} \hbar^{2g - 2 + n}\,G_{g,n} = \hbar^{n - 2}\,G_{0,n} + o(\hbar^{n - 2})\,,
\end{equation}
and likewise for $\widetilde{G}$. 

\begin{remark} \label{rem:div} Note that $G_n$ for all $n$ and $\widetilde{G}_{n}$ for $n \neq 2$ are honest formal power series in $X_1,\ldots,X_n$, while $\widetilde{G}_2$ should be considered as a formal series when $X_i \rightarrow 0$ in the sector  $|X_1| < |X_2| < \cdots < |X_n|$, i.e. an element of $R(\!(\hbar)\!)[X_1;\ldots;X_n] \coloneqq R(\!(\hbar)\!)[\![X_1]\!](\!(X_2)\!) \cdots (\!(X_n)\!)$.
\end{remark}

To a multiplicative function $\phi_{\hbar} \colon PS \rightarrow R[\![\hbar]\!]$ such that
\[
\forall (\mathcal{A},\alpha) \in PS,\qquad \phi_{\hbar} (\mathcal{A},\alpha) \in \hbar^{|(\mathcal{A},\alpha)|} R[\![\hbar^{2}]\!]\,,
\]
we can associate a topological partition function $Z$  by comparison with \eqref{eq1comp}-\eqref{eq2comp}, and thus a collection of $n$-point functions \eqref{npointfun}. Their coefficients are given for any $\lambda \vdash d$ of length $n$ by the formula
\[
F_{g;\lambda_1,\ldots,\lambda_{n}} = \cdot [\hbar^{2g - 2 + n + d}]\,\,\mathfrak{z}_{\lambda}\,\phi_{\hbar}(\mathbf{1}_{d},\pi_{\lambda})\,.
\]
In particular, given a multiplicative function $\phi \colon PS \rightarrow R$, we can put ourselves in the previous situation by multiplying it by $\hbar^{{\rm colength}}$, and thus associate to it $n$-point functions \eqref{npointfun}, in which  the $g > 0$ sector is zero and:
\[
F_{0;\lambda_1,\ldots,\lambda_{n}} = \mathfrak{z}_{\lambda}\, \phi(\mathbf{1}_{d},\pi_{\lambda})\,.
\]

\subsection{Main formulas}

Topological partition functions, multiplicative functions and collections of $n$-point functions are different ways to encode the same information. In Theorem~\ref{thm:hbarstarproduct} we have given equivalent forms, in terms of multiplicative functions, of the relation $Z = \mathsf{D} Z^{\vee}$ between two topological partition functions $Z$ and $Z^{\vee}$. Our aim is now to translate this relation into functional relations between their respective $n$-point functions $G_n$ and $G_n^{\vee}$. The result is expressed as weighted sums over graphs and the formal power series:
\begin{equation}
\label{varsigmaw} \varsigma(w) = \frac{{\rm sinh}(w/2)}{w/2} = 1 + \frac{w^2}{24} + O(w^4)\,.
\end{equation}

\begin{definition}[Graphs]\label{def:graphs}
If $n > 0$, we let $\mathcal{G}_n$ be the set of connected bicoloured graphs such that
\begin{itemize}
\item the white vertices are labelled from $1$ to $n$;
\item edges only connect vertices of different colour;
\item black vertices have valency $\geq 2$.
\end{itemize}
The three conditions imply that the set of graphs in $\mathcal{G}_n$ is infinite, but it is finite if we fix their first Betti number. Black vertices are characterised by multisets\footnote{A multiset $I$ in $[n]$ is a function $f_I\colon [n] \rightarrow \mathbb{Z}_{\geq 0}$. We say that $i$ is an element of $I$ when $f_I(i) > 0$, but elements may have multiplicity $f_I(i)$ greater than $1$. The cardinality is defined to take into account the multiplicity: $\# I = \sum_{i \in I} f(i)$.}  $I$ in $[n]$ --- also called \emph{hyperedges} --- recording the white vertices they connect to. The last condition implies $\# I \geq 2$. If $\Gamma \in \mathcal{G}_n$, we denote $\mathcal{I}(\Gamma)$ its set of hyperedges and ${\rm Aut}(\Gamma)$ the automorphism group, consisting of permutations of the edges respecting the structure of $\Gamma$ and the labelling of white vertices.
\end{definition}

\begin{example} Here we give a few examples of such graphs:
\vspace{0.2cm}
	\[
\vcenter{
	\xymatrix@!C=5pt@R=30pt{
		*+[o][F-]{{1}}    \\
		 *+[o][F**]{}\ar@{-}@/^/[u]\ar@{-}@/_/[u] \\
	}
}
\vcenter{
	\xymatrix@!C=5pt@R=30pt{
		 & & *+[o][F-]{{1}}  &    \\
		 & *+[o][F**]{}\ar@{-}@/^/[ru] \ar@{-}@/_/[ru]&  *+[o][F**]{}\ar@{-}@/^/[u]\ar@{-}@/_/[u] & *+[o][F**]{}\ar@{-}[lu] \ar@{-}@/_/[lu]  \ar@{-}@/^/[lu] \\
	}
}
\vcenter{
	\xymatrix@!C=5pt@R=30pt{
		& & *+[o][F-]{{1}}  & *+[o][F-]{{2}} &   \\
		&  *+[o][F**]{}\ar@{-}@/^/[ru] \ar@{-}[ru] \ar@{-}[rru] \ar@{-}@/^/[rru]&  *+[o][F**]{}\ar@{-}@/^/[u]\ar@{-}[u]\ar@{-}@/_/[u] & *+[o][F**]{}\ar@{-}[lu] \ar@{-}[u]& *+[o][F**]{}\ar@{-}[llu] \ar@{-}[lu] \\
	}
}
\vcenter{
			\xymatrix@!C=5pt@R=30pt{
				& *+[o][F-]{{1}} & *+[o][F-]{{2}}  & *+[o][F-]{{3}} & *+[o][F-]{{4}}   \\
				& *+[o][F**]{}\ar@{-}[u] \ar@{-}[ru]\ar@{-}[rru]&  *+[o][F**]{}\ar@{-}[lu]\ar@{-}[u]\ar@{-}@/^/[rru] & *+[o][F**]{}\ar@{-}[lu] \ar@{-}[u]& *+[o][F**]{}\ar@{-}[llu] \ar@{-}[lu] \\
			}
		}
	\]
The automorphism factor of the first graph is $\# {\rm Aut} =1$, while $\# {\rm Aut} =2$ for the three others.
\end{example}

\vspace{0.3cm}

\begin{definition}[Weights] \label{def:weights} Let $(w_i)_{i = 1}^n$ be an $n$-tuple of variables, and if $I$ is a multiset in $[n]$, denote $w_I = (w_i)_{i \in I}$ the corresponding collection of variables with multiplicity.
\begin{itemize}
\item To a hyperedge $I$ of $[n]$ that is not of the form $I = \{j,j\}$, we assign the weight
\[
\mathsf{c}^{\vee}(u_I,w_I) =\Big( \prod_{i \in I} \hbar u_i \,\varsigma(\hbar u_i w_i \partial_{w_i})\Big) \widetilde{G}^{\vee}_{\# I}(w_{I})\,.
\]
These are series depending on variables $(u_i,w_i)_{i \in I}$, and following Remark~\ref{rem:div} they are only considered in the sector $|w_i| < |w_j|$ for $i < j$.
\item For a hyperedge of the form $I = \{j,j\}$, the above expression would be ill-defined due to the double pole in the shifted $2$-point function \eqref{npointfun}. We rather assign the weight:
\[
\mathsf{c}^{\vee}(u_I,w_I) = \big(\hbar u_j \,\varsigma(\hbar u_j w_j \partial_{w_j})\big)^2\,G_{2}^{\vee}(w_j,w_j)\,.
\]
\item To the $i$-th white vertex, we attach the operator weight
\begin{equation}
\label{O2Xi}
\begin{split}
 \vec{\mathsf{O}}^{\vee}(w_i) & = \sum_{m \geq 0} \big(P^{\vee}(w_i) w_i\partial_{w_i}\big)^{m} P^{\vee}(w_i) \\
 & \quad \cdot [v_i^{m}]\,\,\sum_{r \geq 0} \Big(\partial_{y} + \frac{v_i}{y}\Big)^{r} \exp\bigg(v_i\,\frac{\varsigma(\hbar v_i \partial_{y})}{\varsigma(\hbar \partial_{y})} \ln y - v_i \ln y\bigg) \Big|_{y = G_{0,1}^{\vee}(w_i)} \\
& \quad \cdot [u_i^{r}]\,\,\frac{\exp\big(\hbar u_i \varsigma(\hbar u_i w_i \partial_{w_i})(G_{1}^{\vee}(w_i) - \hbar^{-1}) - u_i(G_{0,1}^{\vee}(w_i) - 1)\big)}{\hbar u_i\,\varsigma(\hbar u_i)}  \,,
\end{split}
\end{equation}
where $P^{\vee}(w_i)$ is a power series specified later. As $y = G_{0,1}^{\vee}(w_i) = 1 + O(w_i)$, $\ln y $ is a well-defined power series in $w_i$. The $-\hbar^{-1}$ and $-1$ in the last line cancel the conventional constant added in the definition of the $1$-point function \eqref{npointfun}. This operator acts from the left on series depending on the variables $u_i,w_i$ and gives as output a series in $w_i$. 
\end{itemize}
\end{definition}

\begin{theorem}\label{thm:R-transform-HigherGenera} Let $Z,Z^{\vee}$ be two topological partition functions and $G_{g,n},G_{g,n}^{\vee}$ the generating series appearing in the topological expansion of their respective $n$-point functions, cf. Section~\ref{npointSec}. Suppose that $Z = \mathsf{D}Z^{\vee}$. Then, under the substitution
\[
X_i = \frac{w_i}{G_{0,1}^{\vee}(w_i)}\,,\qquad P^{\vee}(w_i) =  \frac{\dd \ln w_i}{\dd \ln X_i}\,,
\]
we have
\begin{equation}
\label{0102}\begin{split}
G_{0,1}(X_1) & = G^{\vee}_{0,1}(w_1)\,, \\
G_{0,2}(X_1,X_2) & = P^{\vee}(w_1)P^{\vee}(w_2)\bigg(G_{0,2}^{\vee}(w_1,w_2) + \frac{w_1w_2}{(w_1 - w_2)^2}\bigg) - \frac{X_1X_2}{(X_1 - X_2)^2}\,,
\end{split}
\end{equation}
for $2g - 2 + n > 0$:
\begin{equation}
\label{Ggnpass}
G_{g,n}(X_1,\ldots,X_n) = \delta_{n,1}\Delta^{\vee}_g(X_1) + [\hbar^{2g - 2 + n}]\,\,\sum_{\Gamma \in \mathcal{G}_n}  \frac{1}{\# {\rm Aut}(\Gamma)} \prod_{i = 1}^n \vec{\mathsf{O}}^{\vee}(w_i) \prod_{I \in \mathcal{I}(\Gamma)} \mathsf{c}^{\vee}(u_I,w_I)\,.
\end{equation}
The correction term appearing for $n = 1$ is:
\begin{equation}
\label{Ggnpass-n1}
\begin{split}
	\Delta_{g}^{\vee}(X) &= [\hbar^{2g}]\,\,\sum_{m\geq 0} \big(P^{\vee}(w) w\partial_{w}\big)^{m} \,
 [v^{m+1}] \exp\bigg(v\,\frac{\varsigma(\hbar v \partial_{y})}{\varsigma(\hbar \partial_{y})} \ln y - v \ln y\bigg) \Big|_{y = G_{0,1}^{\vee}(w)}
 	\\
 & \quad \cdot
  P^{\vee}(w) w\partial_{w}  G_{0,1}^{\vee}(w)
\,.
\end{split}
\end{equation}
\end{theorem}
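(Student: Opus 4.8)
The plan is to reduce the statement to the explicit closed formula for $n$-point functions of \cite{BDBKSexplicit}, specialised to the diagonal operator $\mathsf{D}$. By the last assertion of Theorem~\ref{thm:hbarstarproduct}, the hypothesis $Z = \mathsf{D}Z^{\vee}$ means that the two topological partition functions differ by the operator that is diagonal in the Schur basis with eigenvalue $\prod_{(i,j)\in\lambda}\big(1 + \hbar(j-i)\big)$. Writing $Z = e^{\mathsf{F}}\ket = \mathsf{D}\,e^{\mathsf{F}^{\vee}}\ket$ and using $\bra\mathsf{D} = \bra$ (a consequence of $\bra\ln\mathsf{D} = 0$), I would move every copy of $\mathsf{D}$ to the far left in the connected expectation value defining $G_n$ in \eqref{Gntildes}, at the cost of replacing each current by its conjugate $\mathsf{D}^{-1}\mathsf{J}(X_i)\mathsf{D}$. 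This recasts the problem as the computation of
\[
G_n(X_1,\ldots,X_n) = \hbar^{-1}\delta_{n,1} + \hbar^n\,\Bra \prod_{i=1}^n \big(\mathsf{D}^{-1}\mathsf{J}(X_i)\mathsf{D}\big)\cdot e^{\mathsf{F}^{\vee}}\Ket^{\circ}\,,
\]
so that the only nontrivial analytic input is the conjugation of the Heisenberg current by $\mathsf{D}$.

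The second step is to transcribe the conjugation formula of \cite{BDBKSexplicit}. There, the modes $\mathsf{D}^{-1}\mathsf{J}_k\mathsf{D}$, resummed against the disk function, are shown to produce exactly the ingredients of Definition~\ref{def:weights}: the change of variables $X = w/G_{0,1}^{\vee}(w)$, dictated by the leading $(g,n) = (0,1)$ part of the conjugation, the Jacobian factor $P^{\vee}(w) = \dd\ln w/\dd\ln X$, and the $\varsigma$-deformations. The shift operators $\varsigma(\hbar u_i\, w_i\partial_{w_i})$ in the edge weights are the universal signature of the Heisenberg commutation relations in the bosonisation formalism, whereas the specific content product $\prod(1 + \hbar(j-i))$ carried by $\mathsf{D}$ enters through the white-vertex operators, concretely through the ratio $\varsigma(\hbar v\partial_y)/\varsigma(\hbar\partial_y)$ in the second line of \eqref{O2Xi}. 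Assembling the one-body part of the conjugation with the change of variables yields the white-vertex operator $\vec{\mathsf{O}}^{\vee}(w_i)$ of \eqref{O2Xi}, whose first two lines combine the Lagrange-inversion change of variables with the content deformation, and whose third line inserts the one-point function corrections.

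The third step is to expand the connected correlator by the inclusion–exclusion formula recalled after \eqref{Gntildes} (Wick's theorem). Each connected cluster of currents contributes a shifted multipoint function $\widetilde{G}^{\vee}_{\#I}$, that is, a black vertex with hyperedge $I$ and weight $\mathsf{c}^{\vee}(u_I,w_I)$; grouping the $n$ insertions into clusters in all possible ways produces precisely the sum over the bicoloured graphs $\Gamma \in \mathcal{G}_n$, weighted by $1/\#\mathrm{Aut}(\Gamma)$. The diagonal hyperedges $I = \{j,j\}$ and the low values $n = 1,2$ must be split off because of the double pole in $\widetilde{G}^{\vee}_2$ from \eqref{npointfun}; this is exactly the separate treatment already built into Definition~\ref{def:weights}, and it is also the source of the Bergman-kernel correction $\tfrac{w_1 w_2}{(w_1 - w_2)^2}$ in the $G_{0,2}$ relation of \eqref{0102}. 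Finally I would read off the special cases: the disk relation $G_{0,1}(X_1) = G_{0,1}^{\vee}(w_1)$ and the cylinder relation in \eqref{0102} follow by restricting the graph sum to genus zero with one, resp.~two, white vertices, while the single-vertex correction $\Delta_g^{\vee}$ of \eqref{Ggnpass-n1} is the $n = 1$ contribution of $\vec{\mathsf{O}}^{\vee}$ on the graph with no black vertices.

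I expect the main obstacle to be the second step: faithfully matching the conjugation $\mathsf{D}^{-1}\mathsf{J}(X)\mathsf{D}$ with the operators of Definition~\ref{def:weights}. The delicate points are tracking the implicit change of variables $X = w/G_{0,1}^{\vee}(w)$ throughout, keeping the principal-specialisation operators in the second and third lines of \eqref{O2Xi} in step with the higher-content corrections, and correctly bookkeeping the constants $-\hbar^{-1}$ and $-1$ that reconcile the normalisation $\hbar^{-1}\delta_{n,1}$ of the $1$-point function. Once the conjugation formula of \cite{BDBKSexplicit} is rendered in the present notation, the remaining steps are a routine application of Wick's theorem followed by a low-order identification.
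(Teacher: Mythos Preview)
Your overall architecture is the same as the paper's: conjugate the Heisenberg current by $\mathsf{D}$ using the formula of \cite{BDBKSexplicit}, then expand the connected correlator by Wick/inclusion--exclusion into a sum over bicoloured graphs. Where the proposal diverges from the actual proof is in the content of your ``second step'', and in your identification of $\Delta_g^{\vee}$.

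The conjugation formula \eqref{DJD} does \emph{not} hand you the operator $\vec{\mathsf{O}}^{\vee}(w_i)$ with the substitution $X_i=w_i/G_{0,1}^{\vee}(w_i)$ already in place. What it gives, after the Wick expansion, is the Key Combinatorial Identity (Lemma~\ref{lem:KeyComb}), a graph sum whose white-vertex weight is a different operator $\vec{\mathsf{U}}^{\vee}(X_i)$ (Equation~\eqref{U2Xi}) still acting in the $X$-variable, with $\partial_y^r$ evaluated at $y=0$ and an explicit $k$ in the exponent. Passing from $\vec{\mathsf{U}}^{\vee}(X_i)$ to $\vec{\mathsf{O}}^{\vee}(w_i)$ requires three separate manipulations (attributed to Kazarian in the paper): (i) absorb the factor $\exp\big(u(G_{0,1}^{\vee}(w)-1)\big)$ to shift the evaluation point from $y=0$ to $y=G_{0,1}^{\vee}(w)-1$ via \eqref{eq:FirstStep-u}; (ii) strip the nonpolynomial $k$-dependence by pulling out $(1+Y(w))^k$ and trade the residual polynomial dependence on $k$ for $(z\partial_z)^m\cdot[v^m]$; (iii) only then does Lagrange inversion produce the change of variables and the Jacobian $P^{\vee}$. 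Your sketch collapses these into ``transcribing the conjugation formula'', but they are the substance of the argument, and in particular the change of variables is not dictated by the conjugation --- it is manufactured in step (iii).

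Your description of $\Delta_g^{\vee}$ is also off. The graph with a single white vertex and no hyperedges is already included in the sum over $\Gamma\in\mathcal{G}_n$ in \eqref{Ggnpass}; its weight is $\vec{\mathsf{O}}^{\vee}(w_1)$ applied to $1$. The correction $\Delta_g^{\vee}$ is an \emph{additional} term, arising because for $n=1$ the third line of \eqref{U2Xi} has a genuine $u^{-1}$ pole, so step (i) above cannot be applied directly. One must first split off $\exp\big(u(G_{0,1}^{\vee}(w)-1)\big)/(\hbar u)$ by hand; the three tricks applied to the regular remainder give the isolated-vertex graph term, while the subtracted piece, evaluated separately, yields $G_{0,1}^{\vee}(w)-\hbar^{-1}+\sum_{g\geq 1}\hbar^{2g-1}\Delta_g^{\vee}(X)$. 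This is the mechanism that produces both the $(0,1)$ relation in \eqref{0102} and the higher-genus corrections \eqref{Ggnpass-n1}.
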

\begin{remark}
\label{special02}Equation~\eqref{Ggnpass} remains valid for $(g,n) = (0,2)$ provided the left-hand side is replaced with $\widetilde{G}_{0,2}(X_1,X_2)$. This recovers the second equation in \eqref{0102}.
\end{remark}
\begin{remark}
	\label{special01}Equation~\eqref{Ggnpass} remains also valid for $(g,n) = (0,1)$ provided we extend the summation over $m$ to $m\geq -1$ in the definition of $\Delta_{0}^{\vee}(X)$ and identify $ \big(P^{\vee}(w) w\partial_{w}\big)^{-1} P^{\vee}(w) w\partial_{w} G_{0,1}^{\vee}(w)$ with $G_{0,1}^{\vee}(w)$. This is the only contribution (the sum over graphs does not contribute as it contains only nonnegative powers of $\hbar$). This recovers the first equation in \eqref{0102}.
\end{remark}
This theorem is a special case of~\cite[Theorem 4.14 and Remark 4.15]{BDBKS}, or more precisely, it is a special case of the statement made in the first line of the proof of~\cite[Theorem 4.14]{BDBKS}. For completeness, and since it is of crucial importance for us, we shall explain its proof in Section~\ref{proofg}.

There are several simplifications in the genus $0$ sector. We are going to present the result in terms of multiplicative functions.
\begin{definition}
\label{deftree} If $\mathbf{r} = (r_1,\ldots,r_n) \in \mathbb{Z}_{\geq 0}^n$, let $\mathcal{G}_{0,n}(\mathbf{r} + 1)$ be the subset of $\mathcal{G}_n$ consisting of trees in which the $i$-th vertex has valency $r_i + 1$.
\end{definition}
Note that such trees do not have non-trivial automorphisms. Observe as well that for fixed $n$, the set $\mathcal{G}_{0,n}(\mathbf{r} + 1)$ is non-empty only for finitely many $n$-tuples $\mathbf{r}$.
\begin{definition}
\label{Or2}Let us introduce the $r$-th piece of the genus $0$ version of the operator weight of Definition~\ref{def:weights}:
$$
\vec{\mathsf{O}}_r^{\vee}(w) = \sum_{m \geq 0} (P^{\vee}(w) w\partial_{w})^{m} P^{\vee}(w) \cdot [v^m] \,\,\Big(\partial_{y} + \frac{v}{y}\Big)^{r} \cdot 1\Big|_{y = G_{0,1}^{\vee}(w)}\,.
$$
This is an operator acting from the left on series in the variable $w$. 
\end{definition}
\begin{theorem}
\label{thm:R-transform-GenusZero} Let $\phi,\phi^{\vee}\colon PS \rightarrow R$ be multiplicative functions and $G_{0,n},G_{0,n}^{\vee}$ their respective $n$-point functions. Suppose that $\phi = \zeta \ast \phi^{\vee}$. Then, under the substitution
\[
X_i = \frac{w_i}{G_{0,1}^{\vee}(w_i)}\,,\qquad P^{\vee}(w_i) =  \frac{\dd \ln w_i}{\dd \ln X_i}\,,
\]
we have \eqref{0102} for $n = 1,2$, and for any $n \geq 3$:
\[
G_{0,n}(X_1,\ldots,X_n) = \sum_{r_1,\ldots,r_n \geq 0} \prod_{i = 1}^{n} \vec{\mathsf{O}}_{r_i}^{\vee}(w_i) \, \sum_{T \in \mathcal{G}_{0,n}(\mathbf{r} + 1)}  \prod_{I \in \mathcal{I}(T)}' G^{\vee}_{0,\# I}(w_I)\,,
\]
where $\prod'$ means that one should replace each occurrence of $G_{0,2}^{\vee}(w_i,w_j)$ for $i \neq j$ with $\widetilde{G}_{0,2}^{\vee}(w_i,w_j)$. 
\end{theorem}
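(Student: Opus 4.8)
The plan is to deduce this genus-zero statement from the all-genus Theorem~\ref{thm:R-transform-HigherGenera} by lifting the pair of multiplicative functions to a pair of topological partition functions and then extracting the lowest order in $\hbar$. First I would promote $\phi^{\vee}$ to a purely genus-zero topological partition function $Z^{\vee}$, by setting $\Phi_{Z^{\vee},\hbar}(\mathcal{A},\alpha) = \hbar^{|(\mathcal{A},\alpha)|}\phi^{\vee}(\mathcal{A},\alpha)$ as in Section~\ref{npointSec}; its $n$-point functions have only a genus-zero sector, equal to the $G_{0,n}^{\vee}$ attached to $\phi^{\vee}$, with $G_1^{\vee} = \hbar^{-1}G_{0,1}^{\vee}$ and $\widetilde{G}_k^{\vee} = \hbar^{k-2}\widetilde{G}_{0,k}^{\vee}$ exactly. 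I then set $Z \coloneqq \mathsf{D}Z^{\vee}$, which is again a topological partition function by the correspondence invoked in Theorem~\ref{thm:hbarstarproduct}. By Corollary~\ref{thm:starproduct}, the relation $Z = \mathsf{D}Z^{\vee}$ forces $\Phi_Z^{[0]} = \zeta \ast \Phi_{Z^{\vee}}^{[0]} = \zeta \ast \phi^{\vee} = \phi$, where the last equality is the hypothesis. Hence the genus-zero $n$-point functions of $Z$ are exactly the $G_{0,n}$ attached to $\phi$, the substitution $X_i = w_i/G_{0,1}^{\vee}(w_i)$ is the same one, and it remains to read off the genus-zero part of \eqref{Ggnpass}.

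The core of the argument is then an $\hbar$-degree count. Because $Z^{\vee}$ is purely planar and $\varsigma(w) = 1 + O(w^2)$, each building block has a clean leading order: a hyperedge weight $\mathsf{c}^{\vee}(u_I,w_I)$ starts at $\hbar^{2\#I - 2}\big(\prod_{i\in I}u_i\big)\widetilde{G}_{0,\#I}^{\vee}(w_I)$; and in the vertex operator \eqref{O2Xi} both exponentials degenerate at leading order, since the exponent in the last line equals $u_i[\varsigma(\hbar u_i w_i\partial_{w_i}) - 1](G_{0,1}^{\vee} - 1) = O(\hbar^2)$ and the ratio $\varsigma(\hbar v_i\partial_y)/\varsigma(\hbar\partial_y) \to 1$, so that $\vec{\mathsf{O}}^{\vee}(w_i)$ reduces to the coefficient extraction against $\tfrac{1}{\hbar u_i}$ together with the operator $\vec{\mathsf{O}}_r^{\vee}(w_i)$ of Definition~\ref{Or2}. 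Summing the orders, a connected graph $\Gamma$ with $n$ white and $B$ black vertices and $E$ edges contributes at $\hbar^{(2E - 2B) - n}$, using $\sum_{I}\#I = E$; writing this as $\hbar^{2g - 2 + n}$ gives $g = E - B - n + 1$, the first Betti number of $\Gamma$. Therefore extracting $[\hbar^{n-2}]$ kills every graph with a cycle and leaves only trees, which carry no nontrivial automorphisms.

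For a tree each black vertex meets distinct white vertices, so its hyperedges are genuine sets with $\#I$ equal to the black valency and the diagonal weights $I = \{j,j\}$ never occur. Collecting the $u_i$-factors at white vertex $i$, the $d_i$ incident edges supply $u_i^{d_i}$, and the extraction $[u_i^r]\tfrac{1}{\hbar u_i}u_i^{d_i} = \hbar^{-1}\delta_{r,\, d_i - 1}$ pins $r_i = d_i - 1$, i.e.\ valency $d_i = r_i + 1$; this is exactly the constraint defining $\mathcal{G}_{0,n}(\mathbf{r} + 1)$ in Definition~\ref{deftree}. Matching powers of $\hbar$ (the tree relation $E = n + B - 1$ makes $(2E - 2B) - n = n - 2$), the coefficient of $\hbar^{n-2}$ is precisely $\sum_{T \in \mathcal{G}_{0,n}(\mathbf{r}+1)}\prod_i \vec{\mathsf{O}}_{r_i}^{\vee}(w_i)\prod_{I}\widetilde{G}_{0,\#I}^{\vee}(w_I)$, where $\widetilde{G}_{0,2}^{\vee}$ appears for the valency-two hyperedges while $\widetilde{G}_{0,\#I}^{\vee} = G_{0,\#I}^{\vee}$ for $\#I \geq 3$; this is the claimed formula, the primed product encoding exactly this replacement. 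The cases $n = 1, 2$ are not produced by the tree sum and are instead read off from the genus-zero part of \eqref{Ggnpass} via Remarks~\ref{special01} and~\ref{special02}, returning the two identities in \eqref{0102}.

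The step I expect to be most delicate is the uniformity of this leading-order analysis of \eqref{O2Xi}: one must verify that neither exponential, nor the prefactor $1/(\hbar u_i\,\varsigma(\hbar u_i))$, nor the ratio $\varsigma(\hbar v_i\partial_y)/\varsigma(\hbar\partial_y)$ contributes any hidden negative power of $\hbar$ that would lower the effective order, so that the minimal total $\hbar$-degree of a graph is genuinely realised by taking each block at its leading order and equals $2g - 2 + n$ with $g$ the Betti number. Once this is secured, the reduction to Definition~\ref{Or2} and the tree-count are purely formal.
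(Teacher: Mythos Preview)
Your proposal is correct and follows essentially the same approach as the paper: lift $\phi^{\vee}$ to a purely planar $Z^{\vee}$, set $Z=\mathsf{D}Z^{\vee}$, use Corollary~\ref{thm:starproduct} to identify the genus-zero sector of $Z$ with $\phi$, then extract the leading $\hbar$-order from \eqref{Ggnpass} and observe via the Euler-characteristic count that only trees survive. Your $\hbar$-bookkeeping is in fact slightly more explicit than the paper's (which has a small typo in the displayed leading order of $\mathsf{c}^{\vee}$ but the correct total $-n+\sum_I(2\#I-2)$); the only expository difference is that for $n=1,2$ you invoke Remarks~\ref{special01}--\ref{special02}, whereas the paper rederives the $(0,2)$ case from the unique tree in $\mathcal{G}_{0,2}$ and the $(0,1)$ case by returning to Lemma~\ref{lem:KeyComb} and applying Lagrange inversion directly --- note that for $n=2$ the tree sum \emph{does} produce the identity (in tilded form), so your phrase ``not produced by the tree sum'' is slightly imprecise there.
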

This will be proved in Section~\ref{proofg0}.

\begin{remark} The formula for $(g,n) = (0,1)$ corresponds to Voiculescu's $R$-transform and was derived from the combinatorics of non-crossing partitions in \cite{Spei94}. For $(g,n) = (0,2)$ it was derived from the combinatorics of non-crossing partitioned permutations in \cite{CMSS}, and obtained differently from the combinatorics of fully simple maps in \cite{BG-F18,borot2019relating}. Their generalisation for $g = 0$ and $n \geq 3$ was an open problem from \cite{CMSS}, to which Theorem~\ref{thm:R-transform-GenusZero} answers. Its application to free probability and random matrices will be discussed in Sections~\ref{SecAppl} and ~\ref{SecGUEDET}.
\end{remark}

We can present the relation in terms of the coefficients of the $n$-point functions. Although we state it only in genus $0$, the interested reader can easily derive the formula in higher genus from Theorem~\ref{thm:R-transform-HigherGenera} or the more convenient preliminary form Lemma~\ref{lem:KeyComb} appearing later in the text.
\begin{definition}
\label{deftreebis} Let $\mathcal{T}_{n}$ be the set of trees $T$ obtained by connecting to a $T' \in \mathcal{G}_{0,n}$ finitely many univalent black vertices. The difference with $\mathcal{G}_{0,n}$ is therefore that we allow hyperedges $I$ with $\# I = 1$. This makes the set $\mathcal{T}_{n}$ infinite. We denote $\mathcal{T}_{n}(\mathbf{r} + 1) \subset \mathcal{T}_n$ the subset of trees in which the $i$-th white vertex has valency $r_i + 1$. Note that if $\ell_i(T)$ is the number of univalent black vertices incident to the $i$-th white vertex, we have $\# {\rm Aut}(T) = \prod_{i = 1}^n \ell_i(T)!$.
\end{definition}
\begin{theorem}
 \label{coeffThm}Let $\phi,\phi^{\vee}\colon PS \rightarrow R$ be multiplicative functions and $G_{0,n},G_{0,n}^{\vee}$ their respective $n$-point functions. Suppose that $\phi = \zeta \ast \phi^{\vee}$. For any $k_1,\ldots,k_n > 0$, we have for $n \geq 3$:
\[
 F_{0;k_1,\ldots,k_n} =  \Big[\prod_{i = 1}^n w_i^{k_i}\Big] \sum_{\substack{0 \leq r_i \leq k_i \\ i \in [n]}} \prod_{i = 1}^n \frac{k_i!}{(k_i - r_i)!} \sum_{T \in \mathcal{T}_n(\mathbf{r} + 1)} \frac{\prod_{I \in \mathcal{I}(T)}'' G_{0,\# I}^{\vee}(w_I)}{\# {\rm Aut}(T)}\,,
\]
where $\prod''$ means that one should replace each occurrence of $G_{0,1}^{\vee}(w_j)$ with $G_{0,1}^{\vee}(w_j) - 1$, and each occurrence\footnote{Trees do not have hyperedges of the type $\{i,i\}$, so if $2$-point functions occur it is only in their shifted version.} of $G_{0,2}^{\vee}(w_i,w_j)$ with $\widetilde{G}_{0,2}^{\vee}(w_i,w_j)$.  
\end{theorem}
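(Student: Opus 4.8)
The plan is to extract the coefficient $F_{0;k_1,\ldots,k_n} = \big[\prod_{i=1}^n X_i^{k_i}\big]\,G_{0,n}(X_1,\ldots,X_n)$ directly from the closed formula of Theorem~\ref{thm:R-transform-GenusZero} and to track how the operators $\vec{\mathsf{O}}_{r_i}^{\vee}(w_i)$ interact with this extraction. Since each $\vec{\mathsf{O}}_{r_i}^{\vee}(w_i)$ acts only on the variable $w_i$ and these operators commute, it suffices to understand a single-variable transfer: how $[X^k]\,\vec{\mathsf{O}}_r^{\vee}(w)\,f(w)$ relates to a coefficient extraction in $w$. I would isolate this as a lemma and then assemble the $n$ variables.

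For the transfer lemma I would first record the identity $P^{\vee}(w)\,w\partial_w = X\partial_X$, which follows from the chain rule together with $P^{\vee}(w) = \frac{\dd\ln w}{\dd\ln X}$; hence $(P^{\vee} w\partial_w)^m$ acts on a monomial $X^k$ by multiplication by $k^m$. Next I would compute the inner part of $\vec{\mathsf{O}}_r^{\vee}$ in Definition~\ref{Or2} by the elementary induction $\big(\partial_y + \tfrac{v}{y}\big)^r\cdot 1 = \frac{v(v-1)\cdots(v-r+1)}{y^r}$, so that $[v^m]\big(\partial_y+\tfrac vy\big)^r\cdot1\big|_{y=G_{0,1}^\vee(w)} = s(r,m)\,G_{0,1}^{\vee}(w)^{-r}$, where $s(r,m)$ are the signed Stirling numbers of the first kind determined by $v(v-1)\cdots(v-r+1) = \sum_m s(r,m)\,v^m$. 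Combining these facts, $[X^k]$ turns the operator $\vec{\mathsf{O}}_r^{\vee}(w)$ into the scalar $\sum_m s(r,m)\,k^m = k(k-1)\cdots(k-r+1) = \tfrac{k!}{(k-r)!}$ times $[X^k]\big(P^{\vee}(w)\,G_{0,1}^{\vee}(w)^{-r} f(w)\big)$. Finally I would apply Lagrange--B\"urmann inversion for the change of variables $w = X\,G_{0,1}^{\vee}(w)$ in the weighted form $[X^k]\big(H(w)\,P^{\vee}(w)\big) = [w^k]\big(H(w)\,G_{0,1}^{\vee}(w)^k\big)$ --- the $P^{\vee}$ weight being exactly the Jacobian that makes this clean --- with $H = G_{0,1}^{\vee -r}\, f$. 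This yields the transfer lemma
\[
[X^k]\,\vec{\mathsf{O}}_r^{\vee}(w)\,f(w) = \frac{k!}{(k-r)!}\,[w^k]\big(G_{0,1}^{\vee}(w)^{k-r}\,f(w)\big).
\]

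Assembling over the $n$ variables (the extra factor $G_{0,1}^{\vee}(w_i)^{k_i-r_i}$ depends only on $w_i$, so the extractions for distinct variables are performed independently), and with $f(\mathbf{w}) = \sum_{T\in\mathcal{G}_{0,n}(\mathbf r+1)}\prod'_{I}G_{0,\#I}^{\vee}(w_I)$, I would obtain
\[
F_{0;k_1,\ldots,k_n} = \sum_{\mathbf r}\prod_{i=1}^n\frac{k_i!}{(k_i-r_i)!}\,\Big[\prod_i w_i^{k_i}\Big]\Big(\prod_i G_{0,1}^{\vee}(w_i)^{k_i-r_i}\Big)\sum_{T\in\mathcal{G}_{0,n}(\mathbf r+1)}\prod'_{I}G_{0,\#I}^{\vee}(w_I).
\]
The remaining step is combinatorial: expanding $G_{0,1}^{\vee}(w_i)^{k_i-r_i} = \sum_{\ell_i\geq0}\binom{k_i-r_i}{\ell_i}\big(G_{0,1}^{\vee}(w_i)-1\big)^{\ell_i}$ interprets each factor $G_{0,1}^{\vee}(w_i)-1$ as the $\prod''$-weight of a univalent black vertex attached to the $i$-th white vertex. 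Attaching $\ell_i$ such vertices to a core tree $T\in\mathcal{G}_{0,n}(\mathbf r+1)$ produces precisely a tree in $\mathcal{T}_n\big((\mathbf r+\boldsymbol\ell)+1\big)$, and this is a bijection: the core is recovered by deleting the univalent black vertices, and since $n\geq 3$ every white vertex retains a hyperedge of size $\geq2$, so the core lies in $\mathcal{G}_{0,n}$. The identity $\frac{k_i!}{(k_i-r_i)!}\binom{k_i-r_i}{\ell_i} = \frac{1}{\ell_i!}\,\frac{k_i!}{(k_i-r_i-\ell_i)!}$ converts the binomial coefficients into the factorials $\tfrac{k_i!}{(k_i-r_i')!}$ with $r_i' = r_i+\ell_i$ together with $\prod_i\frac{1}{\ell_i!} = \frac{1}{\#\mathrm{Aut}(\tilde T)}$ (Definition~\ref{deftreebis}), while the $\prod'$-weight of the core together with the factors $(G_{0,1}^{\vee}-1)^{\ell_i}$ reassembles into the $\prod''$-weight of $\tilde T$; since $\mathcal{G}_{0,n}$-trees are automorphism-free, no further symmetry factors intervene. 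Renaming $r_i+\ell_i$ as $r_i$ gives the asserted formula, and terms with $r_i>k_i$ drop out automatically because the falling factorial vanishes, which justifies restricting to $0\le r_i\le k_i$.

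The main obstacle I anticipate is bookkeeping rather than analysis: ensuring that the binomial coefficients produced by expanding the powers $G_{0,1}^{\vee}(w_i)^{k_i-r_i}$ combine with the falling factorials to reproduce exactly the factors $\tfrac{k_i!}{(k_i-r_i)!}$ of the statement while simultaneously generating the correct automorphism factors $\prod_i\ell_i!$ of $\mathcal{T}_n$. The only analytic subtleties are the use of the weighted Lagrange inversion (which one verifies by a residue computation, using $\tfrac{\dd X}{X} = (P^{\vee})^{-1}\tfrac{\dd w}{w}$ and $X = w/G_{0,1}^{\vee}(w)$) and the treatment of the divergent $\widetilde{G}_{0,2}^{\vee}$, whose coefficient extraction is understood in the sector $|w_i|<|w_j|$ as in Remark~\ref{rem:div}; neither obstructs the formal manipulations above.
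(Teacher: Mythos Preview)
Your argument is correct. The approach, however, differs from the paper's in its starting point and in the machinery used to go from $X$-coefficients to $w$-coefficients.

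The paper does not start from Theorem~\ref{thm:R-transform-GenusZero}; it returns to the genus-$0$ specialisation of Lemma~\ref{lem:KeyComb}, where the white-vertex operator is the raw $\vec{\mathsf{U}}^{\vee}(X)$ of \eqref{U2Xi}. In that operator the coefficient extraction $[w^k]$ and the falling factorial $\tfrac{k!}{(k-r)!}=\partial_y^r(1+y)^k|_{y=0}$ are already manifest, and the factor $\exp\big(u(G_{0,1}^{\vee}-1)\big)/u$ directly produces the powers of $G_{0,1}^{\vee}-1$ that become the univalent black vertices. No Stirling identity and no Lagrange inversion are needed at this stage, because those were precisely the ``three tricks'' that the paper performed in Section~\ref{proofg} to pass from Lemma~\ref{lem:KeyComb} to Theorem~\ref{thm:R-transform-GenusZero}.

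Your route instead starts from the processed form (Theorem~\ref{thm:R-transform-GenusZero}) and effectively undoes two of those three tricks: you resum the Stirling numbers $\sum_m s(r,m)k^m=\tfrac{k!}{(k-r)!}$ (undoing the polynomial-in-$k$ device) and apply weighted Lagrange inversion (undoing the third trick) to obtain the clean transfer lemma
\[
[X^k]\,\vec{\mathsf{O}}_r^{\vee}(w)\,f(w)=\frac{k!}{(k-r)!}\,[w^k]\big(G_{0,1}^{\vee}(w)^{\,k-r}f(w)\big),
\]
which is a pleasant formula in its own right. The final combinatorial step --- expanding $G_{0,1}^{\vee}(w_i)^{k_i-r_i}$ and attaching univalent black vertices --- is the same in both proofs. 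Your path is slightly longer but more self-contained (it uses only the stated Theorem~\ref{thm:R-transform-GenusZero} rather than the intermediate Lemma~\ref{lem:KeyComb}); the paper's path is shorter because the transfer from $X$-coefficients to $w$-coefficients was already built into $\vec{\mathsf{U}}^{\vee}$.
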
 
This will be proved in Section~\ref{coeffgenus0}.

The dual statements of these three results, i.e.~the relations giving $G_n^{\vee}$ in terms of $G_n$, have a similar structure and will be given in Section~\ref{SecDual}. Before turning to the proof, we explain how to use the formulas in practice.

\subsection{Reformulation}
\label{Sec:Exsmall}

It is sometimes more convenient (see~\cite{BG-F18}) to use a different convention for the definition of the $n$-point functions, namely 
we can use another change of variables given by 
\[
x(w) = \frac{1}{X(w)} =  w^{-1}G_{0,1}^{\vee}(w) 
\]
and 
we introduce the differential forms for $2g - 2 + n \geq 0$:
\begin{equation}
\label{omegdiff}
\begin{split}
\omega_{g,n}(x_1,\ldots,x_n) & = \prod_{i = 1}^n \frac{\dd x_i}{x_i}\,G_{g,n}(x_1^{-1},\ldots,x_n^{-1}) = \sum_{k_1,\ldots,k_n > 0} F_{g;k_1,\ldots,k_n}\,\prod_{i = 1}^n \frac{\dd x_i}{x_i^{k_i + 1}}\,, \\
\omega_{g,n}^{\vee}(w_1,\ldots,w_n) & = \prod_{i = 1}^n \frac{\dd w_i}{w_i}\,G^{\vee}_{g,n}(w_1,\ldots,w_n) = \sum_{k_1,\ldots,k_n > 0} F^{\vee}_{g;k_1,\ldots,k_n}\, \prod_{i = 1}^n w_i^{k_i - 1} \dd w_i\,,
\end{split}
\end{equation}
and their shifted version for $(g,n) = (0,2)$:
\begin{equation}
\label{ome20nu}
\begin{split}
\widetilde{\omega}_{0,2}(x_1,x_2) = \omega_{0,2}(x_1,x_2) + \frac{\dd x_1 \dd x_2}{(x_1 - x_2)^2}\,,\qquad 
\widetilde{\omega}^{\vee}_{0,2}(w_1,w_2) = \omega_{0,2}^{\vee}(w_1,w_2) + \frac{\dd w_1 \dd w_2}{(w_1 - w_2)^2}\,.
\end{split}
\end{equation}
Let us present the equivalent form it gives to Theorem~\ref{thm:R-transform-GenusZero}. The relation $G_{0,1}^\vee(w)=G_{0,1}(x^{-1})$ can be rephrased as  the statement on the functional inverse 
\begin{equation}
	\label{xweq} w(x) = \frac{G_{0,1}(x^{-1})}{x} \qquad \Longleftrightarrow \qquad x(w) = \frac{G^\vee_{0,1}(w)}{w}\,.
\end{equation}
For $(g,n) = (0,2)$, \eqref{0102} becomes:
\begin{equation}
\label{om02tildeequl} \widetilde{\omega}_{0,2}(x_1,x_2) = \widetilde{\omega}^{\vee}_{0,2}(w_1,w_2)\,.
\end{equation}
The alternative convention makes these relations particularly simple to remember. For $n \geq 3$, using the variable $x = X^{-1}$, the operator of Definition~\ref{Or2} becomes:
\[
\vec{\mathsf{O}}_{r}^{\vee}(w) = \sum_{m \geq 0} (-x\partial_{x})^m \frac{- x \dd w}{w \dd x} \cdot [v^m] \Big(\partial_{y} + \frac{v}{y}\Big)^r \cdot 1\Big|_{y = xw}\,,
\]
and we get
\[
\omega_{0,n}(x_1,\ldots,x_n) = \sum_{r_1,\ldots,r_n \geq 0} \prod_{i = 1}^n \frac{\dd x_i}{x_i}\, \vec{\mathsf{O}}_{r_i}^{\vee}(w_i) \Big(\frac{w_i}{\dd w_i}\Big)^{r_i + 1} \sum_{T \in \mathcal{G}_{0,n}(\mathbf{r} + 1)} \prod_{I \in \mathcal{I}(T)}' \omega^{\vee}_{0,\# I}(w_I)\,,
\]
where $\prod'$ means that any occurrence of $\omega_{0,2}^{\vee}(w_i,w_j)$ with $i \neq j$ should be replaced with $\widetilde{\omega}_{0,2}^{\vee}(w_i,w_j)$. 

\subsection{Examples}

\subsubsection{\texorpdfstring{$(g,n) = (0,3)$}{(g,n)=(0,3)}}

There are exactly $4$ trees in $\mathcal{G}_{0,3}$ (Figure~\ref{G03fig}), and we get from Theorem~\ref{thm:R-transform-GenusZero}:
\begin{equation*}
\begin{split}
G_{0,3}(x_1,x_2,x_3) & = \Big(\prod_{a = 1}^3 P^{\vee}(w_a)\Big)\Bigg[G_{0,3}^{\vee}(w_1,w_2,w_3) \\
& \quad  - \sum_{i = 1}^3 w_i  \partial_{w_i}\bigg(\frac{\prod_{j \neq i} \big(G_{0,2}^{\vee}(w_i,w_j) + \frac{w_iw_j}{(w_i - w_j)^2}\big)}{G_{0,1}^{\vee}(w_i)\frac{w_i}{x(w_i)}\partial_{w_i} x(w_i)}\bigg)\Bigg]. \\
\end{split}
\end{equation*}
In terms of the differential forms, the form is also slightly simpler to remember:
\begin{equation}
\label{03form}
\omega_{0,3}(x_1,x_2,x_3) = - \omega_{0,3}^{\vee}(w_1,w_2,w_3) + \sum_{i = 1}^3 \dd_{w_i}\bigg(\frac{\prod_{j \neq i} \widetilde{\omega}_{0,2}^{\vee}(w_i,w_j)}{\dd x_i \,\dd w_i}\bigg)\,.
\end{equation}

\vspace{-0.4cm}

\begin{figure}[h!]
\includegraphics[width=\textwidth]{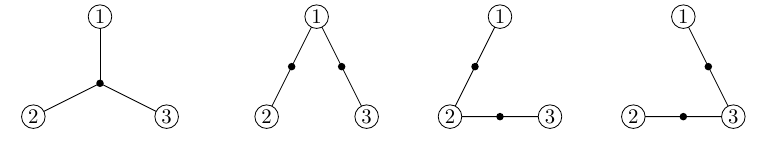}
\caption{\label{G03fig} The trees in $\mathcal{G}_{0,3}$}
\end{figure}

\subsubsection{\texorpdfstring{$(g,n) = (1,1)$}{(g,n)=(1,1)}}

We need to extract the coefficient of $\hbar^{1}$ in the formula of Theorem~\ref{thm:R-transform-HigherGenera}. Recall that the leading order was $\hbar^{-1}$, so this contribution may come from one of the three following possibilities:
\begin{itemize}
\item The vertex without hyperedges. Its weight is the coefficient of $\hbar^1$ in \eqref{O2Xi}. If we pick the leading order $\varsigma(z) = 1 + O(z^2)$ everywhere, we can get a contribution by picking one $G_{1,1}^{\vee}(w)$ in the last line. This comes without powers of $u$, so selects $r = 0$; as $\varsigma$ was replaced by $1$, the exponential in the second line of \eqref{O2Xi} is $1$, thus forcing $m = 0$. Another contribution comes from picking the second term $\frac{z^2}{24}$ from one occurrence of $\varsigma(z)$ and replacing the other occurrences of $\varsigma$ by $1$. If we pick this second term in $\varsigma$ from the denominator of the third line of \eqref{O2Xi}, we will get a linear term in $u$, thus selecting $r = 1$, while in the second line the exponential will be $1$, thus selecting $m = 1$. If we rather pick it from the exponential in the third line, we get $u^2$, thus selecting $r = 2$, while in the second line the exponential will be $1$, and as $(\partial_{y} + v/y)^2 \cdot 1 = (v^2 - v)/y$, we get contributions from $m = 1$ and $m = 2$.
\item The graph consisting of one white vertex connected in two ways to a black vertex (which admits $2$ automorphisms), with $\varsigma$ always replaced by $1$. In this case, we get $\vec{\mathsf{O}}_r^{\vee}(w) u^2 G_{0,2}^{\vee}(w,w)$, which selects $r = 1$ and subsequently $m = 1$.
\item The correction term $\Delta_1^{\vee}(X_1)$ given by \eqref{Ggnpass-n1}. We must select one term $\frac{z^2}{24}$ in $\varsigma(z)$ either from the numerator or the denominator in the exponential, hence get contributions from $m = 0$ and $m = 2$.
\end{itemize}
This results in:
\begin{equation*} 
\begin{split} 
G_{1,1}(X) & = P^{\vee}(w)G_{1,1}^{\vee}(w) - \frac{1}{24} P^{\vee}(w)w \partial_{w}\bigg(\frac{P^{\vee}(w)}{G_{0,1}^{\vee}(w)}\bigg) \\
& \quad + \frac{1}{24} P^{\vee}(w)w\partial_{w}(P^{\vee}(w)w\partial_{w} - 1)\bigg(\frac{P^{\vee}(w)}{(G_{0,1}^{\vee}(w))^2}\,(w\partial_w)^2 G_{0,1}^{\vee}(w)\bigg) \\
& \quad + \frac{1}{2} P^{\vee}(w)w \partial_w\bigg(\frac{P^{\vee}(w)}{G_{0,1}^{\vee}(w)}\,G_{0,2}^{\vee}(w,w)\bigg) \\
& \quad -\frac{1}{24} \big((P^\vee(w)w\partial_w)^2 -1 \big) \frac{P^\vee(w)w\partial_w G^\vee_{0,1}(w)}{(G^\vee_{0,1}(w))^2}�\,.
\end{split}
\end{equation*}
After a tedious algebra, we observe many simplifications:
\begin{equation}
\label{11form}\omega_{1,1}(x) + \omega_{1,1}^{\vee}(w) =  \dd\bigg[\frac{1}{2}\,\frac{\omega_{0,2}^{\vee}(w,w)}{\dd x\,\dd w} - \frac{1}{24}\,\frac{\dd(\partial_w^2 x/\partial_w x)}{\dd x}\bigg]\,.
\end{equation}

\subsubsection{\texorpdfstring{$(g,n) = (0,4)$}{(g,n)=(0,4)}}

As shown in Figure~\ref{G04fig}, there are $29$ trees in $\mathcal{G}_{0,4}$, which can have $4$ different topologies once labellings are forgotten. The weight of a tree $T\in\mathcal{G}_{0,4}$ is given by $\mathcal{W}(T)\coloneqq \prod_{I \in \mathcal{I}(T)}' G^{\vee}_{0,\# I}(w_I)$, recalling that $\prod'$ means that each occurrence of $G_{0,2}^{\vee}(w_i,w_j)$ for $i \neq j$ is replaced with $\widetilde{G}_{0,2}^{\vee}(w_i,w_j)$.  Explicitly, the weight for each tree reads:

\begin{equation*}
\begin{split}
\mathcal{W}(T_0) & = G^{\vee}_{0,4}(w_1,w_2,w_3,w_4)\,, \\
\mathcal{W}(T_1^{(i,j)}) & = \widetilde{G}_{0,2}^{\vee}(w_i,w_j)\, G^{\vee}_{0,3}(w_i,w_k,w_l)\,, \\
\mathcal{W}(T_2^{(i,j)}) & =  \widetilde{G}^{\vee}_{0,2}(w_i,w_j)\,\widetilde{G}^{\vee}_{0,2}(w_i,w_k)\,\widetilde{G}^{\vee}_{0,2}(w_j,w_l)\,, \\
\mathcal{W}(T_3^{(i)}) & = \widetilde{G}^{\vee}_{0,2}(w_i,w_j)\, \widetilde{G}^{\vee}_{0,2}(w_i,w_k)\,\widetilde{G}^{\vee}_{0,2}(w_i,w_l)\,,
\end{split}
\end{equation*}
where $\{i,j,k,l\}= [4]$.

\begin{figure}[h!]
\includegraphics[width=\textwidth]{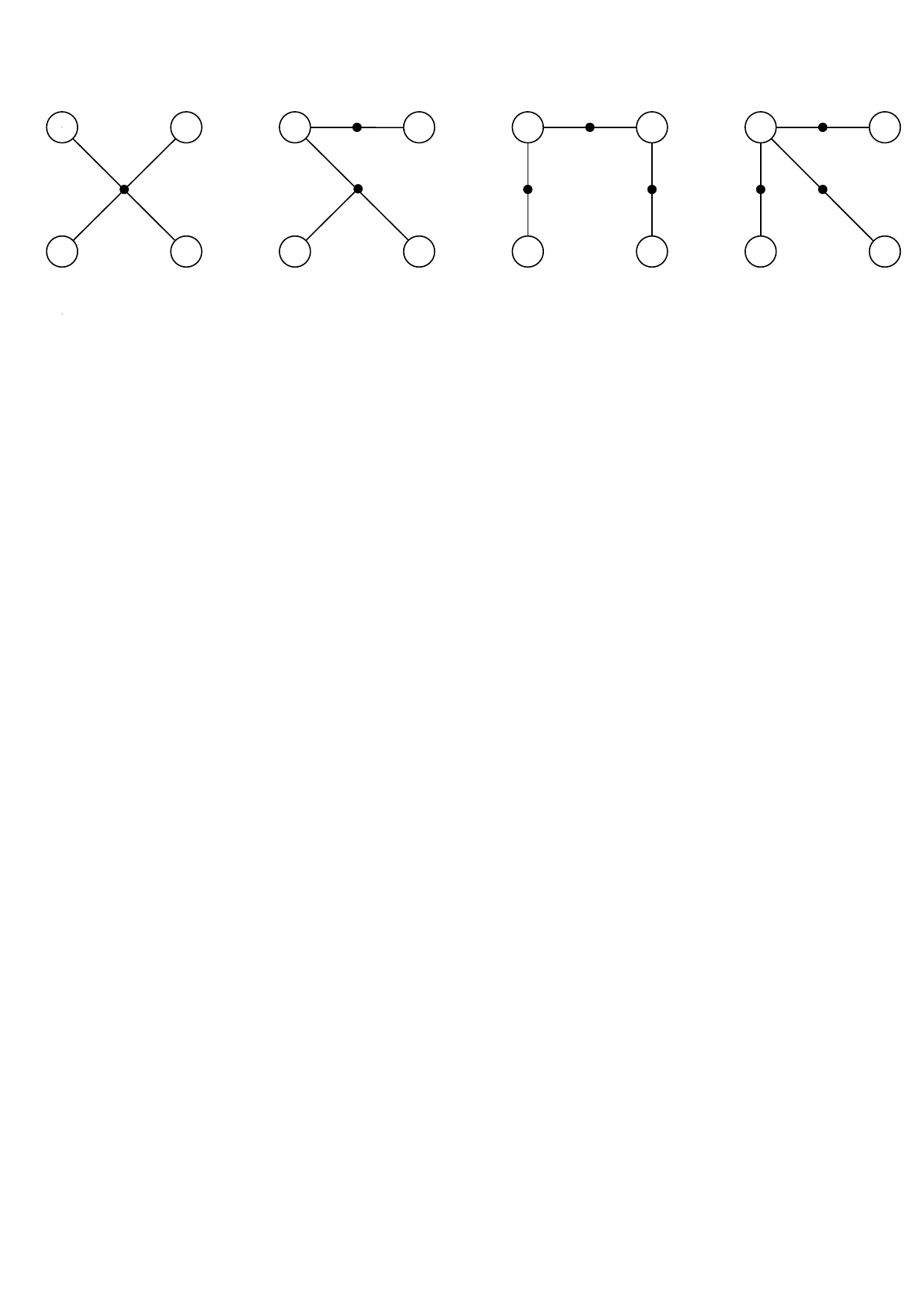}
\put (-445,0){$T_0$}
\put (-477,102){$1$}\put (-407,102){$2$}\put (-477,32){$3$}\put (-407,32){$4$}
\put (-320,0){$T_1^{(i,j)}$}
\put (-345,102){$i$}\put (-275,102){$j$}\put (-346,32){$k$}\put (-274,32){$l$}
\put (-185,0){$T_2^{(i,j)}$}
\put (-214,102){$i$}\put (-144,102){$j$}\put (-214,32){$k$}\put (-143,32){$l$}
\put (-52,0){$T_3^{(i)}$}
\put (-82,102){$i$}\put (-12,102){$j$}\put (-82,32){$k$}\put (-11,32){$l$}
\caption{\label{G04fig} For $n=4$, there are $1+12+12+4$ trees in $\mathcal{G}_{0,4}$, which we call $T_0$, $T_1^{(i,j)}$, $T_2^{(i,j)}$ and $T_3^{(i)}$, respectively. In the two middle trees, $i,j \in [4]$ are pairwise distinct; in the last one $i \in [4]$. The missing indices ($k$ and $l$ for the two middle ones; $j, k$ and $l$ for the last one) are chosen so that $\{i,j,k,l\}= [4]$, and their order does not matter.}
\end{figure}

The functional relation of Theorem~\ref{thm:R-transform-GenusZero} for $n = 4$ yields: 
\begin{equation*}
\begin{split}
& G_{0,4}(X_1,X_2,X_3,X_4)  =  
\bigg(\prod_{i = 1}^{4} \vec{\mathsf{O}}_{0}^{\vee}(w_i) \bigg)
\mathcal{W}(T_0) + \sum_{i=1}^4 \vec{\mathsf{O}}_{1}^{\vee}(w_i)\bigg(\prod_{\ell \neq i} \vec{\mathsf{O}}_{0}^{\vee}(w_{\ell})\bigg)\sum_{j \neq i} \mathcal{W}(T_1^{(i,j)}) \\
& + \sum_{i=1}^4 \vec{\mathsf{O}}_{1}^{\vee}(w_i)\sum_{j\neq i}\vec{\mathsf{O}}_{1}^{\vee}(w_j)\bigg(\prod_{\ell \neq i, j} \vec{\mathsf{O}}_{0}^{\vee}(w_{\ell})\bigg) \mathcal{W}(T_2^{(i,j)}) + \sum_{i=1}^4 \vec{\mathsf{O}}_{2}^{\vee}(w_i)\bigg(\prod_{\ell \neq i} \vec{\mathsf{O}}_{0}^{\vee}(w_{\ell})\bigg) \mathcal{W}(T_3^{(i)})\\
& =\Big(\prod_{a = 1}^4 P^{\vee}(w_a)\Big)\Bigg[\mathcal{W}(T_0)- \sum_{i = 1}^4 w_i  \partial_{w_i}\bigg(\frac{\sum_{j \neq i} \mathcal{W}(T_1^{(i,j)})}{G_{0,1}^{\vee}(w_i)\frac{w_i}{x(w_i)}\partial_{w_i} x(w_i)}\bigg) \\
&+ \sum_{i=1}^4 \sum_{j\neq i}w_iw_j\partial_{w_i}\partial_{w_j}\bigg(\frac{\mathcal{W}(T_2^{(i,j)})}{G_{0,1}^{\vee}(w_i)\frac{w_i}{x(w_i)}\partial_{w_i} x(w_i)G_{0,1}^{\vee}(w_j)\frac{w_j}{x(w_j)}\partial_{w_j} x(w_j)}\bigg) \\
&+ \sum_{i=1}^4 w_i\partial_{w_i} \bigg[ \frac{1}{\frac{w_i}{x(w_i)}\partial_{w_i} x(w_i)}w_i\partial_{w_i}+1\bigg]\bigg(\frac{\mathcal{W}(T_3^{(i)})}{(G_{0,1}^{\vee}(w_i))^2\frac{w_i}{x(w_i)}\partial_{w_i} x(w_i)}\bigg)
\Bigg]\,,
\end{split}
\end{equation*}
with the substitution $X_a = w_a/G_{0,1}^{\vee}(w_a)=1/x_a$ for $a \in [4]$. In terms of the differential forms \eqref{omegdiff}, the relation reads
\begin{equation*}
\begin{split}
\omega_{0,4}(x_1,x_2,x_3,x_4) & = \omega^{\vee}_{0,4}(x_1,x_2,x_3,x_4)-\sum_{i=1}^4\dd_{w_i}\bigg(\frac{\sum_{j \neq i} \widetilde{\omega}_{0,2}^{\vee}(w_i,w_j)\,\omega_{0,3}^{\vee}(w_i,w_k,w_l)}{\dd x_i \,\dd w_i}\bigg)\\
&+\sum_{i=1}^4\sum_{j\neq i}\dd_{w_i}\dd_{w_j}\bigg(\frac{\widetilde{\omega}_{0,2}^{\vee}(w_i,w_j)\,\widetilde{\omega}_{0,2}^{\vee}(w_i,w_k)\,\widetilde{\omega}_{0,2}^{\vee}(w_j,w_l)}{\dd x_i \,\dd w_i\,\dd x_j \,\dd w_j}\bigg)\\
&+\sum_{i=1}^4\dd_{w_i}\bigg(\big(1+x_i\partial_{x_i}\big)\Big(\frac{\widetilde{\omega}_{0,2}^{\vee}(w_i,w_j)\,\widetilde{\omega}_{0,2}^{\vee}(w_i,w_k)\,\widetilde{\omega}_{0,2}^{\vee}(w_i,w_l)}{x_i \,\dd x_i \,(\dd w_i)^2}\Big)\bigg),
\end{split}
\end{equation*}
where we use the substitution $x_a = G_{0,1}^{\vee}(w_a)/w_a$ for $a \in [4]$ and the convention for $i,j,k,l$ was explained in the legend of Figure~\ref{G04fig}.

\subsection{Relation to topological recursion}
\label{sec:realun}
When $G_{g,n}$ is the generating series of ordinary maps of genus $g$ with $n$ boundaries, $G_{g,n}^{\vee}$ enumerates fully simple maps of the same topology \cite{BG-F18,borot2019relating}. In this case, it is known since Eynard \cite{E1MM,Eformal,Ebook} that $\omega_{g,n}$s are computed by the topological recursion \cite{EO07inv,EORev} applied to the spectral curve
\[
\mathcal{S} = \big(\mathbb{P}^1,x,w,\tfrac{\dd z_1 \dd z_2}{(z_1 - z_2)^2}\big)\,,
\]
while it was conjectured in \cite{BG-F18} and proved in \cite{BDBKS,BCG-F} that $\omega_{g,n}^{\vee}$s are computed by the topological recursion applied to
\[
\mathcal{S}^{\vee} = \big(\mathbb{P}^1,w,x,\tfrac{\dd z_1\dd z_2}{(z_1 - z_2)^2}\big)\,.
\]
This gives a class of examples to test our functional relations numerically. The $(0,3)$ formula \eqref{03form} was indeed shown to hold in this situation in \cite[Section 6]{BG-F18}, and here we see that in fact it holds in the greater generality provided by the master relation \eqref{master}. To test the $(1,1)$ formula \eqref{11form}, let us consider the case of quadrangulations (maps with faces of degree $4$), which corresponds to the spectral curve in parametric form
\[
x(z) = c\Big(z + \frac{1}{z}\Big)\,,\qquad w(z) = \frac{z^2 - \tau}{cz^3}\,,\qquad \frac{\dd z_1\dd z_2}{(z_1 - z_2)^2} = \widetilde{\omega}_{0,2}(z_1,z_2) = \widetilde{\omega}_{0,2}^{\vee}(z_1,z_2) \,,
\]
where $t$ is the weight per quadrangle, and
\[
c = \sqrt{\frac{1 - \sqrt{1 - 12t}}{6t}}\,,\qquad \tau = ct^4 \,.
\]
From this we deduce
\begin{equation*}
\begin{split}
\omega_{0,2}^{\vee}(z_1,z_2) & = \widetilde{\omega}_{0,2}^{\vee}(z_1,z_2) - \frac{w'(z_1)w'(z_2)\dd z_1\dd z_2}{(w(z_1) - w(z_2))^2} = \frac{\tau\big(z_1^2z_2^2 + \tau(z_1^2 + 4z_1z_2 + z_2^2)\big) \dd z_1 \dd z_2}{\big(\tau(z_1^2 + z_1z_2 + z_2^2) - z_1^2z_2^2\big)^2}\,, \\
\frac{\omega_{0,2}^{\vee}(z,z)}{2\,\dd x(z) \,\dd w(z)} & = -\frac{\tau z^4(z^2 + 6\tau)}{2(z^2 - 3\tau)^3(z^2 - 1)}\,,
\end{split}
\end{equation*}
and 
\begin{equation*}
\begin{split}
\frac{1}{24}\frac{\dd}{\dd x}\bigg(\frac{\partial_w^2 x}{\partial_w x}\bigg) & = \frac{1}{24\, x'(z)} \partial_z\bigg(\frac{1}{w'(z)}\partial_z\ln\Big(\frac{x'(z)}{w'(z)}\Big)\bigg) \\
& = - \frac{z^4\big(z^8  - 3z^6(3\tau + 1) + 18\tau z^4 (3\tau + 1) + 3\tau z^2(-33\tau + 1) + 27\tau^2\big)}{12(z^2 - 1)^3(z^2 - 3\tau)^3}\,.
\end{split}
\end{equation*}
Besides, \cite[Section 5.2]{BG-F18} computed from topological recursion\footnote{There is a misprint in \cite{BG-F18}, namely the left-hand side of formula (5.2) should be divided by $x'(z)$ and the left-hand side of formula (5.3) should be divided by $w'(z)$.}:
\begin{equation}
\begin{split}
\omega_{1,1}(z) & = \frac{z\big(\tau z^4 + z^2(1 - 5\tau) + \tau\big)}{(z^2 - 1)^4(3\tau - 1)^2}\,\dd z\,, \\
 \omega_{1,1}^{\vee}(z) & = \frac{3\tau^2 z\big((3\tau - 2)z^4 + 3\tau(9\tau - 1)z^2 - 27\tau^3\big)}{(z^2 - 3\tau)^4(3\tau - 1)^2}\,\dd z\,,
 \end{split}
\end{equation}
and we checked that \eqref{11form} is indeed satisfied.

\medskip

We suggest that the master relation \eqref{master}, or rather the functional relations given in Theorem~\ref{thm:R-transform-HigherGenera} when written in terms of differentials, in fact describe the effect of the symplectic exchange $x \leftrightarrow w$ in topological recursion.

\begin{conjecture}
\label{ConjTR} Let $\mathcal{C}$ be a compact Riemann surface, $x,w$ are meromorphic functions on $\mathcal{C}$ such that $\dd x$ and $\dd w$ do not have common zeroes, and $\mathcal{B}$ is a fundamental bidifferential of the second kind. We call $\omega_{g,n}$ the differentials obtained from the topological recursion with the spectral curve  $(\mathcal{C},x,w,\mathcal{B})$, and $\omega_{g,n}^{\vee}$ the ones associated to the spectral curve $(\mathcal{C},w,x,\mathcal{B})$, and define  $\widetilde{\omega}_{0,2} = \widetilde{\omega}_{0,2}^{\vee} = \mathcal{B}$. Then, these differentials will satisfy for all $2g - 2 + n \geq 0$ the functional relations of Theorem~\ref{thm:R-transform-HigherGenera} (after they are converted in relations between meromorphic differentials on $\mathcal{C}$).
\end{conjecture}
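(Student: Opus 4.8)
The plan is to deduce the conjecture from the general theory of the $x$--$w$ swap (symplectic exchange) in topological recursion, reducing it to a combinatorial identification with the functional relations of Theorem~\ref{thm:R-transform-HigherGenera}. The starting observation is that topological recursion is a \emph{local} operation: for $2g-2+n>0$, the differentials $\omega_{g,n}$ are computed by residues at the zeros of $\dd x$, with a recursion kernel built from $w$, $\dd x$ and $\mathcal{B}$, while the $\omega_{g,n}^{\vee}$ are computed by residues at the zeros of $\dd w$. The hypothesis that $\dd x$ and $\dd w$ have no common zero guarantees that these two ramification divisors are disjoint, so that the two kernels are built from non-degenerate local data. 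The first step is therefore to recast the sought identity as a relation between local expansions at a generic point $p\in\mathcal{C}$: choosing $w$ as a local coordinate and expanding, $\omega_{g,n}$ and $\omega_{g,n}^{\vee}$ become the $n$-point functions $G_{g,n}$ and $G_{g,n}^{\vee}$ of the statement, the substitution $X_i=w_i/G_{0,1}^{\vee}(w_i)$ expresses the change between the two spectral coordinates, and $P^{\vee}(w_i)=\dd\ln w_i/\dd\ln X_i$ records the associated Jacobian.

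Second, I would invoke the universal formula for the symplectic exchange in topological recursion \cite{EOxy,BDBKS}, which expresses the $\omega_{g,n}^{\vee}$ attached to $(\mathcal{C},w,x,\mathcal{B})$ in terms of the $\omega_{g,n}$ attached to $(\mathcal{C},x,w,\mathcal{B})$ through $\varsigma$-deformed differential operators of exactly the shape appearing in Definition~\ref{def:weights}. The content of the conjecture then becomes the assertion that this universal swap formula coincides, term by term, with the graph expansion of Theorem~\ref{thm:R-transform-HigherGenera}. Since that theorem is a specialisation of the closed formulas of \cite{BDBKS} and the swap is governed by the same Fock-space machinery, the matching should reduce to identifying the hyperedge weights $\mathsf{c}^{\vee}(u_I,w_I)$ and the vertex operators $\vec{\mathsf{O}}^{\vee}(w_i)$ on the two sides, together with the bicoloured-graph combinatorics and the automorphism factors. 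The cleanest way to organise this would be to prove that the operator $\mathsf{D}$ of Theorem~\ref{thm:hbarstarproduct} is precisely the Fock-space avatar of the symplectic exchange, so that the master relation $Z=\mathsf{D}Z^{\vee}$ holds for the topological partition functions attached to a general spectral curve and its swap, after which Theorem~\ref{thm:R-transform-HigherGenera} applies verbatim.

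The main obstacle, in my view, is the passage from this local and formal picture to the genuinely global geometric statement. The functional relations were established in a situation where every object is a formal power series at a single point (essentially $\mathcal{C}=\mathbb{P}^1$ with the distinguished point $w=0$), whereas a general spectral curve has several ramification points for $x$ and for $w$, possibly of higher order, and $\mathcal{B}$ is an arbitrary fundamental bidifferential of the second kind rather than a normalised Cauchy kernel. One must then check that an identity of germs valid in every relevant chart promotes to an identity of global meromorphic differentials on $\mathcal{C}$, and that the graph expansion correctly accumulates the contributions of all the branch points of both $x$ and $w$. A related subtlety is to interpret the substitution $X_i=w_i/G_{0,1}^{\vee}(w_i)$ globally, as the comparison of the two functions $x$ and $w$ along the curve, and to reconcile its local multivaluedness with the single-valued differentials produced by the recursion.

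As a self-contained alternative that avoids invoking the explicit swap formula, one could instead show directly that the right-hand side of Theorem~\ref{thm:R-transform-HigherGenera}, assembled from the $\omega_{g,n}^{\vee}$ of $(\mathcal{C},w,x,\mathcal{B})$, satisfies the abstract linear and quadratic loop equations at the zeros of $\dd x$, together with the correct pole structure and the vanishing of the relevant residues. As these properties characterise the topological recursion differentials of $(\mathcal{C},x,w,\mathcal{B})$ uniquely, equality would follow. In this route the technical heart is verifying that the $\varsigma$-deformed operators of Definition~\ref{def:weights}, which are adapted to the ramification of $w$, conspire to produce the \emph{quadratic} loop equation at the ramification of $x$; I expect this reshuffling of the ramification structure to be the decisive difficulty in either approach.
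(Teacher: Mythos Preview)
The statement you are attempting to prove is stated in the paper as a \emph{conjecture}, not a theorem: the paper does not provide a proof. It only records evidence, namely that the relation holds by construction for $(g,n)=(0,2)$, can be checked by hand for $(0,3)$ and $(1,1)$, and holds for all $(g,n)$ in the special case of spectral curves arising from map enumeration (and more generally from unitarily invariant matrix ensembles), where the master relation $Z=\mathsf{D}Z^{\vee}$ is available for independent reasons. The authors explicitly write ``We hope to return to the matter soon.''

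Your proposal is therefore not to be compared with a proof in the paper but assessed as a strategy for an open problem. The first route you suggest --- invoking a ``universal formula for the symplectic exchange'' from \cite{EOxy,BDBKS} and matching it to Theorem~\ref{thm:R-transform-HigherGenera} --- is circular as stated: the conjecture \emph{is} the assertion that the symplectic exchange is governed by these functional relations for a general spectral curve, and at the time of writing no such universal swap formula of the required shape was established in the literature you cite. In particular, showing that ``$\mathsf{D}$ is the Fock-space avatar of the symplectic exchange'' for an arbitrary compact curve $\mathcal{C}$ with arbitrary $\mathcal{B}$ is precisely the missing ingredient, not something one can invoke. You correctly identify the genuine obstacle: the Fock-space derivation of Theorem~\ref{thm:R-transform-HigherGenera} lives in a single formal disc, whereas topological recursion on a general spectral curve is a global statement involving residues at all ramification points and a bidifferential that need not be the standard Cauchy kernel.

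Your alternative route via loop equations is a reasonable line of attack and is in the spirit of how such results are typically proved, but as you yourself note, the heart of the matter --- showing that the $\varsigma$-deformed operators built from the $w$-ramification produce the quadratic loop equation at the $x$-ramification --- is left entirely open. So what you have written is a fair sketch of where the difficulty lies, but not a proof.
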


This conjecture holds by definition for $(g,n) = (0,2)$. It holds for $(0,3)$ (formula justified in \cite{BG-F18}) as well as for $(1,1)$ by comparison of \eqref{11form} with \cite[Lemma C.1]{EO07inv} where one can use $w$ as a coordinate. Besides, as we have just discussed, it holds for all $(g,n)$ in the case of spectral curves corresponding to map enumeration. This conjecture should be a step towards understanding the symplectic invariance property of topological recursion envisioned in \cite{EO2MM,EOxy}, which one should extract from the analytic properties of  $\omega_{g,1} + \omega_{g,1}^{\vee}$ for all $g \geq 1$.

Furthermore, the conjecture remains true in the more general context of the matrix model with external field. The generalised resolvents of the matrix model with external field were proved to satisfy topological recursion in \cite{EO07inv,EORev}, while the topological recursion for the same spectral curve, but after exchanging the role of $x$ and $w$, produces diagonal correlation functions of the matrix model with external field \cite[Section 4.1]{BCEG-F}, which combinatorially corresponds to the enumeration of the so-called ciliated maps\footnote{Ciliated maps were introduced in \cite{BCEG-F} in order to study $r$-spin intersection numbers on the moduli space of curves in the context of topological recursion and in relation to map enumeration.}. As proved in \cite{BG-F18} and recalled in Theorem~\ref{tunome}, the master relation appears in general unitarily invariant ensembles of random hermitian matrices, so it also applies to the generalised resolvents (of the type $p_{\lambda}$ in Theorem~\ref{tunome}) and the diagonal correlation functions (of the type $\mathcal{P}_{\nu}$ in Theorem~\ref{tunome}) of the matrix model with external field.

\subsection{Analytic properties}

Whether the generating series are Taylor expansions of analytic functions depends on the topological partition functions that one starts with. However, we can deduce from Theorem~\ref{thm:R-transform-HigherGenera} that the master transformation induced by $\mathsf{D}$ respects the analytic properties of the generating functions, provided one understands well the analytic properties for the $(0,1)$-case.

\begin{proposition}\label{pr313}
Let $Z,Z^{\vee}$ be two topological partition functions such that $Z = \mathsf{D}Z^{\vee}$, and $G_{g,n},G_{g,n}^{\vee}$ the generating series coming from the topological expansion of their $n$-point functions, cf.~Section~\ref{npointSec}. Suppose that there exists a Riemann surface $\Sigma$ equipped with a marked point $q_{\infty}$ and two meromorphic functions $\mathsf{x}$ and $\mathsf{w}$ such that
\begin{itemize}
\item $q_{\infty}$ is a simple pole of $\mathsf{x}$ and a simple zero of $\mathsf{w}$;
\item $W_{0,1} \circ \mathsf{x}$ is the series expansion of the function $\mathsf{w}$ near $q_{\infty}$, and $X_{0,1} \circ \mathsf{w}$ is the series expansion of the function $\mathsf{x}$ near $q_{\infty}$.
\end{itemize}
Then, the two properties are equivalent:
\begin{itemize}
\item[(i)] for $2g - 2 + n \geq 0$, $G_{g,n}(1/\mathsf{x}(z_1),\ldots,1/\mathsf{x}(z_n))$ is the series expansion of a meromorphic function of $(z_1,\ldots,z_n) \in \Sigma^n$ near $(q_{\infty},\ldots,q_{\infty})$;
\item[(ii)]  for $2g - 2 + n \geq 0$, $G_{g,n}^{\vee}(\mathsf{w}(z_1),\ldots,\mathsf{w}(z_n))$ is the series expansion of a meromorphic function of $(z_1,\ldots,z_n) \in \Sigma^n$ near $(q_{\infty},\ldots,q_{\infty})$.
\end{itemize}
\end{proposition}
\begin{proof}
Assume (ii). We first observe that $\frac{1}{\mathsf{x}}$ gives a meromorphic analytic continuation on $\Sigma$ of $X \circ \mathsf{w}$, and  $\frac{\mathsf{x} \dd \mathsf{w}}{\mathsf{w} \dd \mathsf{x}}$ gives a meromorphic analytic continuation on $\Sigma$ of $P^{\vee} \circ \mathsf{w}$.

Second, $G_{0,2}(1/\mathsf{x}(z_1),1/\mathsf{x}(z_2))$ admits an analytic continuation as a meromorphic function of $(z_1,z_2) \in \Sigma^2$ because the right-hand side of \eqref{0102} manifestly does. As $\widetilde{G}_{0,2}(1/\mathsf{x}(z_1),1/\mathsf{x}(z_2))$ is obtained from $G_{0,2}$ by the addition of a manifestly meromorphic term (cf.~\eqref{npointfun}), it also admits an analytic continuation as a meromorphic function of $(z_1,z_2) \in \Sigma^2$.

Third, for any fixed $g \geq 1$, the right-hand side of \eqref{Ggnpass-n1} contains only a finite sum over $m$, consisting of operators that are polynomial in $G_{0,1}^{\vee}(w)$, in $P^{\vee}(w)$, in $w$, in $\partial_{w}$ and in $\partial_{G_{0,1}^{\vee}(w)}$, acting on $P^{\vee}(w)w \partial_w G_{0,1}^{\vee}(w)$. The derivations $\partial_{w}(\cdot)$ and $\partial_{G_{0,1}^{\vee}(w)}(\cdot)$ acting on formal power series of $w$ correspond at the level of meromorphic functions on $\Sigma$ to the operations $\frac{1}{\dd \mathsf{w}} \dd(\cdot)$ and $\frac{1}{\dd(\mathsf{x}\mathsf{w})} \dd (\cdot)$. We deduce that $\Delta_g^{\vee}(1/\mathsf{x}(z))$ admits an analytic continuation as a meromorphic function of $z \in \Sigma$.

Last, for fixed $(g,n)$ with $2g - 2 + n > 0$, $G_{g,n}$ is expressed in terms of $G^{\vee}_{g',n'}$ via \eqref{Ggnpass}, involving  the term $\Delta_g^{\vee}$ that we just handled (present for $n = 1$ only) and a sum over graphs from Definition~\ref{def:graphs}. Only the finitely many graphs with first Betti number bounded by $g$ can contribute. The weight of these graphs is described in Definition~\ref{def:weights}.  As $G_{g,n}$ appears as the coefficient of  $\hbar^{2g - 2 + n}$ in the weighted sum, the formal series $\varsigma$ (cf.~\eqref{varsigmaw}) appearing in the weight of hyperedges $\mathsf{c}^{\vee}$ and in the operator $\vec{\mathsf{O}}$ for white vertices can effectively be truncated to a polynomial of some degree depending on $g$ and $n$. So, the weight of a hyperedge  of cardinality $\# I$ can be replaced by a truncated weight admitting a meromorphic analytic continuation in $\Sigma^{\# I}$. Likewise, in the truncation of the operator-valued weight of white vertices, the sum over $r,m$ (see \eqref{O2Xi}) only receives finitely many contributions: 
\begin{itemize}
\item the sum over $r$ is associated to the coefficient of $u_i^r$. This variable appears to be multiplied by $\hbar$ in the expressions of $\mathsf{c}^{\vee}$ and $\vec{\mathsf{O}}$. Since we extract the coefficient of $\hbar^{2g - 2 + n}$ in the weighted sum, $r$ is bounded by $2g-2+n$.
\item The sum over $m$ is associated to the coefficient of $v_i^m$. Since the sum over $r$ is bounded and we extract the coefficient of $\hbar^{2g - 2 + n}$, the expression for $\vec{\mathsf{O}}$ is actually a polynomial in $v_i$. Therefore the sum over $m$ is finite.
\end{itemize}
Those contributions are polynomials in generating series and derivations that admit meromorphic analytic continuation on $\Sigma$. Therefore, $G_{g,n}(1/\mathsf{x}(z_1),\ldots,1/\mathsf{x}(z_n))$ admits an analytic continuation as a meromorphic function of $(z_1,\ldots,z_n) \in \Sigma^n$, and we have checked (i).

The proof of (i) $\Rightarrow$ (ii) is completely analogous based on the dual expressions of $G_{g,n}^{\vee}$ in terms of $G_{g,n}$ given in Section~\ref{coeffgenus0}.
\end{proof}

The assumption on analytic properties of $(0,1)$ in Proposition~\ref{pr313} could be reformulated as: $G_{0,1}(1/\mathsf{x}(z))$ and $G_{0,1}^{\vee}(\mathsf{w}(z))$ both represent the series expansion of the function $\mathsf{x}(z)\mathsf{w}(z)$ of $z \in \Sigma$ near $q_{\infty}$. The triple $(\Sigma,\mathsf{x},\mathsf{w})$ is then called \emph{spectral curve}. In full generality checking whether there exists or not a spectral curve (and constructing it explicitly enough) is hard, but it can be handled in specific examples, in particular when $W_{0,1}$ is the series expansion of an algebraic function (as in the example of Section~\ref{sec:realun}).

The conclusion of Proposition~\ref{pr313} could be reformulated using the differential form-valued generating series: the $\omega_{g,n}$ extend to meromorphic $n$-forms on $\Sigma^n$ for $2g -2 + n \geq 0$ if and only if $\omega_{g,n}^{\vee}$ do. If this is the case, we usually keep using the symbols $\omega_{g,n}$ and $\omega_{g,n}^{\vee}$ for these meromorphic $n$-forms on $\Sigma^n$. In these terms, for instance, \eqref{om02tildeequl} translates into an identity between meromorphic bidifferentials on $\Sigma^2$
$$
\omega_{0,2} + \frac{\dd \mathsf{x}_1 \dd \mathsf{x}_2}{(\mathsf{x}_1 - \mathsf{x}_2)^2} = \omega_{0,2}^{\vee} + \frac{\dd\mathsf{w}_1\dd \mathsf{w}_2}{(\mathsf{w}_1 - \mathsf{w}_2)^2}\,,
$$
where $\mathsf{x}_i = \mathsf{x} \circ {\rm pr}_i$ and ${\rm pr}_i : \Sigma^2 \rightarrow \Sigma$ is the projection on the $i$-th copy. Likewise, \eqref{11form} translates into an identity of meromorphic $1$-forms on $\Sigma$
\begin{equation}
\label{11formanal}
\mathsf{\omega}_{1,1} + \omega_{1,1}^{\vee} = \dd\Bigg[ \frac{\iota^* \omega_{0,2}^{\vee}}{2\,\dd \mathsf{x} \,\dd \mathsf{w}} - \frac{1}{24\,\dd \mathsf{x}}\dd\Big(\frac{\dd\big(\frac{\dd \mathsf{x}}{\dd \mathsf{w}}\big)}{\dd \mathsf{x}}\Big)\bigg]\,,
\end{equation}
where $\iota : \Sigma \rightarrow \Sigma \times \Sigma$ is the diagonal inclusion (specialisation to the two same points).

As the formula in Theorem~\ref{thm:R-transform-HigherGenera} has an explicit structure, the location and order of poles of $\omega_{g,n}^{\vee}$ knowing the singularities of $\omega_{g,n}$ (or vice versa) could in principle be described, but we do not pursue it here.

\section{Proof of the main results}

\label{Sec4}

\subsection{All genus: proof of Theorem~\ref{thm:R-transform-HigherGenera}}
\label{proofg}

We rely on the description of the projective representation of $\widehat{\mathfrak{gl}}_{\infty}(R)$ in terms of differential operators on $\mathcal{F}_R$ \cite{KacBook}. Our starting point is the formula found \textit{e.g.}~in \cite[Proposition 3.1]{BDBKSexplicit}, expressing the conjugation of the Heisenberg field with the operator $\mathsf{D}$ or $\mathsf{D}^{-1}$:
\begin{equation}
\label{DJD} \begin{split}
\mathsf{D}^{\mp 1} \,\widetilde{\mathsf{J}}(X)\, \mathsf{D}^{\pm 1} & = \sum_{k \in \mathbb{Z}} X^{k}  [w^{k}] \,\,\sum_{r \geq 0} \partial_{y}^r \exp\bigg(\pm k\, \frac{\varsigma(k\hbar\partial_y)}{\varsigma(\hbar\partial_{y})} \ln(1 + y) \bigg)\Big|_{y = 0}[u^{r}] \frac{1}{\hbar u \varsigma(\hbar u)} \\
& \quad
 \cdot\exp\bigg(\sum_{m > 0} \hbar u\, \varsigma(\hbar u w\partial_{w}) \mathsf{J}_{-m} w^{-m}\bigg) \exp\bigg(\sum_{m > 0} \hbar u\, \varsigma(\hbar u w\partial_{w}) \mathsf{J}_{m}w^m\bigg)\,,
\end{split}
\end{equation}
where we recall $\varsigma(w) = 2w^{-1}{\rm sinh}(w/2) = 1 + \frac{w^2}{24} + O(w^4)$. Let us employ this formula with the sign $+$, substitute it in Equation~\eqref{Gntildes} and then make use of the commutation relations $[\mathsf{J}_l,\mathsf{J}_m] = l \delta_{l + m,0}$, as well as the properties $\bra \mathsf{J}_{-m} = 0$ and $\mathsf{J}_m \ket =0$, for $m>0$. As explained in \cite[Section 2]{BDBKS} (the case discussed in \emph{op.~cit.}~is a bit more general: it depends on a choice of a formal power series $\psi(y,\hbar^2)$ that is equal to $\log(1+y)$ in our case), we obtain a sum over the graphs of Definition~\ref{def:graphs}, where the weights consist of the following elementary blocks.

\begin{definition}\label{def:AssignGraph} We fix variables $(u_i,v_i,w_i)_{i = 1}^n$.
\begin{itemize}
\item To a hyperedge $I \subseteq [n]$, we attach the weight (already in Definition~\ref{def:weights}):
\[
\mathsf{c}^{\vee}(u_I,w_I) = \prod_{i \in I} \hbar u_i \varsigma(\hbar u_i w_i \partial_{w_i}) \widetilde{G}^{\vee}_{\# I}(w_{I})\,.
\]
\item To the $i$-th white vertex, we attach the operator weight:
\begin{equation}
\label{U2Xi}\begin{split}
\vec{\mathsf{U}}^{\vee}(X_i) & = \sum_{k \in \mathbb{Z}} X_i^{k} \cdot [w_i^{k}] \,\,\sum_{r \geq 0} \partial_y^{r}\exp\bigg(k\,\frac{\varsigma(k\hbar\partial_{y})}{\varsigma(\hbar\partial_{y})}\,\ln(1 + y)\bigg)\Big|_{y = 0} \\
& \phantom{\sum_{k \in \mathbb{Z}} X_i^{k}} \!\! \cdot [u_i^{r}]\,\,\frac{\exp\big(\hbar u_i\, \varsigma(\hbar u_i w_i \partial_{w_i}) (G_{1}^{\vee}(w_i) - \hbar^{-1})\big)}{\hbar u_i \,\varsigma(\hbar u_i)}\,.
\end{split}
\end{equation}
\end{itemize}
This operator acts from the left on series depending on variables $u_i$ and $w_i$ and gives as output a series in $X_i$. The subtraction of $\hbar^{-1}$ in the last line kills the conventional constant added in the definition of the $1$-point function in \eqref{npointfun}.
\end{definition}

\begin{lemma}[Key combinatorial identity]  \label{lem:KeyComb} The relation $Z = \mathsf{D} Z^{\vee}$ is equivalent to the relations:
\begin{equation}
\label{eqn:KeyComb}
\forall n > 0,\qquad \widetilde{G}_n(X_1,\ldots,X_n) = \sum_{\Gamma \in \mathcal{G}_n} \frac{1}{\# {\rm Aut}(\Gamma)} \prod_{i = 1}^n \vec{\mathsf{U}}^{\vee}(X_i) \prod_{I \in \mathcal{I}(\Gamma)} \mathsf{c}^{\vee}(u_I,w_I)\,,
\end{equation}
where the first product is taken from left to right with $i$ increasing.
\end{lemma}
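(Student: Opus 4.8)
The plan is to compute the left-hand side of \eqref{eqn:KeyComb} directly from the hypothesis $Z = \mathsf{D}Z^{\vee}$, that is $e^{\mathsf{F}}\ket = \mathsf{D}\,e^{\mathsf{F}^{\vee}}\ket$, reorganise the result as the stated sum over graphs, and then deduce the equivalence from the fact that the connected correlators determine the partition function. Concretely, I would substitute $e^{\mathsf{F}}\ket = \mathsf{D}\,e^{\mathsf{F}^{\vee}}\ket$ into the definition \eqref{Gntildes} of $\widetilde{G}_n$. Since $\bra\ln\mathsf{D} = 0$ forces $\bra\mathsf{D} = \bra$, inserting $\mathsf{D}\mathsf{D}^{-1}$ between consecutive fields and absorbing the leftmost $\mathsf{D}$ into $\bra$ gives $\bra\prod_{i=1}^n\widetilde{\mathsf{J}}(X_i)\cdot\mathsf{D}\,e^{\mathsf{F}^{\vee}}\ket = \bra\prod_{i=1}^n\bigl(\mathsf{D}^{-1}\widetilde{\mathsf{J}}(X_i)\mathsf{D}\bigr)\cdot e^{\mathsf{F}^{\vee}}\ket$. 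Passing to connected expectation values commutes with this rewriting, so the whole left-hand side of \eqref{eqn:KeyComb} becomes a connected correlator of the conjugated Heisenberg fields against $e^{\mathsf{F}^{\vee}}\ket$.

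The second step inserts the explicit conjugation formula \eqref{DJD} with the upper sign. Each field $\mathsf{D}^{-1}\widetilde{\mathsf{J}}(X_i)\mathsf{D}$ then becomes a sum over $k\in\mathbb{Z}$ and $r\geq0$ of a scalar ``vertex prefactor'' — the $\partial_y^r$-expression at $y=0$ together with the $\frac{1}{\hbar u_i\varsigma(\hbar u_i)}$ factor — multiplying a normal-ordered product of a creation exponential in the $\mathsf{J}_{-m}$ and an annihilation exponential in the $\mathsf{J}_m$, $m>0$. I would then apply the bosonic Wick calculus: push every annihilation operator to the right using $[\mathsf{J}_l,\mathsf{J}_m]=l\,\delta_{l+m,0}$, and use $\mathsf{J}_m\,e^{\mathsf{F}^{\vee}}\ket = e^{\mathsf{F}^{\vee}}[\mathsf{J}_m,\mathsf{F}^{\vee}]\ket$, which holds because $\mathsf{F}^{\vee}$ is a series in creation operators, so the double commutator vanishes. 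Each annihilation leg of a white vertex then either contracts with a creation leg of another white vertex, producing a bare propagator of generating series $\frac{w_iw_j}{(w_i-w_j)^2}$, or is absorbed into $e^{\mathsf{F}^{\vee}}$ as a derivative, opening one leg of a connected correlator of $Z^{\vee}$.

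The third step recognises the resulting diagrammatics as the graphs of Definition~\ref{def:graphs}: white vertices are the insertion points $1,\dots,n$, black vertices are the correlator blobs carrying a hyperedge $I$, and an edge records a leg of white vertex $i$ landing on a black vertex. The scalar vertex prefactors reassemble into the operators $\vec{\mathsf{U}}^{\vee}(X_i)$ of \eqref{U2Xi}: in particular the single-leg contractions onto $e^{\mathsf{F}^{\vee}}$ are resummed into the vertex operator, whence the appearance of $G_1^{\vee}$ in \eqref{U2Xi}, while correlators with $\geq 2$ legs become the black vertices. The $\varsigma$-dressing $\prod_{i\in I}\hbar u_i\varsigma(\hbar u_iw_i\partial_{w_i})$ comes from the arguments of the exponentials in \eqref{DJD}, and — this is the delicate point — the bare propagators combine with the correlators $G^{\vee}_{\#I}$ so that each black vertex carries the \emph{shifted} function $\widetilde{G}^{\vee}_{\#I}(w_I)$, the shift $\frac{w_iw_j}{(w_i-w_j)^2}$ being supplied by the bare contraction at bivalent black vertices; this is the analogue at the level of $Z^{\vee}$ of the shift in \eqref{npointfun}. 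Summing over isomorphism classes of contraction patterns produces the factor $\frac{1}{\#\mathrm{Aut}(\Gamma)}$, and connectedness of $\bra\cdots\ket^{\circ}$ restricts the sum to the connected graphs $\mathcal{G}_n$, yielding \eqref{eqn:KeyComb} with the product ordered by increasing $i$ to respect the sector convention $|w_i|<|w_j|$ for $i<j$ in the $2$-point functions.

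For the equivalence, I would note that \eqref{eq2comp}--\eqref{npointfun} make the passage from a topological partition function to its family of connected $n$-point functions injective, so $Z$ is determined by the $\widetilde{G}_n$ (up to the conventional constants). Applying the forward direction to the partition function $\mathsf{D}Z^{\vee}$ itself shows that the right-hand side of \eqref{eqn:KeyComb} is precisely the family of $n$-point functions of $\mathsf{D}Z^{\vee}$; hence if $Z$ satisfies \eqref{eqn:KeyComb} for all $n>0$, then $Z$ and $\mathsf{D}Z^{\vee}$ share all their $n$-point functions and must coincide. The main obstacle throughout is the bookkeeping of the Wick expansion in the third step: verifying that the scalar data organises exactly into $\vec{\mathsf{U}}^{\vee}$ and the $\varsigma$-dressing, that the mixture of bare propagators and $\mathsf{F}^{\vee}$-derivatives reassembles the shifted $\widetilde{G}^{\vee}_{\#I}$ rather than some other combination, and that the multiplicities of equivalent contraction patterns match $\#\mathrm{Aut}(\Gamma)$. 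This is where I would lean most heavily on the template established in \cite{BDBKSexplicit, BDBKS}.
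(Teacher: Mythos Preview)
Your approach is correct and essentially identical to the paper's: the paper also starts from the conjugation formula \eqref{DJD} (with the $+$ sign), substitutes it into \eqref{Gntildes}, and then invokes the Heisenberg commutation relations together with $\bra\mathsf{J}_{-m}=0$ and $\mathsf{J}_m\ket=0$ to obtain the graph sum, explicitly deferring the Wick bookkeeping to \cite[Section~2]{BDBKS} and \cite[Section~3]{BDBKSexplicit} exactly as you do. The only point you leave slightly implicit is the treatment of self-contractions at a single white vertex (hyperedges $I=\{j,j\}$), where the divergent shift is absent and one gets the unshifted $G_2^{\vee}(w_j,w_j)$ as in Definition~\ref{def:weights}; otherwise your outline matches the paper's argument step for step.
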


\begin{proof} For the proof we just give a dictionary that explains how to match Equation~\eqref{eqn:KeyComb} to the statement of~\cite[Lemma 2.1]{BDBKS} assuming that the function $\psi(y,\hbar^2)$ in \emph{op.~cit.}~is set to be $\ln(1+y)$, and the variables $z_i$ there correspond to $w_i$ in our notation. In order to follow the table below one needs to remember the notation and terminology used in~\cite[Lemma 2.1]{BDBKS}.

\vspace{0.4cm}

\bgroup
\def\arraystretch{1.5}
\noindent
\begin{tabular}{l | l}
	\hline 
	\hspace*{19mm} Graphs in equation~\eqref{eqn:KeyComb}   & \hspace*{14mm} Graphs in \cite[Lemma 2.1]{BDBKS}  \\ 
	\hline 
	$\bullet\;$ the white vertex labeled by $i$ ($i=1,\ldots n$) & \hspace*{1mm} $\bullet\;$ the white vertex labeled by $i$, ($i=1,\ldots n$) \\
	$\bullet\;$ a hyperedge of cardinality $\geq 3$ & \hspace*{1mm} $\bullet\;$ a multi-edge of cardinality $\geq 3$ \\
	$\bullet\;$ a hyperedge of cardinality $2$ incident to & \hspace*{1mm} $\bullet\;$ the formal sum of a multi-edge of cardinality  \\
	 two different white vertices labeled by $i$ and $j$ & \hspace*{1mm} $2$	 incident to two different white vertices labeled  \\
	& \hspace*{1mm} by  $i$ and $j$ and an ordinary edge connecting  \\
	& \hspace*{1mm} these two vertices \\
	$\bullet\;$ a hyperedge of cardinality $2$ incident  & \hspace*{1mm} $\bullet\;$ a multi-edge of cardinality $2$ incident (twice)   \\
    (twice) to the white vertex labeled by $i$ & \hspace*{1mm} to the white vertex labeled by $i$ \\
	$\bullet\;$ the extra factor for the white vertex labeled
	& \hspace*{1mm} $\;\bullet\;$ the union of all multi-edges of cardinality $1$ 
	\\
 by $i$  (the numerator of the second line in~\eqref{U2Xi}) 
	& \hspace*{1mm} incident to the white vertex labeled by $i$
	\\
	\hline
\end{tabular}
\egroup

\vspace{0.6cm}

With this dictionary, we see that the contribution of all multi-edges of cardinality $\geq 3$ is exactly the same in both formulas; one just has to identify the expansions of $G_{k}^{\vee}(w_1,\dots,w_k)$ with $s_{g;t_1,\dots,t_k}$ in~\cite[Equation (11)]{BDBKS}. The same holds for hyperedges (in our terminology) of cardinality two that are incident twice to the same white vertex. 

Consider a hyperedge of cardinality $2$ incident to two different white vertices labeled by $i$ and $j$. By our convention, we assign to it the weight 
\[
\big(\hbar u_i \,\varsigma(\hbar u_i w_i \partial_{w_i})\big)\big(\hbar u_j \,\varsigma(\hbar u_j w_j \partial_{w_j})\big)\bigg(
\,G_{2}^{\vee}(w_i,w_j) +\frac{w_iw_j}{(w_i-w_j)^2}\bigg)\,.
\]
It is possible to say that for each such hyperedge we can choose either the first summand or the second one, and to take the sum over all possible choices for all such edges. In the notation and terminology of~\cite[Lemma 2.1]{BDBKS} the first choice is the weight associated to the respective multi-edge, and the second choice is the weight associated to what is called an ordinary edge in~\emph{op.~cit.}

In the notation of~\cite[Lemma 2.1]{BDBKS}, there can be any non-negative number of multi-edges of cardinality $1$ attached to a white vertex labeled by $i$. Taking into account the total weight is equal to the inverse order of the automorphisms of the graph, the weights of all such edges can be assembled into the extra exponential factors. These exponential factors are, in our notation, precisely $\exp\big(\hbar u_i\, \varsigma(\hbar u_i w_i \partial_{w_i}) (G_{1}^{\vee}(w_i) - \hbar^{-1})\big)$ for each $i=1,\dots,n$ (the shift by $\hbar^{-1}$ is the result of the convention we have chosen in Equation~\eqref{npointfun}). Note that these factors are exactly the missing ones that we need to match our operators $\vec{\mathsf{U}}^{\vee}(X_i)$ and the corresponding operators in~\cite[Lemma 2.1]{BDBKS}. 

This completes the comparison and establishes~\eqref{eqn:KeyComb} as a special case of~\cite[Lemma 2.1]{BDBKS}.
\end{proof}

To pass from \eqref{eqn:KeyComb} to the statement of Theorem~\ref{thm:R-transform-HigherGenera}, in particular to \eqref{Ggnpass}, we proceed in three steps, which were suggested by Kazarian in an unpublished manuscript (see~\cite{Kaz2019,Kaz2020a}). In our exposition we follow~\cite[Section 4.4]{BDBKSexplicit}.

First, if $\Phi(y)$, $\Psi(u)$ and $Y(w)$ are three formal power series with $Y(w) = O(w)$, we observe that
\begin{align}\label{eq:FirstStep-u}
\sum_{r \geq 0} (\partial_y^r\Phi)(0) \cdot [u^r]\,\, e^{uY(w)} \Psi(u) = \sum_{r \geq 0} (\partial_y^r \Phi)(Y(w)) \cdot [u^r]\,\, \Psi(u)\,,
\end{align}
which can be checked by direct expansion of the left-hand side in the variable $w$. This observation applied to $Y(w_i) = G_{0,1}^{\vee}(w_i) - 1$   allows us to remove the otherwise infinitely many\footnote{More precisely, for every fixed degree of $X_i$ this amounted to finitely many contributions, but their number increases with the degree of the monomial in $X$s, resulting into infinitely many contributions in the functional relation.} contributions $\exp\big(u_i (G^{\vee}_{0,1}(w_i) - 1)\big)$  in the second line of \eqref{U2Xi}. Note that this second line of \eqref{U2Xi} starts with $1/u$ for $n=1$. We will discuss this case separately later. For $n\geq 2$, Equation~\eqref{eqn:KeyComb} is turned by this trick into
\begin{equation}
	\label{eqn:KeyComb-2}
\begin{split}
& \widetilde{G}_n(X_1,\ldots,X_n) 
\\ 
& = \prod_{i=1}^n \Bigg[ \sum_{k \in \mathbb{Z}} X_i^{k}\cdot  [w_i^{k}] \,\,\sum_{r \geq 0} \partial_y^{r}\exp\bigg(k\,\frac{\varsigma(k\hbar\partial_{y})}{\varsigma(\hbar\partial_{y})}\,\ln(1 + y)\bigg)\Big|_{y = G^\vee_{0,1}(w_i)-1} [u_i^{r}] \\
& \quad \frac{\exp\big(\hbar u_i\, \varsigma(\hbar u_i w_i \partial_{w_i}) (G_{1}^{\vee}(w_i) - \hbar^{-1})-u_i(G_{0,1}^{\vee}(w_i) - 1)\big)}{\hbar u_i \,\varsigma(\hbar u_i)}\Bigg]
 \sum_{\Gamma \in \mathcal{G}_n} \frac{\prod_{I \in \mathcal{I}(\Gamma)} \mathsf{c}^{\vee}(u_I,w_I)}{\# {\rm Aut}(\Gamma)} \,.
\end{split}
\end{equation}

Second, we have to remove the explicit dependence on $k$s coming from the weight of the white vertices \eqref{U2Xi}. To this end, we use for $r \geq 0$ and $k \in \mathbb{Z}$:
\begin{equation}
 \label{eq:FirstFactorLB}
 \begin{split}
&  \partial_y^{r} \exp\Big(k\frac{\varsigma(k\hbar \partial_y)}{\varsigma(\hbar\partial_y)} \ln(1+y) \Big)\Big|_{y=G^\vee_{0,1}(w)-1} 
=  \partial_y^{r} \exp\Big(k\frac{\varsigma(k\hbar \partial_y)}{\varsigma(\hbar\partial_y)} \ln y \Big)\Big|_{y=G^\vee_{0,1}(w)} \\
& =  (G^\vee_{0,1}(w))^{k} \cdot \bigg( \exp(-k\ln y)\partial_y^{r} \exp\Big(k\frac{\varsigma(k\hbar \partial_y)}{\varsigma(\hbar\partial_y)} \ln y \Big)\bigg)\Big|_{y=G^\vee_{0,1}(w)}
 \\
& =  (G^\vee_{0,1}(w))^{k} \cdot \bigg( \Big(\partial_y+\frac{k}{y}\Big)^{r} \exp\Big(k\big(\frac{\varsigma(k\hbar \partial_y)}{\varsigma(\hbar\partial_y)}-1\big) \ln y \Big)\bigg)\Big|_{y=G^\vee_{0,1}(w)}\,.
\end{split}
\end{equation}
The second factor here has a polynomial dependence on $k$ in each degree in $\hbar$, and we can use the following trick to assemble it: if $Q(k)$ is a polynomial in $k$, we have
\[
\sum_{k} Q(k)\,X^{k} =  Q(X\partial_X) \sum_{k} X^{k} = \sum_{j \geq 0} (X\partial_X)^j \,[v^j ]\, \sum_{k} X^{k} Q(v)\,.
\]
For every given monomial in $\hbar,X_1,\ldots,X_n$, we need to use such a formula when the summation ranges over a set of integers $k$ bounded from below, in which case it makes sense in the framework explained in Remark~\ref{rem:div}. Equation~\eqref{eqn:KeyComb-2} is turned by this trick into
\begin{equation}
	\label{eqn:KeyComb-3}
	\begin{split}
		& \widetilde{G}_n(X_1,\ldots,X_n) 
= \prod_{i=1}^n\Bigg[ \sum_{j=0}^\infty  (X_i\partial_{X_i})^j [v^j]  \sum_{k \in \mathbb{Z}} X_i^{k}\cdot  [w_i^{k}] (G^\vee_{0,1}(w_i))^{k}\cdot
\\ & \quad \sum_{r \geq 0} \bigg( \Big(\partial_y+\frac{v}{y}\Big)^{r} \exp\Big(v\big(\frac{\varsigma(v\hbar \partial_y)}{\varsigma(\hbar\partial_y)}-1\big) \ln y \Big)\bigg)\Big|_{y=G^\vee_{0,1}(w_i)} [u_i^{r}] \\
		& \quad \frac{\exp\big(\hbar u_i\, \varsigma(\hbar u_i w_i \partial_{w_i}) (G_{1}^{\vee}(w_i) - \hbar^{-1})-u_i(G_{0,1}^{\vee}(w_i) - 1)\big)}{\hbar u_i \,\varsigma(\hbar u_i)}\Bigg]
		\sum_{\Gamma \in \mathcal{G}_n} \frac{\prod_{I \in \mathcal{I}(\Gamma)} \mathsf{c}^{\vee}(u_I,w_I)}{\# {\rm Aut}(\Gamma)} \,.
	\end{split}
\end{equation}
 
Third, to remove the factor of $(G^\vee_{0,1}(w_i))^{k}$ in~\eqref{eqn:KeyComb-3}, we use the Lagrange inversion formula: given a Laurent series $L(w)$, we have for any $k \in \mathbb{Z}$, 
\begin{equation}
\sum_{k \in \mathbb{Z}} X^k \cdot [\tilde{w}^k] \,\,(G^\vee_{0,1}(\tilde w))^{k} L(\tilde{w}) = \frac{\dd \ln w}{\dd \ln X}\,L(w)\,,\qquad {\rm where}\;\;  X = \frac{w}{G^\vee_{0,1}( w)}\,.
\end{equation}
Using this observation for each $i \in [n]$, we obtain for $n \geq 2$ and $g \geq 0$:
\begin{equation}
\label{Ggnpasstilde}
\widetilde{G}_{g,n}(X_1,\ldots,X_n) = [\hbar^{2g - 2 + n}]\,\,\sum_{\Gamma \in \mathcal{G}_n}  \frac{1}{\# {\rm Aut}(\Gamma)} \prod_{i = 1}^n \vec{\mathsf{O}}^{\vee}(w_i) \prod_{I \in \mathcal{I}(\Gamma)} \mathsf{c}^{\vee}(u_I,w_I)\,,
\end{equation}
using the substitutions
\begin{equation}
\label{XPveesub}X_i = \frac{w_i}{G_{0,1}^{\vee}(w_i)}\,,\qquad P^{\vee}(w_i) = \frac{\dd \ln w_i}{\dd \ln X_i}\,,\qquad X_i\partial_{X_i}= P^{\vee}(w_i) w_i\partial_{w_i}\,. 
\end{equation}
The operators $\vec{\mathsf{O}}^{\vee}(X_i)$ are those introduced in \eqref{O2Xi}, and they involve the above choice of $P^{\vee}(w_i)$. For $2g - 2 + n > 0$, the left-hand side is simply $G_{g,n}(X_1,\ldots,X_n)$ and we recover \eqref{Ggnpass}. Up to the comparison of the right-hand side in the case $(g,n) = (0,2)$ to the second formula of \eqref{0102} and the study of the special case $(g,n) = (0,1)$, which will both be carried out in the next section, this concludes the proof of Theorem~\ref{thm:R-transform-HigherGenera} for $n\geq 2$.

In the case $n=1$ we have to make one more extra step. There is a graph that contains just one vertex and no hyperedges. The expression that we assign to this graph by Definition~\ref{def:AssignGraph} is equal to the operator~\eqref{U2Xi} applied to the constant $1$. Then, in order to apply Equation~\eqref{eq:FirstStep-u}, we have to remove manually the singularity in $u$ as it is done in~\cite[Section 6.2]{BDBKSexplicit}, that is, we have:
\begin{equation*}
\begin{split}
	& \quad G_1(X) -\hbar^{-1}  \\
	& = \sum_{k \geq 1} X^{k} \cdot [w^{k}] \,\,\sum_{r \geq 0} \partial_y^{r}\exp\bigg(k\,\frac{\varsigma(k\hbar\partial_{y})}{\varsigma(\hbar\partial_{y})}\,\ln(1 + y)\bigg)\Big|_{y = 0} \\
	& �\phantom{= \sum_{k \geq 1} X^{k}}\,\, \cdot [u^{r}]\,\,\Bigg(\frac{\exp\big(\hbar u\, \varsigma(\hbar u w \partial_{w}) (G_{1}^{\vee}(w) - \hbar^{-1})\big)}{\hbar u\,\varsigma(\hbar u)} - \frac{\exp\big(u(G^\vee_{0,1}(w)-1)\big)}{\hbar u}\Bigg)
	\\ 
	& \quad 
	  + \sum_{k \geq 1} X^{k} \cdot [w^{k}] \,\,\sum_{r \geq 0} \partial_y^{r}\exp\bigg(k\,\frac{\varsigma(k\hbar\partial_{y})}{\varsigma(\hbar\partial_{y})}\,\ln(1 + y)\bigg)\Big|_{y = 0} 
	\cdot [u^{r}]\,\,\frac{\exp\big(u\, (G^\vee_{0,1}(w)-1)\big)}{\hbar u}
	\,.
\end{split}
\end{equation*}
The first summand here can be evaluated by the three steps performed above in the general case, and it gives the contribution of the graph with one vertex and no edges in the statement of Theorem~\ref{thm:R-transform-HigherGenera}. 
Let us show that the second summand is equal to $G_{0,1}(w)-\hbar^{-1}+\sum_{g \geq 1} \hbar^{2g-1} \Delta_g^{\vee}(X)$. We follow the computation done in~\cite[Section 6.2]{BDBKSexplicit}, and this computation just implements the same three steps as above with an extra small step to remove the singularity in $u$ and further simplifications related to the specifics of this simpler formula:
\begin{equation} \label{eq:TowardsDelta}
	\begin{split}
	& \sum_{k \geq 1} X^{k} \cdot [w^{k}] \,\,\sum_{r \geq 0} \partial_y^{r}\exp\bigg(k\,\frac{\varsigma(k\hbar\partial_{y})}{\varsigma(\hbar\partial_{y})}\,\ln(1 + y)\bigg)\Big|_{y = 0} 
	\cdot [u^{r}]\,\,\frac{\exp\big(u\, (G^\vee_{0,1}(w)-1)\big)}{\hbar u}
	\\ 
	& = 	\int_0^X \frac {dX}{X} \sum_{k \geq 1} X^{k} \cdot [w^{k}] \,\,\sum_{r \geq 0} \partial_y^{r}\exp\bigg(k\,\frac{\varsigma(k\hbar\partial_{y})}{\varsigma(\hbar\partial_{y})}\,\ln(1 + y)\bigg)\Big|_{y = 0} 
	\cdot [u^{r}]\,\,w\partial_w \frac{\exp\big(u\, (G^\vee_{0,1}(w)-1)\big)}{\hbar u}
	\\
	& = 	\int_0^X \frac {dX}{X} \sum_{k \geq 1} X^{k} \cdot [w^{k}] \,\,\sum_{r \geq 0} \partial_y^{r}\exp\bigg(k\,\frac{\varsigma(k\hbar\partial_{y})}{\varsigma(\hbar\partial_{y})}\,\ln(1 + y)\bigg)\Big|_{y = 0} 
\cdot [u^{r}]\,\,\frac{\exp\big(u\, (G^\vee_{0,1}(w)-1)\big)}{\hbar}	w\partial_w G^\vee_{0,1}(w)
	\\
& = 	\int_0^X \frac {dX}{X} \sum_{k \geq 1} X^{k} \cdot [w^{k}] \,\,\sum_{r \geq 0} \partial_y^{r}\exp\bigg(k\,\frac{\varsigma(k\hbar\partial_{y})}{\varsigma(\hbar\partial_{y})}\,\ln y\bigg)\Big|_{y = G^\vee_{0,1}(w)} 
\cdot [u^{r}]\,\,\frac{w\partial_w G^\vee_{0,1}(w)}{\hbar}	\,.
	\end{split}
\end{equation}
Since the last factor of this expression does not depend on $u$, we continue the computation substituting $r=0$:
\begin{equation*}
	\begin{split}
		\eqref{eq:TowardsDelta} & = 	\int_0^X \frac {dX}{X} \sum_{k \geq 1} X^{k} \cdot [w^{k}]\exp\bigg(k\,\frac{\varsigma(k\hbar\partial_{y})}{\varsigma(\hbar\partial_{y})}\,\ln y\bigg)\Big|_{y = G^\vee_{0,1}(w)} 
		\frac{w\partial_w G^\vee_{0,1}(w)}{\hbar}	\\
		& = 	\int_0^X \frac {dX}{X} \sum_{j=0}^\infty (X\partial_X)^j \cdot [v^j] \sum_{k \geq 1} X^{k} \cdot [w^{k}] (G^\vee_{0,1}(w))^k
		\\ & \quad \cdot \exp\bigg(v\,\frac{\varsigma(v\hbar\partial_{y})}{\varsigma(\hbar\partial_{y})}\,\ln y-v\ln y\bigg)\Big|_{y = G^\vee_{0,1}(w)} 
		\frac{w\partial_w G^\vee_{0,1}(w)}{\hbar}	\\
		\end{split}
		\end{equation*}
		\begin{equation*}
	\begin{split}
		& = \frac 1\hbar	\int_0^X \frac {dX}{X} \sum_{j=0}^\infty (X\partial_X)^j  [v^j]  \exp\bigg(v\,\frac{\varsigma(v\hbar\partial_{y})}{\varsigma(\hbar\partial_{y})}\,\ln y-v\ln y\bigg)\Big|_{y = G^\vee_{0,1}(w)} 
		\frac{1}{P^\vee(w)}w\partial_w G^\vee_{0,1}(w)
			\\
		& = \frac 1\hbar	\int_0^X \frac {dX}{X} \sum_{j=1}^\infty (X\partial_X)^j  [v^j]  \exp\bigg(v\,\frac{\varsigma(v\hbar\partial_{y})}{\varsigma(\hbar\partial_{y})}\,\ln y-v\ln y\bigg)\Big|_{y = G^\vee_{0,1}(w)} 
		\frac{1}{P^\vee(w)}w\partial_w G^\vee_{0,1}(w)
		\\
		& \quad 
			+ \frac 1\hbar	\int_0^X \frac {dX}{X} 
		\frac{1}{P^\vee(w)}w\partial_w G^\vee_{0,1}(w)
		\,.
	\end{split}
\end{equation*}
Having written it this way, we can directly integrate (recall that $P^\vee(w)w\partial_w = X\partial_X$). The result of the integration of the first summand is manifestly $\sum_{g=1}^\infty \hbar^{2g-1}\Delta^\vee_g(X)$ and the integration of the second term gives $\hbar^{-1}(G^\vee_{0,1}(w)-1)$, with $X=w/G^\vee_{0,1}(w)$.
This completes the proof of Theorem~\ref{thm:R-transform-HigherGenera}.

\subsection{\texorpdfstring{Genus $0$: proof of Theorem~\ref{thm:R-transform-GenusZero}}{Genus 0: proof of Theorem~\ref{thm:R-transform-GenusZero}}}

\label{proofg0}

Let $\phi,\phi^{\vee} \colon PS \rightarrow R$ be two multiplicative functions such that $\phi = \zeta \ast \phi^{\vee}$. As described at the end of Section~\ref{npointSec}, we associate to $\phi^{\vee}$ a topological partition function $Z^{\vee}$ whose $n$-point functions $G_{g,n}^{\vee}$ vanish for genus $g > 0$. Then, we \emph{define} a topological partition function $Z = \mathsf{D}Z^{\vee}$. The corresponding multiplicative function $\Phi_{Z,\hbar} \colon PS \rightarrow R[\![\hbar]\!]$ is a priori different from $\phi$, but thanks to Corollary~\ref{thm:starproduct} its genus $0$ part is
\[
\Phi_{Z}^{[0]} = \zeta \ast \Phi_{Z^{\vee}}^{[0]}= \zeta \ast \phi^{\vee} = \phi\,.
\]
Therefore, the $n$-point functions $G_{0,n}$ and $G_{0,n}^{\vee}$, associated to $\phi$ and $\phi^{\vee}$ respectively, are related by the restriction to genus $0$ of the expressions found in Section~\ref{proofg}.

Let us come back to \eqref{Ggnpasstilde} (which is valid for $n \geq 2$) and specialise it to genus $0$. Recall the $\hbar$-expansion of the $n$-point functions \eqref{giGn}. We need to isolate the leading order in $\hbar$ in the building blocks from Definition~\ref{def:weights}:
\begin{equation*} 
\begin{split} 
\mathsf{c}^{\vee}(u_i,w_i) & = \hbar^{2\# I - 2}\Big(\prod_{i \in I} u_i\Big) \widetilde{G}_{0,\# I}^{\vee}(w_I) + O(\hbar^{2\# I - 1}) \\
\vec{\mathsf{O}}^{\vee}(w) & = \sum_{m \geq 0} (P^{\vee}(w)w\partial_{w})^{m} \cdot [v^m] \sum_{r \geq 0} \Big(\partial_{y} + \frac{v}{y}\Big)^{r} 1 \Big|_{y = G_{0,1}^{\vee}(w)}\cdot [u^{r}]\,\,(u\hbar)^{-1} + O(\hbar^0) \\
& = \hbar^{-1}\sum_{r \geq 0} \vec{\mathsf{O}}_{r}^{\vee}(w) \cdot [u^{r + 1}] + O(\hbar^0)\,,
\end{split}
\end{equation*}
where, in the last line, we refer to Definition~\ref{Or2}. Substituting this in \eqref{Ggnpasstilde}, we find that for a particular graph $\Gamma\in \mathcal{G}_n$ the minimal degree of $\hbar$ in \eqref{Ggnpasstilde} is equal to
$$
-n+\sum_{I \in \mathcal{I}(\Gamma)}(2 \# I -2)\,.
$$
This value is minimal for trees, for which it is equal to $n-2$, hence they give the only contribution to the genus $0$ part of $\widetilde{G}_{n}$. Note also that trees have no non-trivial automorphisms. The variable $u_i$ only appears from the hyperedge contributions, and its power is the valency of the $i$-th white vertex. Therefore, the extraction of powers of $u_i$ prescribed by the operators at the vertices restricts the sum to the set $\mathcal{G}_{0,n}(\mathbf{r} + 1)$ of trees where the $i$-th white vertex has valency $r_i + 1$. All in all, \eqref{Ggnpasstilde} in genus $0$ and for $n \geq 2$ becomes 
\begin{equation}
\label{Ggn0pass}
\widetilde{G}_{0,n}(X_1,\ldots,X_n) = \sum_{r_1,\ldots,r_n \geq 0} \prod_{i = 1}^n \vec{\mathsf{O}}_{r_i}^{\vee}(w_i) \sum_{T \in \mathcal{G}_{0,n}(\mathbf{r} + 1)} \prod_{I \in \mathcal{I}(T)} \widetilde{G}_{0,\# I}^{\vee}(w_I)
\end{equation}
with the substitutions \eqref{XPveesub}, as announced in Theorem~\ref{thm:R-transform-GenusZero}. For $n = 2$, there is only one tree in $\mathcal{G}_{0,n}$, namely the one where the two white vertices are connected to a single black vertex. Then, only $\vec{\mathsf{O}}_{0}^{\vee}(w_i) = P^{\vee}(w_i)$ for $i = 1,2$ contribute to \eqref{Ggn0pass}:
\[
\widetilde{G}_{0,2}(X_1,X_2) = P^{\vee}(w_1)P^{\vee}(w_2)G_{0,2}^{\vee}(w_1,w_2)\,.
\]
Coming back to the non-shifted $2$-point functions via \eqref{npointfun} yields the $(0,2)$ case of Theorems~\ref{thm:R-transform-HigherGenera} and ~\ref{thm:R-transform-GenusZero}.

It remains to treat the $(g,n) = (0,1)$ case, and for this we return to Lemma~\ref{lem:KeyComb}. Compared to the previous argument, only the weight of the white vertices is different (it is given by \eqref{U2Xi} instead of \eqref{O2Xi}), and its leading order in $\hbar$ is:
\begin{equation}
\label{Ugenus0}
\begin{split}
\vec{\mathsf{U}}(X) & = \sum_{k \in \mathbb{Z}} X^k \cdot [w^k]\,\,\sum_{r \geq 0} \partial_y^r (1 + y)^{k}\big|_{y = 0} \cdot [u^{r}]\,\,\frac{\exp\big(u(G_{0,1}^{\vee}(w) - 1)\big)}{u\hbar} + O(\hbar^0) \\
& = \hbar^{-1}\sum_{k \geq 0} X^k\cdot [w^k]\,\,\sum_{r = 0}^{k} \frac{k!}{(k - r)!}  \sum_{l \geq 0} \frac{(G_{0,1}^{\vee}(w) - 1)^{l + 1}}{(l + 1)!}\cdot [u^{r - l}] + O(\hbar^0)\,.
\end{split}
\end{equation}
It is still true that only the trees will contribute to \eqref{eqn:KeyComb} for $g =0$, and for $n = 1$ there is a single tree, namely the one without hyperedges. Therefore, we obtain:
\begin{equation*}
\begin{split}
G_{0,1}(X_1)-1 & = \widetilde{G}_{0,1}(X_1)-1 = \sum_{k \in \mathbb{Z}} X_1^k \cdot [w^k]\,\,\sum_{r \geq 0} \frac{k!}{(k - r)!} \frac{(G_{0,1}^{\vee}(w) - 1)^{r + 1}}{(r + 1)!} \\
& = \sum_{k \geq 0} X_1^k \cdot [w^k]\,\,\sum_{r = 0}^{k} \frac{k!}{(k - r)! r!} \frac{(G_{0,1}^{\vee}(w) - 1)^{r + 1}}{r + 1} \\
& = \sum_{k \geq 0} X_1^k\cdot [w^k] \,\,\frac{(G_{0,1}^{\vee}(w))^{k + 1}-1}{k + 1} \\
& = G_{0,1}^{\vee}(w_1)-1\,,\qquad {\rm where}\quad X_1 = \frac{w_1}{G_{0,1}^{\vee}(w_1)}\,,
\end{split}
\end{equation*}
with the help of the Lagrange inversion formula in the last line. This also completes the proof of the $(0,1)$ case, and of Theorem~\ref{thm:R-transform-HigherGenera} and~\ref{thm:R-transform-GenusZero}.

\begin{remark}
We encourage the readers who want to understand better the three tricks of Section~\ref{proofg} to derive by themselves the $(0,2)$ case from Lemma~\ref{lem:KeyComb} as we did here with $(0,1)$.
\end{remark}

\subsection{\texorpdfstring{Genus $0$ coefficient-wise: proof of Theorem~\ref{coeffThm}}{Genus 0 coefficient-wise: proof of Theorem~\ref{coeffThm}}}
\label{coeffgenus0}
Our last task is to extract the coefficient of the monomial $\prod_{i = 1}^n X_i^{k_i}$ in $\widetilde{G}_{0,n}(X_1,\ldots,X_n)$. For $k_i > 0$ this coefficient is the same in $G_{0,n}(X_1,\ldots,X_n)$ and was called $F_{0;k_1,\ldots,k_n}$ in \eqref{npointfun}. We return to the specialisation of Lemma~\ref{lem:KeyComb} in genus $0$. The arguments already at work in Section~\ref{proofg0} together with the leading order \eqref{Ugenus0} of the operators $\vec{\mathsf{U}}$ yield:
\[
F_{0;k_1,\ldots,k_n} = \Big[\prod_{i = 1}^k w_i^{k_i}\Big] \sum_{\substack{r_1,\ldots,r_n \geq 0 \\ \ell_1,\ldots,\ell_n \geq 0}} \prod_{i = 1}^n \frac{k_i!}{(k_i - r_i)!}\,\frac{(G_{0,1}^{\vee}(w_i) - 1)^{\ell_i + 1}}{(\ell_i + 1)!} \sum_{T \in \mathcal{G}_{0,n}(\mathbf{r} - \boldsymbol{\ell})}  \prod_{I \in \mathcal{I}(T)} \widetilde{G}_{0,\# I}^{\vee}(w_I)\,.
\]
For a given $n$-tuple $\boldsymbol{\ell}$, we may add $\ell_i + 1$ univalent black vertices to the $i$-th vertex. This absorbs the factor $(G_{0,1}^{\vee}(w_i) - 1)^{\ell_i+1}$ in the product over hyperedges; the $(\ell_i+1)!$ in the denominator accounts for the automorphism of the tree at the $i$-th vertex, and we recognise Theorem~\ref{coeffThm}.

\subsection{Dual formulations}
\label{SecDual}

We can easily obtain dual functional relations expressing $G_n^{\vee}$ in terms of $G_n$. Writing $Z^{\vee} = \mathsf{D}^{-1}Z$, our starting point would be formula \eqref{DJD} with a minus sign. The only difference, apart from exchanging the role of $X_i$ and $w_i$, is an opposite sign for $v_i$ in the operator weight \eqref{O2Xi} for white vertices. Noticing that $\varsigma$ is even, the operator weights are now:
\begin{equation*}
\begin{split}
\vec{\mathsf{O}}(X) & = \sum_{m \geq 0} (P(X)X\partial_{X})^m P(X) \\
& \quad \cdot [v^m]\,\,\sum_{r \geq 0} \Big(\partial_y - \frac{v}{y}\Big)^r \exp\bigg( -v \frac{\varsigma(\hbar v \partial_y)}{\varsigma(\hbar \partial_{y})}\ln y + v \ln y\bigg)\Big|_{y = G_{0,1}(X)} \\
& \quad \cdot [u^{r}]\,\,\frac{\exp\big(\hbar u \varsigma(\hbar u X \partial_{X})(G_1(X) - \hbar^{-1}) - u(G_{0,1}(X) - 1)\big)}{\hbar u \varsigma(\hbar u)}\,,
\end{split}
\end{equation*}
and the hyperedge weights are
\[
\mathsf{c}(u_I,X_I) = \left\{\begin{array}{lll} \big(\prod_{i \in I} \hbar u_i \varsigma(\hbar u_i X_i \partial_{X_i})\big) \widetilde{G}_{\# I}(X_I) & & {\rm if}\,\,I \neq \{j,j\}\,, \\[1ex] \big(\hbar u_j \varsigma(\hbar u_j X_j \partial_{X_j})\big)^2 \,G_2(X_j,X_j) & & {\rm if}\,\,I = \{j,j\}\,. \end{array}\right.
\]
With the substitutions
\[
w_i = \frac{X_i}{G_{0,1}(X_i)}\,,\qquad P(X_i) = \frac{\dd \ln X_i}{\dd \ln w_i} = \frac{1}{P^{\vee}(w_i)}\,,
\]
the dual of Theorem~\ref{thm:R-transform-HigherGenera} for $2g - 2 + n > 0$ is
\begin{equation*}
G_{g,n}^{\vee}(w_1,\ldots,w_n) = \delta_{n,1}\Delta_g(w_1) + [\hbar^{2g - 2 + n}]\,\,\sum_{\Gamma \in \mathcal{G}_n} \frac{1}{\# {\rm Aut}(\Gamma)} \prod_{i = 1}^n \vec{\mathsf{O}}(X_i) \prod_{I \in \mathcal{I}(\Gamma)} \mathsf{c}(u_I,X_I)\,,
\end{equation*}
with the correction term for $n = 1$ now given by
\begin{equation*}
\begin{split}
\Delta_g(w) & = [\hbar^{2g}]\,\,\sum_{m\geq 0} \big(P(X) X\partial_{X}\big)^{m} \,
 [v^{m+1}] \exp\bigg(-v\,\frac{\varsigma(\hbar v \partial_{y})}{\varsigma(\hbar \partial_{y})} \ln y + v \ln y\bigg) \Big|_{y = G_{0,1}(X)} \\
 & \quad\qquad \cdot
  P^{\vee}(X) X\partial_{X}  G_{0,1}(X)\,.
\end{split}
\end{equation*}

In genus $0$, the relevant operator weight is
\[
\vec{\mathsf{O}}_r(X) = \sum_{m \geq 0} (P(X)X\partial_{X})^m P(X) \cdot [v^m]\,\,\bigg(\partial_y - \frac{v}{y}\bigg)^{r}\cdot 1 \Big|_{y = G_{0,1}(X)}\,,
\]
and the dual of Theorem~\ref{thm:R-transform-GenusZero} reads:
\[
G_{0,n}^{\vee}(w_1,\ldots,w_n) = \sum_{r_1,\ldots,r_n \geq 0} \prod_{i = 1}^n \vec{\mathsf{O}}_{r_i}(X_i) \sum_{T \in \mathcal{G}_{0,n}(\mathbf{r} + 1)} \prod_{I \in \mathcal{I}(T)}' G_{0,\# I}(X_I)\,,
\]
and $\prod'$ means that each occurrence of $G_{0,2}(X_i,X_j)$ for $i \neq j$ should be replaced with $\widetilde{G}_{0,2}(X_i,X_j)$.

To get the coefficient-wise relation dual to Theorem~\ref{coeffThm}, we observe that the operator \eqref{Ugenus0} is replaced  at leading order with
\begin{equation*}
\begin{split}
\vec{\mathsf{U}}^{\vee}(w) & = \sum_{k \in \mathbb{Z}} w^{k} \cdot [X^k]\,\,\sum_{r \geq 0} \partial_y^r (1 + y)^{-k}\big|_{y = 0} \cdot [u^r] \frac{\exp\big(u(G_{0,1}(X) - 1)\big)}{u} \\
& = \sum_{k \in \mathbb{Z}} w^k \cdot [X^k]\,\,\sum_{r \geq 0} =\frac{(-1)^r (r + k - 1)!}{(k - 1)!} \sum_{l \geq 0} \frac{(G_{0,1}(X) - 1)^{l + 1}}{(l + 1)!} \cdot [u^{r - l}]\,.
\end{split}
\end{equation*}
The only difference with \eqref{Ugenus0} is the combinatorial factor, which is non-vanishing for all nonnegative $k$. This leads to the formula
\[
F_{0;k_1,\ldots,k_n}^{\vee} = \Big[\prod_{i = 1}^n X_i^{k_i}\Big] \sum_{r_1,\ldots,r_n \geq 0} \prod_{i = 1}^n \frac{(-1)^{r_i}(r_i + k_1 - 1)!}{(k_i - 1)!} \sum_{T \in \mathcal{T}_n(\mathbf{r} + 1)} \frac{\prod''_{I \in \mathcal{I}(T)} G_{0,\# I}(X_I)}{\# {\rm Aut}(T)}\,,
\]
where $\prod''$ means that one should replace each occurrence of $G_{0,1}(X_j)$ with $G_{0,1}(X_j) - 1$, and each occurrence of $G_{0,2}(X_i,X_j)$ for $i \neq j$ with $\widetilde{G}_{0,2}(X_i,X_j)$. The form is in fact similar to Theorem~\ref{coeffThm} when one observes that
\[
\frac{k!}{(r-k)!} = r! \,\binom{k}{r}\,,\qquad \frac{(-1)^{r}(r + k - 1)!}{(k - 1)!} = r! \,\binom{-k}{r}\,.
\]

\section{Applications to free probability}
\label{SecAppl}

We now present the interpretation of the results of Section~\ref{Sec3} in the context of (higher-order) free probabilities.

\subsection{Preliminary: decorated partitioned permutations}
\label{Decpartperm}
If $\mathscr{A}$ is an associative algebra, we can consider partitioned permutations decorated by elements in $\mathscr{A}$. It means we want to study the set $PS(\mathscr{A}) \coloneqq \bigcup_{d \geq 1} PS(d) \times \mathscr{A}^d$. If $f_1\colon PS \rightarrow R$ and $f_2\colon PS(\mathscr{A}) \rightarrow R$ are two functions, their convolution is defined by
\begin{equation*}
\begin{split} 
(f_1 \ast f_2)(\mathcal{C},\gamma)[a_1,\ldots,a_d] & \coloneqq (f_1\ast f_2[a_1,\ldots,a_d])(\mathcal{C},\gamma) \\
&  =  \sum_{(\mathcal{A},\alpha) \cdot (\mathcal{B},\beta)  = (\mathcal{C},\gamma)}  f_1(\mathcal{A},\alpha)\,f_2(\mathcal{B},\beta)[a_1,\ldots,a_d] \,,
\end{split}
\end{equation*} 
for $(\mathcal{C},\gamma) \in PS(d)$ and $a_1,\ldots,a_d \in \mathscr{A}$. A similar definition can be made for the extended convolution $f_1 \circledast f_2$. A function $\phi\colon PS(\mathscr{A}) \rightarrow R$ is \emph{multiplicative} if for any $d \geq 0$ and $a_1,\ldots,a_d \in \mathscr{A}$:
\begin{itemize}
\item for any $\sigma,\pi \in S(d)$, we have $\phi(\mathbf{1}_{d},\pi^{-1} \circ \sigma \circ \pi)[a_1,\ldots,a_d] = \phi(\mathbf{1}_d,\sigma)[a_{\pi(1)},\ldots,a_{\pi(d)}]$;
\item for any $(\mathcal{A},\alpha) \in S(d)$, we have $\phi(\mathcal{A},\alpha)[a_1,\ldots,a_n] = \prod_{A \in \mathcal{A}} \phi(1_{\# A},\alpha_{|A})[(a_i)_{i \in A}]$, where bijections $[\#A] \rightarrow A$ have been chosen to make sense of the right-hand side, which is independent of this choice due to the first condition.
\end{itemize}

\subsection{Higher-order free probability}
\label{Sec:Higherorderfree}

\begin{definition}
Let $\mathscr{A}$ be an associative algebra. We say that an $n$-linear form $\tau \colon \mathscr{A}^n \rightarrow \mathbb{C}$ is \emph{tracial} if
\[
\forall b,a_1,\ldots,a_n \in \mathscr{A},\quad \forall i \in [n],\qquad \tau(a_1,\ldots,a_ib,\ldots,a_n) = \tau(a_1,\ldots,ba_i,\ldots,a_n)\,.
\]
\end{definition} 

\begin{definition}
A \emph{higher-order probability space} (HOPS) is the data $(\mathscr{A},\boldsymbol{\varphi})$ consisting of a unital associative (maybe non-commutative) algebra $\mathscr{A}$ over $\mathbb{C}$ and a family $\boldsymbol{\varphi} = (\varphi_n)_{n \geq 1}$ of tracial $n$-linear forms such that $\varphi_1(1) = 1$ and 
 $\varphi_n(1,a_2,\ldots,a_n) = 0$ for any $n \geq 2$ and $a_2,\ldots,a_n \in \mathscr{A}$.
 \end{definition}

Given a  HOPS $(\mathscr{A},\boldsymbol{\varphi})$, there is a natural multiplicative function $\phi\colon PS(\mathscr{A}) \rightarrow \mathbb{C}$ encoding the (higher-order) moments, specified for $\lambda \vdash d$ of length $n$ by
\[
\phi(\mathbf{1}_d,\pi_{\lambda})[a_1,\ldots,a_d] = \varphi_{n}\Big(\prod_{j = 1}^{\lambda_1} a_j\,,\, \prod_{j = 1}^{\lambda_2} a_{\lambda_1 + j}\,,\ldots,\,\prod_{j = 1}^{\lambda_{n}} a_{\lambda_1 + \cdots + \lambda_{n - 1} + j}\Big)\,.
\]
We then define another multiplicative function $\phi^{\vee} \colon PS(\mathscr{A})\rightarrow \mathbb{C}$ by
\[
\phi^{\vee} \coloneqq \mu \ast \phi \qquad  \Longleftrightarrow\qquad   \phi = \zeta \ast \phi^{\vee}\,.
\]

\begin{definition} (Adapted\footnote{In \cite{CMSS}  the convolution of $f_1$ with $f_2$ is defined when $f_1$ is a function on $PS(\mathscr{A})$ and $f_2$ a function on $PS$, and the higher-order free cumulants are defined by right-convolution of $\phi$ with the zeta function on $PS$. In the present paper we rather introduced (Section~\ref{Decpartperm}) the convolution $f_1 \ast f_2$ when $f_1$ is a function on $PS$ and $f_2$ a a function on $PS(\mathscr{A})$, and define the higher-order free cumulants by left-convolution of $\phi$ with $\zeta$. As this only involves multiplicative functions and in light of Lemma~\ref{commutmult}, these definitions are equivalent.} from \cite{CMSS}) The free cumulants at order $n$ are encoded in the collection of $d$-linear forms $\kappa_{k_1,\ldots,k_{n}}\colon \mathscr{A}^{d} \rightarrow \mathbb{C}$ indexed by $k_1,\ldots,k_n > 0$ given by:
\[
\kappa_{k_1,\ldots,k_{n}}(a_1,\ldots,a_{d}) = \phi^{\vee}(\mathbf{1}_{d},\pi_{\lambda(\mathbf{k})})[a_1,\ldots,a_d]\,,\qquad d = k_1 + \cdots + k_n\,.
\]
\end{definition}

If $a \in \mathscr{A}$, let us define the generating series of moments and cumulants at order $n$ of $a$:
\begin{equation}
\begin{split}
M_n(X_1,\ldots,X_{n}) & = \delta_{n,1} + \sum_{k_1,\ldots,k_n > 0} \varphi_n(a^{k_1},\ldots,a^{k_n}) \prod_{i =1}^n X_i^{k_i}\,,  \\
C_n(w_1,\ldots,w_n) & = \delta_{n,1} + \sum_{k_1,\ldots,k_n > 0} \kappa_{k_1,\ldots,k_n}(a,\ldots,a) \prod_{i = 1}^n w_i^{k_i} \,.
\end{split}
\end{equation}
At first order, a fundamental result of Speicher \cite{Spei94} states that
\[
C_1(wM(w)) = w\,.
\]
This can also be formulated in terms of the Voiculescu $R$-transform, defined by $C(w) = 1 + wR(w)$, as the functional relation $\frac{x}{M(1/x)} + R\big(\frac{M(1/x)}{x}\big) = x$. Collins, Mingo, Speicher and \'Sniady established in \cite{CMSS} the formula at second order:
\[
\frac{M_2(1/x_1,1/x_2)}{x_1x_2} = \frac{\dd w_1}{\dd x_1}\,\frac{\dd w_2}{\dd x_2} \bigg(\frac{C_2(w_1,w_2)}{w_1w_2} + \frac{1}{(w_1 - w_2)^2}\bigg) - \frac{1}{(x_1 - x_2)^2}\,,\qquad w_i = \frac{M(1/x_i)}{x_i}\,,
\]
and asked for the generalisation of these functional relations to order $n \geq 3$. Converting the notations into $G_{0,n} = M_n$, $G_{0,n}^{\vee} = C_n$, Theorem~\ref{thm:R-transform-GenusZero} answers this question (the formulas for $n = 1$ and $n = 2$ match). Knowing the formulas, it seems rather complicated to prove them directly from the combinatorics of partitioned permutations --- even in the $n = 3$ case \eqref{03form}. We had to make a detour via the master relation (Theorem~\ref{thm:hbarstarproduct}) and algebraic manipulations in the bosonic Fock space in order to derive the results.

\subsection{Surfaced permutations}
\label{Sec:PSG}
Another way to work with the extended product is to add the data of a genus function to partitioned permutations. It corresponds to the notion of surfaced permutations proposed in \cite[Appendix]{CMSS}. We are in fact going to allow half-integer genus, so that infinitesimal freeness cumulants will find a natural place in the framework (see end of Section~\ref{SecFreehig}).

\begin{definition}\label{def:PSG}
A \emph{surfaced permutation} of $[d]$ is a triple $(\mathcal{A},\alpha,g)$ where $(\mathcal{A},\alpha)\in PS(d)$ and  $g \colon \mathcal{A} \rightarrow \frac{1}{2}\mathbb{Z}_{\geq 0}$ is a function. We denote $\PSG(d)$ the set of surfaced permutations of $[d]$, and $\PSG = \bigcup_{d \geq 1} \PSG(d)$.
\end{definition}
The colength of $(\mathcal{A},\alpha,g)\in \PSG(d)$ is defined by:
\[ 
|(\mathcal{A},\alpha,g)| \coloneqq |(\mathcal{A},\alpha)| + \sum\limits_{A\in\mathcal{A}} 2 g(A)\,.
\]
\begin{definition}
The extended product of $(\mathcal{A},\alpha,g)$, $(\mathcal{B},\beta,h) \in \PSG(d)$ is defined as $(\mathcal{A},\alpha,g) \odot (\mathcal{B},\beta,h) = (\mathcal{A}\vee \mathcal{B}, \alpha \circ \beta,k)$ in which the genus function at $C \in \mathcal{A} \vee \mathcal{B}$ takes the value:
\begin{equation}
\label{eq:prod:PSG}
k(C)\coloneqq \frac{|(\mathcal{A}_{|C},\alpha_{|C},g_{|C})|+|(\mathcal{B}_{|C},\beta_{|C},h_{|C})|-|(\mathcal{A}\vee \mathcal{B}\,{}_{|C},\alpha \circ \beta{}\,{}_{|C})|}{2}\ .
\end{equation} 
Here, if $\mathcal{D} = \{D_1,\ldots,D_l\} \in P(d)$ and $C \subseteq [d]$, the notation $\mathcal{D}_{|C}$ stands for $\{D_1 \cap C,\ldots,D_l \cap C\}$ from which one removes the elements which are empty sets. If $\mathscr{A}$ is an associative algebra, the convolution of two functions $f_1,f_2 \colon \PSG(d) \rightarrow R$ is
\[
(f_1 \circledast f_2) (\mathcal{C},\gamma,k) = \sum\limits_{(\mathcal{A},\alpha,g) \odot (\mathcal{B},\beta,h) = (\mathcal{C},\gamma,k)} f_1(\mathcal{A},\alpha,g) \,\, f_2(\mathcal{B},\beta,h)\,.
\]
It is easy to check that $\circledast$ is associative.
\end{definition}

\begin{remark}\label{Blockad} There are two alternative ways to think about the formula \eqref{eq:prod:PSG} for the genus. On the one hand, we observe that it is such that we have a block-additivity of the colength under products of surfaced permutations:
\[
|(\mathcal{A}_{|C},\alpha_{|C},g_{|C})| + |(\mathcal{B}_{|C},\beta_{|C},h_{|C})| = |(\mathcal{A} \vee \mathcal{B}\,{}_{|C},\alpha \circ \beta\,{}_{|C},k_{|C})|\,.
\]
On the other hand, we also have
\begin{equation}
\label{iniguzb}\begin{split}
k(C) & = \sum_{\substack{A \in \mathcal{A} \\ A \subseteq C}} g(A) + \sum_{\substack{B \in \mathcal{B} \\ B \subseteq C}} h(B) + \frac{1}{2}\Big(|(\mathcal{A}_{|C},\alpha_{|C})| + |(\mathcal{B}_{|C},\beta_{|C})| - |(\mathcal{A} \vee \mathcal{B}\,{}_{|C},\alpha \circ \beta\,{}_{|C})|\Big)\, \\
& \geq \sum_{\substack{A \in \mathcal{A} \\ A \subseteq C}} g(A) + \sum_{\substack{B \in \mathcal{B} \\ B \subseteq C}} h(B) \,.
\end{split} 
\end{equation}
Since $|\alpha \circ \beta|-|\alpha|-|\beta|$ is nonnegative and even, the last term in the first line is a nonnegative integer. We interpret this equation by saying that the product of surfaced permutation can create genus in integer units.
\end{remark}

The relation with the setting of Section~\ref{Sec:PSS} (justifying that we keep the same notations $\odot$ and $\circledast$) is that, if we associate to the multiplicative functions $f_{i}\colon \PSG \rightarrow R$, for $i = 1,2$, the multiplicative functions $\hat{f}_{i}\colon PS \rightarrow R[\![\hbar]\!]$ given for $(\mathcal{A},\alpha)$ by
\begin{equation}
\widehat{f}_{i}(\mathcal{A},\alpha) = \sum_{g \colon \mathcal{A} \rightarrow \frac{1}{2}\mathbb{Z}_{\geq 0}} \hbar^{|(\mathcal{A},\alpha,g)|}\, f_i(\mathcal{A},\alpha,g)\,,
\end{equation}
then we have
\[
\widehat{\,\,f_{1} \circledast f_{2}\,\,} = \widehat{f}_{1} \circledast \widehat{f}_{2}\,,
\]
where on the left-hand side (resp.~on the right-hand side) this is the extended convolution on $\PSG$ (resp.~$PS$). This is due to the block-additivity of the colength of surfaced permutations mentioned in Remark~\ref{Blockad}. 

We have an injection $\iota\colon PS(d) \rightarrow \PSG(d)$ consisting in completing a partitioned permutation by taking  zero as genus function. All functions $f$ on $PS(d)$ can be considered as functions on $\PSG(d)$ by extending them by $0$ outside $\iota(PS(d))$; we denote it $\iota_*f$. Clearly, $\bbdelta = \iota_*\delta$ is the unit for $\circledast$, while $\bbzeta = \iota_*\zeta$ is the zeta function, also characterised by $\hat{\bbzeta} = \zeta_{\hbar}$. By the previous discussion, it admits as inverse for $\circledast$ the function $\bbmu$ characterised by $\hat{\bbmu} = \mu_{\hbar}$, whose existence comes from Lemma~\ref{existe}. 

\begin{remark} Observe that $\PSG(d)$ admits a poset structure, by declaring that
\[
(\mathcal{A},\alpha,g) \preceq (\mathcal{C},\gamma,k) \qquad \Leftrightarrow \qquad \exists (\mathcal{B},\beta,0) \in \PSG(d),\quad (\mathcal{A},\alpha,g) \odot (\mathcal{B},\beta,0) = (\mathcal{C},\gamma,k)\,.
\]
One could be tempted to invoke the general fact that posets admit M\"obius functions \cite{Rota64}. However, the zeta function associated to this poset structure is not the one we consider.
\end{remark}

\begin{definition}
A function $f \colon \PSG \rightarrow R$ is \emph{multiplicative} if  for any $(d,h) \in \mathbb{Z}_{> 0} \times \mathbb{Z}_{\geq 0}$ and $\sigma \in S(d)$, $f(\mathbf{1}_d,\sigma,h)$ depends only on the conjugacy class of $\sigma$, and for any $(\mathcal{A},\alpha,g) \in \PSG$:
\[
f(\mathcal{A},\alpha,g)=\prod_{A \in\mathcal{A}}f(\mathbf{1}_{\#A},\alpha_{|A},g_{|A})\,.
\]
The previously introduced functions $\bbdelta$, $\bbzeta$ and $\bbmu$ are examples of multiplicative functions.
\end{definition}

\begin{remark} \label{evenremark} We say that $f \colon \PSG \rightarrow R$ is \emph{even} when $f(\mathcal{A},\alpha,g) = 0$ for any $(\mathcal{A},\alpha) \in PS$ such that there exists a block $A \in \mathcal{A}$ having $g(A) \notin \mathbb{Z}_{\geq 0}$. Due to the last point in Remark~\ref{Blockad}, the extended convolution of two even functions is even. In the previous sections we only considered functions with integer genus: it fits with the present setting by restricting to  even functions on $\PSG$.
\end{remark}

We may also consider the analog of the product $\cdot$ between surfaced permutations, by keeping only the cases in $\odot$ with no genus creation.

\begin{definition}
\label{def49}If $(\mathcal{A},\alpha,g),(\mathcal{B},\beta,h)$ are two surfaced permutations of $[d]$, we define their product:
\begin{equation*}
(\mathcal{A},\alpha,g) \cdot (\mathcal{B},\beta,h) =  
\left\{\begin{array}{lll}
(\mathcal{A}\vee \mathcal{B},\alpha \circ \beta,k) & & \textup{if}\,\, |(\mathcal{A},\alpha)|+|(\mathcal{B},\beta)|=|(\mathcal{A}\vee \mathcal{B},\alpha \circ \beta)|, \\
0 && \textup{otherwise}. \end{array}\right.
\end{equation*}
Note that in the first case we have
\[
k(C) = \sum_{\substack{A \in \mathcal{A} \\ A \subseteq C}} g(A) + \sum_{\substack{B \in \mathcal{B} \\ B \subseteq C}} h(B)\,.
\]
The convolution between functions $f_1,f_2\colon \PSG(d) \rightarrow R$ is defined as
\[
(f_1 \ast f_2) (\mathcal{C},\gamma,k) = \sum_{(\mathcal{A},\alpha,g) \cdot (\mathcal{B},\beta,h) = (\mathcal{C},\gamma,k)} f_1(\mathcal{A},\alpha,g) \,f_2(\mathcal{B},\beta,h)\,.
\]
\end{definition}

The extraction of leading order in Lemma~\ref{lem25} can be upgraded to include the first subleading order (encoded in genus $\frac{1}{2}$).  To describe this, we need more notations.

\begin{definition}
We say that two multiplicative functions $\phi_1,\phi_2 \colon \PSG \rightarrow R$ agree infinitesimally if their value coincides on $(\mathcal{A},\alpha,g)$ for any $g \colon \mathcal{A} \rightarrow \frac{1}{2}\mathbb{Z}_{\geq 0}$ such that $\sum_{A \in \mathcal{A}} g(A) \leq \frac{1}{2}$. In that case we denote $\phi_1 \approx \phi_2$.
\end{definition}

\begin{lemma}
\label{lem121}
Let $\phi_{1},\phi_{2}\colon \PSG \rightarrow R$ be two multiplicative functions. The relation $\phi_1 = \bbzeta \circledast \phi_2$ implies the infinitesimal agreement $\phi_1 \approx \bbzeta \ast \phi_2$.
\end{lemma}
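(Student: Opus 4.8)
The plan is to unfold both sides of the claimed infinitesimal agreement directly and to isolate their difference as a sum over \emph{genus-creating} factorisations, mirroring the leading-order extraction of Lemma~\ref{lem25} but now with the genus playing the role previously played by the power of $\hbar$. First I would fix a target surfaced permutation $(\mathcal{C},\gamma,k) \in \PSG(d)$ with $\sum_{C \in \mathcal{C}} k(C) \leq \frac{1}{2}$ and expand the hypothesis $\phi_1 = \bbzeta \circledast \phi_2$ at this point. Since $\bbzeta = \iota_*\zeta$ is supported on triples of the form $(\mathbf{0}_{\alpha},\alpha,0)$, where it takes the value $1$, only factorisations $(\mathbf{0}_{\alpha},\alpha,0) \odot (\mathcal{B},\beta,h) = (\mathcal{C},\gamma,k)$ contribute, giving
\[
\phi_1(\mathcal{C},\gamma,k) = \sum_{(\mathbf{0}_{\alpha},\alpha,0)\,\odot\,(\mathcal{B},\beta,h) = (\mathcal{C},\gamma,k)} \phi_2(\mathcal{B},\beta,h)\,.
\]
In exactly the same way, $(\bbzeta \ast \phi_2)(\mathcal{C},\gamma,k)$ is the analogous sum but with $\odot$ replaced by the product $\cdot$ of Definition~\ref{def49}.

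Next I would compare the two index sets. By Definition~\ref{def49}, the product $(\mathbf{0}_{\alpha},\alpha,0) \cdot (\mathcal{B},\beta,h)$ is nonzero --- in which case it coincides with $(\mathbf{0}_{\alpha},\alpha,0) \odot (\mathcal{B},\beta,h)$ --- precisely when no genus is created, that is when $|(\mathbf{0}_{\alpha},\alpha)| + |(\mathcal{B},\beta)| = |(\mathbf{0}_{\alpha} \vee \mathcal{B},\alpha\circ\beta)|$. Hence every term of the $\ast$-sum is a term of the $\circledast$-sum, and the difference $\phi_1(\mathcal{C},\gamma,k) - (\bbzeta\ast\phi_2)(\mathcal{C},\gamma,k)$ equals exactly the sum of $\phi_2(\mathcal{B},\beta,h)$ over those factorisations for which genus is strictly created.

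The heart of the argument is then a genus count showing this residual sum is empty in our range. Applying \eqref{iniguzb} from Remark~\ref{Blockad} to the factorisation whose first factor has genus function $\equiv 0$, the genus of the $\odot$-product in a block $C$ reads $k(C) = \sum_{B \subseteq C} h(B) + \Delta(C)$, where $\Delta(C) \in \mathbb{Z}_{\geq 0}$ is the genus created in $C$; summing over $C$ and using $\mathcal{B} \leq \mathcal{C}$ gives $\sum_{C} k(C) = \sum_{B \in \mathcal{B}} h(B) + \sum_C \Delta(C)$. Because $h$ takes values in $\frac{1}{2}\mathbb{Z}_{\geq 0}$ and, crucially, genus is created in integer units (Remark~\ref{Blockad}), any factorisation with genus strictly created has $\sum_C \Delta(C) \geq 1$, hence total genus $\sum_C k(C) \geq 1 > \frac{1}{2}$. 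This contradicts $\sum_C k(C) \leq \frac{1}{2}$, so no genus-creating factorisation contributes, the two sums coincide, and $\phi_1 \approx \bbzeta \ast \phi_2$.

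The one point that genuinely uses the structure of $\PSG$ --- and the expected main obstacle to a naive ``leading order'' argument --- is the integrality of the created genus $\Delta(C)$, since this is what opens the gap between $\frac{1}{2}$ and $1$ and lets the infinitesimal range $\sum_A g(A) \leq \frac{1}{2}$ see exactly the no-creation terms; without it one could only conclude agreement at $\sum_A g(A) = 0$, as in Lemma~\ref{lem25}. I would therefore quote the nonnegativity and evenness of $|\alpha\circ\beta| - |\alpha| - |\beta|$ (equivalently of $\Delta(C)$, via \eqref{iniguzb}) explicitly, as that is the only nontrivial input; multiplicativity of $\phi_1,\phi_2$ is not actually needed for the comparison and merely fixes the natural context.
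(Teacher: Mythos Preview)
Your argument is correct and is exactly the approach the paper intends: the paper's own proof is the single line ``Same as in Lemma~\ref{lem25}, taking into account that the creation of genus occurs by integer units only (Remark~\ref{Blockad}),'' and you have faithfully unpacked precisely that, using the integrality of the created genus $\Delta(C)$ from \eqref{iniguzb} to rule out any genus-creating factorisation when the total genus is $\leq \tfrac{1}{2}$. Your closing observation that multiplicativity is not actually used in the comparison is accurate and a fair remark.
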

\begin{proof} Same as in Lemma~\ref{lem25}, taking into account that the creation of genus occurs by integer units only (Remark~\ref{Blockad}).
\end{proof}
\begin{remark} \label{dualrem} This has an equivalent presentation via the ring of dual numbers $R' = R[\![\hbar]\!]/(\hbar^2)$. Namely, let us write:
\[
\widehat{\phi}_i(\mathcal{A},\alpha) = \hbar^{|(\mathcal{A},\alpha)|}\big(\phi_i(\mathcal{A},\alpha,0) + \hbar \phi'_i(\mathcal{A},\alpha) + o(\hbar)\big)\,,
\]
and define multiplicative functions
\[
{}^{\flat}\phi_i \colon PS \rightarrow R',\qquad {\rm by}\quad {}^{\flat} \phi_i(\mathcal{A},\alpha) = \phi_i(\mathcal{A},\alpha,0) + \hbar \phi_i'(\mathcal{A},\alpha)\,.
\]
Then, the relation  $\phi_1 = \bbzeta \circledast \phi_2$ between $R$-valued functions on surfaced permutations implies the relation ${}^{\flat} \phi_1 = \zeta \ast {}^{\flat}\phi_2$ between $R'$-valued functions on partitioned permutations. Observe that $\phi_i(-,-,0)$ is a multiplicative function on $PS$, but $\phi_i'$ is not. Instead, we have
\[
\phi_i'(\mathcal{A},\alpha) = \sum_{A \in \mathcal{A}} \phi'_i(\mathbf{1}_{\#A},\alpha_{|A}) \prod_{\substack{A' \in \mathcal{A} \\ A' \neq A}} \phi_i(1_{\# A'},\alpha_{|A'},0) \,.
\]
\end{remark}

\medskip

As in Section~\ref{Decpartperm}, we may also work with the set $\PSG(\mathscr{A})$ of surfaced permutations decorated by elements of an associative algebra $\mathscr{A}$.

\subsection{Extension of the main formulas}
\label{Sec:extdemi}
Section~\ref{Sec24} and Section~\ref{Sec3} extend without effort to topological partition functions allowing $g \in \frac{1}{2}\mathbb{Z}_{\geq 0}$, while the monotone Hurwitz numbers are unchanged and have integer genus. In particular, the functional relations of Theorem~\ref{thm:R-transform-HigherGenera} for the $n$-point functions of multiplicative functions $\Phi,\Phi^{\vee}\colon \PSG \rightarrow R$ satisfying  $\Phi = \bbzeta \circledast \Phi^{\vee}$ hold in the same form. Taking into account Lemma~\ref{lem121}, the proofs of Theorem~\ref{thm:R-transform-GenusZero} and Theorem~\ref{coeffThm} can be easily adapted to obtain functional relations in the genus $0$ and $\frac{1}{2}$. 

\begin{definition}
Let $\mathcal{G}_{0,n}'$ be the set of bicoloured trees as in Definition~\ref{deftree} except that they must contain one special black vertex, whose corresponding hyperedge $I'$ may be univalent. Let $\mathcal{T}_{n}'$ be the set of bicoloured trees obtained by connecting to a $T' \in \mathcal{G}_{0,n}'$ (see Definition~\ref{deftreebis}) finitely many univalent black vertices. In $\mathcal{G}_{0,n}'(\mathbf{r} + 1)$ and $\mathcal{T}_n'(\mathbf{r} + 1)$ we require the $i$-th vertex to have valency $r_i + 1$.
\end{definition}

\begin{theorem}
\label{thmdemimain}Let $\phi,\phi' ,\phi^{\vee},\phi'{}^{\vee}\colon PS \rightarrow R$ be functions so that
\[
{}^{\flat} \phi = \phi + \hbar \phi' \colon PS \rightarrow R',\qquad {}^{\flat}\phi^{\vee} = \phi^{\vee} + \hbar \phi'{}^{\vee} \colon PS \rightarrow R'
\]
are multiplicative. Introduce the $n$-point functions 
\begin{equation}
\label{G12nc9}
\begin{split}
G_{0,n}(X_1,\ldots,X_n) & =  \delta_{n,1} + \sum_{k_1,\ldots,k_n > 0} \phi(\mathbf{1}_{k_1 + \cdots + k_n},\pi_{\lambda(\mathbf{k})}) \prod_{i = 1}^n X_i^{k_i}\,, \\
G_{\frac{1}{2},n}(X_1,\ldots,X_n) & = \sum_{k_1,\ldots,k_n > 0} \phi'(\mathbf{1}_{k_1 + \cdots + k_n},\pi_{\lambda(\mathbf{k})}) \prod_{i = 1}^n X_i^{k_i}\,,
\end{split}
\end{equation}
and likewise $G_{0,n}^{\vee}$ and $G_{\frac{1}{2},n}^{\vee}$. 

Suppose that we have ${}^{\flat} \phi = \zeta \ast {}^{\flat} \phi^{\vee}$. Then, the genus $0$ functional relations given in Theorem~\ref{thm:R-transform-GenusZero} and~\ref{coeffThm} hold, and with the same substitution and notations we have for any $n \geq 1$:
\begin{equation}
\label{G12nc}
G_{\frac{1}{2},n}(X_1,\ldots,X_n)  = \sum_{r_1,\ldots,r_n \geq 0} \prod_{i = 1}^n \vec{\mathsf{O}}_{r_i}^{\vee}(w_i) \sum_{T \in \mathcal{G}_{0,n}'(\mathbf{r} + 1)}G_{\frac{1}{2},\# I'}(w_{I'}) \prod'_{I \neq I'}G_{0,\# I}^{\vee}(w_I)\,.
\end{equation}
Equivalently, for any $n \geq 1$, and $\lambda \vdash d$ of length $n$ we have:
\begin{equation}
\label{demicoeff}
\phi'(\mathbf{1}_d,\pi_{\lambda}) = \Big[\prod_{i = 1}^n w_i^{\lambda_i} \Big] \,\,\sum_{\substack{0 \leq r_i \leq \lambda_i \\ i \in [n]}} \prod_{i = 1}^n \frac{\lambda_i!}{(\lambda_i - r_i)!} \sum_{T \in \mathcal{T}'_{n}(\mathbf{r} + 1)} \frac{G_{\frac{1}{2},\# I'}(w_{I'}) \prod''_{I \neq I'} G_{0,\# I}(w_I)}{\# {\rm Aut}(T)}\,.
\end{equation}
\end{theorem}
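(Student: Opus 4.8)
The plan is to rerun the proofs of Theorem~\ref{thm:R-transform-GenusZero} and Theorem~\ref{coeffThm} one order higher in $\hbar$, using Lemma~\ref{lem121} in place of Corollary~\ref{thm:starproduct}. The genus-$0$ assertions are immediate: reducing the hypothesis ${}^{\flat}\phi = \zeta \ast {}^{\flat}\phi^{\vee}$ modulo $\hbar$ gives $\phi = \zeta \ast \phi^{\vee}$ between the genus-$0$ parts, which are multiplicative, so Theorems~\ref{thm:R-transform-GenusZero} and~\ref{coeffThm} apply as they stand.

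For the genus-$\frac{1}{2}$ relations I would first realise $\phi,\phi'$ as the two lowest pieces of a genuine $\mathsf{D}$-image, exactly as in Section~\ref{proofg0}. Working in the half-integer genus framework of Section~\ref{Sec:extdemi}, I build a topological partition function $Z^{\vee}$ whose associated surfaced multiplicative function has genus-$0$ part $\phi^{\vee}$, genus-$\frac{1}{2}$ part $\phi'^{\vee}$, and vanishing data in genus $\geq 1$, and set $Z = \mathsf{D}Z^{\vee}$. By Theorem~\ref{thm:hbarstarproduct} this is the relation $\Phi_Z = \bbzeta \circledast \Phi_{Z^{\vee}}$, so Lemma~\ref{lem121} gives $\Phi_Z \approx \bbzeta \ast \Phi_{Z^{\vee}}$, that is ${}^{\flat}\Phi_Z = \zeta \ast {}^{\flat}\phi^{\vee} = {}^{\flat}\phi$. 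Hence the genus-$0$ and genus-$\frac{1}{2}$ $n$-point functions of $Z$ are exactly $G_{0,n}$ and $G_{\frac{1}{2},n}$, and those of $Z^{\vee}$ are $G_{0,n}^{\vee}$ and $G_{\frac{1}{2},n}^{\vee}$.

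Now apply Theorem~\ref{thm:R-transform-HigherGenera} to $Z = \mathsf{D}Z^{\vee}$ — valid in this setting — and extract the coefficient of $\hbar^{n-1}$ in \eqref{Ggnpasstilde}, the order just above the leading $\hbar^{n-2}$. The key point is a parity argument: every $\varsigma$-factor in Definition~\ref{def:weights} contributes only even powers of $\hbar$, and the substitution \eqref{XPveesub} involves the genus-$0$ function $G_{0,1}^{\vee}$ alone, hence is $\hbar$-independent. Consequently the single additional odd power of $\hbar$ must come from exactly one genus-$\frac{1}{2}$ datum, all other building blocks being taken at genus $0$. As in Section~\ref{proofg0} this restricts the sum to trees and distinguishes one hyperedge $I'$ carrying $G_{\frac{1}{2},\#I'}^{\vee}$, the others carrying $G_{0,\#I}^{\vee}$: when $\#I' \geq 2$ the genus-$\frac{1}{2}$ factor sits in a weight $\mathsf{c}^{\vee}$, while when $\#I' = 1$ it is produced by the genus-$\frac{1}{2}$ correction of the vertex operator \eqref{O2Xi}, in which the term $\hbar u_i\,\varsigma(\hbar u_i w_i\partial_{w_i})(G_1^{\vee}(w_i) - \hbar^{-1})$, after the genus-$0$ piece $u_i(G_{0,1}^{\vee}(w_i)-1)$ has cancelled against the subtracted term, leaves $\hbar u_i G_{\frac{1}{2},1}^{\vee}(w_i)$, i.e.\ a univalent special black vertex. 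This is precisely the passage from $\mathcal{G}_{0,n}$ to $\mathcal{G}_{0,n}'$ and yields \eqref{G12nc}. The coefficient form \eqref{demicoeff} follows in the same manner from the leading order \eqref{Ugenus0} of the operators $\vec{\mathsf{U}}^{\vee}$ in Lemma~\ref{lem:KeyComb}, reasoning as in Section~\ref{coeffgenus0}: the powers $(G_{0,1}^{\vee}-1)^{\ell_i+1}$ become $\ell_i+1$ univalent black vertices (upgrading $\mathcal{T}_n$ to $\mathcal{T}_n'$) with the automorphism factors $(\ell_i+1)!$, and the unique genus-$\frac{1}{2}$ factor becomes the special vertex.

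The main obstacle is the genus-$\frac{1}{2}$ extraction of the vertex operator: one must check that its $\hbar^{1}$-part reduces, after the three reductions of Section~\ref{proofg} (all of which use only the $\hbar$-independent substitution \eqref{XPveesub}), to the genus-$0$ operator $\vec{\mathsf{O}}_{r_i}^{\vee}$ acting on the configuration with one extra univalent special hyperedge — with no stray contribution from $\varsigma$-corrections (excluded by parity) and no correction to the substitution itself (excluded since \eqref{XPveesub} uses only $G_{0,1}^{\vee}$). Granting this, the small cases $n = 1,2$ are handled by the same specialisations as at the end of Section~\ref{proofg0}.
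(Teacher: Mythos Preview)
Your proposal is correct and follows essentially the same approach as the paper: build $Z^{\vee}$ from $\phi^{\vee},\phi'^{\vee}$ with higher genera zero, set $Z = \mathsf{D}Z^{\vee}$, invoke Lemma~\ref{lem121} to identify the genus-$0$ and genus-$\tfrac{1}{2}$ $n$-point functions, and extract the $\hbar^{n-1}$ coefficient from the graph formula using the parity of $\varsigma$ to force exactly one genus-$\tfrac{1}{2}$ factor on a tree.

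The one difference worth noting concerns the ``main obstacle'' you flag, namely the genus-$\tfrac{1}{2}$ contribution from the vertex operator. Rather than expand \eqref{O2Xi} directly as you do, the paper first \emph{rewrites} the functional relation: it replaces $G_1^{\vee}$ inside the vertex operator by its integer-genus part $G_{{\rm even},1}^{\vee} = \sum_{g \in \mathbb{Z}_{\geq 0}} \hbar^{2g-1} G_{g,1}^{\vee}$, and compensates by allowing univalent black vertices from the start (summing over $\mathcal{G}_n'$ instead of $\mathcal{G}_n$) with weight $\hbar u_i\,\varsigma(\hbar u_i w_i\partial_{w_i})\,G_{{\rm odd},1}^{\vee}(w_i)$. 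After this split the vertex operator is manifestly even in $\hbar$, so the parity argument goes through uniformly without a separate case analysis, and the univalent special vertex $I'$ appears on the same footing as the multivalent ones. Your direct expansion reaches the same conclusion, but the even/odd split is what cleanly dissolves the obstacle you identified.
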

\begin{corollary}
\label{the1demi} We have
\begin{equation}
\label{iuniunasf} G_{\frac{1}{2},1}(X) = P^{\vee}(w)\,G_{\frac{1}{2},1}^{\vee}(w)\,,\quad X = \frac{w}{G_{0,1}^{\vee}(w)}\,,\qquad P^{\vee}(w) = \frac{\dd \ln w}{\dd \ln X}\,.
\end{equation}
Equivalently:
\[
G_{\frac{1}{2},1}(X) \frac{\dd X}{X} = G_{\frac{1}{2},1}^{\vee}(w) \frac{\dd w}{w}\,.
\]
\end{corollary}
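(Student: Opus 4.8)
The plan is to obtain Corollary~\ref{the1demi} as the $n = 1$ specialisation of equation \eqref{G12nc} in Theorem~\ref{thmdemimain}, which I may assume. The first step is to enumerate the trees contributing on the right-hand side. An element of $\mathcal{G}_{0,1}'$ is a connected bicoloured tree (in the sense of Definition~\ref{deftree}) with a single white vertex, labelled $1$, together with a distinguished special black vertex whose hyperedge $I'$ is permitted to be univalent, while every other black vertex must have valency $\geq 2$. Since there is only one white vertex and edges join vertices of opposite colour, any non-special black vertex of valency $\geq 2$ would be forced to meet white vertex $1$ at least twice, producing a multiple edge and hence a cycle, which is impossible in a tree. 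Consequently the only tree in $\mathcal{G}_{0,1}'$ is the one in which white vertex $1$ is joined by a single edge to the special black vertex: here $I' = \{1\}$ so that $\# I' = 1$, and the valency of the white vertex is $1$, which forces $r_1 = 0$.

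The second step is to evaluate the vertex operator at $r_1 = 0$. From Definition~\ref{Or2}, the factor $\big(\partial_{y} + \tfrac{v}{y}\big)^{0} \cdot 1 = 1$ carries no $v$-dependence, so only the $m = 0$ term survives the extraction $[v^m]$, giving $\vec{\mathsf{O}}_0^{\vee}(w) = P^{\vee}(w)$ acting simply as multiplication by $P^{\vee}(w)$. Substituting this together with the single contributing tree, whose special hyperedge carries the dual infinitesimal one-point function $G_{\frac{1}{2},1}^{\vee}(w)$, into \eqref{G12nc} yields
\[
G_{\frac{1}{2},1}(X) = P^{\vee}(w)\,G_{\frac{1}{2},1}^{\vee}(w),\qquad X = \frac{w}{G_{0,1}^{\vee}(w)},
\]
which is the first assertion of the corollary. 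For the differential-form reformulation, I would use $P^{\vee}(w) = \frac{\dd \ln w}{\dd \ln X}$, so that $P^{\vee}(w)\,\frac{\dd X}{X} = \frac{\dd w}{w}$; multiplying the functional relation by $\frac{\dd X}{X}$ then gives $G_{\frac{1}{2},1}(X)\,\frac{\dd X}{X} = G_{\frac{1}{2},1}^{\vee}(w)\,\frac{\dd w}{w}$, as claimed.

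There is essentially no analytic difficulty once Theorem~\ref{thmdemimain} is available; the statement is a pure specialisation. The only points demanding care are combinatorial: verifying that for $n = 1$ the contributing tree is unique (in particular that the special black vertex is necessarily univalent and that $r_1 = 0$ is forced), and checking that the $r = 0$ vertex operator degenerates to multiplication by $P^{\vee}(w)$. Both are elementary, so the main ``work'' is simply confirming that the general machinery collapses correctly in this smallest case.
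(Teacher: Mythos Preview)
Your proof is correct and follows the same overall strategy as the paper, namely specialising Theorem~\ref{thmdemimain} to $n=1$. The only difference is that you work with the functional relation \eqref{G12nc} and the set $\mathcal{G}_{0,1}'$, whereas the paper instead specialises the coefficient-wise formula \eqref{demicoeff} over $\mathcal{T}_1'(r+1)$ and then applies Lagrange inversion; your route is marginally more direct since Lagrange inversion is already absorbed into the operators $\vec{\mathsf{O}}_r^{\vee}$.
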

\begin{proof}[Proof of Theorem~\ref{thmdemimain}]
Let $\phi_{\hbar}^{\vee} \colon PS \rightarrow R[\![\hbar]\!]$ be the unique multiplicative function which for any $d \in \mathbb{Z}_{\geq 0}$ and $\alpha \in S(d)$ satisfies
\[
\phi_{\hbar}^{\vee}(\mathbf{1}_d,\alpha) = \hbar^{|(\mathbf{1}_d,\alpha)|}\big(\phi(\mathbf{1}_d,\alpha) + \hbar \phi'(\mathbf{1}_d,\alpha)\big)\,.
\]
We then introduce the multiplicative function $\phi_{\hbar} = \zeta_{\hbar} \circledast \phi_{\hbar}^{\vee}$. Let $G_n$ and $G_n^{\vee}$ be the $n$-point functions corresponding to $\phi_{\hbar}$ and $\phi_{\hbar}^{\vee}$: they have an $\hbar$ expansion of the form \eqref{giGn} with half-integer $g$ allowed, and Theorem~\ref{thm:R-transform-HigherGenera} applies. As we are in the situation of Lemma~\ref{lem121}, $G_{0,n}$ and $G_{\frac{1}{2},n}$ must coincide with the right-hand sides of \eqref{G12nc9} defined in terms of $\phi$ and $\phi'$. We shall focus on them but rewrite Theorem~\ref{thm:R-transform-HigherGenera}  differently. Namely, we decide to replace $G_1^{\vee}(w_i)$ in the operator weight $\vec{\mathsf{O}}^{\vee}(w_i)$ with
\[
G_{{\rm even},1}^{\vee}(w) = \sum_{g \in \mathbb{Z}_{\geq 0}} \hbar^{2g - 1}\,G_{g,1}^{\vee}(w) \,.
\]
Following the proof of Theorem~\ref{thm:R-transform-HigherGenera} in Section~\ref{proofg}, we see that the functional relation \eqref{Ggnpass} still holds with this modified operator weight provided we sum over the set $\mathcal{G}'_n$ of bicoloured graphs like in Definition~\ref{def:graphs} but now allowing univalent black vertices. When connecting to the $i$-th white vertex the latter receive the weight
\[
\mathsf{c}^{\vee}(u_i,w_i) = \hbar u_i \varsigma(\hbar u_i w_i\partial_{w_i}) G_{{\rm odd},1}^{\vee}(w_i)\,,
\]
where:
\[
G_{{\rm odd},1}^{\vee}(w) = \sum_{g \in \frac{1}{2} + \mathbb{Z}_{\geq 0}} \hbar^{2g - 1} G_{g,1}^{\vee}(w)\,.
\]
When extracting the genus $\frac{1}{2}$ part of the formula, only the leading power in $\hbar$ of each of the weights and only the trees in $\mathcal{G}_n'$ in which exactly one factor of $G_{\frac{1}{2},\# I}$ is picked will contribute. This is because the series $\varsigma$ where $\hbar$ occurs is even, corresponding to the fact that monotone Hurwitz numbers have integer genus. The outcome is then \eqref{G12nc}, and adapting the proof of Section~\ref{coeffgenus0} gives \eqref{demicoeff}.
\end{proof}
\begin{proof}[Proof of Corollary~\ref{the1demi}]
We specialise \eqref{demicoeff} to $n = 1$. The set $\mathcal{T}'_1(r+ 1)$ contains a single tree, namely the white vertex connected to the special vertex and $r$ other univalent black vertices. We therefore find for $k > 0$:
\begin{equation*}
\begin{split}
\phi'(\mathbf{1}_k,(1 2 \cdots k)) & = [X^k]\,\,G_{\frac{1}{2},1}(X) = [w^k]\,\, \sum_{r = 0}^{k} \frac{k!}{(k - r)!}\, G_{\frac{1}{2},1}^{\vee}(w) \,\frac{(G_{0,1}^{\vee}(w) - 1)^{r}}{r!} \\
& = [w^k]\,\,G_{\frac{1}{2},1}^{\vee}(w) G_{0,1}^{\vee}(w)^{k}\,,
\end{split}
\end{equation*}
which leads to \eqref{iuniunasf} thanks to Lagrange inversion formula.
\end{proof}

\subsection{Surfaced free probability}
\label{SecFreehig}
Section~\ref{Sec3} suggests a natural generalisation of the notion of higher-order free cumulants that includes information about higher genera, using the extended convolution instead of the convolution. As much as free probability applied to random ensembles of hermitian matrices at leading order when the size $N$ goes to $\infty$, the surfaced version we propose captures information about the corrections of order $N^{-2g}$ beyond the leading order (we will make this precise in Section~\ref{Sec:Randommat}). With Sections~\ref{Sec:PSG}-\ref{Sec:extdemi} and Remark~\ref{evenremark} under our belt, it does not cost anything to allow half-integer $g$.

\begin{definition}
A \emph{surfaced probability space} (SPS) is the data $(\mathscr{A},\boldsymbol{\varphi})$ consisting of a unital associative algebra $\mathscr{A}$ over $\mathbb{C}$ and a family $\boldsymbol{\varphi} = \big(\varphi_{g,n}\,\,:\,\,g \in \frac{1}{2}\mathbb{Z}_{\geq 0},\,\,n \in \mathbb{Z}_{> 0}\big)$ of tracial $n$-linear forms on $\mathscr{A}$ such that $\varphi_{0,1}(1) = 1$ and $\varphi_{g,n}(1,a_2,\ldots,a_n) = 0$ for any $(g,n) \neq (0,1)$ and $a_2,\ldots,a_n \in \mathscr{A}$.
\end{definition}

Given a SPS, we encode the moments into the multiplicative function on $\PSG(\mathscr{A})$ valued in $R = \mathbb{C}$ and given, for $\lambda \vdash d$ of length $\ell$, genus $g \in \frac{1}{2}\mathbb{Z}_{\geq 0}$ and $a_1,\ldots,a_n \in \mathscr{A}$, by
\[
\phi(\mathbf{1}_{d},\pi_{\lambda},g)[a_1,\ldots,a_d] = \varphi_{g,n}\Big(\prod_{j = 1}^{\lambda_1} a_{j}\,,\,\prod_{j = 1}^{\lambda_2} a_{\lambda_1 + j}\,,\ldots,\,\prod_{j = 1}^{\lambda_{\ell}} a_{\lambda_1 + \cdots + \lambda_{\ell - 1} + j} \Big)\,.
\]
Then we consider the multiplicative function $\phi^{\vee}$ on $\PSG(\mathscr{A})$ by
\begin{equation}
\label{phiveunu}\phi^{\vee} = \bbmu \circledast \phi \qquad \Longleftrightarrow \qquad \phi = \bbzeta \circledast \phi^{\vee}\,,
\end{equation}
and we define the genus $g$, order $n$ free cumulants to be the $n$-linear forms:
\[
\kappa_{g;k_1,\ldots,k_n}\colon \mathscr{A}^n \rightarrow \mathbb{C}\,,\qquad \kappa_{g;k_1,\ldots,k_n}(a_1,\ldots,a_d) =\phi^{\vee}(\mathbf{1}_{d},\pi_{\lambda(\mathbf{k})},g)[a_1,\ldots,a_{d}]\,,
\]
where $d = k_1 + \cdots + k_n$. For a given $a \in \mathscr{A}$, the generating series of ``surfaced moments'' and ``surfaced free cumulants'' of $a$:
\begin{equation}
\label{GgnGnun}\begin{split}
G_{g,n}(X_1,\ldots,X_n) & = \delta_{g,0}\delta_{n,1} + \sum_{k_1,\ldots,k_n > 0} \varphi_{g,n}(a^{k_1},\ldots,a^{k_n}) \prod_{i = 1}^n X_i^{k_i}\,, \\
G_{g,n}^{\vee}(w_1,\ldots,w_n) & =  \delta_{g,0}\delta_{n,1} + \sum_{k_1,\ldots,k_n > 0} \kappa_{g;k_1,\ldots,k_n}(a,\ldots,a) \prod_{i = 1}^n w_i^{k_i}\,,
\end{split}
\end{equation}
are then related by Theorem~\ref{thm:R-transform-HigherGenera} where half-integer genera are allowed. In particular the $g = 0$ case is covered by Theorem~\ref{thm:R-transform-GenusZero} and the $g = \frac{1}{2}$ by Theorem~\ref{thmdemimain}.

 \begin{definition}
We define a partial order\footnote{This is not a total order. For instance $(1,1)$ and $(0,3)$ are not comparable. More generally, two distinct elements $(g,n),(g',n')$ having  $2g - 2 + n = 2g' - 2 + n'$  are not comparable.} on $\mathsf{Typ} = \frac{1}{2}\mathbb{Z}_{\geq 0} \times \mathbb{Z}_{> 0}$ by declaring $(g,n) \preceq (g',n')$ when $g \leq g'$ and $g' + n' \leq g + n$. We also define
\[
\overline{\mathsf{Typ}} = \mathsf{Typ} \cup \bigg(\bigcup_{g \in \frac{1}{2}\mathbb{Z}_{\geq 0}} \{(g,\infty)\}\bigg) \cup \{(\infty,\infty)\}\,,
\]
on which the partial order relation extends in a natural way, for instance, if $g,g' \in \frac{1}{2}\mathbb{Z}_{\geq 0}$ with $g < g'$ and $n \in \mathbb{Z}_{> 0}$
\[
(g,n) \prec (g,\infty) \prec (g',\infty)\,,\qquad (g,\infty) \prec (g',n)\,. 
\]
\end{definition}
The following lemma shows that this order reflects the structure of the relations \eqref{phiveunu} or of Theorem~\ref{thm:R-transform-HigherGenera}.
\begin{lemma}
Let $(g_0,n_0) \in \overline{\mathsf{Typ}}$. The knowledge of $\varphi_{g,n}$ for all $(g,n) \preceq (g_0,n_0)$ is equivalent to the knowledge of $\kappa_{g;k_1,\ldots,k_n}$ for all $(g,n) \preceq (g_0,n_0)$ and $k_1,\ldots,k_n > 0$.
\end{lemma}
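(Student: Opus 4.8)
The plan is to work at the level of the multiplicative functions $\phi,\phi^{\vee}\colon \PSG(\mathscr{A}) \to \mathbb{C}$ encoding the surfaced moments and free cumulants, and to exploit both $\phi = \bbzeta \circledast \phi^{\vee}$ and its inverse $\phi^{\vee} = \bbmu \circledast \phi$, where $\bbzeta = \iota_*\zeta$ and $\bbmu = \iota_*\mu$ are multiplicative kernels supported on genus-$0$ surfaced permutations. Knowing the forms $\varphi_{g,n}$ (resp.\ $\kappa_{g;k_1,\ldots,k_n}$) of a fixed type is the same as knowing $\phi$ (resp.\ $\phi^{\vee}$) on all $(\mathbf{1}_d,\pi_\lambda,g)$ with $\ell(\lambda)=n$, and multiplicativity then determines these functions on all of $\PSG(\mathscr{A})$ in that genus. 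I would reduce the statement to two \emph{locality} assertions: (L1) each $\varphi_{g,n}$ is a finite polynomial expression in the cumulants $\kappa_{g';k_1,\ldots,k_{n'}}$ with $(g',n')\preceq(g,n)$, and (L2) symmetrically each $\kappa_{g;k_1,\ldots,k_n}$ is such an expression in the moments $\varphi_{g',n'}$ with $(g',n')\preceq(g,n)$. Granting (L1)--(L2), the lemma is immediate, since $\{(g,n):(g,n)\preceq(g_0,n_0)\}$ is a down-set for $\preceq$: knowing all cumulants of type $\preceq(g_0,n_0)$ lets one evaluate, through (L1), every moment of type $\preceq(g_0,n_0)$, and conversely through (L2). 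The extended types $(g_0,\infty)$ and $(\infty,\infty)$ follow by taking the corresponding unions over finite $n$, as their principal ideals are still down-sets.

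For (L1) I would expand $\varphi_{g,n}=\phi(\mathbf{1}_d,\pi_\lambda,g)$, with $\lambda\vdash d$ of length $n$, using $\phi=\bbzeta\circledast\phi^{\vee}$. As $\bbzeta$ is supported on triples $(\mathbf{0}_\alpha,\alpha,0)$, each surviving term arises from a factorisation $(\mathbf{0}_\alpha,\alpha,0)\odot(\mathcal{B},\beta,h)=(\mathbf{1}_d,\pi_\lambda,g)$ and, by multiplicativity, equals $\prod_{B\in\mathcal{B}}\kappa_{h(B);\,\lambda(\beta_{|B})}$. It remains to show that every factor has type $(h(B),n'_B)\preceq(g,n)$, where $n'_B=\ell(\beta_{|B})$, and this rests on two colength inequalities. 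First, genus monotonicity $h(B)\le g$: by the block form \eqref{iniguzb} of the genus created under an extended product, $g=k([d])\ge\sum_{B}h(B)\ge h(B)$, since the created genus is a nonnegative integer (Remark~\ref{Blockad}). Second, $h(B)+n'_B\le g+n$: combining the block-additivity of colength (Remark~\ref{Blockad}), the identities $|(\mathbf{0}_\alpha,\alpha)|=|\alpha|$ from \eqref{spcol} and $|(\mathbf{1}_m,\sigma)|=m+\ell(\sigma)-2$, the colength triangle inequality $|\pi_\lambda|\le|\alpha|+|\beta|$ for $\alpha\circ\beta=\pi_\lambda$ (which yields $\#\mathbf{0}_\beta\le n+|\alpha|$), and the connectivity bound $|\alpha|\ge\#\mathcal{B}-1$ forced by $\mathbf{0}_\alpha\vee\mathcal{B}=\mathbf{1}_d$, a short computation gives
\[
g+n-h(B)-n'_B \;\ge\; \Big(\sum_{B'\neq B}\ell(\beta_{|B'})\Big)-\big(\#\mathcal{B}-1\big)\;\ge\;0,
\]
the final step because each of the $\#\mathcal{B}-1$ remaining blocks carries at least one cycle. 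Together the two inequalities are exactly $(h(B),n'_B)\preceq(g,n)$.

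Statement (L2) follows by the identical bookkeeping applied to $\phi^{\vee}=\bbmu\circledast\phi$: $\bbmu$ is again genus-$0$ and multiplicative, so an arbitrary term of $\kappa_{g;k_1,\ldots,k_n}$ is a product of moments $\varphi_{h(B),\ell(\beta_{|B})}$ subject to the same colength constraints, and the two estimates above (which used only nonnegativity of the created genus, the colength triangle inequality, and the connectivity $\mathcal{A}\vee\mathcal{B}=\mathbf{1}_d$) again bound every factor type by $(g,n)$. I would also record that the diagonal contribution to $\phi=\bbzeta\circledast\phi^{\vee}$, given by $\alpha=\mathrm{id}$ and $(\mathcal{B},\beta,h)=(\mathbf{1}_d,\pi_\lambda,g)$, produces exactly $\kappa_{g;k_1,\ldots,k_n}$ of type $(g,n)$ (and dually for the inverse relation), so that the two order ideals are in genuine correspondence and no information is lost on either side.

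The main obstacle is the second colength inequality $h(B)+n'_B\le g+n$, which is the combinatorial heart of the argument and demands care in combining additivity of colength under $\odot$ with the cycle bound $\#\mathbf{0}_\beta\le n+|\alpha|$ and the connectivity estimate $|\alpha|\ge\#\mathcal{B}-1$. Before that, I would check that the extended-convolution sums defining $\varphi_{g,n}$ and $\kappa_{g;\ldots}$ are finite for each fixed $\lambda$ (so that ``function of'' means an honest polynomial), which holds because colength is additive and bounded (as in Lemma~\ref{lem121} and \eqref{miniABprod}), and that the two inequalities reproduce precisely the partial order $\preceq$, namely genus-monotone and $(g+n)$-monotone, consistently with the incomparability of distinct equal-$(2g-2+n)$ types noted after the definition of $\preceq$.
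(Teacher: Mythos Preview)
Your argument is correct in substance and takes the direct combinatorial route that the paper explicitly mentions (``This can be in principle extracted by elementary means from the definition~\eqref{phiveunu}'') but does not carry out; the paper instead reads the triangularity off the graph expansion in Theorem~\ref{thm:R-transform-HigherGenera}. Your approach has the advantage of being self-contained and not relying on the Fock space machinery, while the paper's approach illustrates that the functional relations already encode this order-ideal structure visibly.

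One inaccuracy should be flagged: you assert $\bbmu=\iota_*\mu$, i.e.\ that $\bbmu$ is supported on genus~$0$. This is not true in general. By the paper's characterisation $\hat{\bbmu}=\mu_\hbar$, that would force $\mu_\hbar(\mathcal{A},\alpha)=\hbar^{|(\mathcal{A},\alpha)|}\mu(\mathcal{A},\alpha)$, but computing $\zeta_\hbar\circledast(\hbar^{|\cdot|}\mu)$ produces, beyond the leading term $\zeta\ast\mu=\delta$, higher powers of $\hbar$ coming from factorisations with $|(\mathcal{A},\alpha)|+|(\mathcal{B},\beta)|>|(\mathcal{C},\gamma)|$, which do not cancel. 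Fortunately your own parenthetical remark already contains the fix: the two estimates you isolate use only nonnegativity of genus, the colength triangle inequality, and the connectivity $\mathcal{A}\vee\mathcal{B}=\mathbf{1}_d$. For a general first factor $(\mathcal{A},\alpha,g')$ you simply replace $|\alpha|$ by $|(\mathcal{A},\alpha,g')|$; the inequalities $|(\mathcal{A},\alpha,g')|\ge|\alpha|$ (since $|\mathcal{A}|\ge|\alpha|$ from $\mathbf{0}_\alpha\le\mathcal{A}$) and $|(\mathcal{A},\alpha,g')|\ge|\mathcal{A}|\ge\#\mathcal{B}-1$ (partition colength subadditivity under $\vee$) make both bounds go through unchanged. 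With that adjustment, (L2) is proved exactly as (L1), and the lemma follows.
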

\begin{proof} This can be in principle extracted by elementary means from the definition \eqref{phiveunu}, but we propose here to read it from Theorem~\ref{thm:R-transform-HigherGenera}. By multilinearity, it is enough to prove the thesis for the evaluations of $\varphi_{g,n}$ and $\kappa_{g;k_1,\ldots,k_n}$ on tuples of the form $(a,\ldots,a)$. The claim clearly holds for $(g,n) = (0,1)$, and for $(\frac{1}{2},1)$ (see Corollary~\ref{the1demi}). Now take $a \in \mathscr{A}$ and $(g,n) \in \mathsf{Typ}$ with $2g - 2 + n > 0$: we consider \eqref{Ggnpass} expressing $G_{g,n}$ in terms of $G^{\vee}_{g',n'}$, both defined as in \eqref{GgnGnun}. The summand associated to a graph $\Gamma \in \mathcal{G}_n$ contains --- besides $G_{0,1}^{\vee}$ --- contributions from the hyperedges involving $G_{g_I,\# I}^{\vee}$ for some $g_I \in \frac{1}{2}\mathbb{Z}_{\geq 0}$, and contributions from the $i$-th vertex which give either a $1$ or a product $\prod_{p = 1}^{q_i} G_{g_{i,p},1}^{\vee}$ with $q_i > 0$ and $g_{i,p} \in \frac{1}{2}\mathbb{Z}_{> 0}$. The extraction of the right power of $\hbar$ in \eqref{Ggnpass} shows that 
\[ 
2g - 2 + n = \sum_{i = 1}^n \Big(-1 + \sum_{p = 1}^{q_i} 2g_{i,p}\Big)+ \sum_{I \in \mathcal{I}(\Gamma)} 2(g_I - 1 + \# I)\,.
\]
In other words:
\begin{equation}
\label{dind1}
g - 1 + n = \sum_{i = 1}^n \sum_{p = 1}^{q_i} g_{i,p} + \sum_{I \in \mathcal{I}(\Gamma)} (g_I - 1 + \# I)\,.
\end{equation}
Besides, as $\Gamma$ is connected, we must have $\sum_{I \in \mathcal{I}(\Gamma)} (-1 + \# I) \geq n - 1$ and thus
\begin{equation}
\label{dind2}
g \geq \sum_{i = 1}^n \sum_{p = 1}^{q_i} g_{i,p} + \sum_{I�\in \mathcal{I}(\Gamma)} g_I\,.
\end{equation}
As in the right-hand side of \eqref{dind1}-\eqref{dind2} all terms of the sums are nonnegative, we deduce that only $G_{g',n'}^{\vee}$ with $g' + n' \leq g + n$ and $g' \leq g$ are involved in the sum over graphs, that is $(g',n') \preceq (g,n)$. Noticing that the correction term $\Delta_g^{\vee}$ in \eqref{Ggnpass-n1} only involves $G_{g',1}^{\vee}$ for $g' \leq g$, we deduce that $G_{g,n}$ is expressed as a function of $G_{g',n'}^{\vee}$ with $(g',n') \preceq (g,n)$. 
\end{proof}

The raison d'\^etre of cumulants is to formulate a notion of freeness for elements (or more generally, subalgebras) of $\mathscr{A}$, by the vanishing of mixed free cumulants. We can propose a similar definition here.

\begin{definition}
Let $(g_0,n_0) \in \overline{\mathsf{Typ}}$. A family $(\mathscr{X}_i)_{i \in I}$ of subsets of $\mathscr{A}$ is called \emph{$(g_0,n_0)$-free} if for any $(g,n) \preceq (g_0,n_0)$, for any $d \geq 0$, any $(a_1,\ldots,a_d) \in \prod_{p = 1}^d \mathscr{X}_{i(p)}$ and $k_1,\ldots,k_n > 0$ so that $k_1 + \cdots + k_n = d$, we have $\kappa_{g;k_1,\ldots,k_n}(a_1,\ldots,a_d) = 0$ whenever there exists $p,q \in [d]$ and $i(p) \neq i(q)$.
\end{definition}
\begin{remark}
Voiculescu's freeness is $(0,1)$-freeness, the second-order freeness of \cite{MingoSpeicher} is $(0,2)$-freeness, the all-order freeness of \cite{CMSS} is $(0,\infty)$-freeness. Due to Lemma~\ref{lem121} along with Remark~\ref{dualrem} or looking at the definition of the convolution $\ast$ on $\PSG$ (Definition~\ref{def49}), $(\frac{1}{2},1)$-freeness coincides with the notion of infinitesimal freeness of \cite{NF-infinitesimal}. We note that it involves only the free cumulants of type $(g,n) = (0,1)$ and $(\frac{1}{2},1)$. More precisely, in terms of generating series we have from Corollary~\ref{the1demi} the functional relations
\[
G_{0,1}(X) = G_{0,1}^{\vee}(w)\,,\qquad G_{\frac{1}{2},1}(X) = \frac{\dd \ln w}{\dd \ln X}\,G_{\frac{1}{2},1}^{\vee}(w)\,,\qquad X = \frac{w}{G_{0,1}^{\vee}(w)}\,,
\]
which are indeed the ones known for infinitesimal free cumulants. Besides, the order $k$ infinitesimal freeness of \cite{Fevrierhigher} corresponds to $(1,0)$-freeness using multiplicative functions valued in the ring $R$ of upper triangular T\"oplitz matrices of size $(k + 1)$ (instead of $\mathbb{C}[\![\hbar]\!]/(\hbar^2)$ that corresponds to $k = 1$).
\end{remark}
An immediate consequence is that, if $a_1 \in \mathscr{X}_1$ and $a_2 \in \mathscr{X}_2$ but $(\mathscr{X}_1,\mathscr{X}_2)$ is $(g_0,n_0)$-free, the free cumulants of $a_1 + a_2$ are additive for orders $(g,n) \preceq (g_0,n_0)$, that is:
\begin{equation}
\label{kappaadd}
\kappa_{g;k_1,\ldots,k_n}(a_1 + a_2,\ldots,a_1 + a_2) = \kappa_{g;k_1,\ldots,k_n}(a_1,\ldots,a_1) + \kappa_{g;k_1,\ldots,k_n}(a_2,\ldots,a_2)\,.
\end{equation}
A good notion of freeness for sets should pass to subalgebras. It was shown to be the case for $(0,n)$-freeness in \cite[Section 7.3]{CMSS}. We now extend those arguments to higher genera, only stressing what is new compared to \cite{CMSS}.

\begin{definition}
The type of a surfaced permutation $(\mathcal{A},\alpha,g)$ is $(\mathsf{g},\mathsf{n}) \in \mathsf{Typ}$ where $\mathsf{n}$ is the number of cycles of $\alpha$ and $\mathsf{g} = \sum_{A \in \mathcal{A}} g(A)$. 
\end{definition}

\begin{lemma}
\label{Lhigh} Let $(\mathscr{A},\boldsymbol{\varphi)}$ be a SPS and $\mathscr{X}_1,\mathscr{X}_2 \subset \mathscr{A}$, and $(g_0,n_0) \in \overline{\mathsf{Typ}}$. We denote $\mathscr{X}_i^+ = \mathscr{X}_i \cup \{1\}$. The following statements are equivalent:
\begin{itemize}
\item[(i)] $\mathscr{X}_1$ and $\mathscr{X}_2$ are  $(g_0,n_0)$-free.
\item[(ii)] $\mathscr{X}_1^+$ and $\mathscr{X}_2^+$ are $(g_0,n_0)$-free.
\item[(iii)] For any $d \in \mathbb{Z}_{\geq 0}$, any $(\mathcal{C},\gamma,k) \in \PSG(d)$ of type $(\mathsf{g},\mathsf{n}) \preceq (g_0,n_0)$, any $a_1,\ldots,a_d \in \mathscr{X}_1^+$ and $b_1,\ldots,b_d \in \mathscr{X}_2^+$, we have
\[
\phi(\mathcal{C},\gamma,k)[a_1b_1,\ldots,a_db_d] = \sum_{(\mathcal{A},\alpha,g) \odot (\mathcal{B},\beta,h) = (\mathcal{C},\gamma,k)} \phi^{\vee}(\mathcal{A},\alpha,g)[a_1,\ldots,a_d]\,\phi(\mathcal{B},\beta,h)[b_1,\ldots,b_d]\,.
\] 
\item[(iv)] For any $d \in \mathbb{Z}_{\geq 0}$, any $(\mathcal{C},\gamma,k) \in \PSG(d)$ of type $(\mathsf{g},\mathsf{n}) \preceq (g_0,n_0)$, any $a_1,\ldots,a_d \in \mathscr{X}_1^+$ and $b_1,\ldots,b_d \in \mathscr{X}_2^+$, we have
\[
\phi^{\vee}(\mathcal{C},\gamma,k)[a_1b_1,\ldots,a_db_d] = \sum_{(\mathcal{A},\alpha,g) \odot (\mathcal{B},\beta,h) = (\mathcal{C},\gamma,k)} \phi^{\vee}(\mathcal{A},\alpha,g)[a_1,\ldots,a_d]\,\phi^{\vee}(\mathcal{B},\beta,h)[b_1,\ldots,b_d]\,.
\]
\end{itemize}
\end{lemma}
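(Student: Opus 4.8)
The plan is to follow the strategy of \cite[Section~7.3]{CMSS}, treating the genus function as extra data that is transported passively through most of the argument, and to isolate the two places where the genus-creation rule of the extended product (Definition~\ref{def49} and Remark~\ref{Blockad}) plays a genuine role. The logical skeleton will be: (i)$\Leftrightarrow$(ii) is the elementary insertion of the unit; (iii)$\Leftrightarrow$(iv) is a formal manipulation in the convolution algebra using $\phi = \bbzeta \circledast \phi^\vee$ from \eqref{phiveunu}; and (ii)$\Leftrightarrow$(iv) is the substantive equivalence between vanishing of mixed cumulants and the multiplicative factorisation, which I expect to be the main obstacle. Throughout I restrict to surfaced permutations of type $(\mathsf{g},\mathsf{n}) \preceq (g_0,n_0)$, which is consistent because, by the genus-creation rule of Remark~\ref{Blockad} together with the same Euler-characteristic bookkeeping as in \eqref{dind1}--\eqref{dind2}, every $\odot$-factor of a surfaced permutation of type $\preceq (g_0,n_0)$ again has type $\preceq (g_0,n_0)$.

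For (i)$\Leftrightarrow$(ii), the implication (ii)$\Rightarrow$(i) is immediate since $\mathscr{X}_i \subseteq \mathscr{X}_i^+$. For the converse I would argue, as in the first-order theory, that a mixed free cumulant $\kappa_{g;k_1,\ldots,k_n}$ one of whose arguments is the unit $1$ either vanishes or reduces to a mixed cumulant of strictly lower order in the genuine arguments from $\mathscr{X}_1\cup\mathscr{X}_2$; the axiom $\varphi_{g,n}(1,a_2,\ldots,a_n)=0$ for $(g,n)\neq(0,1)$ feeds this induction, and the genus label is untouched since inserting $1$ does not alter the surfaced permutation. Hence vanishing of mixed cumulants on $\mathscr{X}_1,\mathscr{X}_2$ propagates to $\mathscr{X}_1^+,\mathscr{X}_2^+$.

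For (iii)$\Leftrightarrow$(iv), I fix decorations $a_1,\ldots,a_d\in\mathscr{X}_1^+$ and $b_1,\ldots,b_d\in\mathscr{X}_2^+$ and introduce the functions on $\PSG(d)$
\[
F(\mathcal{A},\alpha,g)=\phi^\vee(\mathcal{A},\alpha,g)[a_1,\ldots,a_d],\qquad H(\mathcal{B},\beta,h)=\phi^\vee(\mathcal{B},\beta,h)[b_1,\ldots,b_d],
\]
together with $M=\phi(\,\cdot\,)[b_1,\ldots,b_d]$, $K=\phi^\vee(\,\cdot\,)[a_1b_1,\ldots,a_db_d]$ and $L=\phi(\,\cdot\,)[a_1b_1,\ldots,a_db_d]$. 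Evaluating \eqref{phiveunu} at the respective decorations gives $M=\bbzeta\circledast H$ and $L=\bbzeta\circledast K$, while statement (iv) reads $K=F\circledast H$ and (iii) reads $L=F\circledast M$. Given (iv), associativity of $\circledast$ yields $L=\bbzeta\circledast F\circledast H$, whereas $F\circledast M=F\circledast\bbzeta\circledast H$, so the two coincide once $\bbzeta\circledast F=F\circledast\bbzeta$. This last commutation is the decorated counterpart of Lemma~\ref{commutmult}: it holds because $\bbzeta$ is conjugation invariant and $F$ arises from the multiplicative function $\phi^\vee$ by fixing the decoration, which is exactly the setting already invoked in the footnote to the definition of higher-order free cumulants. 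The converse (iii)$\Rightarrow$(iv) is identical after convolving with $\bbmu$ and using the analogous commutation $\bbmu\circledast F=F\circledast\bbmu$.

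The crux is (ii)$\Leftrightarrow$(iv). By multiplicativity of $\phi^\vee$ and multilinearity, $\phi^\vee(\mathcal{C},\gamma,k)[a_1b_1,\ldots,a_db_d]$ expands block-by-block into free cumulants of the products $a_ib_i$, for which I would use the surfaced analogue of the Krawczyk--Speicher formula for cumulants with products as entries. Assuming (ii), the vanishing of all mixed cumulants forces every surviving block to be monochromatic, so each contributing configuration splits into an $a$-part $(\mathcal{A},\alpha,g)$ and a $b$-part $(\mathcal{B},\beta,h)$ whose merge reconstitutes $(\mathcal{A}\vee\mathcal{B},\alpha\circ\beta)=(\mathcal{C},\gamma)$ and whose genera combine precisely by \eqref{eq:prod:PSG}; this is exactly the $\odot$-convolution of (iv). Conversely, specialising (iv) to a single-block surfaced permutation and setting suitable $a_i$ or $b_i$ equal to $1$ isolates one mixed cumulant and forces its vanishing, giving (ii). I expect the main difficulty to lie here: establishing the products-as-arguments expansion in the surfaced setting and checking that the genus bookkeeping of the extended product matches the genus created when passing from cumulants of products to cumulants of the individual entries, thereby generalising the planar computation of \cite[Section~7.3]{CMSS}.
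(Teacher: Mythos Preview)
Your plan is sound and essentially coincides with the paper's: the equivalences (i)$\Leftrightarrow$(ii) and (iii)$\Leftrightarrow$(iv) are handled exactly as you say, and the substantive implication is the products-as-arguments expansion together with the genus bookkeeping, which you correctly flag as the crux. The paper carries this out for the direction (i)$\Rightarrow$(iii) rather than (ii)$\Rightarrow$(iv): it doubles $[d]$ to $[d,\bar d]\cong[2d]$, uses the tautological identity $\phi(\mathcal{C},\gamma,k)[a_1b_1,\ldots,a_db_d]=\phi(\hat{\mathcal{C}},\hat\gamma,\hat k)[a_1,b_1,\ldots,a_d,b_d]$, expands the right-hand side via $\phi=\bbzeta\circledast\phi^\vee$ on $2d$ letters, and then invokes vanishing of mixed cumulants to force every block of the inner $(\mathcal{B},\beta,h)$ to lie in $[d]$ or $[\bar d]$. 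The only genuinely new step compared to \cite{CMSS} is then the check that the split $(\mathcal{B},\beta,h)\rightsquigarrow(\mathcal{B}_1,\beta_1,h_1),(\mathcal{B}_2,\beta_2,h_2)$ reproduces the $\odot$-product on $[d]$ at the level of genus; the paper does this by the block-additivity of colengths from Remark~\ref{Blockad}, which is precisely the ``genus bookkeeping'' you anticipated.

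One practical remark on routing: aiming directly at (iv) via a surfaced Krawczyk--Speicher formula for $\phi^\vee$ requires first establishing that formula, whereas the paper sidesteps this because the doubling identity for $\phi$ (moments) is immediate from traciality and multiplicativity. Since you already have (iii)$\Leftrightarrow$(iv), it is cleaner to prove (i)$\Rightarrow$(iii) by the doubling argument and then pass to (iv) by convolving with $\bbmu$, rather than to set up the cumulant version independently.
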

\begin{proposition}
\label{SettoAlg}
Let $(\mathscr{X}_i)_{i \in I}$ be a family of subsets of $\mathscr{A}$, and $\mathscr{A}_i$ the subalgebra generated by $\mathscr{X}_i$. Let $(g_0,n_0) \in \overline{\mathsf{Typ}}$. The $(g_0,n_0)$-freeness of $(\mathscr{X}_i)_{i \in I}$ is equivalent to the $(g_0,n_0)$-freeness of $(\mathscr{A}_i)_{i \in I}$. 
\end{proposition}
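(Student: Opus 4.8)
The plan is to derive everything from Lemma~\ref{Lhigh}, which recasts the freeness of two subsets as the factorisation property (iii)--(iv) of the multiplicative functions $\phi,\phi^{\vee}$ on interleaved products. One direction is immediate: since $\mathscr{X}_i\subseteq\mathscr{A}_i$, the $(g_0,n_0)$-freeness of $(\mathscr{A}_i)_{i\in I}$ restricts to that of $(\mathscr{X}_i)_{i\in I}$. For the converse I would first reduce the family statement to a two-set statement. Given a mixed cumulant on the $\mathscr{A}_i$, only finitely many indices $i_1,\dots,i_s$ (with $s\geq 2$) occur; pick $i_1$ among them. Every free cumulant with one argument in $\mathscr{X}_{i_1}$ and another in $\bigcup_{t\geq 2}\mathscr{X}_{i_t}$ is mixed for the family, hence vanishes, so the pair $\big(\mathscr{X}_{i_1},\,\bigcup_{t\geq 2}\mathscr{X}_{i_t}\big)$ is free in the two-set sense. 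If I can show the two-set subalgebra statement, then $\mathscr{A}_{i_1}$ is free from the subalgebra generated by $\bigcup_{t\geq 2}\mathscr{X}_{i_t}$, which contains every $\mathscr{A}_{i_t}$ for $t\geq 2$; since the given cumulant has an argument in $\mathscr{A}_{i_1}$ and (being mixed) another outside it, it vanishes. Thus the whole proposition reduces to: if two sets $\mathscr{X}_1,\mathscr{X}_2$ are $(g_0,n_0)$-free, so are the generated subalgebras $\mathscr{A}_1,\mathscr{A}_2$. By multilinearity of $\phi,\phi^{\vee}$ in their algebra arguments, and since $\mathscr{A}_i$ is spanned by $1$ and by products $x_1\cdots x_m$ with $x_j\in\mathscr{X}_i$, it suffices to verify condition (iv) of Lemma~\ref{Lhigh} for the pair $(\mathscr{A}_1,\mathscr{A}_2)$ when the entries are such products of generators.

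The core is then an \emph{expansion} (point-splitting) operation on surfaced permutations that upgrades the factorisation from single generators to products of generators. To a surfaced permutation $(\mathcal{C},\gamma,k)\in\PSG(n)$ and lengths $m_1,\dots,m_n$ I would associate a surfaced permutation on the larger ground set $[\,m_1+\cdots+m_n]$ obtained by blowing up the $p$-th point into a consecutive run of $m_p$ points inside its $\gamma$-cycle, with the inherited block and genus data. Because the forms $\varphi_{g,n}$ are tracial and are read off along cycles, evaluating $\phi(\mathcal{C},\gamma,k)$ on entries $a_pb_p$ that are length-$m_p$ products of generators equals evaluating $\phi$ of the expanded surfaced permutation on the individual generators; inverting by $\bbzeta$ gives the same for $\phi^{\vee}$. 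In the expanded system each position carries a single generator from $\mathscr{X}_1$ or $\mathscr{X}_2$, so it is an instance of condition (iv) for entries in $\mathscr{X}_1^{+},\mathscr{X}_2^{+}$ (setting the complementary factor to $1$), which holds by the assumed freeness of $\mathscr{X}_1,\mathscr{X}_2$. The point is that a factorisation $(\mathcal{A},\alpha,g)\odot(\mathcal{B},\beta,h)=(\mathcal{C},\gamma,k)$ on the collapsed side corresponds bijectively, with matching genera, to a factorisation on the expanded side, so that the expanded factorisation pushes back down to the desired factorisation (iv) for $\mathscr{A}_1^{+},\mathscr{A}_2^{+}$. Here the block-additivity of the colength of surfaced permutations (Remark~\ref{Blockad}) is exactly what makes the genus function transform additively under splitting and remerging.

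The main obstacle is the genus and type bookkeeping in the expansion step. One must check that point-splitting really sets up a bijection between the factorisations contributing to the two sides of (iv), under which both the block structure and the (now half-integer) genus function correspond, and that the type constraint $(\mathsf{g},\mathsf{n})\preceq(g_0,n_0)$ is preserved: collapsing a run can only decrease the number of cycles, while the total genus is conserved by block-additivity, so no contribution of type $\not\preceq(g_0,n_0)$ can enter. Apart from this accounting, the combinatorics of the collapse is precisely that of \cite[Section~7.3]{CMSS}, whose genus-zero all-order case is exactly this statement; the only genuinely new input is that the extended product $\odot$ creates genus in integer units (Remark~\ref{Blockad}), which keeps the half-integer genus data consistent throughout and ensures the $\preceq$-constraint is respected. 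Feeding the resulting factorisation back into Lemma~\ref{Lhigh} yields the $(g_0,n_0)$-freeness of $\mathscr{A}_1,\mathscr{A}_2$, and the family case follows by the reduction of the first paragraph.
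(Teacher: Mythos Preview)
Your proposal is correct and follows essentially the same route as the paper: both arguments reduce to the two-set case, invoke Lemma~\ref{Lhigh} to recast freeness as a factorisation identity for $\phi$ (or $\phi^{\vee}$), and then defer the detailed combinatorics of passing from generators to products to \cite[Section~7.3]{CMSS}, noting that the only new ingredient beyond genus~$0$ is the integer-unit genus creation of Remark~\ref{Blockad}. The paper is terser---it singles out one product step (freeness of $\mathscr{X}_1,\mathscr{X}_2$ implies freeness after multiplying within one side) and then cites \cite[Theorem~7.12]{CMSS}---whereas you spell out the point-splitting mechanism and the type bookkeeping, but the underlying argument is the same.

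One small caution: the sentence ``inverting by $\bbzeta$ gives the same for $\phi^{\vee}$'' is not as immediate as you make it sound. The relation $\phi=\bbzeta\circledast\phi^{\vee}$ lives on a fixed $\PSG(d)$, while expansion changes $d$; so transferring the expansion identity from $\phi$ to $\phi^{\vee}$ already \emph{requires} the compatibility of point-splitting with $\odot$-factorisations that you later appeal to CMSS for. It is cleaner to work only with $\phi$ and verify condition~(iii) (rather than~(iv)) of Lemma~\ref{Lhigh} on the expanded side, then collapse---this avoids the apparent circularity.
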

\begin{proof}[Proof of Lemma~\ref{Lhigh}] This is the higher-genus generalisation of Theorem~\cite[Theorem 7.9]{CMSS}. Here we only explain (i) $\Rightarrow$ (iii). The converse direction is then proved as in \cite{CMSS}. The implication (iii) $\Rightarrow$ (iv) comes by extended convolution with the M\"obius function and (iv) $\Rightarrow$ (iii) by extended convolution with the zeta function. The equivalence between (i) and (ii) comes from a direct adaptation of \cite[Proposition~7.8]{CMSS}.

\medskip

Let $(\mathcal{C},\gamma,k) \in \PSG(d)$ of type $(\mathsf{g},\mathsf{n})$. We take a second copy $[\bar{d}]$ of the set $[d]$ and interleave their elements
\[
 [d,\bar{d}]\coloneqq \{1,\,\bar{1},\,2,\, \bar{2},\, 3,\,\bar{3},\dots,\, d,\, \bar{d}\} \cong [2d]\,.
\] 
We call $\psi\colon [d] \rightarrow [\bar{d}]$ the canonical identification. For a set $C=\{i_1,\dots,i_{\ell}\}\subset [d]$, we denote by $\bar{C} = \{\bar{i_1},\dots,\bar{i_{\ell}}\} \subset[\bar{d}]$. We define the surfaced permutation $(\hat{\mathcal{C}}, \hat{\gamma}, \hat{k}) \in\mathbb{PS}(2d)$ such that the blocks of $\hat{\mathcal{C}}$ are of the form $\hat{C} \coloneqq C\cup \bar{C}$ where $C \in \hat{\mathcal{C}}$, the permutation $\hat{\gamma}$ is characterised by $\hat{\gamma}|_{[d]} = \psi \circ \gamma$ and $\hat{\gamma}_{|[\bar{d}]} = \gamma \circ \psi^{-1}$ and the genus function is $\hat{k}(C\cup \bar{C}) = k(C)$. We have
\begin{equation*}
\label{phiphivee}
\begin{split}
\phi(\mathcal{C},\gamma,k)[a_1b_1,\ldots,a_db_d] & = \phi(\hat{\mathcal{C}},\hat{\gamma},\hat{k})[a_1,b_1,\ldots,a_d,b_d] \\
& = \sum_{(\mathbf{0}_{\alpha},\alpha,0) \odot (\mathcal{B},\beta,h) = (\hat{\mathcal{C}},\hat{\gamma},\hat{k})} \phi^{\vee}(\mathcal{B},\beta,h)[a_1,b_1,\ldots,a_d,b_d]\,. 
\end{split}
\end{equation*}

Assume that $(\mathscr{X}_1,\mathscr{X}_2)$ is $(g_0,n_0)$-free. The vanishing of the mixed surfaced free cumulants up to order $(g_0,n_0)$ means that the only terms remaining in the right-hand side of (iii) come from surfaced permutations $(\mathcal{B},\beta,h)$ where blocks $B \in \mathcal{B}$ are included either in $[d]$ or in $[\bar{d}]$. Since $\mathbf{0}_{\beta} \leq \mathcal{B}$, the permutation $\beta$ leaves $[d]$ and $[\bar{d}]$ stable and we introduce:
\[
\beta_1 = \beta_{|[d]}\,,\qquad \beta_2 = \psi^{-1} \circ \beta_{|[\bar{d}]} \circ \psi\,,\qquad \text{both}\,\,\text{in}\,\,S(d)\,.
\]
By multiplicativity of $\phi^{\vee}$, we obtain:
\begin{equation}
\label{phiphivee2}
\begin{split}
& \quad \phi(\mathcal{C},\gamma,k)[a_1b_1,\ldots,a_db_d] \\
& = \phi(\hat{\mathcal{C}},\hat{\gamma},\hat{k})[a_1,b_1,\ldots,a_d,b_d] \\
& = \sum_{(\mathbf{0}_{\alpha},\alpha,0) \odot (\mathcal{B},\beta,h) = (\hat{\mathcal{C}},\hat{\gamma},\hat{k})} \phi^{\vee}(\mathcal{B}_1,\beta_1,h_1)[a_1,\ldots,a_d]\,\phi^{\vee}(\mathcal{B}_2,\beta_2,h_2)[b_1,\ldots,b_d]\,. 
\end{split}
\end{equation}

The condition $\alpha \circ \beta = \hat{\gamma}$ implies that $\alpha$ sends $[d]$ to $[\overline{d}]$ and vice versa. Introducing
\[
\alpha_1 = \psi^{-1} \circ \alpha_{|[d]}\,,\qquad \alpha_2 =  \alpha_{|[\bar{d}]} \circ \psi\,,\qquad  \text{both}\,\,\text{in}\,\,S(d)\,,
\]
we get $\alpha_i \circ \beta_i = \gamma$ for $i = 1,2$. Introducing $\tilde{\alpha} = \alpha_2 \circ \beta_1^{-1}$, we therefore have $\tilde{\alpha} \circ \beta_1 \circ \beta_2 = \gamma$. We also observe that for any $C \in \mathcal{C}$
\begin{equation}
\label{alphalpha}
|\tilde{\alpha}_{|C}| = |\gamma \circ  \beta_2^{-1} \circ \beta_1^{-1}\,{}_{|C}| = |\gamma \circ \beta_{|C}| = |\alpha|\,.
\end{equation}
We denote $(\mathcal{B}_1,\beta_1,h_1),(\mathcal{B}_2,\beta_2,h_2) \in \PSG(d)$ which are obtained by restricting $(\mathcal{B},\beta,h)$ to $[d]$ and $[\bar{d}]$, using again the canonical identification $[d] \cong [\bar{d}]$.  We claim that
\begin{equation}
\label{lap1} (\mathcal{A},\mathbf{0}_{\alpha},0) \odot (\mathcal{B},\beta,h) = (\hat{\mathcal{C}},\gamma,\hat{k})
\end{equation}
implies
\begin{equation}
\label{lap2}(\mathbf{0}_{\tilde{\alpha}},\tilde{\alpha},0) \odot (\mathcal{B}_1,\beta_1,h_1) \odot (\mathcal{B}_2,\beta_2,h_2) = (\mathcal{C},\gamma,k)\,.
\end{equation}
This amounts to check that $\mathbf{0}_{\tilde{\alpha}} \vee \mathcal{B}_1 \vee \mathcal{B}_2 = \mathcal{C}$ and match the genus functions. The former is already in \cite{CMSS} and we focus on the latter which is new to our setting. By Remark~\ref{Blockad}, we have to check block-additivity of colengths. Given a block $C \in \mathcal{C}$, we have using \eqref{alphalpha}
\begin{equation*}
\begin{split}
& \quad |(\mathbf{0}_{\tilde{\alpha}}{}_{|C},\tilde{\alpha}_{|C})| + |(\mathcal{B}_1{}_{|C},\beta_1{}_{|C},h_1{}_{|C})|  + |(\mathcal{B}_2{}_{|C},\beta_2{}_{|C},h_2{}_{|C})| \\
& = |\tilde{\alpha}_{|C}| + |(\mathcal{B}_{|C},\beta_{|C},h_{|C})| + |(\mathcal{B}_{|\bar{C}},\beta_{|\bar{C}},h_{|\bar{C}})| \\
& = |\alpha_{|\hat{C}}| + |(\mathcal{B}_{|\hat{C}},\beta_{|\hat{C}},h_{|\hat{C}})| \\
& = 2\hat{k}(\hat{C}) = 2k(C)\,,
\end{split}
\end{equation*}
where we used \eqref{spcol} in the second line, \eqref{alphalpha} in the third line, and the definition \eqref{eq:prod:PSG} of the genus function for $\odot$ in the last line. This justifies the claim, and a closer look at this argument shows that \eqref{lap1} and \eqref{lap2} are in fact equivalent.

 Observing that the first factor in \eqref{lap2} is exactly the kind of surfaced permutations in the support of the zeta function, this allows transforming \eqref{phiphivee2} into
\begin{equation*}
\begin{split}
& \quad \phi(\mathcal{C},\gamma,k)[a_1b_1,\ldots,a_db_d] \\
& = \sum_{(\tilde{\mathcal{A}},\tilde{\alpha},\tilde{g}) \odot (\mathcal{B}_1,\beta_1,h_1) \odot (\mathcal{B}_2,\beta_2,h_2) = (\mathcal{C},\gamma,k)} \zeta(\tilde{\mathcal{A}},\tilde{\alpha},\tilde{g}) \,\phi^{\vee}(\mathcal{B}_1,\beta_1,h_1)[a_1,\ldots,a_d] \,\phi^{\vee}(\mathcal{B}_2,\beta_2,h_2)[b_1,\ldots,b_d] \\
& = \sum_{(\tilde{\mathcal{B}}_1,\tilde{\beta}_1,\tilde{h}_1) \odot (\mathcal{B}_2,\beta_2,h_2) = (\mathcal{C},\gamma,k)} \phi(\tilde{\mathcal{B}}_1,\tilde{\beta}_1,\tilde{h}_1)[a_1,\ldots,a_d]\,\phi^{\vee}(\mathcal{B}_2,\beta_2,h_2)[b_1,\ldots,b_d]\,,
\end{split}
\end{equation*} 
where we have recognised convolution by the zeta function to get the second line.
\end{proof}

\begin{proof}[Proof of Proposition~\ref{SettoAlg}] By multilinearity, the linear spans of two free sets is free. The only thing that deserves a check is that freeness of $\mathscr{X}_1$ and $\mathscr{X}_2$ implies freeness of $\mathscr{X}_1$ and $\mathscr{X}_1\mathscr{X}_2$. Given Lemma~\ref{Lhigh}, the proof is identical to \cite[Theorem 7.12]{CMSS}.
\end{proof}

\subsection{Application in random matrix theory}
\label{Sec:Randommat}
The formalism of surfaced free cumulants and freeness up to order $(g_0,n_0)$ can be directly applied in random matrix theory, generalising the known cases of order $(0,1)$ \cite{Voi91}, $(0,2)$ \cite{MMS07}.

\begin{definition}
If $A$ is a matrix of size $N$ and $\lambda \vdash d$ of length $n$, with $d \leq N$, we denote
\begin{equation}
p_{\lambda}(A) = \prod_{i = 1}^{n} {\rm Tr}(A^{\lambda_i})\,,\qquad \mathcal{P}_{\lambda}(A) = \prod_{c = 1}^{d} A_{c,\pi_{\lambda}(c)}\,.
\end{equation}
\end{definition}

We recall the following result, which comes from Weingarten calculus.
\begin{theorem} (\cite[Theorem 4.4]{CMSS}, \cite[Theorem 8.8]{BG-F18})
\label{tunome}Let $A$ be a random hermitian matrix of size $N$, whose law is invariant under unitary conjugation. Then for any $\lambda \vdash d$:
\begin{equation*}
\begin{split}
\mathbb{E}[p_{\lambda}(A)] = \mathfrak{z}_{\lambda} \sum_{\nu \vdash d} N^{d}\,H^{<}(\lambda,\nu)\big|_{\hbar = 1/N}\,\mathbb{E}[\mathcal{P}_{\nu}(A)]\,, \\
\mathbb{E}[\mathcal{P}_{\lambda}(A)] = \mathfrak{z}_{\lambda} \sum_{\nu \vdash d} N^{-d} H^{\leq}(\lambda,\nu)\big|_{\hbar = 1/N}\,\mathbb{E}[p_{\nu}(A)]\,.
\end{split}
\end{equation*}
\end{theorem}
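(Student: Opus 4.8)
The strategy is to exploit the unitary invariance of the law of $\Omega$ to reduce every expectation to a Haar integral over $U(N)$, and then to recognise the resulting symmetric-group sum as a weakly monotone Hurwitz number by means of the Jucys--Murphy elements. By the spectral theorem and unitary invariance, I would write $\Omega = U D U^{*}$, where $D = {\rm diag}(x_1,\ldots,x_N)$ carries the eigenvalues and $U$ is Haar-distributed on $U(N)$, independent of $D$. Since $p_{\lambda}$ is a class function, $p_{\lambda}(\Omega) = p_{\lambda}(D)$, so $\mathbb{E}[p_{\lambda}(\Omega)] = \mathbb{E}_D[p_\lambda(D)]$ and the $U$-dependence is trivial.

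For $\mathcal{P}_\lambda$ I would expand each entry as $\Omega_{c,\pi_\lambda(c)} = \sum_{a} U_{c,a}\,x_a\,\bar U_{\pi_\lambda(c),a}$ and apply the Collins--Śniady formula
\[
\mathbb{E}_U\Big[\prod_{c}U_{c,a_c}\,\bar U_{\pi_\lambda(c),a_c}\Big] = \sum_{\sigma,\tau\in S(d)}{\rm Wg}(N,\sigma\tau^{-1})\prod_c\delta_{c,\pi_\lambda(\sigma(c))}\,\delta_{a_c,a_{\tau(c)}}\,.
\]
The first family of Kronecker symbols forces $\sigma = \pi_\lambda^{-1}$ (the row indices $1,\ldots,d$ being distinct), while the second confines the index map $a$ to be constant along the cycles of $\tau$. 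Summing over $a\in[N]^d$ then produces $p_{\lambda(\tau)}(D)$, and since ${\rm Wg}$ is invariant under inversion we have ${\rm Wg}(N,\sigma\tau^{-1}) = {\rm Wg}(N,\tau\pi_\lambda)$. Taking the expectation over $D$ and grouping by the conjugacy class $\nu = \lambda(\tau)$ yields
\[
\mathbb{E}[\mathcal{P}_\lambda(\Omega)] = \sum_{\nu\vdash d}\Big(\sum_{\tau\in C_\nu}{\rm Wg}(N,\tau\pi_\lambda)\Big)\,\mathbb{E}[p_\nu(\Omega)]\,.
\]

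The heart of the argument is to evaluate the inner sum. I would use the classical theorem of Jucys \cite{Jucys}, $\prod_{k=1}^d(N+J_k) = \sum_{\sigma\in S(d)}N^{\#{\rm cyc}(\sigma)}\sigma$, together with the group-algebra characterisation $\sum_\sigma{\rm Wg}(N,\sigma)\sigma = \big(\prod_{k=1}^d(N+J_k)\big)^{-1}$ of the Weingarten function. Setting $\hbar = 1/N$ and using $J_1 = 0$, this reads $\sum_\sigma{\rm Wg}(N,\sigma)\sigma = N^{-d}\big(\prod_{k=2}^d(1+\hbar J_k)\big)^{-1}$, so that ${\rm Wg}(N,\sigma) = N^{-d}[\sigma]\big(\prod_{k=2}^d(1+\hbar J_k)\big)^{-1}$ in terms of the very central element appearing in the proof of Lemma~\ref{Hinverse}. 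Writing the sum over $C_\nu$ as a coefficient extraction, $\sum_{\tau\in C_\nu}[\tau\pi_\lambda]B = [{\rm id}]\,\pi_\lambda C_\nu B = [\pi_\lambda^{-1}]\,C_\nu B$ with $B = \big(\prod_{k=2}^d(1+\hbar J_k)\big)^{-1}$ central, and averaging over the conjugacy class via $[\pi_\lambda^{-1}]\,C_\nu B = \tfrac{1}{\#C_\lambda}[{\rm id}]\,C_\lambda C_\nu B$, the computation collapses to
\[
\sum_{\tau\in C_\nu}{\rm Wg}(N,\tau\pi_\lambda) = N^{-d}\,\frac{d!}{\#C_\lambda}\cdot\frac{1}{d!}[{\rm id}]\,C_\lambda C_\nu B = N^{-d}\,z(\lambda)\,H^{\leq}(\lambda,\nu)\big|_{\hbar = 1/N}\,,
\]
using the expression for $H^{\leq}$ and $z(\lambda) = d!/\#C_\lambda$ from Lemma~\ref{Hinverse}. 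This is the second displayed relation. The first relation then follows by inverting it: multiplying by $z(\mu)N^dH^{<}(\mu,\lambda)|_{\hbar=1/N}$, summing over $\lambda\vdash d$, and invoking the orthogonality of the first line of Lemma~\ref{Hinverse} to collapse the double sum to $\mathbb{E}[p_\mu(\Omega)]$.

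I expect the main obstacle to be the bookkeeping in the group-algebra step: one must track correctly which permutation plays the role of $\sigma\tau^{-1}$ in the Weingarten weight, use the centrality of $B$ and the invariance of ${\rm Wg}$ under inversion and conjugation, and absorb the factor $z(\lambda) = d!/\#C_\lambda$ when transferring the coefficient extraction from the single permutation $\pi_\lambda$ to its entire conjugacy class. Once the identity $\sum_\sigma{\rm Wg}(N,\sigma)\sigma = \big(\prod_{k=1}^d(N+J_k)\big)^{-1}$ is taken as input, the remaining manipulations are exactly parallel to those already carried out in the proof of Lemma~\ref{Hinverse}.
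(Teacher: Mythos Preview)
Your proof is correct. The paper itself does not supply a proof of this theorem: it is quoted as a known result from \cite[Theorem 4.4]{CMSS} and \cite[Theorem 8.8]{BG-F18}, introduced by the phrase ``We recall the following result, which comes from Weingarten calculus.'' Your argument is precisely the standard Weingarten-calculus derivation underlying those references: expand $\mathcal{P}_\lambda(\Omega)$ in the eigenbasis, apply the Collins--\'Sniady formula, and identify the resulting sum over $S(d)$ with $H^{\leq}$ via the Jucys identity $\sum_{\sigma}\mathrm{Wg}(N,\sigma)\,\sigma=\prod_{k=1}^d(N+J_k)^{-1}$ and the group-algebra manipulations already present in the proof of Lemma~\ref{Hinverse}. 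The inversion step using Lemma~\ref{Hinverse} to obtain the first displayed relation from the second is also exactly right.

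One small remark on the justification of the first step: writing $\Omega\stackrel{d}{=}UDU^{*}$ with $U$ Haar and \emph{independent} of $D$ does not follow from the spectral theorem alone (the eigenvector matrix is only determined up to phases and, in the degenerate case, block unitaries). The clean way is to use unitary invariance directly: $\Omega\stackrel{d}{=}V\Omega V^{*}$ for $V$ Haar independent of $\Omega$, and then diagonalise $\Omega$ inside to produce a Haar factor independent of the spectrum. This does not affect any subsequent computation.
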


\begin{definition}
Let $(A_N)_N$ be a sequence of random hermitian matrices of size $N$. We say that it admits a limit distribution up to order $(g_0,n_0)$ if there exists $F_{g;k_1,\ldots,k_n}$ indexed by $(g,n) \preceq (g_0,n_0)$ and $k_1,\ldots,k_n > 0$, independent of $N$, such that for any $n \in [\lfloor g_0 \rfloor  + n_0]$ and any $k_1,\ldots,k_n > 0$, we have when $N \rightarrow \infty$
\[
\mathbb{E}^{\circ}\big[{\rm Tr}(A_N^{k_1}),\ldots,{\rm Tr}(A_N^{k_n})\big] =  \!\!\!\!\sum_{\substack{g \in \frac{1}{2}\mathbb{Z}_{\geq 0} \\  g \leq g_0 + \min(0,n_0 - n)}} \!\!\!\! N^{2 - 2g - n}\,F_{g;k_1,\ldots,k_n} + o(N^{2 - 2g_0 - n_0 + |n_0 - n|})\,,
\]
where $\mathbb{E}^{\circ}$ denotes the cumulant expectation value (with the meaning already explained in Section~\ref{npointSec}). In this expression, the order of the $o(\cdots)$  is adjusted to be the next subleading term compared to the sum. When $g_0 = \infty$, we ask for the existence of such an asymptotic expansion to an arbitrary order $o(N^{-K})$ for all $n \leq n_0$, and in that case we use the notation
\begin{equation}
\label{cumulantofAN}
\mathbb{E}^{\circ}\big[{\rm Tr}(A_N^{k_1}),\ldots,{\rm Tr}(A_N^{k_n})\big] = \sum_{g \in \frac{1}{2}\mathbb{Z}_{\geq 0}} N^{2 - 2g - n}\,F_{g;k_1,\ldots,k_n}^{{\rm A}} + o(N^{-\infty})\,.
\end{equation}
\end{definition} 
From Theorem~\ref{tunome} it can be observed that $(A_N)_N$ has a limit distribution up to order $(g_0,n_0)$, then for any partition $\lambda \vdash d$ of length $n \leq \lfloor g_0 \rfloor + n_0$ we have when $N \rightarrow \infty$:
 \begin{equation}
\label{expEP} \mathbb{E}[\mathcal{P}_{\lambda}(A_N)] =  \sum_{\substack{g \in \frac{1}{2}\mathbb{Z}_{\geq 0} \\ g \leq g_0 + \min(0,n_0 - n)}} N^{2 - 2g - n - d}\,\kappa_{g;\lambda_1,\ldots,\lambda_n}^{{\rm A}} + o(N^{2 - 2g_0 - n_0 + |n_0 - n| - d})\,.
\end{equation}
We obtain the structure of a SPS on the algebra $\mathscr{A} = \mathbb{C}[a]$  by taking as moments:
\[
\forall (g,n) \preceq (g_0,n_0)\,,\qquad \varphi_{g,n}(a^{k_1},\ldots,a^{k_n}) = F^{{\rm A}}_{g;k_1,\ldots,k_n}\,.
\]
Combining Theorem~\ref{tunome} and the expansion \eqref{expEP} with Theorem~\ref{thm:hbarstarproduct} indicates that $\kappa^{{\rm A}}_{g;k_1,\ldots,k_n}$ are the free cumulants at order $(g,n) \preceq (g_0,n_0)$.

\begin{theorem}\label{ABfreethm}
Let $(A_N)_N$ and $(B_N)_N$ be two sequences of ensembles of random matrices of size $N$, at least one of them being unitarily invariant, and such that for each $N$, $A_N$ is independent from $B_N$. Assume that both ensembles have a limit distribution up to order $(g_0,n_0)$ (possibly $\infty$), and consider the algebra $\mathscr{A} = \mathbb{C}\langle a,b \rangle$ of non-commutative polynomials in two letters. Then, for any $Q \in \mathscr{A}$, $Q(A_N,B_N)$ admits a limit distribution up to order $(g_0,n_0)$, so $\mathscr{A}$ can be upgraded to a SPS. Besides, the subalgebras $\mathbb{C}[a]$ and $\mathbb{C}[b]$ are  $(g_0,n_0)$-free. 
\end{theorem}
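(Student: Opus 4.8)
The plan is to reduce the statement to the combinatorial characterisation of $(g_0,n_0)$-freeness provided by Lemma~\ref{Lhigh}, and then to verify its condition (iii) by Weingarten calculus, exactly as the monotone Hurwitz structure of Theorem~\ref{tunome} suggests. Without loss of generality I would take $(A_N)_N$ to be the unitarily invariant ensemble; by commutativity of the extended convolution (Lemma~\ref{commutmult}) the two ensembles play symmetric roles, so it is immaterial which one carries the invariance. Since $A_N$ is unitarily invariant and independent of $B_N$, for every fixed $N$ the pair $(A_N,B_N)$ has the same joint law as $(U_N A_N U_N^\ast, B_N)$, where $U_N$ is Haar-distributed on $U(N)$ and independent of both. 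This is the mechanism that lets us integrate out the relative rotation between the two ensembles.

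First I would assemble the joint moments into a multiplicative function $\phi$ on $\PSG(\mathscr A)$, $\mathscr A=\mathbb C\langle a,b\rangle$, by recording the cumulant expectation values of traces of words in $A_N,B_N$ together with their $1/N$ expansion as in \eqref{expEP}; the existence of such an expansion up to order $(g_0,n_0)$ for an arbitrary $Q(A_N,B_N)$ will be a by-product of the computation below, since every such moment is a finite sum of the factorised contributions produced there. To establish $(g_0,n_0)$-freeness of the generating sets $\mathscr X_1=\{a\}$ and $\mathscr X_2=\{b\}$ it suffices, by Lemma~\ref{Lhigh}, to prove condition (iii): for all $a_1,\dots,a_d\in\mathscr X_1^+$, $b_1,\dots,b_d\in\mathscr X_2^+$ and every $(\mathcal C,\gamma,k)\in\PSG(d)$ of type $(\mathsf g,\mathsf n)\preceq(g_0,n_0)$,
\[
\phi(\mathcal C,\gamma,k)[a_1b_1,\dots,a_db_d]=\!\!\!\!\sum_{(\mathcal A,\alpha,g)\odot(\mathcal B,\beta,h)=(\mathcal C,\gamma,k)}\!\!\!\!\phi^{\vee}(\mathcal A,\alpha,g)[a_1,\dots,a_d]\,\phi(\mathcal B,\beta,h)[b_1,\dots,b_d]\,.
\]

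The core step is to compute the left-hand side by inserting $A_N\stackrel{d}{=}U_NA_NU_N^\ast$ and integrating over $U_N$ with the Weingarten formula. The $U$-integral produces a double sum over permutations whose coefficients are the Weingarten function, whose all-order $1/N$ expansion is precisely the one governed by the monotone Hurwitz numbers of Theorem~\ref{tunome}; equivalently, at the level of functions on surfaced permutations it realises the M\"obius--zeta relation of Theorem~\ref{thm:hbarstarproduct}. After integration the expression factorises: the $A$-traces collapse to expectations of the type $\mathbb E[\mathcal P_\nu(A_N)]$, which by \eqref{expEP} encode the surfaced free cumulants of $A_N$, i.e.~the factor $\phi^{\vee}$ carrying the $a$-decorations, while the $B$-traces collapse to ordinary mixed moments of $B_N$, i.e.~the factor $\phi$ carrying the $b$-decorations. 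The permutation $\tilde\alpha$ emerging from the Weingarten sum plays the role of the zeta-supported factor $(\mathbf 0_{\tilde\alpha},\tilde\alpha,0)$, exactly as in the algebraic manipulation at the end of the proof of Lemma~\ref{Lhigh}; collecting terms order by order in $1/N$ and matching powers of $N$ with colengths and genera reproduces the claimed convolution sum over factorisations $(\mathcal A,\alpha,g)\odot(\mathcal B,\beta,h)=(\mathcal C,\gamma,k)$.

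Once condition (iii) is verified, Lemma~\ref{Lhigh} yields the $(g_0,n_0)$-freeness of $\mathscr X_1=\{a\}$ and $\mathscr X_2=\{b\}$, and Proposition~\ref{SettoAlg} upgrades this to the $(g_0,n_0)$-freeness of the generated subalgebras $\mathbb C[a]$ and $\mathbb C[b]$. The hard part will be the genus bookkeeping: one must match the all-order Weingarten asymptotics of the $U_N$-integral with the genus-creation rule of the surfaced product $\odot$ (Remark~\ref{Blockad}), checking that the block-additivity of colengths together with the integral units of genus produced by $\odot$ reproduce exactly the powers of $N$ dictated by \eqref{expEP}, and that the error terms remain below order $o(N^{2-2g_0-n_0+|n_0-n|-d})$ so that the expansion is controlled precisely up to type $(g_0,n_0)$. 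This is the higher-genus refinement of the leading-order asymptotic freeness of Voiculescu, and it is here that the monotone Hurwitz dictionary of Theorem~\ref{tunome} does the essential work.
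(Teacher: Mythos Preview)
Your approach is valid but takes a longer detour than the paper. You aim at the factorisation formula, condition~(iii) of Lemma~\ref{Lhigh}, by expanding the Haar integral via the full Weingarten/monotone Hurwitz machinery and then matching powers of $N$ against the genus-creation rule of $\odot$. This is essentially the all-genus extension of \cite[Theorem~4.4]{CMSS} and would work, but it requires nontrivial bookkeeping (you correctly flag this as the hard part), and it goes through Lemma~\ref{Lhigh} and Proposition~\ref{SettoAlg} to reach the conclusion.

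The paper instead establishes condition~(i) directly and avoids Lemma~\ref{Lhigh} altogether. It computes the mixed entry-product expectation $\mathbb{E}\big[\prod_{c}(\Omega(c))_{c,\sigma(c)}\big]$ for an arbitrary labelling $\Omega\colon[d]\to\{A_N,B_N\}$, inserts a Haar unitary on the $A$-positions, and observes that the resulting Weingarten integral imposes $c=\sigma(\alpha(c))$ for a permutation $\alpha$ of the $A$-positions; this forces $\sigma$ to preserve $\Omega^{-1}(A_N)$, which is impossible as soon as some cycle of $\sigma$ carries both labels. Hence the expectation~\eqref{nunfun} vanishes identically (not just asymptotically) in the genuinely mixed case, and by~\eqref{expEP} the mixed surfaced free cumulants vanish up to order $(g_0,n_0)$. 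The existence of the limit distribution for general words is handled separately by citing the finite-$N$ formula of \cite[Theorem~4.4,~(2)]{CMSS}. So the paper trades your structural factorisation for a short vanishing argument; your route gives more (the full moment-cumulant factorisation over $\PSG$) at the cost of the genus bookkeeping you anticipate.
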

 
 \begin{proof}
Let $(A_N)_N$ and $(B_N)_N$ as in the theorem. For any $k,k' > 0$, $(A_N^{k})_N$ and $(B_N^{k'})$ clearly have a limit distribution up to order $(g_0,n_0)$. Examining the finite $N$ formula in \cite[Theorem~4.4, (2)]{CMSS}, the products $(A_N^{k} B_N^{k'})_N$ also have a limit distribution up to order $(g_0,n_0)$. Take $\sigma \in S(d)$,  $N \geq d$ and a map $\Omega \colon [d] \rightarrow \{A_N,B_N\}$. Due to independence of $A_N$ and $B_N$, and unitary invariance of the law of one of the matrices (say $A_N$), we have
\begin{equation}
\label{nunfun}\begin{split} 
& \quad \mathbb{E}\Bigg[\prod_{c = 1}^d (\Omega(c))_{c,\sigma(c)}\Bigg] = \mathbb{E}\Bigg[\prod_{c \in \Omega^{-1}(A_N)} (A_N)_{c,\sigma(c)}\Bigg]\,\mathbb{E}\Bigg[\prod_{c \in \Omega^{-1}(B_N)} (B_N)_{c,\sigma(c)}\Bigg] \\
& = \int_{U(N)} \!\!\! \mathrm{d}U\ \mathbb{E}\Bigg[\prod_{c \in \Omega^{-1}(A_N)} (UA_NU^{-1})_{c,\sigma(c)}\Bigg]\,\mathbb{E}\Bigg[\prod_{c \in \Omega^{-1}(B_N)} (B_N)_{c,\sigma(c)}\Bigg] \\
& = \sum_{\substack{i_c,j_c \in [N] \\ c \in \Omega^{-1}(A_N)}} \bigg(\int_{U(N)} \!\!\! \mathrm{d}U  \prod_{c \in \Omega^{-1}(A_N)} U_{c,i_c} U^{-1}_{j_c,\sigma(c)} \bigg) \mathbb{E}\Bigg[\prod_{c \in \Omega^{-1}(A)} (A_N)_{i_c,j_c}\Bigg] \,\mathbb{E}\Bigg[\prod_{c \in \Omega^{-1}(B)} (B_N)_{i_c,j_c}\Bigg]\,.
\end{split}  
\end{equation}
By Weingarten calculus \cite{CollinsWeingarten} the integral over $U(N)$ vanishes unless there exist two permutations $\alpha,\beta \in S(\Omega^{-1}(A_N))$ such that $c = j_{\beta(c)}$ and $i_c = \sigma(\alpha(c))$ for all $c \in \Omega^{-1}(A_N)$. This cannot happen when $\Omega$ takes at least once the values $A_N$ and $B_N$, as we can find $c_0 \in [d]$ such that $\Omega(c_0) = A_N$, and $\Omega(\sigma(c_0)) = B_N$ or $\Omega(\sigma^{-1}(c_0)) = B_N$. Since the surfaced free cumulants evaluated on $(\Omega(c))_{c \in [d]}$ are extracted from the asymptotic expansion of \eqref{nunfun} when $N \rightarrow \infty$ (recall \eqref{expEP}), all mixed surfaced free cumulants between $\mathbb{C}[a]$ and $\mathbb{C}[b]$ vanish up to order $(g_0,n_0)$ (which from the assumption is the order up to which the asymptotic expansion exist), i.e.~these two algebras are $(g_0,n_0)$-free in the surfaced probability space $\mathbb{C}\langle a,b \rangle$.
\end{proof}
  
\begin{remark} The combinatorics underlying infinitesimal free cumulants is the truncation keeping the leading (genus $0$) and the first subleading (genus $\frac{1}{2}$) of the master relation involving monotone Hurwitz numbers, whose apparition can be traced back to Weingarten calculus for the unitary group. Typically, for topological expansions in unitarily invariant random hermitian matrices the genus $\frac{1}{2}$ order (corresponding to a term of order $N^{-1}$ compared to the leading term) vanishes. An example of situation where it does not vanish is the 1-hermitian matrix model with an $N$-dependent potential of the form $V_0 + N^{-1}V_1$. Although the non-vanishing of the genus $\frac{1}{2}$ order is typical in the topological expansion for orthogonally invariant random symmetric matrices, the observation of Mingo \cite{Mingoinf} that infinitesimal freeness cannot describe the relations between moments and cumulants in such models is not a surprise, as they should rather be governed by the Weingarten calculus for the orthogonal group.
\end{remark}

\begin{remark} \label{RemMMPart} The topological partition function associated with \eqref{cumulantofAN} is in fact the $N \rightarrow \infty$ asymptotic series of the formal series in the variables $X_1,X_2,\ldots$
\begin{equation*}
\begin{split}
\mathbb{E}\bigg[\prod_{i} \frac{1}{\det(1 - X_iA_N)}\bigg] & = \mathbb{E}\bigg[\exp\Big(\sum_{k \geq 1} \frac{{\rm Tr}(A_N^k) p_k}{k}\Big)\bigg] \\
& = \exp\bigg(\sum_{n \geq 1} \mathbb{E}^{\circ}\big[{\rm Tr}(A^{k_1}),\ldots,{\rm Tr}(A^{k_n})\big]\,\frac{p_{k_1} \cdots p_{k_n}}{n!\,k_1 \cdots k_n}\bigg) \\
& = Z\big|_{\hbar = 1/N} \cdot \big(1 + O(N^{-\infty})\big)\,,
\end{split}
\end{equation*}
where $p_k = \sum_i X_i^k$ is the $k$-th power sum. The first equality is Cauchy's identity, the second one is the definition of the connected expectation value, and the last one comes from comparing \eqref{cumulantofAN} and Section~\ref{npointSec}.
\end{remark}

\subsection{Remarks on analyticity}
\label{Sec:analrecm}
Several important classes of ensembles of random hermitian matrices exhibit limit distributions to all orders \cite{APS01,BG11,BGK}, and it is typical that only integer $g$ appear. All the cases handled in \textit{op.~cit.}~satisfy the assumptions of Proposition~\ref{pr313}: for all $g,n$, the generating series $G_{g,n}$ admit analytic continuations as meromorphic functions on $\Sigma^n$, where $\Sigma$ is the Riemann surface equipped with two meromorphic functions $\mathsf{x},\mathsf{w}$ determined by the equation $\mathsf{w} = W_{0,1}(\mathsf{x})$ (the so-called \emph{spectral curve}). These assumptions are also satisfied in all models governed by the Eynard--Orantin topological recursion \cite{BEO13} --- where it is more common to use the differential forms $\omega_{g,n}$. Proposition~\ref{pr313} then guarantees that the $G_{g,n}^{\vee}$ also admit analytic continuation on $\Sigma^n$.

This has a practical consequence of interest. In the situation of Theorem~\ref{ABfreethm}, assume that $(A_N)_N$ and $(B_N)_N$ admit respective spectral curves $(\Sigma^{{\rm A}},\mathsf{x}^{{\rm A}},\mathsf{w}^{{\rm A}})$ and $(\Sigma^{{\rm B}},\mathsf{x}^{{\rm B}},\mathsf{w}^{{\rm B}})$, and that for all $g,n$, $G_{g,n}^{*}$ admits an analytic continuation as a meromorphic function on $(\Sigma^*)^{n}$ for each $* \in \{{\rm A},{\rm B}\}$.  Then, we define $\Sigma^{{\rm A}+{\rm B}}$ as the subset of $\Sigma^{{\rm A}} \times \Sigma^{{\rm B}}$ where $\mathsf{w}^{{\rm A}} = \mathsf{w}^{{\rm B}}$, which we equip with the two meromorphic functions
$$
\mathsf{w}^{{\rm A} + {\rm B}} = \mathsf{w}^{{\rm A}} = \mathsf{w}^{{\rm B}},\qquad \mathsf{x}^{{\rm A} + {\rm B}} = \mathsf{x}^{{\rm A}} + \mathsf{x}^{{\rm B}} - \frac{1}{\mathsf{w}^{{\rm A} + {\rm B}}}\,.
$$
By the R-transform machinery (cf.~\eqref{xweq}) and additivity of the (first order) free cumulants in the situation of Theorem~\ref{ABfreethm}, this is a spectral curve for $(A_N + B_N)_N$. Furthermore, the additivity of all-order free cumulants (cf.~\eqref{kappaadd}) yields for $2g - 2 + n \geq 0$
$$
G_{g,n}^{{\rm A} + {\rm B},\vee} = G_{g,n}^{{\rm A},\vee} + G_{g,n}^{{\rm B},\vee}\,.
$$
Our assumptions and Proposition~\ref{pr313} imply that the right-hand side admits an analytic continuation as a meromorphic function on $\Sigma^{{\rm A}+{\rm B}}$. Using again Proposition~\ref{pr313}, we deduce that the same property holds for $G_{g,n}^{{\rm A}+{\rm B}}$.

\section{Example: GUE + deterministic}
\label{SecGUEDET}

We illustrate the previous formulas with computations for the sum of GUE matrix and an independent deterministic matrix. This ensemble was first considered by Pastur, who showed that the $(0,1)$-moments exist (law of large numbers) and computed them \cite{Pastur}. It attracted interest since the discovery of the Baik--Ben Arous--P\'ech\'e (BBP) phase transition describing a separation of the maximum eigenvalue for deterministic parts of small rank \cite{BBPphaset}. This was revisited using free probability \cite{Capitaine}, see also the review \cite{DonatiCapitaine}. Our results permit the systematic study of the topological expansion in these ensembles. We will restrict ourselves to deterministic matrices with two distinct eigenvalues, where the computations remain relatively explicit, and derive explicit formulas for the generating series of $(0,2)$, $(0,3)$, and $(1,1)$-moments. In general $(g,n)$-moments for $2g - 2 + n > 0$ do not come from a positive measure, but from finite-order distributions.

\subsection{Setting}
 \label{GUESetting}
Fix parameters $s,L \in \mathbb{R}_{> 0}$ and $t \in (0,1)$. Let $A_N$ be a random hermitian matrix of size $N$ with law of density
$$
\dd\mathbb{P}(A) =  c_N \exp\Big(-\frac{N}{2s^2} {\rm Tr}(A^2)\Big) \dd A\,,
$$
with constant $c_N$  making $\mathbb{P}$ a probability measure, and $B_N = {\rm diag}(L,\ldots,L,0,\ldots,0)$ a deterministic matrix with $tN$ non-zero eigenvalues. To simplify the statements we assume that $t$ is a fixed rational number not depending on $N$, and the denominator of $t$ divides $N$. As all expansions we consider will be uniform for $t \in [0,1]$, this restriction is not essential. It is obvious for $(B_N)_N$, and a classical consequence of Wick's theorem for $(A_N)_N$, that these ensembles admit a limit distribution to all orders with integer genera only. According to Theorem~\ref{ABfreethm}, $(A_N)_N$ and $(B_N)_N$ are asymptotically free to all orders. Section~\ref{Sec:analrecm} will also appear relevant.

We write the $N \rightarrow \infty$ asymptotic expansion of the connected moments
\begin{equation}
\label{EANBN} \begin{split}
\mathbb{E}^{\circ}\big[{\rm Tr}(A_N^{k_1}),\ldots,{\rm Tr}(A_N^{k_n})\big] & = \sum_{g \geq 0} N^{2 - 2g - n}\,F_{g;k_1,\ldots,k_n}^{{\rm A}} + o(N^{-\infty})\,, \\
\mathbb{E}^{\circ}\big[{\rm Tr}(B_N^{k_1}),\ldots,{\rm Tr}(B_N^{k_n})\big] & = \sum_{g \geq 0} N^{2 - 2g - n}\,F_{g;k_1,\ldots,k_n}^{{\rm B}} + o(N^{-\infty})\,.
\end{split}
\end{equation} 
By Theorem~\ref{ABfreethm}, $(A_N + B_N)_N$ also admits a limit distribution at all-order for $N \rightarrow \infty$
\begin{equation}
\label{EANplusBN}
\mathbb{E}^{\circ}[{\rm Tr}((A_N + B_N)^{k_1}),\ldots,{\rm Tr}((A_N + B_N)^{k_n})] = \sum_{g \geq 0}  N^{2 - 2g - n}\,F_{g;k_1,\ldots,k_n}^{{\rm A} + {\rm B}} + o(N^{-\infty})\,.
\end{equation}
We will show how to compute the first few terms in \eqref{EANplusBN}, going beyond the known $(0,1)$ case.

The associated free cumulants $\kappa_{g;k_1,\ldots,k_n}^{*}$ with $* \in \{{\rm A},{\rm B},{\rm A} + {\rm B}\}$ are instrumental in this computation. For the GUE, they provide the $N \rightarrow \infty$ asymptotic expansion of the connected moments of cyclic products of entries of the matrix as in \eqref{expEP}:
$$
\mathbb{E}^{\circ}[\mathcal{P}_{\lambda}(A_N)] = \sum_{g \geq 0} N^{2 - 2g - n - |\lambda|}\,\kappa_{g;\lambda_1,\ldots,\lambda_n}^{{\rm A}} + o(N^{-\infty})\,.
$$
For the deterministic matrix, this does not apply because the law of $B_N$ is not unitarily invariant, but it applies to $B_N' = \Omega_N B_N \Omega_N^{-1}$ where $\Omega_N$ is a Haar-distributed random unitary matrix independent of $A_N$. Since $B_N'$ has the same moments as $B_N$, it must have the same free cumulants and we get
\begin{equation}
\label{EkBN}
\mathbb{E}^{\circ}[\mathcal{P}_{\lambda}(B'_N)]  = \sum_{g \geq 0} N^{2 - 2g - n -|\lambda|}\,\kappa_{g;\lambda_1,\ldots,\lambda_n}^{{\rm B}} + o(N^{-\infty})\,.
\end{equation}
A similar statement can be made for $A_N + B_N'$.

\subsection{Spectral curves and analytic continuation of generating series}

For the GUE, the coefficient $F_{g;k_1,\ldots,k_n}^{{\rm A}}$ is the number of maps of genus $g$ with $n$ rooted boundary faces labelled from $1$ to $n$, having respective degrees $k_1,\ldots,k_n$, and with no internal faces, multiplied by $s^{k_1 + \cdots + k_n}$. On the other hand, $\kappa_{g;\lambda_1,\ldots,\lambda_n}^{{\rm A}}$ is the same count restricted to fully simple maps \cite{BG-F18}. As the only fully simple map without internal faces consist of a single edge drawn on the sphere, we have
\begin{equation}
\label{kappatrivGUE}
\kappa_{g;\lambda_1,\ldots,\lambda_n}^{{\rm GUE}} = \delta_{g,0}\delta_{n,1}\delta_{\lambda_1,2}\, s^2\,.
\end{equation}
For $(g,n)=(0,1)$, we deduce
$$
X_{0,1}^{{\rm A}}(w) = \frac{1}{w} + s^2 w\,,
$$
from which we obtain by functional inversion:
$$
W_{0,1}^{{\rm GUE}}(x) = \frac{x - \sqrt{x^2 - 4s^2}}{2}\,,
$$
with the determination of the squareroot chosen such that $\sqrt{x^2 - 4s^2} \sim x$ when $x \rightarrow \infty$. This is described by the spectral curve $(\Sigma^{{\rm A}},\mathsf{x}^{A},\mathsf{w}^{{\rm A}})$ where $\Sigma^{{\rm A}}$ is the Riemann sphere, $\mathsf{w}^{{\rm A}} = w$ is the global coordinate on the Riemann sphere, and $\mathsf{x}^{{\rm A}}(w) = \frac{1}{w} + s^2 w$. The marked point is $q_{\infty}^{{\rm A}} = 0$ in the $w$-coordinate. For $2g - 2 + n \geq 0$, since $\omega_{g,n}^{{\rm A},\vee} = 0$ for $2g - 2 + n \geq 0$, these admit analytic continuation as meromorphic $n$-differentials on $(\Sigma^{{\rm A}})^n$, and by Proposition~\ref{pr313}, so do the non-trivial $\omega_{g,n}^{{\rm A}}$.

For $(B_N)_N$, the situation is opposite to the GUE. Being deterministic, the all-order moments are straightforward to compute: we have
\begin{equation}
\label{FgtrivBN}
F_{g;k_1,\ldots,k_n} = \delta_{g,0}\delta_{n,1} t L^{k_1}\,,
\end{equation}
but the all-order free cumulants however are not trivial. For $(0,1)$ we deduce
$$
W_{0,1}^{{\rm B}}(x) = \frac{t}{x - L} + \frac{1 - t}{x}\,,
$$
from which we obtain by functional inversion:
$$
X_{0,1}^{{\rm B}}(w) = \frac{1}{2}\Bigg(L + \frac{1}{w} + \sqrt{L^2 + \frac{2L(2t - 1)}{w} + \frac{1}{w^2}}\Bigg)\,,
$$
with the determination of the squareroot chosen such that $\sqrt{L^2 + \frac{2L(2t - 1)}{w} + \frac{1}{w^2}} \sim \frac{1}{w}$ when $w \rightarrow 0$. This is described by the spectral curve $(\Sigma^{{\rm B}},\mathsf{x}^{{\rm B}},\mathsf{w}^{{\rm B}})$, where $\Sigma^{{\rm B}}$ is the Riemann sphere, $\mathsf{x}^{{\rm B}} = x$ is the global coordinate on the Riemann sphere, and $\mathsf{w}^{{\rm B}}(x) = \frac{t}{x - L} + \frac{1 - t}{x}$. The marked point is $q_{\infty}^{{\rm B}} = \infty$ in the $x$-coordinate. For $2g - 2 + n \geq 0$, repeating the analyticity argument shows that $\omega_{g,n}^{{\rm B},\vee}$ admits an analytic continuation as a meromorphic $n$-differential on $(\Sigma^{{\rm B}})^n$.

The asymptotic freeness of $(A_N)_N$ and $(B_N)_N$ results in 
$$
X_{0,1}^{{\rm A} + {\rm B}}(w) = X_{0,1}^{{\rm A}}(w) + X_{0,1}^{{\rm B}}(w) - \frac{1}{w}\,.
$$
This is described by the spectral curve $(\Sigma^{{\rm A + B}},\mathsf{x}^{{\rm A} + {\rm B}},\mathsf{w}^{{\rm A} + {\rm B}})$, where $\Sigma^{{\rm A}+ {\rm B}}$ is also the Riemann sphere with global coordinate $x$ (the marked point is $q_{\infty}^{{\rm A} + {\rm B}} = \infty$), with
\begin{equation}
\label{spAplusB}
\mathsf{x}^{{\rm A} + {\rm B}}(x) = x + s^2 \mathsf{w}^{{\rm B}}(x),\qquad \mathsf{w}^{{\rm A} + {\rm B}}(x) = \mathsf{w}^{{\rm B}}(x)\,.
\end{equation}
We have a natural biholomorphic identification $\Sigma^{{\rm B}} \rightarrow \Sigma^{{\rm A} + {\rm B}}$, because the two spaces share the same global coordinate $x$. It will not lead to any confusion to write
$$
\mathsf{w}(x) = \mathsf{w}^{{\rm B}}(x) = \mathsf{w}^{{\rm A} + {\rm B}}(x) = \frac{t}{x - L} + \frac{1 - t}{x}\,,
$$
which we can consider as a meromorphic function on $\Sigma^{{\rm B}}$ or on $\Sigma^{{\rm A} + {\rm B}}$.  For $2g - 2 + n \geq 0$, the all-order freeness together with the vanishing of $\omega_{g,n}^{{\rm A},\vee}$ for $2g - 2 + n \geq 0$ yields $\omega_{g,n}^{{\rm A} + {\rm B},\vee} = \omega_{g,n}^{{\rm B},\vee}$. We already know that the right-hand side has an analytic continuation as a meromorphic $n$-differential on $(\Sigma^{{\rm B}})^n$. Then, the same holds for  $\omega_{g,n}^{{\rm A} + {\rm B},\vee}$ on $(\Sigma^{{\rm A} + {\rm B}})^n$ (by the aforementioned identification) and $\omega_{g,n}^{{\rm A} + {\rm B}}$ (by Proposition~\ref{pr313}).

\begin{remark} Although they all are Riemann spheres, the spaces $\Sigma^{{\rm A}},\Sigma^{{\rm B}}$ and $\Sigma^{{\rm A} + {\rm B}}$ should a priori be treated as distinct spaces, and this is why we have used different notations for them. It is specific to this model that we have a biholomorphic identification $\Sigma^{{\rm B}} \cong \Sigma^{{\rm A} + {\rm B}}$, but there is no such identification between $\Sigma^{{\rm A}}$ and $\Sigma^{{\rm B}}$; instead, the natural maps $\mathsf{w} : \Sigma^{{\rm A} + {\rm B}} \rightarrow \Sigma^{{\rm A}}$ is a ramified covering of degree $2$.
\end{remark}

In the following computations we shall present the analytic continuation of $\omega_{g,n}^{*}$ and $\omega_{g,n}^{*,\vee}$ (cf.~\eqref{omegdiff}, to which we add $\omega_{0,1}^* = \mathsf{w}^*\dd\mathsf{x}^*$ and $\omega_{0,1}^{\vee,*} = \mathsf{x}^*\dd \mathsf{w}^*$) as meromorphic $n$-differentials on $(\Sigma^*)^n$ using the global coordinate of $\Sigma^*$ ($w$ for A and $x$ for B and A + B). The analytic continuation will be denoted with the same symbol. 
They store the $N \rightarrow \infty$ asymptotic expansion of the polynomial observables in the matrix ensembles in the following way (we only write the non-trivial ones). If $f_1,\ldots,f_n$ are polynomials
\begin{equation}
\label{contoursp}
\begin{split}
& \quad \mathbb{E}^{\circ}\big[{\rm Tr}\, f_1(A_N), \ldots, {\rm Tr}\, f_n(A_N)\big] \\
& = \sum_{g \geq 0} N^{2 - 2g - n}  (-1)^n \mathop{{\rm Res}}_{w_1 = 0} \cdots \mathop{{\rm Res}}_{w_n = 0} \omega_{g,n}^{{\rm A}}(w_1,\ldots,w_n) \prod_{i = 1}^n f_i\big(\mathsf{x}^{{\rm A}}(w_i)\big) + o(N^{-\infty})\,, \\
& \quad \mathbb{E}^{\circ}\big[{\rm Tr}\, f_1(A_N + B_N), \ldots, {\rm Tr}\, f_n(A_N + B_N)\big] \\
& = \sum_{g \geq 0} N^{2 - 2g - n} (-1)^n \mathop{{\rm Res}}_{x_1 = \infty} \cdots \mathop{{\rm Res}}_{x_n = \infty} \omega_{g,n}^{{\rm A} + {\rm B}}(x_1,\ldots,x_n) \prod_{i  = 1}^n f_i\big(\mathsf{x}^{{\rm A} + {\rm B}}(x_i)\big) + o(N^{-\infty})\,,
\end{split}
\end{equation}
while if $\lambda \vdash d$ has length $n$
\begin{equation}
\label{kcontour}
\begin{split}
\mathbb{E}^{\circ}\big[\mathcal{P}_{\lambda}(B_N')\big] & = \sum_{g \geq 0} N^{2 - 2g - n - d} \mathop{{\rm Res}}_{x_1 = \infty} \cdots \mathop{{\rm Res}}_{x_n = \infty} \omega_{g,n}^{{\rm B},\vee}(x_1,\ldots,x_n) \prod_{i = 1}^n \big(\mathsf{w}(x_i)\big)^{-\lambda_i}\,, \\
\mathbb{E}^{\circ}\big[\mathcal{P}_{\lambda}(A_N + B_N')\big] & = \sum_{g \geq 0} N^{2 - 2g - n - d} \mathop{{\rm Res}}_{x_1 = \infty} \cdots \mathop{{\rm Res}}_{x_n = \infty} \omega_{g,n}^{{\rm A} + {\rm B},\vee}(x_1,\ldots,x_n) \prod_{i = 1}^n \big(\mathsf{w}(x_i)\big)^{-\lambda_i} \,.
\end{split}
\end{equation}
These identities are the direct consequences of \eqref{EANBN}-\eqref{EANplusBN}-\eqref{EkBN}, the definition of the differential form-valued generating series and of their analytic continuation, and Cauchy residue formula.

\subsection{Moment generating series for the sum}

\begin{definition}
\label{nabladeg}
Let $E_k$ be the elementary symmetric polynomial in $x_1,\ldots,x_n$ of degree $k$ (used below for $n = 2$ and $n = 3$). Let $\nabla$ be the operator acting on rational functions of $x$ by
$$
\nabla f(x) \coloneqq \frac{\dd f(x)}{\dd \mathsf{x}^{{\rm A} + {\rm B}}(x)}\,.
$$
The relevance of this operator will appear in the proof of Proposition~\ref{prexp11}.
\end{definition}
The following proposition will be rephrased in distributional form in Section~\ref{measuredis} (Propositions \ref{prexp11}-\ref{prexp03}).
\begin{proposition}\label{proncun}
For $(0,2)$, we have
\begin{equation}
\label{02res}
\begin{split}
\omega_{0,2}^{{\rm A} + {\rm B}}(x_1,x_2) & = \frac{\dd x_1 \dd x_2}{(x_1 - x_2)^2} - \frac{\dd \mathsf{x}^{{\rm A} + {\rm B}}(x_1) \dd \mathsf{x}^{{\rm A} + {\rm B}}(x_2)}{\big(\mathsf{x}^{{\rm A} + {\rm B}}(x_1) - \mathsf{x}^{{\rm A} + {\rm B}}(x_2)\big)^2} \\
& = \frac{(1-t)s^2L\big[L(L^2 - ts^2) - 2L^2E_1 + L(E_1^2 + 2E_2) - 2E_1E_2\big] + E_2^2}{\big((1-t)L^2s^2 - (1-t)Ls^2E_1 - (L^2 - s^2)E_2  + LE_1E_2 - E_2^2\big)^2} \dd x_1\dd x_2\,.
 \end{split}
\end{equation}
For $(0,3)$, we have
\begin{equation}
\label{03res}
\begin{split}
\omega_{0,3}^{{\rm A} + {\rm B}}(x_1,x_2,x_3) & =  -s^2 \dd_{x_1}\dd_{x_2}\dd_{x_3}\bigg(\sum_{i =1}^{3} \frac{\dd x_i}{(x_i - x_j)(x_i - x_k)\dd \mathsf{x}^{{\rm A} + {\rm B}}(x_i)}\bigg) \\
& = \dd_{x_1} \dd_{x_2} \dd_{x_3}\bigg(s^4 F(x_1,x_2,x_3) \prod_{i = 1}^3 \frac{1}{x_i^2(x_i - L)^2}\,\frac{\dd x_i}{\dd \mathsf{x}^{{\rm A} + {\rm B}}(x_i)}\bigg)\,,
\end{split}
\end{equation}
where
{\small \begin{equation*}
\begin{split}
F(x_1,x_2,x_3) & = -(1-t)^2L^4s^2(E_1 - L)^2 + (1-t)L\Big[L^3(3s^2t - L^2 - 2s^2)E_2 \\
& \quad + 2L^2\big(L^2 + s^2(1-t)\big)E_1E_2 + 2(1-3t)L^2s^2E_3 - L^3E_1^2E_2 - L(2L^2 + s^2)E_2^2 \\
& \quad  - 2(1-2t)Ls^2E_1E_3 + 2L^2E_1E_2^2 + 2(L^2 + s^2)E_2 E_3 
- L\big(2E_1E_2E_3 + E_2^2\big) + 2E_2^2E_3\Big]  \\
& \quad - (s^2 + 3tL^2)E_3^2 + 2tLE_1E_3^2  - E_2E_3^2\,.
\end{split}
\end{equation*}}
For $(1,1)$, we have
\begin{equation}
\label{11res}
\omega_{1,1}^{{\rm A} + {\rm B}}(x) = - \frac{s^4}{24}\,\dd\Bigg[\nabla^2\bigg(\frac{t}{(x - L)^2} + \frac{1 - t}{x^2}\bigg)\Bigg] = \frac{s^2}{24} \dd \Bigg[\nabla^2 \bigg(\frac{\dd \mathsf{x}^{{\rm A} + {\rm B}}}{\dd x}\bigg)\Bigg]\,.
\end{equation}
\end{proposition}

\begin{proof}
For the GUE, \eqref{kappatrivGUE} tells us that $\omega_{0,2}^{{\rm A},\vee} = 0$. From \eqref{ome20nu}-\eqref{om02tildeequl} we deduce first
$$
\widetilde{\omega}_{0,2}^{{\rm A}}(w_1,w_2) = \frac{\dd w_1 \dd w_2}{(w_1 - w_2)^2} \,,
$$
and subsequently:
$$
\omega_{0,2}^{{\rm A}}(w_1,w_2) = \frac{\dd w_1\dd w_2}{(w_1 - w_2)^2} - \frac{\dd \mathsf{x}^{{\rm A}}(w_1)\dd \mathsf{x}^{{\rm A}}(w_2)}{(\mathsf{x}^{{\rm A}}(w_1) - \mathsf{x}^{{\rm A}}(w_2))^2} = \frac{s^2\,\dd w_1\dd w_2}{(1 - s^2 w_1w_2)^2}\,.
$$

For $(B_N)_N$, \eqref{FgtrivBN} tells us that $\omega_{0,2}^{{\rm B}} = 0$. From \eqref{ome20nu}-\eqref{om02tildeequl} we deduce that
$$
\widetilde{\omega}_{0,2}^{{\rm B}}(x_1,x_2) = \frac{\dd x_1 \dd x_2}{(x_1 - x_2)^2}\,,
$$
and consequently
\begin{equation}
\label{om02same}
\omega_{0,2}^{{\rm B},\vee}(x_1,x_2) = \frac{\dd x_1 \dd x_2}{(x_1 - x_2)^2} - \frac{\dd \mathsf{w}(x_1) \dd \mathsf{w}(x_2)}{(\mathsf{w}(x_1) - \mathsf{w}(x_2))^2} = \frac{-L^2 t(1-t)}{(x_1x_2 - L(1-t)(x_1 + x_2) + L^2(1-t))^2}\,.
\end{equation}

For $(A_N + B_N)_N$, second order asymptotic freeness together with vanishing of the $(0,2)$-free cumulants of the GUE yield
$$
\omega_{0,2}^{{\rm A} + {\rm B},\vee}(x_1,x_2) = \omega_{0,2}^{{\rm B},\vee}(x_1,x_2) = \frac{\dd x_1 \dd x_2}{(x_1 - x_2)^2} - \frac{\dd \mathsf{w}(x_1)\dd \mathsf{w}(x_2)}{(\mathsf{w}(x_1) - \mathsf{w}(x_2))^2}\,.
$$
Hence
\begin{equation}\label{Abom2tilde2}
\widetilde{\omega}_{0,2}^{{\rm A} + {\rm B},\vee}(x_1,x_2) =\widetilde{\omega}_{0,2}^{{\rm A} + {\rm B}}(x_1,x_2) = \frac{\dd x_1 \dd x_2}{(x_1 - x_2)^2}\,,
\end{equation}
and
\begin{equation}
\label{Abom2tilde}\begin{split}
\omega_{0,2}^{{\rm A} + {\rm B}}(x_1,x_2) & = \frac{\dd x_1 \dd x_2}{(x_1 - x_2)^2} - \frac{\dd \mathsf{x}^{{\rm A} + {\rm B}}(x_1)\dd \mathsf{x}^{{\rm A} + {\rm B}}(x_2)}{(\mathsf{x}^{{\rm A} + {\rm B}}(x_1) - \mathsf{x}^{{\rm A} + {\rm B}}(x_2))^2}\,.
\end{split}
\end{equation}
This evaluates to the announced expression \eqref{02res}.

The strategy is the same for other $(g,n)$. Equation~\eqref{FgtrivBN} tells us that $\omega_{0,3}^{{\rm B}} = 0$. The functional relation \eqref{03form} then implies
$$   
\omega_{0,3}^{{\rm B},\vee}(x_1,x_2,x_3) = - \sum_{i = 1}^{3} \dd_{x_i}\bigg(\frac{\dd x_i}{\dd \mathsf{w}(x_i)}\,\frac{\dd x_j\,\dd x_k}{(x_i - x_j)^2(x_i - x_k)^2}\bigg)\,,  
$$ 
where $j < k$ are such that $\{i,j,k\} = \{1,2,3\}$.  Equation \eqref{kappatrivGUE} tells us that $\omega_{0,3}^{{\rm A},\vee} = 0$. Since $(A_N)_N$ and $(B_N)_N$ are free at order $(0,3)$, we get $\omega_{0,3}^{{\rm A} + {\rm B}} = \omega_{0,3}^{{\rm B},\vee}$. With the expression of $\widetilde{\omega}_{0,2}^{{\rm A} + {\rm B},\vee}$ found in \eqref{Abom2tilde2}, we can use \eqref{03form} in the opposite direction and deduce
\begin{equation*}
\begin{split}
\omega_{0,3}^{{\rm A} + {\rm B}}(x_1,x_2,x_3) & = - \omega_{0,3}^{{\rm B},\vee}(x_1,x_2,x_3) + \sum_{i = 1}^{3} \dd_{x_i}\bigg(\frac{(\dd x_i)^2}{\dd \mathsf{x}^{{\rm A} + {\rm B}}(x_i) \dd \mathsf{w}(x_i)}\,\frac{\dd x_j\,\dd x_k}{(x_i - x_j)^2(x_i - x_k)^2}\bigg) \\
& = \sum_{i = 1}^3  \dd_{x_i}\Bigg[\bigg(\frac{\dd x_i}{\dd \mathsf{x}^{{\rm A} + {\rm B}}(x_i)} - 1\bigg)\frac{\dd x_i}{\dd \mathsf{w}(x_i)}\,\frac{\dd x_j\,\dd x_k}{(x_i - x_j)^2(x_i - x_k)^2}\bigg)\Bigg] \\
& = \sum_{i = 1}^3 \dd_{x_i}\bigg(\frac{-s^2\dd x_i}{\dd \mathsf{x}^{{\rm A} + {\rm B}}(x_i)}\,\frac{\dd x_j\,\dd x_k}{(x_i - x_j)^2(x_i - x_k)^2}\bigg) \\
& =  \dd_{x_1}\dd_{x_2}\dd_{x_3}\bigg(\sum_{i = 1}^{3} \frac{-s^2\dd x_i}{\dd \mathsf{x}^{{\rm A} + {\rm B}}(x_i)}\,\frac{1}{(x_i - x_j)(x_i - x_k)}\bigg)\,.
\end{split}
\end{equation*}
One can observe that the expression in the parenthesis has no poles at coinciding points, although the three terms individually do. Its evaluation yields \eqref{03res}.

Next, $\omega_{1,1}^{{\rm B}} = 0$ and the functional relation \eqref{11formanal} yield
$$
\omega_{1,1}^{{\rm B},\vee}(x) = \dd\bigg[\frac{1}{2}\,\frac{\omega_{0,2}^{{\rm B},\vee}(x,x)}{\dd x \dd \mathsf{w}(x)} - \frac{1}{24 \dd x}\dd\bigg(\frac{\dd\big(\frac{\dd x}{\dd \mathsf{w}(x)}\big)}{\dd x}\bigg)\bigg] \,.
$$
The vanishing of $\omega_{1,1}^{{\rm A},\vee}$ and the freeness at order $(1,1)$ of $(A_N)_N$ and $(B_N)_N$ implies $\omega_{1,1}^{{\rm A}+{\rm B},\vee} = \omega_{1,1}^{{\rm B},\vee}$. Using the functional relation \eqref{11formanal} in the opposite direction we obtain
\begin{equation*}
\begin{split}
& \quad \omega_{1,1}^{{\rm A} + {\rm B}}(x) \\
& = \dd\Bigg[\frac{1}{2}\,\frac{\omega_{0,2}^{{\rm B},\vee}(x,x)}{\dd \mathsf{x}^{{\rm A} + {\rm B}}(x) \dd \mathsf{w}(x)} - \frac{1}{2}\,\frac{\omega_{0,2}^{{\rm B},\vee}(x,x)}{\dd x \dd \mathsf{w}(x)} + \frac{1}{24 \dd x}\dd\bigg(\frac{\dd\big(\frac{\dd x}{\dd \mathsf{w}(x)}\big)}{\dd x}\bigg) - \frac{1}{24 \dd \mathsf{x}^{{\rm A} + {\rm B}}(x)}\dd\bigg(\frac{\dd \big(\frac{\dd \mathsf{x}^{{\rm A} + {\rm B}}(x)}{\dd \mathsf{w}(x)}\big)}{\dd \mathsf{x}^{{\rm A} + {\rm B}}(x)}\bigg)\Bigg]\,. 
\end{split}
\end{equation*}
We can compute $\omega_{0,2}^{{\rm B},\vee}$ from \eqref{om02same}. With help of \textsc{Mathematica} the result for $\omega_{1,1}^{{\rm A} + {\rm B}}(x)$ can be presented in the simpler form  \eqref{11res}.
\end{proof}

\subsection{Structure of the spectral curve}

\label{dstrucsp}

The expressions found in Proposition~\ref{proncun} are sufficient to compute the polynomial moments of $A_N + B_N$ up to $o(1/N)$, using  \eqref{contoursp}, but one would prefer to trade the contour integrals with integrals over $\mathbb{R}$. For this we can exploit the analyticity of $\omega_{g,n}^{{\rm A} + {\rm B}}$ to move the contour on the spectral curve in \eqref{contoursp}, but we first need a better understanding of the structure of the branched covering $\mathsf{x}^{{\rm A} + {\rm B}} : \Sigma^{{\rm A} + {\rm B}} \rightarrow \widehat{\mathbb{C}}$, given by
\begin{equation}
\label{xAplusB} \mathsf{x}^{{\rm A} + {\rm B}}(x) = x + \frac{ts^2}{x - L} + \frac{(1-t)s^2}{x}\,.
\end{equation}
In particular, we need to describe the $\mathsf{x}^{{\rm A} + {\rm B}}$-preimage of $\mathbb{R} \cup \{\infty\}$, which we call $\mathcal{R}$. The explicit description we give in this section will be used in Section~\ref{measuredis} to present the results of Proposition~\ref{proncun} in terms of distributions with real support.

The map $\mathsf{x}^{{\rm A} + {\rm B}}$ has degree $3$ and thus $4$ ramification points (counted with multiplicity), which are the roots of 
$$
Q(x) = \frac{\dd\mathsf{x}^{{\rm A} + {\rm B}}(x)}{\dd x} = 1 - \frac{ts^2}{(x-L)^2} - \frac{(1-t)s^2}{x^2}\,.
$$
Since the degree $4$ polynomial
\begin{equation}
\label{Qtildedef}
\tilde{Q}(x) \coloneqq x^2(x-L)^2Q(x) = x^4 - 2Lx^3  + (L^2 -s^2)x^2   + 2Ls^2(1-t)x - L^2s^2(1-t)
\end{equation}
has positive leading coefficient, and $\tilde{Q}(0)$ and $\tilde{Q}(L)$ are negative,  we always have at least two real ramification points with $0$ and $L$ lying between them. We will use generically the letter $r$ for ramification points, and $b = \mathsf{x}^{{\rm A} + {\rm B}}(r)$ for the corresponding branchpoints. For $x \in \mathbb{C}$, we observe that
\begin{equation}
\label{Imdc} {\rm Im}\big(\mathsf{x}^{{\rm A} + {\rm B}}(x)\big) = {\rm Im}(x)\bigg(1 - \frac{ts^2}{|x - L|^2} - \frac{(1-t)s^2}{|x|^2}\bigg)\,.
\end{equation}
Therefore, $\mathcal{R} = \mathbb{R} \cup \Gamma$, where $\Gamma$ is the vanishing locus of the second factor in \eqref{Imdc}. We see that $\Gamma$ intersects the real axis at the real ramification points and can be parametrised by $\xi = {\rm Re}(x)$ in the form $\xi \pm {\rm i}\Upsilon_{\epsilon}(\xi)$, with
\begin{equation}
\label{Upsilon0}
\Upsilon_{\epsilon}(\xi) = \sqrt{\frac{s^2 - \xi^2 - (\xi - L)^2 + \epsilon\sqrt{(L^2 + s^2)^2 - 4ts^2L^2 - 4(L^2 + s^2 - 2ts^2)L\xi + 4L^2\xi^2}}{2}}\,,
\end{equation}
for some $\epsilon \in \{\pm 1\}$, where the squareroots in this expression are chosen to have nonnegative values. The topology of $\Gamma$, the range of the parametrisation and the allowed choices of $\epsilon$ depend on the number of real ramification points. The latter is determined by the sign of the discriminant of $\tilde{Q}(x)$:
$$
\mathsf{Disc} = 16t(1-t)L^2s^{4}\big((L^2 - s^2)^3 - 27t(1-t)L^2s^4\big)\,.
$$
At fixed  $t \in (0,1)$, the last factor in $\mathsf{Disc}$ has exactly two real roots at $\frac{L}{s} = \pm \lambda(t)$, with
$$
\lambda(t) = \sqrt{1 + 3t^{2/3}(1-t)^{1/3} + 3t^{1/3}(1-t)^{2/3}} \,\in (1,2]
$$
reaching its maximum value $2$ for $t = \frac{1}{2}$, and four non-real roots related to each other by a sign or/and complex conjugation. This leads to the following alternative, illustrated in Figure~\ref{fig:spcurvecircle}. In the following, $\mathbb{H}_{\pm} = \{x  \in \mathbb{C}\,\,|\,\,\pm {\rm Im}\,x > 0\}$ stands for the upper (resp.~lower) half-plane. We describe the three regimes in the case where $L>0$. 

\begin{figure}[h!]
\begin{center}
\includegraphics[width=0.32\textwidth]{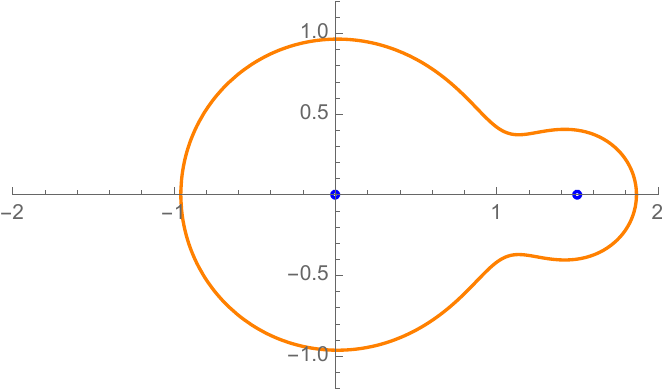}
\includegraphics[width=0.32\textwidth]{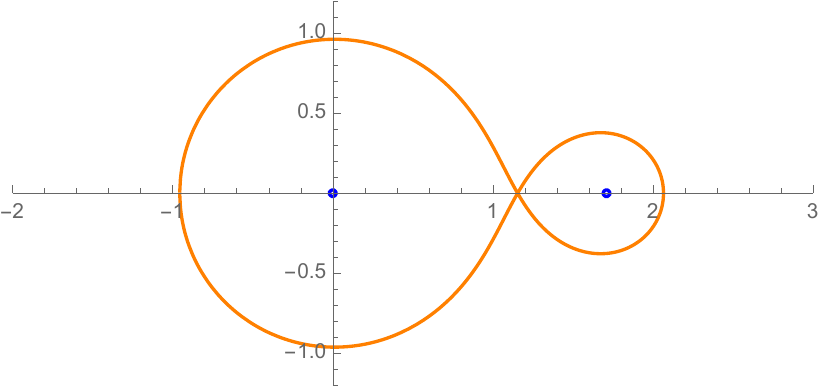}
 \includegraphics[width=0.32\textwidth]{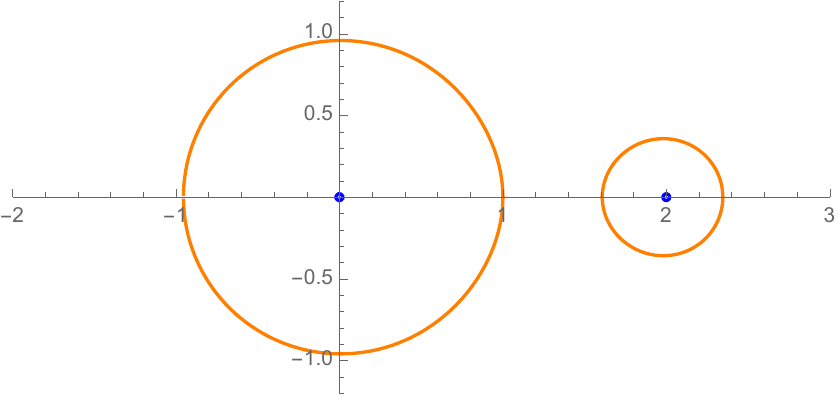}
 \caption{\label{fig:spcurvecircle} For $s = 1$, $t = 0.1$ and for the values (from left to right) $L = 1.5$, $L = \lambda(0.1) \approx 1.7024$ and $L = 2$, we display in the $x$-plane the locus $\Gamma$ in orange and the two finite poles of $\mathsf{x}^{{\rm A} + {\rm B}}$ (at $0$ and $L$) in blue.}
 \end{center}
 \end{figure}
 
\medskip

\noindent $\bullet$ \textit{If  $L < s \lambda(t)$}, we have $\mathsf{Disc} < 0$. Then we have two real ramification points satisfying
$$
r_-(t) < 0 < L < r_+(t)
$$
and a pair of complex conjugate ones. When $t \rightarrow 0$ they behave as
\begin{equation}
\label{rpmtum}
 r_{\pm}(t) = \pm s + \frac{sL(2s  \mp L)}{2(s \mp L)^2} t + O(t^2)\,,\qquad b_{\pm}(t) = \pm 2s + \frac{sL}{s \mp L} t + O(t^2)\,.
\end{equation}
$\Gamma$ is a topological circle and the two complex ramification points belong\footnote{\label{note2}This is justified as follows. For a ramification point we have $Q(r) = 1 - \frac{ts^2}{(r - L)^2} - \frac{(1-t)s^2}{r^2}$. If $r$ is not real, the ratio of the last two terms cannot be a positive real number. So, we have a strict triangular inequality $1 < \frac{ts^2}{|r - L|^2} + \frac{(1-t)s^2}{|r|^2}$, hence $r \notin \Gamma$. Since $\Gamma$ is a Jordan curve, $\mathbb{C} \setminus \Gamma$ has two connected components, exactly one of them being bounded. The two connected components can be distinguished by the sign  that the expression $\big(1 - \frac{ts^2}{|x - L|^2} - \frac{(1-t)s^2}{|x|^2}\big)$ assumes on each of them.  Letting $x \rightarrow \infty$ we see that the sign is positive, so the bounded connected component must correspond to the negative sign. Hence $r$ belongs to the bounded connected component.}
 to the bounded connected component of $\Sigma^{{\rm A} + {\rm B}} \setminus \Gamma$. Letting $I = (r_-(t),r_+(t))$, we have a parametrisation $\gamma_{\pm} : I \rightarrow \Gamma \cap \mathbb{H}_{\pm}$ given by $\gamma_{\pm}(\xi) = \xi \pm {\rm i}\Upsilon_{+}(\xi)$. We call $J = (b_-(t),b_+(t))$.
 
\medskip

\noindent $\bullet$ \textit{If $L = s \lambda(t)$}, we have $\mathsf{Disc} = 0$. Then we have three real ramification points\footnote{\label{note1}The fact that they are ordered in this way can be proved by checking it holds for $t$ small enough using their given asymptotics when $t \rightarrow 0$. Since $0$ and $L$ are not ramified it must remain true for all $t \in (0,1)$ by continuity.} satisfying
$$
r_-(t) <  0 < r_0(t) < L < r_+(t)\,,
$$
where $r_0(t)$ is double. When $t \rightarrow 0$, the two extreme ones behave as in \eqref{rpmtum} and the double one as
$$ 
r_0(t) = s\Big(1 + \frac{t^{1/3}}{2} + O(t^{2/3})\Big)\,,\qquad b_0(t) = 2s\Big(1 - \frac{3t^{2/3}}{8} + O(t)\Big)\,.
$$
$\Gamma$ is a union of two topological circles meeting at $r_0(t)$. We can still parametrise $\Gamma \cap \mathbb{H}_{\pm}$ with $\gamma_{\pm}$ given above, except that the range is now $I = (r_-(t),r_+(t)) \setminus \{r_0(t)\}$. We call $J = (b_-(t),b_+(t)) \setminus \{b_0(t)\}$.


\medskip

\noindent $\bullet$ \textit{If $L > s \lambda(t)$}, we have $\mathsf{Disc} > 0$. Then we have four real ramification points satisfying\footnote{See Footnote \ref{note1}.}
$$
r_-(t) < 0 < r_+(t) < \tilde{r}_-(t) < L < \tilde{r}_+(t).
$$
When $t \rightarrow 0$, the two first ones behave as in \eqref{rpmtum}, while the two last ones behave like
$$
 \tilde{r}_{\pm}(t) = L \pm \frac{sL}{\sqrt{L^2 - s^2}}\sqrt{t} - \frac{s^4 L}{(L^2-s^2)^2}t+ O(t^{3/2})\,,\qquad \quad\tilde{b}_{\pm}(t) = \Big(L + \frac{s^2}{L}\Big) \pm \frac{2s\sqrt{L^2 - s^2}}{L} \sqrt{t} + O(t)\,.
$$
$\Gamma$ is a disjoint union of two topological circles. Letting $I = (r_-(t),r_+(t)) \cup (\tilde{r}_-(t),\tilde{r}_+(t))$, we have a parametrisation $\gamma_{\pm} : I \rightarrow \Gamma \cap \mathbb{H}_{\pm}$ with same formula as above. We call $J = (b_-(t),b_+(t)) \cup (\tilde{b}_-(t),\tilde{b}_+(t))$.

\begin{definition}[Parametrisations]
\label{paramxc} Recall the parametrisation of $\gamma_{\pm} : I \rightarrow \Gamma \cap \mathbb{H}_\pm$ is given by
$$
\gamma_{\pm}(\xi) = \xi \pm {\rm i}\Upsilon_+(\xi)\,,
$$
with $\Upsilon_+(\xi)$ defined in \eqref{Upsilon0}. We introduce the parametrisation $c\coloneqq \mathsf{x}^{{\rm A} + {\rm B}} \circ \gamma_+ : I \rightarrow J$, which is equal to
\begin{equation}
\label{cixi} c(\xi) = 2\xi - \frac{L}{2} + \frac{s^2 - \sqrt{(L^2 + s^2)^2 - 4ts^2L^2 - 4(L^2 + s^2 - 2ts^2)L\xi + 4L^2\xi^2}}{2(2\xi - L)}\,.
\end{equation}
Although it is not manifest, this expression is regular at $\xi = L/2$.  By design, for $\xi \in I$ the arguments inside each squareroot in  $\gamma_+$ and $\Upsilon_+$ \eqref{Upsilon0} are positive and we choose their nonnegative determination.
\end{definition}

\begin{lemma}[Properties of the parametrisations]
\label{propparam} For any real ramification point $r$ and $t \in (0,1)$, $c'(r)$ is finite and non-zero and there exists a constant $c_r > 0$ such that
$$
\Upsilon_+(\xi) \,\,\mathop{\sim}_{\substack{\xi \in I \\ \xi \rightarrow r}} c_r |\xi - r|^{\frac{m}{2}}\,,
$$
where $m = 1$ if $r$ is simple and $m = 2$ if $r$ is double. Besides, this asymptotic equivalence can be differentiated with respect to $\xi$.
\end{lemma}
\begin{proof} The first part is a computation. For the second part we remark that $\Upsilon_+(\xi)^2= -A(\xi) + \sqrt{B(\xi)}$, where $A,B$ are polynomials that can be read from \eqref{Upsilon0} satisfying $A(\xi) \leq \sqrt{B(\xi)}$. We have $\Upsilon_+(r) = 0$ but one can check that for $t \in (0,1)$ the roots of $A$ are not roots of $\tilde{Q}$. By comparing with \eqref{Qtildedef} we find $B(\xi) - A^2(\xi) = -\tilde{Q}(\xi)$. Hence
$$
\Upsilon_+(\xi)^2 = \frac{B(\xi) - A(\xi)^2}{A(\xi) + \sqrt{B(\xi)}}
$$
is analytic near $\xi = r$ with a zero of order $m$.
\end{proof}
 
\begin{lemma}[Principal sheet]
Let $\mathcal{C}$ be the unbounded connected component of $\Sigma^{{\rm A} + {{\rm B}}} \setminus \Gamma$ (the component containing $\infty$ in the $x$ coordinate) and $\mathcal{C}_{\pm} = \mathcal{C} \cap \mathbb{H}_{\pm}$. Then $\mathsf{x}^{{\rm A} + {\rm B}}(\mathcal{C}_{\pm})\subseteq \mathbb{H}_{\pm}$ and $\mathsf{x}^{{\rm A} + {\rm B}}$ restricts to a biholomorphic map from $\mathcal{C}$ to $\widehat{\mathbb{C}} \setminus \overline{J}$.
\end{lemma}

\begin{proof}
This is an elementary topological argument.

By construction, for $M > 0$ large enough we have $\pm {\rm i}M  \in \mathcal{C}_{\pm}$ while \eqref{Imdc} implies that  the sign of ${\rm Im}\,\mathsf{x}^{{\rm A} + {\rm B}}$ is $\pm 1$. The map ${\rm sgn}({\rm Im}\,\mathsf{x}^{{\rm A} + {\rm B}})$ is continuous on $\mathcal{C}_{\pm}$ and each of these two domains are connected, so we deduce that ${\rm sgn}({\rm Im}\,\mathsf{x}^{{\rm A} + {\rm B}}) = \pm 1$ on $\mathcal{C}_{\pm}$, justifying the first claim.

Denote $\partial\mathcal{C}_+$ the boundary of $\mathcal{C}_+$ in the Riemann sphere $\Sigma^{{\rm A} + {\rm B}}$. Then, pick  $x_0^{\pm} \in \mathbb{H}_\pm \setminus \overline{\mathcal{C}_\pm}$, introduce $i_{\pm}(x) = \frac{1}{x - x_0^{\pm}}$ that we consider as an automorphism of the Riemann sphere, and set $\mathcal{C}_\pm' = i_{\pm}(\mathcal{C}_{\pm})$. The map $f_{\pm} = i_{\pm} \circ \mathsf{x}^{{\rm A} + {\rm B}} \circ i_{\pm}^{-1}$ is continuous on the bounded domain $\overline{\mathcal{C}_\pm'} \subset \mathbb{C}$, holomorphic in $\mathcal{C}_\pm'$ with image included in $i_{\pm}(\mathbb{H}_{\pm})$, and $\partial\mathcal{C}'_{\pm} = i_{\pm}(\partial\mathcal{C}_\pm)$ is a Jordan curve. After we justify that the restriction of $\mathsf{x}^{{\rm A} + {\rm B}}$ to $\partial\mathcal{C}_\pm'$ is a homeomorphism onto $i_{\pm}(\mathbb{R} \cup \{\infty\})$, a classical theorem of complex analysis  \cite[Chapter IV, \S 11]{Cana} implies that $f_{\pm}$ gives a biholomorphic map from $\mathcal{C}'_{\pm}$ onto $i_{\pm}(\mathbb{H}_{\pm})$. Hence, the restriction of $\mathsf{x}^{{\rm A} + {\rm B}}$ to $\mathcal{C}_\pm$ is biholomorphic onto $\mathbb{H}_{\pm}$. Together with $\mathsf{x}^{{\rm A} + {\rm B}}(\mathcal{C}_{\pm}) \subset \mathbb{H}_{\pm}$ and the fact that $\mathsf{x}^{{\rm A} + {\rm B}}(\mathcal{C} \cap \mathbb{R}) = \mathbb{R} \setminus \overline{J}$ (direct consequence of the variations of the restriction of $\mathsf{x}^{{\rm A} + {\rm B}}$ on $\mathbb{R}$), this implies that $\mathsf{x}^{{\rm A} + {\rm B}}$ is a biholomorphic map from $\mathcal{C}$ to $\widehat{\mathbb{C}}  \setminus \overline{J}$.

To justify the injectivity on the boundary, we remark that the map $x \mapsto {\rm Re}\,x$ restricts to a homeomorphism $\psi_\pm$ from $\partial\mathcal{C}_{\pm} \setminus \{\infty\}$ to $\mathbb{R}$, which is real-analytic away from the ramification points. Then $h_\pm = \mathsf{x}^{{\rm A} + {\rm B}} \circ \psi^{-1}_{\pm} :  \mathbb{R} \rightarrow \mathbb{R}$ is continuous, and away from the ramification points it is real-analytic. As $\Gamma$ does not contain ramification points (cf.~Footnote~\ref{note2}) and the differential of $\mathsf{x}^{{\rm A} + {\rm B}}$ is $\mathbb{C}$-linear, $h_{\pm}'$ does not vanish away from ramification points. Hence, $h_\pm$ is strictly monotone and $\mathsf{x}^{{\rm A} + {\rm B}}$ is injective on $\partial\mathcal{C}_\pm \setminus \{\infty\}$. Since the latter does not contain the poles of $\mathsf{x}^{{\rm A} + {\rm B}}$ located at $0,L,\infty$, we conclude that $\mathsf{x}^{{\rm A} + {\rm B}}$ is injective on $\partial\mathcal{C}_\pm$, as desired.
\end{proof}

\subsection{Distributional form}
\label{measuredis}

It is well-known that the $(0,1)$-moments come from a positive measure on $\mathbb{R}$ (called \emph{equilibrium measure}). The BBP phase transition manifests itself when $t \rightarrow 0$ by a change of the topology of the support: if $L < s$ the support is a segment close to $[-2s,2s]$, while if $L > s$ it also contains a second segment of width $O(\sqrt{t})$ around $L + \frac{s^2}{L}$. Section~\ref{dstrucsp} revisited this transition and contains all the necessary information to describe explicitly the equilibrium measure for $t \in (0,1)$.

\begin{lemma}[Law of large numbers]
\label{01lemmueq} For any polynomial $f$, we have
$$
\mathbb{E}\big[{\rm Tr}f (A_N + B_N)] \mathop{=}_{N \rightarrow \infty} N \int_{J} f \dd \mu_{0,1} + O\Big(\frac{1}{N}\Big)\,,
$$
where $\mu_{0,1}$ is the probability measure with support $\overline{J}$, whose pullback by the parametrisation $c : I \rightarrow J$ is
$$
c^*\mu_{0,1} = \frac{\Upsilon_{+}(\xi)}{\pi s^2}\mathbf{1}_{I}(\xi) \dd \xi \,.
$$
If $L \leq s \lambda(t)$, the support consists of one segment. If $L > s\lambda(t)$, the support consists of two segments, and $\mu_{0,1}$ assigns mass $t$ to the rightmost one.
\end{lemma} 

\begin{remark}
From \eqref{Upsilon0}-\eqref{cixi} we can determine the behavior of the equilibrium measure when $t \rightarrow 0$. As expected, it becomes semi-circular at the macroscopic level:
$$
\lim_{t \rightarrow 0} \frac{\dd\mu_{{\rm eq}}(\mathsf{x})}{\dd \mathsf{x}} = \frac{\sqrt{4s^2 - \mathsf{x}^2}}{2\pi s^2} \, \mathbf{1}_{[-2s,2s]}(\mathsf{x})
$$
uniformly for $\mathsf{x} \in \mathbb{R}$. Besides, if $L >  s\lambda(0) = s$, the measure also has in its support a segment with size of order $\sqrt{t}$ and mass $t$, where it also looks semi-circular:
$$
\frac{\dd\mu_{0,1}(\mathsf{x}(\mathsf{v}))}{\dd \mathsf{v}} \mathop{=}_{t \rightarrow 0} \frac{2t\sqrt{1 - \mathsf{v}^2}}{\pi}\mathbf{1}_{[-1,1]}(\mathsf{v})  + O(t^{3/2}),\qquad  \text{  where}\,\,\mathsf{x}(\mathsf{v}) = L + \frac{s^2}{L} + \frac{2s}{L}\sqrt{L^2-s^2}\sqrt{t}\,\mathsf{v}
$$
uniformly for real $\mathsf{v}$ on any compact set.
\end{remark}

\begin{proof}
From \eqref{contoursp} we get for $N \rightarrow \infty$
$$
\mathbb{E}\big[{\rm Tr} f(A_N + B_N)\big] \mathop{=}_{N \rightarrow \infty}\,\, N\,\mathcal{M}_{0,1}[f] + O\Big(\frac{1}{N}\Big), \;\text{ with}\,\,\mathcal{M}_{0,1}[f] \coloneqq  - \mathop{{\rm Res}}_{x = \infty} f\big(\mathsf{x}^{{\rm A} + {\rm B}}(x)\big) \mathsf{w}(x)\dd \mathsf{x}^{{\rm A} + {\rm B}}(x)\,.
$$
We use the formula \eqref{spAplusB} for $\big(\mathsf{x}^{{\rm A} + {\rm B}}(x),\mathsf{w}(x)\big)$ and move the contour in $\Sigma^{{\rm A} + {\rm B}}$ to rewrite
\begin{equation}
\label{M091f}\begin{split}
\mathcal{M}_{0,1}[f] & =  \frac{1}{s^2} \mathop{{\rm Res}}_{x = \infty} f\big(\mathsf{x}^{{\rm A} + {\rm B}}(x)\big) \big(x - \mathsf{x}^{{\rm A} + {\rm B}}(x)\big) \dd \mathsf{x}^{{\rm A} + {\rm B}}(x)  = \frac{1}{s^2}\mathop{{\rm Res}}_{x = \infty} f\big(\mathsf{x}^{{\rm A} + {\rm B}}(x)\big)\,x \dd \mathsf{x}^{{\rm A} + {\rm B}}(x)  \\
& = \frac{-1}{2{\rm i}\pi s^2} \oint_{\Gamma} f\big(\mathsf{x}^{{\rm A} + {\rm B}}(x)\big)\,x\,\dd \mathsf{x}^{{\rm A} + {\rm B}}(x)\,,
\end{split}
\end{equation}
where $\Gamma$ is equipped with the counterclockwise orientation. Recall that $x \in \Gamma$ is parametrised by $\xi \in I \subset \mathbb{R}$ as $x = \gamma_{\pm}(\xi) = \xi \pm {\rm i}\Upsilon_+(\xi)$, and that $c(\xi) = \mathsf{x}^{{\rm A} + {\rm B}}(\xi \pm {\rm i}\Upsilon_+(\xi))$ given in \eqref{cixi} does not depend on the sign $\pm$. In consequence, \eqref{M091f} can be converted to an integral over $I$ (oriented from left to right) 
$$
\mathcal{M}_{0,1}[f] = \frac{-1}{2{\rm i}\pi s^2} \int_{I} f(c(\xi))\Big(\big(\xi - {\rm i}\Upsilon_{+}(\xi)\big) - \big(\xi + {\rm i}\Upsilon_+(\xi)\big)\Big)\dd c(\xi) 
= \int_{I} f(c(\xi))\,\frac{\Upsilon_+(\xi) \dd c(\xi)}{\pi s^2}\,,
$$
and we recognise the announced result.

The support of this measure is $c(\overline{I}) = \overline{J}$. It was described in Section~\ref{dstrucsp} and we saw that it depends on the relative position of $L$ and $s\lambda(t)$. When $L > s\lambda(t)$, it consists of two segments, and the rightmost one (call it $J_2$) is the $\mathsf{x}^{{\rm A} + {\rm B}}$-image of the component of $\Gamma$ surrounding $x = L$ (call it $\Gamma_2$). We can repeat the previous manipulations to compute the mass of $J_2$ as the contour integral in the $x$-plane
$$
\mu_{{\rm eq}}[J_2] = \frac{1}{2{\rm i}\pi} \oint_{\Gamma_2}  \mathsf{w}(x)\,\dd \mathsf{x}^{{\rm A} + {\rm B}}(x)\,.
$$
In view of \eqref{spAplusB}, the integrand is meromorphic inside $\Gamma_2$ with a pole at $x = L$, so the integral is evaluated by a residue at this point, and we find $\mu_{{\rm eq}}[J_2] = t$.  
\end{proof}

We go further by demonstrating that $(0,2)$ moments come from a measure, while the $(g,n)$ moments for $2g - 2 + n > 0$ come from a finite order distribution. The support of these measures or distributions is independent of $(g,n)$ and corresponds to $\overline{J}$ introduced in Section~\ref{dstrucsp}.

\begin{proposition}[Subleading correction to the law of large numbers] \label{prexp11}
We have when $N \rightarrow \infty$
$$
\mathbb{E}\big[{\rm Tr}\,f(A_N + B_N)\big] = N \int_{J} f\,\dd \mu_{0,1} + \frac{1}{N} \int_{J} f'''\, \dd \mu_{1,1} + O\Big(\frac{1}{N^3}\Big)\,,
$$
where $\mu_{1,1}$ is the signed measure of mass $0$ on $\overline{J}$ given by
$$
c^*\mu_{1,1} = - \frac{s^2}{24\pi}\,\frac{c'(\xi) \Upsilon_+'(\xi)}{1 + (\Upsilon_+'(\xi))^2}\ \mathbf{1}_{I}(\xi)\dd \xi\,.
$$
If $L > s\lambda(0) = s$, then $\mu_{1,1}$ gives mass $\frac{s^6t(1-t)}{6L^3}$ to the rightmost segment. Besides, when $t \rightarrow 0$
$$
\lim_{t \rightarrow 0} \frac{\dd\mu_{1,1}(\mathsf{x})}{\dd \mathsf{x}} =  \frac{\mathsf{x}\sqrt{4s^2 - \mathsf{x}^2} }{48\pi}\mathbf{1}_{[-2s,2s]}(\mathsf{x})
$$
uniformly for $\mathsf{x} \in \mathbb{R}$. Besides, if $L > s\lambda(t)$, setting $\mathsf{x}(\mathsf{v}) = L + \frac{s^2}{L} + \frac{2s}{L}\sqrt{L^2-s^2}\sqrt{t}\,\mathsf{v}$,
$$
\frac{\dd \mu_{1,1}(\mathsf{x}(\mathsf{v}))}{\dd \mathsf{v}} \mathop{=}_{t \rightarrow 0}  \frac{s^3\sqrt{L^2 - s^2}}{12\pi L}\,\mathsf{v}\sqrt{1-\mathsf{v}^2}\,\sqrt{t} + O(t)
$$
uniformly for $\mathsf{v}$ in any compact of $\mathbb{R}$.
\end{proposition}

\begin{proposition}[Central limit theorem]  \label{prexp02}
Let $f$ be a polynomial. The random variable
$$
\varkappa_N[f] \coloneqq {\rm Tr}\,f(A_N + B_N) - N \int_{J} f \dd \mu_{0,1}
$$
converges in law to a centered Gaussian random variable as $N\to\infty$, with variance
$$
\frac{1}{2\pi^2} \int_{I^2} \ln\Bigg|\frac{\gamma_+(\xi_1) - \overline{\gamma_+(\xi_2)}}{\gamma_+(\xi_1) - \gamma_+(\xi_2)}\Bigg|\,f'(c(\xi_1))f'(c(\xi_2))\dd c(\xi_1)\dd c(\xi_2)\,.
$$
In other words, for any polynomial $f$, the random variable $\varkappa_N[f]$ converges in law to $\frac{1}{\sqrt{\pi}}\langle\mathscr{G},f' \circ \mathsf{x}^{{\rm A} + {\rm B}} \rangle$, where $\mathscr{G}$ is the restriction to $\Gamma \cap \mathbb{H}_+$ of the Gaussian free field in $\mathbb{H}_+$ with Dirichlet boundary condition.
\end{proposition}

\begin{proposition}[Leading asymptotics of third order cumulants] \label{prexp03}
If $L \neq s\lambda(t)$, we have when $N \rightarrow \infty$
\begin{equation}
\label{Ecirc03}
\mathbb{E}^{\circ}\big[{\rm Tr}\,f(A_N + B_N),{\rm Tr}\,f(A_N + B_N),{\rm Tr}\,f(A_N + B_N)\big] = \frac{1}{N} \int_{J^3} f' \otimes f' \otimes f'\, \dd \mu_{0,3} + O\Big(\frac{1}{N^3}\Big)\,,
\end{equation}
where $\mu_{0,3}$ is a finite signed measure supported on $\overline{J}^3$ such that
\begin{equation}
\label{ccc03}
(c,c,c)^*\mu_{0,3}=  \frac{{\rm i}s^2}{8\pi^3} \sum_{\substack{\epsilon_1,\epsilon_2,\epsilon_3 = \pm 1 \\ i = 1,2,3}}  \frac{\epsilon_1\epsilon_2 \epsilon_3 \gamma_{\epsilon_i}'(\xi_i)\,\mathbf{1}_{I^3}(\xi_1,\xi_2,\xi_3)\dd \xi_1\dd \xi_2 \dd \xi_3}{c'(\xi_i)\big(\gamma_{\epsilon_i}(\xi_i) - \gamma_{\epsilon_j}(\xi_j)\big)\big(\gamma_{\epsilon_i}(\xi_i) - \gamma_{\epsilon_k}(\xi_k)\big)}
\end{equation}
(the explicit expression is long and not particularly enlightening). If $J^{(1)},J^{(2)},J^{(3)}$ is any triple of connected components of $\overline{J}$, we have $\mu_{0,3}[J^{(1)} \times J^{(2)} \times J^{(3)}] = 0$.

If $L = s \lambda(t)$, \eqref{ccc03} develops a double pole at $\xi_i = r_0(t)$ and one should rather consider $\mu_{0,3}$ as a distribution: \eqref{Ecirc03} still holds but one should add to $\Upsilon_+(\xi_j)$ a term $\eta \cdot \theta(\frac{\xi_j - r_0(t)}{\eta})$ where $\eta > 0$ is small enough and $\theta$ is a smooth function with support $[-1,1]$  and $\theta(0) = 1$ (the result does not depend on such $\eta$ and $\theta$).
\end{proposition}

\begin{proof}[Proof of Proposition~\ref{prexp11}]
We first observe that if $h$ is a meromorphic function on $\Sigma^{{\rm A} + {\rm B}}$ without pole at $\infty$ and $f$ is a polynomial, we have by integration by parts
$$
\mathop{{\rm Res}}_{x = \infty} f\big(\mathsf{x}^{{\rm A} + {\rm B}}(x)\big) \dd h(x)  = -\mathop{{\rm Res}}_{x = \infty} f'\big(\mathsf{x}^{{\rm A} + {\rm B}}(x)\big)  h(x) \dd \mathsf{x}^{{\rm A} + {\rm B}}(x)\,.
$$
Recalling Definition~\ref{nabladeg} for the operator $\nabla$, we have likewise for $n \in \mathbb{Z}_{> 0}$
\begin{equation}
\label{nablamoin}
\mathop{{\rm Res}}_{x = \infty} f\big(\mathsf{x}^{{\rm A} + {\rm B}}(x)\big) \dd\big( \nabla^n h(x) \big) = (-1)^{n+1}\mathop{{\rm Res}}_{x = \infty} f^{(n+1)}\big(\mathsf{x}^{{\rm A} + {\rm B}}(x)\big) h(x) \dd \mathsf{x}^{{\rm A} + {\rm B}}(x)\,.
\end{equation}
The order $1/N$ term in the $N \rightarrow \infty$ asymptotic expansion of $\mathbb{E}\big[{\rm Tr}\,f(A_N + B_N)\big]$ is 
$$
\mathcal{M}_{1,1}[f] = - \mathop{{\rm Res}}_{x = \infty} f\big(\mathsf{x}^{{\rm A} + {\rm B}}(x)\big)\,\omega_{1,1}^{{\rm A} + {\rm B}}(x)\,.
$$
As $\omega_{1,1}$ has poles at the ramification points, we cannot directly move the contour to $\Gamma$ to rewrite it as an integral over $J$. Instead, we first use integration by parts. With the formula \eqref{11res} for $\omega_{1,1}$, using the identity \eqref{nablamoin}, we get
\begin{equation}
\label{M11ffirst}
\mathcal{M}_{1,1}[f] = \frac{s^2}{24} \mathop{{\rm Res}}_{x = \infty} f'''\big(\mathsf{x}^{{\rm A} + {\rm B}}(x)\big) \frac{(\dd \mathsf{x}^{{\rm A} + {\rm B}})^2}{\dd x}\,.
\end{equation}
Now, recalling the expression \eqref{xAplusB} for $\mathsf{x}^{{\rm A} + {\rm B}}$, the integrand has poles only at $x = 0,L,\infty$, so we can move the contour from $\infty$ to $\Gamma$, and proceed as in the proof of Lemma~\ref{01lemmueq} to obtain
\begin{equation}
\begin{split}
\mathcal{M}_{1,1}[f] & = \frac{s^2}{48{\rm i}\pi}\,\int_{I} f'''(c(\xi)) \frac{(\dd c(\xi))^2}{\dd \xi}\Big(\frac{1}{1 + {\rm i}\Upsilon_+'(\xi)} - \frac{1}{1 - {\rm i}\Upsilon_+'(\xi)}\Big) \\
& =-\frac{s^2}{24\pi} \int_{I} f'''(c(\xi))\,\frac{c'(\xi)\Upsilon'(\xi) \dd \xi}{1 + (\Upsilon_+'(\xi))^2}\,\dd c(\xi)\,.
\end{split}
\end{equation}
Replacing $f'''$ with $1$ in \eqref{M11ffirst} computes $\mu_{1,1}[J]$, and it is equal to $0$ because the integrand has vanishing residue at $\infty$. If $L > s$, we can distinguish the mass of the rightmost segment
$$
\mu_{1,1}[J_2] = \frac{s^4}{24} \mathop{{\rm Res}}_{x = L} \Big(\frac{t}{(x - L)^2} + \frac{1 - t}{x^2}\Big) \dd \mathsf{x}^{{\rm A} + {\rm B}}(x) = \frac{s^6 t(1-t)}{6L^3}\,.
$$
\end{proof}

\begin{proof}[Proof of Proposition~\ref{prexp02}]
In \eqref{contoursp} we read that  the variance of $\varkappa_N[f]$ when $N \rightarrow \infty$ is
$$
{\rm Var}[\varkappa_N[f]\big] = \mathcal{M}_{0,2}[f] + o(1)\,,
$$
where 
\begin{equation}
\label{M02resubis}
\mathcal{M}_{0,2}[f] \coloneqq \mathop{{\rm Res}}_{x_1 = \infty}  \mathop{{\rm Res}}_{x_2 = \infty} \omega_{0,2}^{{\rm A} + {\rm B}}(x_1,x_2) \prod_{j = 1}^{2} f\big(\mathsf{x}^{{\rm A} + {\rm B}}(x_j)\big)\dd \mathsf{x}^{{\rm A} + {\rm B}}(x_j)\,.
\end{equation}
Furthermore, from \eqref{contoursp} we know that $\varkappa_N[f]$ has mean $O(N^{-1}) = o(1)$, and for $n \geq 3$, that its $n$-th cumulant is $O(N^{2 - n}) = o(1)$. This implies that the moments of $\varkappa_N[f]$ converge to the moments of a centered Gaussian random variable with variance $\mathcal{M}_{0,2}[f]$. By the method of moments, this implies convergence in law of $\varkappa_N[f]$ towards this Gaussian.

Recall from Proposition~\ref{proncun} the formula
\begin{equation}
\omega_{0,2}^{{\rm A} + {\rm B}}(x_1,x_2) = \frac{\dd x_1 \dd x_2}{(x_1 - x_2)^2} - \frac{\dd \mathsf{x}^{{\rm A} + {\rm B}}(x_1)\dd \mathsf{x}^{{\rm A} + {\rm B}}(x_2)}{\big(\mathsf{x}^{{\rm A} + {\rm B}}(x_1) - \mathsf{x}^{{\rm A} + {\rm B}}(x_2)\big)^2}\,.
\end{equation}
The residues in \eqref{M02resubis} can be realised by integration along $x_j$ on clockwise-oriented circles with large radii $\rho_1$ and $\rho_2$, and we can choose $\rho_2 > \rho_1$. Calling $C_1,C_2$ those circles with counterclockwise orientation, we get
$$
\mathcal{M}_{0,2}[f] = \frac{-1}{4\pi^2} \oint_{C_1} \oint_{C_2} \bigg(\frac{\dd x_1 \dd x_2}{(x_1 -x_2)^2} - \frac{\dd \mathsf{x}^{{\rm A} + {\rm B}}(x_1) \dd \mathsf{x}^{{\rm A} + {\rm B}}(x_2)}{\big(\mathsf{x}^{{\rm A} + {\rm B}}(x_1) - \mathsf{x}^{{\rm A} + {\rm B}}(x_2)\big)^2}\bigg) \prod_{j= 1}^{2} f\big(\mathsf{x}^{{\rm A} + {\rm B}}(x_j)\big)\,.
$$
As each term in the integrand is regular on $C_1 \times C_2$, we can split the integral into a sum of two. For the second term, we can change variable to $\mathsf{x}_j = \mathsf{x}^{{\rm A} + {\rm B}}(x_j)$. With respect to $\mathsf{x}_1$ the integrand is holomorphic in the bounded connected component of $\mathbb{C} \setminus \mathsf{x}^{{\rm A} + {\rm B}}(C_1)$, therefore the integral vanishes. For the first term, we can move $C_1$ to $\Gamma$, and then move $C_2$ to a contour $\Gamma^{+\eta}$ slightly larger than $\Gamma$, at distance $\eta$ from the latter.

For simplicity of the computations, we assume $L< s \lambda(t)$ so that $C_1$ and $C_2$ have only one component (the argument transposes easily to the case $L\geq s \lambda(t)$). We use the notation $C_j^{\pm}\coloneqq C_j\cap \mathbb{H}_{\pm}$ for the oriented arcs ($-C_j^{\pm}$ is the arc $C_j^{\pm}$ with the opposite orientation); the endpoints of $C_1^{\pm}$ are $r_{\pm}(t)$ (see Section \ref{dstrucsp}) and the endpoints of $C_2^{\pm}$ are $r_{\pm}(t)\pm \eta$.

The strategy is to perform integration by parts with respect to $x_2$ and $x_1$. This makes appear a logarithm. In order to avoid discontinuities in the definition of complex logarithms, we will try to use real logarithms. For this purpose we decompose the integral into four pieces:
\[\begin{split}
\mathcal{M}_{0,2}[f]=& \frac{-1}{4\pi^2}\Bigg(\int_{C_1^{+}}\int_{C_2^{+}}+\int_{C_1^{-}}\int_{C_2^{-}}+\int_{C_1^{+}}\int_{C_2^{-}}+\int_{C_1^{-}}\int_{C_2^{+}}\Bigg)\frac{\dd x_1 \dd x_2}{(x_1-x_2)^2}\prod\limits_{j=1}^{2}f\big(\mathsf{x^{{\rm A}+ {\rm B}}}(x_j)\big) \\
=& \frac{-1}{4\pi^2}\int_{-C_1^{+}}\int_{-C_2^{+}}\Bigg(\frac{\dd x_1 \dd x_2}{(x_1-x_2)^2}\prod\limits_{j=1}^{2}f\big(\mathsf{x^{{\rm A}+ {\rm B}}}(x_j)\big)+\frac{\dd \overline{x}_1 \dd \overline{x}_2}{(\overline{x}_1-\overline{x}_2)^2}\prod\limits_{j=1}^{2}f\big(\mathsf{x^{{\rm A}+ {\rm B}}}(\overline{x}_j)\big) \\
& - \frac{\dd x_1 \dd \overline{x}_2}{(x_1-\overline{x}_2)^2}f\big(\mathsf{x^{{\rm A}+ {\rm B}}}(x_1)\big)f\big(\mathsf{x^{{\rm A}+ {\rm B}}}(\overline{x}_2)\big)-\frac{\dd \overline{x}_1 \dd x_2}{(\overline{x}_1-x_2)^2}f\big(\mathsf{x^{{\rm A}+ {\rm B}}}(\overline{x}_1)\big)f\big(\mathsf{x^{{\rm A}+ {\rm B}}}(x_2)\big)\Bigg)
\end{split}\]
where we have used that $\int_{C_j^{-}}h(x_j)\dd x_j = \int_{-C_j^{+}}h(\overline{x}_j)\dd \overline{x}_j$ to get the second equality. We claim that we can replace $f\big(\mathsf{x}^{{\rm A}+ {\rm B}}(\overline{x}_j)\big)$ with $f\big(\mathsf{x}^{{\rm A}+ {\rm B}}(x_j)\big)$ in these integrals. For $x_1 \in C_1^+$ this is clear: we have $\mathsf{x}^{{\rm A}+ {\rm B}}(\overline{x}_1)=\mathsf{x}^{{\rm A}+ {\rm B}}(x_1)$ hence $f\big(\mathsf{x}^{{\rm A}+ {\rm B}}(\overline{x}_1)\big)=f\big(\mathsf{x}^{{\rm A}+ {\rm B}}(x_1)\big)$. For $x_2 \in C_2^{+} = \Gamma^{+\eta}$, we only have
$$
\underset{\eta\to 0}{\lim}\, \big(f(\mathsf{x}^{{\rm A}+ {\rm B}}(\overline{x}_2))-f(\mathsf{x}^{{\rm A}+ {\rm B}}(x_2))\big)=0\,.
$$
and we have to justify that
\begin{equation}
\label{underst}
\underset{\eta\to 0}{\lim}\int_{-C_1^+}\int_{-C_2^{+}} \frac{\dd \overline{x}_1\dd \overline{x}_2}{(\overline{x}_1-\overline{x}_2)^2} f\big(\mathsf{x}^{{\rm A}+{\rm B}}(x_1)\big)\Big[f\big(\mathsf{x}^{{\rm A}+{\rm B}}(\overline{x}_2)\big)-f\big(\mathsf{x}^{{\rm A}+{\rm B}}(x_2)\big)\Big] = 0\,.
\end{equation}
We start with the integral in the left-hand side for fixed $\eta > 0$, and first perform integration by parts with respect to $x_2$. This does not create boundary terms because the endpoints of $\Gamma^{+\eta} \cap \mathbb{H}_{\pm}$ are on the real line and the last factor in \eqref{underst} vanishes there. We then perform integration by parts with respect to $x_1$. It creates a boundary term and makes appear a complex logarithm. For $(x_1,x_2)\in C_1^+\times C_2^+$ we have $(\overline{x}_1-\overline{x}_2) \notin -{\rm i}\mathbb{R}$, so we can choose the complex logarithm having a cut on $-{\rm i}\mathbb{R}_{\geq 0}$ and coinciding with the real logarithm on $\mathbb{R}_{> 0}$. This rewrites the left-hand side of \eqref{underst} before taking the limit as
\begin{equation*}
\begin{split}
& \int_{-C_1^+}\int_{-C_2^{+}} \ln (\overline{x}_1-\overline{x}_2)f'\big(\mathsf{x}^{{\rm A}+{\rm B}}(x_1)\big)\Big[f'\big(\mathsf{x}^{{\rm A}+{\rm B}}(\overline{x}_2)\big)\dd \mathsf{x}^{{\rm A}+{\rm B}}(\overline{x}_2)-f'\big(\mathsf{x}^{{\rm A}+{\rm B}}(x_2)\big)\dd \mathsf{x}^{{\rm A}+{\rm B}}(x_2)\Big]\dd \mathsf{x}^{{\rm A} + {\rm B}}(x_1) \\
& - \sum_{\epsilon = \pm 1} \int_{-C_2^+} \ln(b_{\epsilon}(t) - \overline{x}_2) f(b_\epsilon(t))\Big[f'\big(\mathsf{x}^{{\rm A}+{\rm B}}(\overline{x}_2)\big)\dd \mathsf{x}^{{\rm A}+{\rm B}}(\overline{x}_2)-f'\big(\mathsf{x}^{{\rm A}+{\rm B}}(x_2)\big)\dd \mathsf{x}^{{\rm A}+{\rm B}}(x_2)\Big]\,.
\end{split}
\end{equation*}
We can now apply dominated convergence by putting absolute values around the integrands and we get that the limit as $\eta\to 0$ is zero.

Therefore, we are led to carry out the following integrals:
\begin{equation}\label{eq:M02split}
\begin{split}\mathcal{M}_{0,2}[f] & = \underset{\eta\to 0}{\lim}\, \Bigg[\frac{-1}{4\pi^2}\int_{-C_1^{+}}\int_{-C_2^{+}}\Bigg(\frac{\dd x_1 \dd x_2}{(x_1-x_2)^2}+\frac{\dd \overline{x}_1 \dd \overline{x}_2}{(\overline{x}_1-\overline{x}_2)^2}\Bigg)\prod\limits_{j=1}^{2}f\big(\mathsf{x^{{\rm A}+ {\rm B}}}(x_j)\big) \\
& \quad +\frac{1}{4\pi^2}\int_{-C_1^{+}}\int_{-C_2^{+}}\Bigg(\frac{\dd x_1 \dd \overline{x}_2}{(x_1-\overline{x}_2)^2}+\frac{\dd \overline{x}_1 \dd x_2}{(\overline{x}_1-x_2)^2} \Bigg)\prod\limits_{j=1}^{2}f\big(\mathsf{x^{{\rm A}+ {\rm B}}}(x_j)\big)\Bigg]\,.
\end{split}
\end{equation} 

Again, we perform integration by parts with respect to $x_2$ and $x_1$. It turns out that the boundary terms in the two lines of \eqref{eq:M02split} cancel each other. In the end, we obtain:
\[\begin{split}
\mathcal{M}_{0,2}[f]=& \underset{\eta \to 0}{\lim}\, \Bigg[\frac{-1}{4\pi^2}\int_{-C_1^{+}}\int_{-C_2^{+}} \ln |x_1-x_2|^2 \prod\limits_{j=1}^{2}f'\big(\mathsf{x^{{\rm A}+ {\rm B}}}(x_j)\big)\dd \mathsf{x^{{\rm A}+ {\rm B}}}(x_j)\\
& + \frac{1}{4\pi^2}\int_{-C_1^{+}}\int_{-C_2^{+}} \ln |x_1-\overline{x}_2|^2 \prod\limits_{j=1}^{2}f'\big(\mathsf{x^{{\rm A}+ {\rm B}}}(x_j)\big)\dd \mathsf{x^{{\rm A}+ {\rm B}}}(x_j)\Bigg]\\
= & \frac{1}{4\pi^2}\int_{-C_1^{+}}\int_{-C_1^{+}} \ln \left|\frac{x_1-\overline{x}_2}{x_1-x_2}\right|^2\prod\limits_{j=1}^{2}f'\big(\mathsf{x^{{\rm A}+ {\rm B}}}(x_j)\big)\dd \mathsf{x^{{\rm A}+ {\rm B}}}(x_j).
\end{split}\]
Notice that thanks to the way the integral was split, only the real log is involved here and every term is integrable. Using the change of variable $x_j = \gamma_+(\xi_j)$, we get the desired expression. 
\end{proof}

\begin{proof}[Proof of Proposition~\ref{prexp03}]
The term we want to compute is
\begin{equation}
\label{M30ciung}
\begin{split}
 \mathcal{M}_{0,3}[f] & \coloneqq - \mathop{{\rm Res}}_{x_1 = \infty} \mathop{{\rm Res}}_{x_2 = \infty}  \mathop{{\rm Res}}_{x_3 = \infty}  \omega_{0,3}^{{\rm A} + {\rm B}}(x_1,x_2,x_3) \prod_{i = 1}^{3} f\big(\mathsf{x}^{{\rm A} + {\rm B}}(x_i)\big) \\
& = \mathop{{\rm Res}}_{x_1 = \infty} \mathop{{\rm Res}}_{x_2 = \infty}  \mathop{{\rm Res}}_{x_3 = \infty} M_{0,3}(x_1,x_2,x_3) \prod_{a = 1}^{3} f'\big(\mathsf{x}^{{\rm A} + {\rm B}}(x_a)\big)\dd \mathsf{x}^{{\rm A} + {\rm B}}(x_a)\big)\,,
\end{split}
\end{equation}
where we have introduced
$$
M_{0,3}(x_1,x_2,x_3) = s^2 \bigg(\sum_{i = 1}^{3} \frac{\dd x_i}{\dd \mathsf{x}^{{\rm A} + {\rm B}}(x_i)\,(x_i - x_j)(x_i -x_k)}\bigg)\,.
$$
As in the previous proofs, we would like to move the three integration contours to $\Gamma$ and rewrite it as an integral over $I^3$, but this can be done only when the function
$$
\Delta M_{0,3}(\xi_1,\xi_2,\xi_3) \coloneqq \frac{1}{(2{\rm i}\pi)^3} \sum_{\epsilon_1,\epsilon_2,\epsilon_3 = \pm 1} \epsilon_1 \epsilon_2 \epsilon_3\,M_{0,3}\big(\xi_1 + {\rm i}\epsilon_1\Upsilon_+(\xi_1),\xi_2 + {\rm i}\epsilon_2 \Upsilon_+(\xi_2),\xi_3 + {\rm i}\epsilon_3\Upsilon_+(\xi_3)\big)
$$ 
is integrable on $I^3$. The function $M_{0,3}$ has an alternative expression found in \eqref{03res}: it is of the form $\frac{F(x_1,x_2,x_3)}{\tilde{Q}(x_1)\tilde{Q}(x_2)\tilde{Q}(x_3)}$, where $F$ is a polynomial. We only need to check the local integrability of $\Delta M_{0,3}$ when $\xi_i$ approaches an endpoint of $I$, since these correspond to ramification points, i.e. roots of $\tilde{Q}$. 

Assume $L \neq s \lambda(t)$. In this case we learned in Section~\ref{dstrucsp} that all ramification points are simple. Let us take a triple $\mathbf{r} = (r^{(1)},r^{(2)},r^{(3)})$ of ramification points (possibly equal). With a partial fraction expansion we can find two triples of constants $\boldsymbol{\alpha}$, $\boldsymbol{\beta}$ such that
$$
M_{0,3}(x_1,x_2,x_3) =  \prod_{j= 1}^{3} \Big(\alpha^{(j)} + \frac{\beta^{(j)}}{x_j - r^{(j)}}\Big) + \tilde{M}_{0,3}(x_1,x_2,x_3)\,.
$$
Then $\Delta M_{0,3}(\xi_1,\xi_2,\xi_3) = \prod_{j = 1}^{3} \beta^{(j)} K_{r^{(j)}}(\xi_j) + \Delta \tilde{M}_{0,3}(\xi_1,\xi_2,\xi_3)$, where
$$
K_r(\xi) = \frac{1}{2{\rm i}\pi}\bigg(\frac{1}{\xi + {\rm i}\Upsilon_+(\xi) - r} - \frac{1}{\xi - {\rm i}\Upsilon_+(\xi) - r}\bigg) = \frac{-1}{\pi}\,\frac{\Upsilon_+(\xi)}{(\xi -r)^2 + \Upsilon_+(\xi)^2}\,.
$$
As a consequence of Lemma~\ref{propparam}, $K_r(\xi) = O\big(|\xi - r|^{-1/2}\big)$ when $\xi \rightarrow r$, so it is locally integrable near $r$. Therefore, $\Delta M_{0,3}$ is integrable on $I^3$. Following the procedure of contour deformation in \eqref{M30ciung} we obtain
$$
\mathcal{M}_{0,3}[f] = \int_{I^3} \Delta M_{0,3}(\xi_1,\xi_2,\xi_3)  \prod_{j =1}^{3} f'\big(c(\xi_j))\dd c(\xi_j) \,.
$$

If $L = s\lambda(t)$, the ramification point $r_0(t)$ is double, and Lemma~\ref{propparam} implies that $\Delta M_{0,3}(\xi_1,\xi_2,\xi_3)$ can develop a double pole at $\xi = r_0(t)$. In this case, we can still move the contour in \eqref{M30ciung} to $\Gamma$ except around  the point $r_0(t)$ which should be avoided. We obtain a ``regularised'' integral over $I$ 
\begin{equation}
\label{M03Ide}\mathcal{M}_{0,3}[f] = \lim_{\eta \rightarrow 0^+} \int_{I_{\eta}^3} \Delta M_{0,3}^{+\eta}(\xi_1,\xi_2,\xi_3) \prod_{j =1}^{3} f'\big(c(\xi_j))\dd c(\xi_j)\,,
\end{equation}
where $\Delta M_{0,3}^{+\eta}$ is obtained from $\Delta M_{0,3}$ by adding to $\Upsilon_+(\xi)$ a deformation $\eta\cdot \theta\big(\frac{\xi - r_0(t)}{\eta}\big)$ as announced.

The property that $\mu_{1,1}$ gives zero mass to each triple of segments is a consequence of
$$
\forall p_1,p_2,p_3 \in \{0,L,\infty\},\qquad \mathop{{\rm Res}}_{x_1 = p_1} \mathop{{\rm Res}}_{x_2 = p_2} \mathop{{\rm Res}}_{x_3 = p_3} M_{0,3}(x_1,x_2,x_3) \prod_{j= 1}^{3} \dd \mathsf{x}^{{\rm A} + {\rm B}}(x_j) = 0\,,
$$
which can be checked by a computation with \eqref{03res}.
\end{proof}

\newpage

\appendix

\section{About the GUE}

Consider again the SPS on $\mathscr{A} = \mathbb{C}[a]$ given by the topological expansion of a GUE matrix, cf. Section~\ref{GUESetting}. As observed in \eqref{kappatrivGUE}, almost all free cumulants are zero and we get
\[
G_{g,n}^{\vee}(w_1,\ldots,w_n) = \delta_{g,0}\delta_{n,1}(1 + w_1^2)\,,
\]
and Theorem~\ref{thm:R-transform-HigherGenera} then gives a formula for the GUE moments. There are only contributions from the graphs $\Gamma \in \mathcal{G}_{n}$ in which black vertices have valency $2$ and cannot be connected to the same white vertex. Erasing the black vertices, this amounts to considering the set ${\rm Gr}_{n}$ of graphs having $n$ vertices labeled from $1$ to $n$ without loops. We observe $\varsigma(\hbar u w \partial_w) w^k =  \varsigma(k\hbar u)w^k$ for $k > 0$. The weight of a hyperedge between $i$-th and $j$-th vertex becomes
\[ 
\mathsf{c}^{\vee}(u_i,w_i,u_j,w_j) = \ln\bigg(\frac{(e^{\hbar u_i/2}w_i - e^{\hbar u_j/2}w_j)(e^{-\hbar u_i/2}w_i - e^{-\hbar u_j/2}w_j)}{(e^{\hbar u_i/2}w_i - e^{-\hbar u_j/2}w_j)(e^{-\hbar u_i/2}w_i - e^{\hbar u_j/2}w_j)}\bigg)\,,
\]
while the operator weight on a vertex is
\begin{equation*}
\begin{split}
\vec{\mathsf{O}}^{\vee}(w) & = \sum_{m \geq 0} (P^{\vee}(w)w\partial_w)^m P^{\vee}(w) \cdot [v^m] \,\, \sum_{r \geq 0} \Big(\partial_y + \frac{v}{y}\Big)^{r} \exp\bigg(v\,\frac{\varsigma(\hbar v\partial_y)}{\varsigma(\hbar \partial_y)}\ln y - v\ln y\bigg)\Big|_{y = 1 + w^2} \\
& \quad \cdot [u^r] \,\,\frac{\exp\big((\varsigma(2\hbar u) - 1)w^2\big)}{\hbar u \varsigma(\hbar u)}\,.
\end{split}
\end{equation*}
The formula is then
\[
G_{g,n}(X_1,\ldots,X_n) = [\hbar^{2g - 2 + n}] \sum_{\Gamma \in {\rm Gr}_n} \frac{1}{\# {\rm Aut}(\Gamma)} \prod_{i = 1}^n \vec{\mathsf{O}}^{\vee}(w_i) \prod_{\{i,j\} = {\rm edge}} \mathsf{c}^{\vee}(u_i,w_i,u_j,w_j)\,,
\]
with the substitutions:
\[
X_i = \frac{w_i}{1 + w_i^2}\,,\qquad P^{\vee}(w_i) = \frac{1 + w_i^2}{2w_i^2} = \frac{X_i}{2w_i}\,.
\]
For $n = 1$, as loops are forbidden, the vertex with no edge is the only graph:
\[
G_{g,1}(X_1) = [\hbar^{2g - 1}]\,\, \vec{\mathsf{O}}^{\vee}(w_1)\,.
\]
The resulting formula is essentially the Harer--Zagier formula, as shown in \cite{DaniHZ} where the proof is a special case of the general method we have used in Section~\ref{Sec3}. 

\printbibliography
\end{document}